\newcommand{\cent}{{\tt c}}
\newcommand{\foot}{\mathrm{f}}
\newcommand{\slope}{{\tt s}}
\newcommand{\nofig}[1]{#1}
\newtheorem{theorem}{Theorem}[section]
\newtheorem{proposition}[theorem]{Proposition}
\newtheorem{lemma}[theorem]{Lemma}
\newtheorem{claim}[theorem]{Claim}
\theoremstyle{definition}
\newtheorem{remark}[theorem]{Remark}
\newtheorem{definition}[theorem]{Definition}
\newtheorem{conjecture}[theorem]{Conjecture}
\newcommand{\st}{\;|\;}
\newcommand{\sbs}{\subseteq}
\newcommand{\stm}{\setminus}
\newcommand{\tref}[1]{Theorem~\ref{t.#1}}
\newcommand{\dref}[1]{Definition~\ref{d.#1}}
\newcommand{\pref}[1]{Proposition~\ref{p.#1}}
\newcommand{\hyref}[1]{hypothesis~(\ref{P.#1})}
\newcommand{\lref}[1]{Lemma~\ref{l.#1}}
\newcommand{\fref}[1]{Figure~\ref{f.#1}}
\newcommand{\Sref}[1]{Step~\ref{S.#1}}
\newcommand{\sref}[1]{Section~\ref{s.#1}}
\newcommand{\eref}[1]{(\ref{e.#1})}
\newcommand{\ipart}[2]{\needspace{3ex}\smallskip \noindent \textbf{#1} \emph{#2}}
\numberwithin{equation}{section}
\numberwithin{figure}{section}
\numberwithin{theorem}{section}
\newcommand{\abs}[1]{|#1|}
\newcommand{\1}{\mathbf{1}}
\newcommand{\Z}{\mathbb{Z}}
\newcommand{\N}{\mathbb{N}}
\newcommand{\R}{\mathbb{R}}
\newcommand{\C}{\mathbb{C}}
\newcommand{\trace}{\operatorname{trace}}
\newcommand{\B}{\mathcal{B}}
\newcommand{\eee}{\mathcal{E}}
\newcommand{\fff}{\mathcal{F}}
\newcommand{\hhh}{\mathcal{H}}
\newcommand{\kkk}{\mathcal{K}}
\newcommand{\rrr}{\mathcal{R}}
\newcommand{\sss}{\mathcal{S}}
\newcommand{\ttt}{\mathcal{T}}
\newcommand{\I}{\mathbf{i}}
\newcommand{\id}{\mathbf{1}}
\newcommand{\Ga}{\Z[\I]}
\newcommand{\mat}[1]{\left[ \begin{matrix} #1 \end{matrix} \right]}
\renewcommand{\Im}{\mathfrak{Im}}
\renewcommand{\Im}{\mathrm{Im}}
\newcommand{\change}[1]{#1}
\begin{document}

\title[Integer superharmonic matrices]{The Apollonian structure of integer superharmonic matrices}

\author{Lionel Levine}

\author{Wesley Pegden}

\author{Charles K. Smart}

\date{May 22, 2014}

\begin{abstract}
We prove that the set of quadratic growths attainable by integer-valued superharmonic functions on the lattice $\mathbb{Z}^2$ has the structure of an Apollonian circle packing.  This completely characterizes the PDE which determines the continuum scaling limit of the Abelian sandpile on the lattice $\Z^2$.
\end{abstract}

\maketitle

\section{Introduction}
\subsection{Main results}
This paper concerns the growth of integer-valued superharmonic functions on the lattice $\Z^2$; that is, functions $g : \Z^2 \to \Z$ with the property that the value at each point $x$ is at least the average of the values at the four lattice neighbors $y\sim x$.  In terms of the Laplacian operator, these are the functions $g$ satisfying:
\begin{equation}
\label{e.superharmonic}
\Delta g(x) := \sum_{y\sim x} (g(y) - g(x)) \leq 0
\end{equation}
for all $x \in \Z^2$.  Our goal is to understand when a given quadratic growth at infinity, specified by a $2 \times 2$ real symmetric matrix $A$, can be attained by an integer-valued superharmonic function.  For technical reasons, it is convenient to replace $0$ by $1$ in the inequality above. (It is straightforward to translate between the two versions using the function $f(x)=\frac12 x_1 (x_1+1)$, which has $\Delta f \equiv 1$.) So we seek to determine, for each matrix $A$, whether there exists a function $g : \Z^2 \to \Z$ such that
\begin{equation}
\label{e.super}
g(x) = \tfrac{1}{2} x^t A x + o(|x|^2) \quad \mbox{and} \quad \Delta g(x) \leq 1 \quad \mbox{for all }x \in \Z^2.
\end{equation}
When this holds, we say that the matrix $A$ is {\em integer superharmonic}, and we call $g$ an \emph{integer superharmonic representative} for $A$.

\begin{figure}[t]
\begin{center}
\nofig{\input{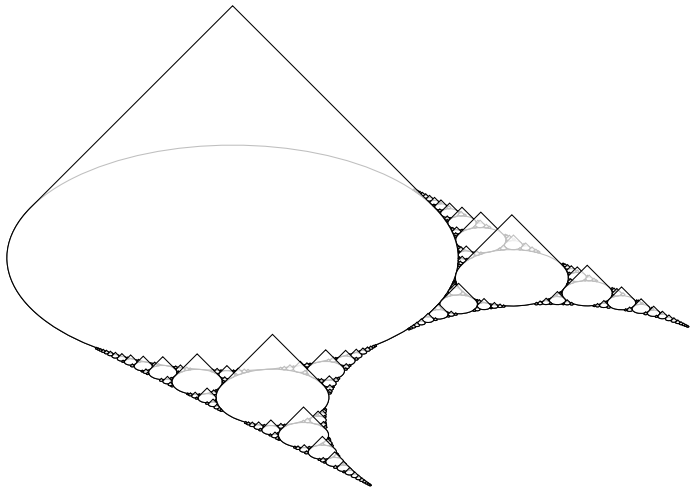}}
\end{center}
\caption{One $2\Z^2$-period of the boundary of the set of integer superharmonic matrices, as characterized by \tref{superharmonic}.}
\label{f.cone}
\end{figure}

We will relate the set of integer superharmonic matrices to an Apollonian circle packing.  Recall that every triple of pairwise tangent general circles (circles or lines) in the plane has exactly two \emph{Soddy general circles} tangent to all three.
  An \emph{Apollonian circle packing} is a minimal collection of general circles containing a given triple of pairwise-tangent general circles that is closed under the addition of Soddy general circles.  Let $\B_k$ $(k\in \Z)$ denote the Apollonian circle packing generated by the vertical lines through the points $(2k,0)$ and $(2k+2,0)$ in $\R^2$ together with the circle of radius $1$ centered at $(2k+1,0)$.  The \emph{band packing} is the union $\B=\bigcup_{k\in \Z} \B_k$, which is a circle packing of the whole plane, plus the vertical lines.

To each circle $C \in \B$ with center $(x_1, x_2) \in \R^2$ and radius $r > 0$, we associate the matrix

\begin{equation}
A_C = \frac{1}{2} \mat{ r + x_1 & x_2 \\ x_2 & r - x_1 }.
\label{l.AC}
\end{equation}

We use the semi-definite order on the space $S_2$ of $2 \times 2$ real symmetric matrices, which sets $A \leq B$ if and only if $B - A$ is positive semidefinite.  Our main result relates integer superharmonic matrices to the band packing.

\begin{theorem}
\label{t.superharmonic}
$A \in S_2$ is integer superharmonic if and only if either $A \leq A_C$ for some circle $C \in \B$, or $\trace(A)\leq 0$.
\end{theorem}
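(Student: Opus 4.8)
The plan is to prove the two directions separately, establishing the "sufficiency" (circle-to-function) direction constructively and the "necessity" (function-to-circle) direction by an extremal/barrier argument.

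The plan is to prove the two implications separately. I would first establish a \emph{downward monotonicity} principle: if $B\in S_2$ is integer superharmonic and $A\le B$, then $A$ is integer superharmonic. Given a representative $h$ for $B$, set $Q=B-A\ge 0$ and note that the convex quadratic $q(x)=\tfrac12 x^t Q x$ has constant discrete Laplacian $\Delta q \equiv \trace(Q)\ge 0$. I would construct an integer-valued function $f$ that is subharmonic ($\Delta f\ge 0$) and satisfies $f(x)=q(x)+o(|x|^2)$ — for instance by a careful rounding of $q$ — and then take $g=h-f$. Then $g:\Z^2\to\Z$, $\Delta g=\Delta h-\Delta f\le 1$, and $g(x)=\tfrac12 x^t A x+o(|x|^2)$, so $A$ is integer superharmonic. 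Because of this principle, the sufficiency direction reduces to showing that \emph{each} matrix $A_C$, $C\in\B$, is itself integer superharmonic.

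For this I would argue by induction along the Apollonian generation of the band packing $\B$. The base circles (the two vertical lines and the unit circle generating each $\B_k$) correspond to explicit matrices whose representatives can be written down directly — e.g. the quadratic $\tfrac12 x_1(x_1+1)$ realizing $\Delta\equiv1$ already handles the degenerate ``line'' cases. The inductive step is the crux: whenever a circle $C$ is the Soddy circle of a mutually tangent triple $C_1,C_2,C_3$ already handled, I would build a representative for $A_C$ by \emph{gluing} the representatives of the $A_{C_i}$ along the lattice directions singled out by the tangencies. Here the key algebraic input is that tangency of circles is exactly the rank-one (semidefinite) degeneracy of the corresponding matrix differences: a short computation gives $\det(A_{C_1}-A_{C_2})=-\tfrac14(|c_1-c_2|^2-(r_1-r_2)^2)$, which vanishes precisely at internal tangency and is negative at external tangency. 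This dictates the directions along which neighboring representatives match to first order and must be combined into a single periodic function with $\Delta g\le1$.

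For the necessity direction I would prove the contrapositive: if $A\not\le A_C$ for every $C\in\B$, then $A$ admits no integer superharmonic representative. The geometric content is that the down-set $\bigcup_{C\in\B}\{A: A\le A_C\}$ is a closed, downward-closed subset of $S_2$ whose upper boundary is the envelope depicted in \fref{cone}, and the tangency computation above shows this envelope is assembled from the flat (rank-one) faces where consecutive circles meet. I would show that any $A$ strictly above this envelope forces a growth rate incompatible with the discrete maximum principle: passing to the scaling limit of a hypothetical representative $g$ and using the Apollonian structure of $\B$, the limiting quadratic would have to dominate some $A_C$ it cannot, yielding a contradiction via a potential-theoretic (least-action) bound on how fast an integer superharmonic function can grow in each rational slope direction.

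I expect the inductive construction of representatives for the Soddy circles to be the principal obstacle: matching the explicit periodic tilings across tangencies while preserving both integrality and the inequality $\Delta g\le1$ is exactly where the recursive Descartes/Apollonian arithmetic must be made to interlock with the lattice, and verifying the superharmonic inequality at the seams is the delicate part.
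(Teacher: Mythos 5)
Your overall architecture matches the paper's: the ``if'' direction via downward closure in the semidefinite order plus a recursive Apollonian construction of a representative for each $A_C$, and the ``only if'' direction via the geometry of the packing. Your downward-monotonicity step is essentially the paper's first lemma (the paper realizes your ``careful rounding'' of $q=\tfrac12 x^t(B-A)x$ as a double Legendre transform $g(x)=\sup_p\inf_y\lfloor q(y)+|y|\rfloor+p\cdot(x-y)$, which gives integrality, $\Delta g\ge 0$, and the $O(1+|x|)$ error all at once), and your identification of the Soddy gluing as the principal obstacle is accurate --- that construction occupies most of the paper.

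There is, however, a genuine gap in your necessity direction. Knowing merely that each $A_C$ \emph{has} an integer superharmonic representative does not preclude some $A>A_C$ from having one too; the contradiction has to come from somewhere, and your ``potential-theoretic (least-action) bound on how fast an integer superharmonic function can grow'' is precisely the missing ingredient, not a proof of it. What the paper actually does is build each representative $g_C$ to be \emph{maximal}: if $\Delta g\le 1$ and $g\ge g_C$ then $g-g_C$ is bounded (the ``moreover'' clause of \tref{odometer}, itself requiring a delicate inductive analysis of $\Delta g_C$ on the tile decomposition). Then the density of tangency points in $\B$ shows that any $A$ with $A\not\le A_C$ for all $C$ must satisfy $A_{C_0}<A$ strictly for some $C_0\in\B$; a hypothetical representative $g$ of $A$ could be shifted by a constant to dominate $g_C$ (since $\Delta g_{C_0}\le 1$ forces $g_{C_0}\le \tfrac12 x^tA_{C_0}x+b\cdot x+O(1)$ while $g$ grows like $\tfrac12x^tAx$), yet $g+c-g_{C_0}$ is unbounded because $A-A_{C_0}>0$, contradicting maximality. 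So your construction task is strictly harder than you state: you must not only glue representatives across Soddy generations but also prove each one is maximal, and this maximality is where the inequality $\Delta g_C\equiv 1$ on the web of tile boundaries (not just $\Delta g_C\le 1$) is used. Your rank-one tangency computation $\det(A_{C_1}-A_{C_2})=\tfrac14\bigl((r_1-r_2)^2-|c_1-c_2|^2\bigr)$ is correct and is good motivation for the cone picture, but it does not by itself supply the growth obstruction.
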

This theorem implies that the boundary of the set of integer superharmonic matrices looks like the surface displayed in \fref{cone}: it is a union of slope-1 cones whose bases are the circles $C \in \B$ and whose peaks are the matrices $A_C$.  Here we have identified $S_2$ with $\R^3$ with coordinates $(x_1,x_2,r)$, and $\mathcal{B}$ lies in the $r=0$ plane.  \change{Note that this embedding of $S_2$ with $\R^3$ has the property that the downset of a matrix $A$ is the slope-1 downward cone in $\R^3$ whose vertex corresponds to $A$.  In particular, the intersection of the cone 
	\[ \{A \in S_2 \mid A \leq A_C \} \]
with the $r=0$ plane is the closed disk bounded by $C$.  Note that the condition $\trace(A)\leq 0$ in the Theorem pulls in a Cantor set of matrices in the $\trace(A)=0$ plane which are in the topological closure of the downset of the set of matrices $A_C$ ($C\in \mathcal{B}$), but not in the set itself.}

\begin{figure}[t]
\begin{center}
\nofig{\includegraphics[width=1\textwidth]{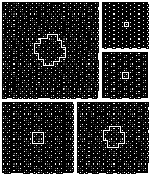}}
\end{center}
\caption{The periodic pattern $\Delta g_C$ of the integer superharmonic representative \tref{odometer}, shown  for five different circles $C$. 
Black, patterned, and white cells correspond to sites $x \in \Z^2$ where $\Delta g_C(x)$ equals $1$, $0$, and $-2$, respectively.  (In general, $-1$ can also occur as a value.)  In each case the fundamental tile $T_C$ is identified by a white outline.
Clockwise from the top left is $\Delta g_C$ for the circle $(153,17,120)$, its three parents $(4,1,4)$, $(9,1,6)$, and $(76,7,60)$, and its Soddy precursor $(25,1,20)$.  The circles themselves are drawn in \fref{circ}.
} 
\label{f.odom}
\end{figure}

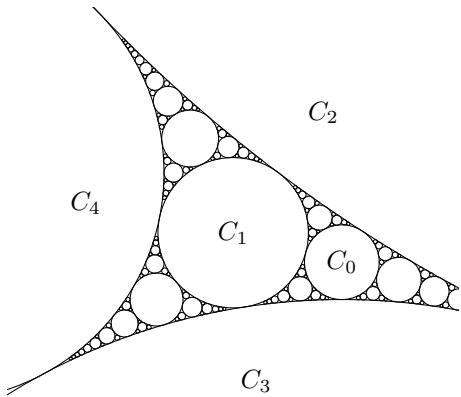
\begin{figure}[t]
\begin{center}
\nofig{\begin{tikzpicture}
\clip (-3,-3) rectangle (3,3);
\draw (12.00000,16.00000) circle (19.00000);
\draw (1.44444,-9.33333) circle (8.44444);
\draw (-3.96000,0.80000) circle (3.04000);
\draw (0.00000,0.00000) circle (1.00000);
\draw (1.44444,-0.39216) circle (0.49673);
\draw (-0.57214,1.25373) circle (0.37811);
\draw (-1.01852,-0.88889) circle (0.35185);
\draw (2.20313,-0.62500) circle (0.29688);
\draw (-0.86458,1.75000) circle (0.19792);
\draw (2.67273,-0.77922) circle (0.19740);
\draw (-1.44340,-1.20755) circle (0.17925);
\draw (1.14880,0.20131) circle (0.16630);
\draw (0.88983,-0.74576) circle (0.16102);
\draw (-0.06518,1.13966) circle (0.14153);
\draw (2.99259,-0.88889) circle (0.14074);
\draw (-0.54593,-0.99133) circle (0.13172);
\draw (-0.79333,0.80000) circle (0.12667);
\draw (-1.04160,-0.41600) circle (0.12160);
\draw (-1.04160,2.01600) circle (0.12160);
\draw (-1.68000,-1.37143) circle (0.10857);
\draw (2.02326,-0.27907) circle (0.09302);
\draw (1.86058,-0.80769) circle (0.09135);
\draw (1.00434,0.40391) circle (0.08252);
\draw (-1.16017,2.18182) circle (0.08225);
\draw (0.67118,-0.84495) circle (0.07908);
\draw (0.13650,1.06486) circle (0.07357);
\draw (-1.83142,-1.47126) circle (0.07280);
\draw (-0.34657,-1.01083) circle (0.06859);
\draw (1.37847,0.16667) circle (0.06597);
\draw (-0.60624,1.69644) circle (0.06592);
\draw (1.09732,-0.83221) circle (0.06376);
\draw (-0.86178,0.62281) circle (0.06328);
\draw (-0.93750,1.50000) circle (0.06250);
\draw (-1.03560,-0.23301) circle (0.06149);
\draw (-1.20896,-1.25373) circle (0.05970);
\draw (2.55191,-0.55269) circle (0.05933);
\draw (-1.24512,2.29508) circle (0.05933);
\draw (2.44136,-0.88889) circle (0.05864);
\draw (-1.41938,-0.97199) circle (0.05753);
\draw (1.05556,0.00000) circle (0.05556);
\draw (0.91098,-0.53123) circle (0.05456);
\draw (-0.14489,1.31818) circle (0.05398);
\draw (-1.93681,-1.53846) circle (0.05220);
\draw (-0.68325,-1.10995) circle (0.04974);
\draw (0.91974,0.50485) circle (0.04919);
\draw (-0.87563,0.95431) circle (0.04822);
\draw (0.55309,-0.88889) circle (0.04691);
\draw (-0.21429,1.02463) circle (0.04680);
\draw (-1.17150,-0.52174) circle (0.04589);
\draw (0.24452,1.01599) circle (0.04500);
\draw (1.93853,-0.17021) circle (0.04492);
\draw (-1.30896,2.37736) circle (0.04481);
\draw (-0.62637,-0.83516) circle (0.04396);
\draw (-0.62637,0.83516) circle (0.04396);
\draw (1.73210,-0.84988) circle (0.04388);
\draw (-0.23604,-1.01493) circle (0.04201);
\draw (-0.90824,-0.51078) circle (0.04201);
\draw (2.90590,-0.72904) circle (0.04110);
\draw (2.82618,-0.96137) circle (0.04077);
\draw (-2.01446,-1.58678) circle (0.03926);
\draw (2.15342,-0.29385) circle (0.03798);
\draw (-0.88506,1.98501) circle (0.03798);
\draw (-0.89421,0.52695) circle (0.03792);
\draw (1.97403,-0.86820) circle (0.03724);
\draw (-1.02831,-0.13470) circle (0.03709);
\draw (-1.07171,1.86047) circle (0.03682);
\draw (1.47571,0.13882) circle (0.03517);
\draw (-1.35869,2.43983) circle (0.03504);
\draw (1.19141,-0.85868) circle (0.03399);
\draw (-1.53994,-1.39759) circle (0.03391);
\draw (-1.65443,-1.23198) circle (0.03320);
\draw (0.86432,0.56505) circle (0.03263);
\draw (-0.51458,1.66038) circle (0.03259);
\draw (1.97222,-0.39216) circle (0.03105);
\draw (0.47897,-0.91303) circle (0.03103);
\draw (1.88152,-0.68742) circle (0.03073);
\draw (-2.07407,-1.62319) circle (0.03060);
\draw (-0.95040,1.40800) circle (0.03040);
\draw (0.31178,0.98204) circle (0.03034);
\draw (-1.11975,-1.25661) circle (0.02955);
\draw (-0.16567,-1.01493) circle (0.02836);
\draw (1.11458,0.39286) circle (0.02827);
\draw (-0.18855,1.38788) circle (0.02826);
\draw (2.49648,-0.48498) circle (0.02818);
\draw (-1.39852,-0.88889) circle (0.02815);
\draw (-1.39852,2.48889) circle (0.02815);
\draw (2.35769,-0.91049) circle (0.02777);
\draw (1.02484,-0.07746) circle (0.02777);
\draw (0.92352,-0.45022) circle (0.02742);
\draw (0.76781,-0.88889) circle (0.02707);
\draw (1.88997,-0.11801) circle (0.02638);
\draw (-0.64249,1.78133) circle (0.02638);
\draw (0.98209,0.29752) circle (0.02617);
\draw (-0.74667,-1.15125) circle (0.02595);
\draw (1.66441,-0.86604) circle (0.02571);
\draw (-0.90280,1.02264) circle (0.02530);
\draw (0.70356,-0.74576) circle (0.02526);
\draw (-0.91293,0.46660) circle (0.02526);
\draw (-0.99278,1.56773) circle (0.02493);
\draw (-1.02219,-0.07311) circle (0.02480);
\draw (0.10026,1.15625) circle (0.02474);
\draw (1.33453,0.24601) circle (0.02473);
\draw (-1.05518,2.16164) circle (0.02467);
\draw (2.63554,-0.56031) circle (0.02464);
\draw (-2.12129,-1.65161) circle (0.02452);
\draw (2.51216,-0.93214) circle (0.02433);
\draw (-1.17430,2.07634) circle (0.02417);
\draw (-1.22026,-0.57179) circle (0.02398);
\draw (1.02130,-0.87567) circle (0.02380);
\draw (-1.26909,-1.31164) circle (0.02378);
\draw (-0.27289,0.98622) circle (0.02328);
\draw (0.82524,0.60495) circle (0.02322);
\draw (0.04988,1.02190) circle (0.02311);
\draw (-1.43113,2.52843) circle (0.02311);
\draw (-0.41425,-1.07246) circle (0.02295);
\draw (-1.48977,-1.01083) circle (0.02286);
\draw (-0.56272,0.85346) circle (0.02227);
\draw (0.42807,-0.92807) circle (0.02204);
\draw (-0.66118,-0.77922) circle (0.02193);
\draw (-0.67396,1.64055) circle (0.02189);
\draw (-1.73829,-1.48808) circle (0.02183);
\draw (0.35766,0.95720) circle (0.02183);
\draw (1.52928,0.11940) circle (0.02181);
\draw (-0.40165,-0.93937) circle (0.02163);
\draw (1.31250,0.10909) circle (0.02159);
\draw (-1.80986,-1.37943) circle (0.02154);
\draw (-0.90045,0.69799) circle (0.02125);
\draw (-0.85966,-0.55130) circle (0.02125);
\draw (-0.85966,1.53089) circle (0.02125);
\draw (1.24528,-0.87014) circle (0.02109);
\draw (1.07172,-0.75145) circle (0.02097);
\draw (1.01864,0.06687) circle (0.02083);
\draw (-1.09848,-0.28571) circle (0.02056);
\draw (-0.07222,1.30160) circle (0.02056);
\draw (2.21026,-0.30769) circle (0.02051);
\draw (-0.11690,-1.01371) circle (0.02042);
\draw (0.84661,-0.56959) circle (0.02038);
\draw (-0.78386,0.65326) circle (0.02038);
\draw (2.02751,-0.88889) circle (0.02011);
\draw (-2.15962,-1.67442) circle (0.02008);
\draw (-1.24574,-1.18313) circle (0.01991);
\draw (2.51979,-0.62500) circle (0.01979);
\draw (-0.97766,-0.28995) circle (0.01975);
\draw (2.45821,-0.81242) circle (0.01966);
\draw (1.12887,0.01649) circle (0.01959);
\draw (-1.36387,-1.02540) circle (0.01950);
\draw (-0.46799,1.63734) circle (0.01938);
\draw (2.86687,-0.68293) circle (0.01931);
\draw (-1.45833,2.56098) circle (0.01931);
\draw (0.96465,-0.58182) circle (0.01919);
\draw (2.76761,-0.97384) circle (0.01911);
\draw (1.20000,1.60000) node {$C_2$};
\draw (0.30588,-1.97647) node {$C_3$};
\draw (-1.96040,0.39604) node {$C_4$};
\draw (0.00000,0.00000) node {$C_1$};
\draw (1.44444,-0.39216) node {$C_0$};
\end{tikzpicture}}
\vspace{-1cm}
\end{center}
\caption{The child circle $C_0 = (153,17,120)$ and its parents $C_1 = (76,7,60)$, $C_2 = (4,1,4)$, and $C_3 = (9,1,6)$ in $\B$.  The other Soddy circle $C_4 = (25,1,20)\in \B$ for the parents of $C_0$ is the \emph{Soddy precursor} of $C_0$.  The triples of integers are curvature coordinates, defined in \sref{descartes}.}
\label{f.circ}
\end{figure}

To prove \tref{superharmonic} we will recursively construct an integer superharmonic representative $g_C$ for each matrix $A_C$ $(C \in \B)$.  Let
	\begin{equation} \label{e.LC} L_C = \{ v \in \Z^2 \mid A_C v \in \Z^2 \}. \end{equation}  
By the extended Descartes circle theorem of Lagarias, Mallows and Wilks \cite{Lagarias-Mallows-Wilks}, each $A_C$ has rational entries, so $L_C$ is a full-rank sublattice of $\Z^2$.

The following theorem, from which \tref{superharmonic} will follow, encapsulates the essential properties of the $g_C$'s we construct.

\begin{theorem}
\label{t.odometer}
For each circle $C\in \B$,
there exists an integer superharmonic representative $g_C$ for $A_C$ which satisfies the periodicity condition
\begin{equation}
\label{e.periodic}
g_C(x + v) = g_C(x) + x^t A_C v + g_C(v)
\end{equation}
for all $v\in L_C$ and $x \in \Z^2$. Moreover, $g_C$ is maximal in the sense that $g - g_C$ is bounded whenever $g : \Z^2 \to \Z$ satisfies $\Delta g \leq 1$ and $g \geq g_C$.
\end{theorem}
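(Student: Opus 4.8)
The plan is to construct each $g_C$ by induction on the Apollonian generation of $C$ in $\B$ and to extract all four conclusions from the construction. The first observation is that the quasi-periodicity \eref{periodic} is equivalent to the additivity of $h_C(x) := g_C(x) - \tfrac12 x^t A_C x$ along $L_C$, i.e. $h_C(x+v) = h_C(x) + h_C(v)$ for $v \in L_C$, which in turn is equivalent to the $L_C$-periodicity of the Laplacian pattern $P_C := \Delta g_C$. Moreover, once \eref{periodic} holds, $v \mapsto h_C(v)$ is a homomorphism $L_C \to \Z$, so $h_C(x) = \langle \ell, x\rangle + O(1)$ for some $\ell$, and hence $g_C(x) = \tfrac12 x^t A_C x + o(|x|^2)$ automatically. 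Thus the growth condition in \eref{super} is free, and the whole problem reduces to producing an $L_C$-periodic integer pattern $P_C \le 1$ that (i) has average $\trace A_C = r$ over a fundamental tile $T_C$, and (ii) integrates to a single-valued integer function $g_C$ whose discrete gradient is $L_C$-periodic up to the drift $A_C$, namely $\nabla g_C(x+v) = \nabla g_C(x) + A_C v$; this last condition is what records the full matrix $A_C$, its trace already being fixed by (i).

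For the base cases I would take the generating circles of $\B_0$ (the others follow by the $2\Z^2$-translation symmetry). The unit circle at $(1,0)$ has $A_C = \mat{1 & 0 \\ 0 & 0}$ and is represented by $g_C(x) = \tfrac12 x_1(x_1+1)$, with constant saturated pattern $P_C \equiv 1$; the two bounding vertical lines correspond to the degenerate, essentially one-dimensional growths and are represented by shifts of this quadratic plus a linear term. For the inductive step I would use that every remaining $C \in \B$ is a Soddy circle of a triple of mutually tangent parents $C_1, C_2, C_3$, with Soddy precursor $C_4$, and that the curvature-coordinate relations of \sref{descartes} determine $L_C$ and the shape of $T_C$ from the parents' data. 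I would then build $P_C$ by partitioning $T_C$ into regions carrying translated copies of the parent patterns $P_{C_1}, P_{C_2}, P_{C_3}$, separated by thin interface layers whose local configurations are controlled by $C_4$, exactly as displayed in \fref{odom}, and integrate to obtain $g_C$.

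The main obstacle is the inductive step, and within it the verification that the assembled pattern still satisfies $P_C \le 1$ everywhere --- most delicately along the one-dimensional seams where blocks from distinct parents abut, since a single site with $\Delta g_C = 2$ would destroy superharmonicity. Controlling these seams requires an explicit description of the boundary layers of each parent representative and a finite check that the tangency (Descartes) relations among $C_1, \dots, C_4$ force every interface configuration into the admissible alphabet ($\Delta g_C \in \{1, 0, -2\}$ of \fref{odom}); this is exactly where the Apollonian geometry is genuinely used. I would simultaneously verify that the resulting discrete gradient one-form is closed with the prescribed drift $A_C$, which makes $g_C$ single-valued and integer-valued and pins its quadratic part; I expect this to follow from the same tiling bookkeeping together with the average-equals-$r$ normalization.

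Finally, for maximality I would argue by comparison. Given $g$ with $\Delta g \le 1$ and $g \ge g_C$, set $w = g - g_C \ge 0$; then $\Delta w \le 1 - P_C$, so $w$ is superharmonic at every saturated site of the $L_C$-periodic set $\{P_C = 1\}$. Combining the recurrence of $\Z^2$ (on which a nonnegative superharmonic function is constant) with a maximum principle restricted to the saturated set, I would argue that a nonnegative function which is superharmonic along a periodic, positive-density set cannot be unbounded, giving boundedness of $g - g_C$. The delicate point is to upgrade superharmonicity on this proper subset to a global Liouville-type bound; I expect this to hinge on establishing that the saturated region is connected across each period, so that the recurrence of $\Z^2$ can be brought to bear.
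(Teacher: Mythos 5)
Your construction phase is essentially the paper's: the odometers are built by recursion along the Apollonian structure, with the Laplacian pattern of a child circle assembled from copies of the parents' patterns on a fundamental tile, and the inequality $\Delta g_C \le 1$ verified by a local analysis where the copies meet. But two points you treat as bookkeeping are exactly where the paper spends most of its effort. First, the copies do not meet along ``thin interface layers controlled by $C_4$'': in the paper's decomposition, $T_C$ is a union of \emph{two} copies each of the three parent tiles, and the two copies of the largest parent's tile \emph{overlap} in a copy of the Soddy precursor's tile (whence $|T_0|+|T_4|=2(|T_1|+|T_2|+|T_3|)$), so one must prove that the two parent odometers agree on that overlap rather than design an interface. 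Second, the tiles have $90^\circ$ symmetry but the odometers only $180^\circ$ symmetry, so the argument that the prototiles actually tile the plane (which uses the rotation $v \mapsto \I v$) cannot be repeated for the functions; this is what forces the boundary-string machinery of \sref{strings} and the topological lemma (\lref{topological}) converting local compatibility into a global tiling.

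The genuine gap is in your maximality argument. The principle you invoke --- a nonnegative function that is superharmonic on a periodic, connected, positive-density subset of $\Z^2$ cannot be unbounded --- is false. Take $S$ to be the grid $q\Z \times \Z \,\cup\, \Z \times q\Z$ with $q \ge 5$ (connected, periodic, positive density), and let $w$ vanish everywhere except at the center of each complementary cell, where it takes arbitrarily large values. Every neighbor of every point of $S$ lies within distance one of $S$, where $w=0$, so $\Delta w = 0$ at every point of $S$; yet $w \ge 0$ is unbounded. So superharmonicity on the saturated web, even with connectivity and recurrence of $\Z^2$, yields nothing by itself: one must also exploit the constraint $\Delta w \le 1 - \Delta g_C$ at the \emph{non}-saturated sites together with the integrality of $w = g - g_C$. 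That is precisely how the paper proceeds, and the mechanism is different from yours: since $g - g_C$ is a nonnegative integer, $f = \min\{g, g_C+1\}$ satisfies $f - g_C \in \{0,1\}$ and $\Delta f \le 1$, and unboundedness of $g - g_C$ produces (via a compactness/translation argument using periodicity of $\Delta g_C$, \lref{suffmax}) an infinite connected set $X \subsetneq \Z^2$ with $\Delta(g_C + \1_X) \le 1$. Such a set is then excluded combinatorially: by \lref{web}, $\Delta g_C \equiv 1$ on the web of tile boundaries, so the complement of $X$ cannot touch the web (any boundary point there would force $\Delta(g_C + \1_X) > 1$), hence is trapped inside a single tile, where an induction on the tile decomposition (\lref{maximal}) finishes. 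Your Liouville-type step would have to be replaced by, or rebuilt into, an argument of this kind.
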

\noindent \change{We can always choose some $b\in \R^2$ so that $g(v_i)+b\cdot v_i=g(-v_i)-b\cdot v_i$ for two basis vectors $v_1,v_2$ of $L_C$.  In particular, choosing $g(v-v)=g(0)=0$, this implies with \eref{periodic} that $g_C(x)- \frac 1 2 x^t A_C x - b^t x$ is $L_C$-periodic for some $b \in \R^2$. In particular, $g$ witnesses that $A_C \in \Gamma$.  We call an integer superharmonic representative with this property an \emph{odometer} for $A_C$.} The construction of $g_C$ is explicit but rather elaborate.  In \sref{overview} we outline the steps of this construction and give the derivation of \tref{superharmonic} from \tref{odometer}.  We now briefly survey a few connections to our work.
  
\subsection{Hexagonal tilings of the plane by $90^\circ$ symmetric tiles}
\label{s.hextilings}
\fref{odom} shows the Laplacians $\Delta g_C$ for a triple of circles in $\mathcal{B}$ and their two Soddy circles.  
In each case $\Delta g_C$ is periodic and we have outlined a fundamental domain $T_C$ on whose boundary $\Delta g_C=1$. A major component of our paper is the construction of these $T_C$, which turn out to have a remarkable tiling property.

To state it precisely, for $x \in \Z^2$ write $s_x=\{x_1,x_1+1\}\times \{x_2,x_2+1\}\sbs \Z^2$ 
and $\bar s_x=[x_1,x_1+1]\times [x_2,x_2+1]\sbs \R^2$. 
If $T$ is a set of squares $s_x$, we call $T$ a \emph{tile} if the set
\begin{align}
\label{e.disc}
I(T)&:=\bigcup_{s_x\in T}\bar s_x\sbs \R^2
\end{align}
is a topological disk.  A {\em tiling} of $\Z^2$ is a collection of tiles $\ttt$ such that every square $s_x$ ($x\in \Z^2$) belongs to exactly one tile from $\ttt$.  

\begin{theorem}
\label{t.tiling}
For every circle $C\in \B$, 
there is a tile $T_C \sbs \Z^2$ with $90^\circ$ rotational symmetry, such that $T_C+L_C$ is a tiling of $\Z^2$.  Moreover, except when $C$ has radius 1, each tile in $T_C+L_C$ borders exactly 6 other tiles.
\end{theorem}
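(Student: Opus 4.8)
The plan is to induct on the recursive (Apollonian) structure of the band packing $\B$. Every circle $C \in \B$ other than the radius-$1$ generators arises as a Soddy circle of a triple of mutually tangent ancestors, so $\B$ carries a natural tree structure in which each non-generating circle $C_0$ has three \emph{parents} $C_1, C_2, C_3$ and a \emph{Soddy precursor} $C_4$ (as in \fref{circ}), related by the Descartes recursion on curvature coordinates. I would construct the tiles $T_C$ along this tree, in parallel with the way the lattices $L_C$ and the matrices $A_C$ are assembled: the tile $T_{C_0}$ is built explicitly by gluing together translated copies of the parent tiles $T_{C_1}, T_{C_2}, T_{C_3}$ (and of the precursor tile $T_{C_4}$), the translations being dictated by generators of $L_{C_0}$ expressed in terms of the parent lattices. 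Each of the four assertions---that $T_C$ is a tile (topological disk), that it has $90^\circ$ rotational symmetry, that $T_C + L_C$ tiles $\Z^2$, and that (away from radius $1$) each tile borders exactly $6$ others---is then carried through the induction.

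For the base case I would take the radius-$1$ circles, whose matrices $A_C$ and lattices $L_C$ are the simplest in the packing; here $T_C$ is a single square (or an elementary symmetric cluster), $L_C$ is a scaling of $\Z^2$, and the tiling is the square tiling, in which each tile meets exactly $4$ neighbors. This is precisely the excluded case in the statement, and it seeds the induction. In the inductive step the $90^\circ$ symmetry requires the most care: writing $R = \smat{0 & -1 \\ 1 & 0}$ for the quarter-turn, one computes $R^t A_C R = A_{C'}$, where $C'$ is the reflection of $C$ through the origin, so the rotation does not fix $A_C$ or $L_C$ outright. I would instead arrange the gluing so that $T_{C_0}$ is invariant under $R$ about its own center, verifying that the symmetric placement of the parent copies is compatible with the symmetry already present (inductively) in the parents; the center of symmetry is pinned down by the rational structure of $A_{C_0}$.

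The tiling property $T_C + L_C = \Z^2$ I would establish by a two-part argument: a cardinality check that $|T_C| = [\Z^2 : L_C] = \det L_C$ (which follows from tracking how $\det L_C$ transforms under the Descartes recursion alongside the area added at each gluing), together with the disk property to rule out gaps and overlaps, so that the addition map $T_C \times L_C \to \Z^2$ is a bijection. For the neighbor count I would pass to the quotient torus $\R^2 / L_C$: the tiling descends to a single face there, and the Euler relation $V - E + F = 0$ with $F = 1$ forces $E = \tfrac32 V$ when every corner is a triple point, giving $V = 2$, $E = 3$, and hence $6 = 2E$ bordering translates (each torus-edge lifts to a pair of opposite boundary arcs $\pm v$); the radius-$1$ tiles are precisely those with a four-valent corner, where the same computation yields $4$ neighbors. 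The crux is thus to show that, in the hexagonal (non-generating) case, the glued boundary of $T_C$ meets its neighbors only in triple points.

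I expect the main obstacle to be controlling the gluing geometrically through the recursion: proving that the union of the translated parent and precursor tiles is actually a topological disk---connected, simply connected, and free of pinch points---and that consecutive translates abut cleanly. This is where the combinatorial identities among the curvature coordinates must be converted into exact statements about how the boundary arcs of the parent tiles match up, and it is also where the $90^\circ$ symmetry and the triple-point condition (hence the clean $6$-neighbor count) are simultaneously secured. The connection to \tref{odometer} furnishes a consistency check: the boundary of the constructed $T_C$ should coincide with the locus $\Delta g_C = 1$, and the periodicity \eref{periodic} of $g_C$ under $L_C$ re-derives the tiling property independently.
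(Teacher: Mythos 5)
Your overall architecture---recursion along the Apollonian tree, gluing translated parent tiles with the Soddy precursor accounting for the overlap, an ``area equals lattice index'' count combined with a topological no-gaps/no-overlaps argument, and a combinatorial count for the six neighbors---is the same as the paper's. But two of your steps, as described, would not go through.

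First, the $90^\circ$ symmetry cannot be ``arranged'' at the level of the gluing. In the actual decomposition the six subtiles sit at centroid offsets $\pm\tfrac12(v_{kj}-\I v_{kj})$ for the three rotations $(i,j,k)$ of $(1,2,3)$: this configuration is only $180^\circ$ symmetric, and a quarter turn does not permute the subtiles (they are tiles of three \emph{different} parent circles, placed at positions that do not form a quarter-turn orbit). The paper proves the symmetry indirectly: using the lattice identities of \lref{lattice} it shows that $T_0$ is surrounded by a ring of twelve parent-tile translates $R_i^k$ whose union is a $90^\circ$-symmetric set of squares (each $R_i^k$ being symmetric by induction), and then observes that $T_0$ is the bounded complementary component of this ring, hence inherits the symmetry. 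Some substitute for this indirect step is needed; your plan of ``verifying that the symmetric placement of the parent copies is compatible'' has no symmetric placement to verify. Second, the lattice in the statement is $L_C=\{v:A_Cv\in\Z^2\}$, whereas the gluing naturally produces the lattice $\Lambda_C$ generated by the vectors $v(C_i,C)$. The inclusion $\Lambda_C\subseteq L_C$ is easy, but if it were proper then $T_C+L_C$ would multiply cover $\Z^2$ even though $T_C+\Lambda_C$ tiles; the identity $|T_C|=\det L_C$ you invoke is exactly what is at stake. The paper only closes this ($L_C=\Lambda_C$, \tref{lattice2}) \emph{after} constructing the odometers, via a maximality and average-Laplacian argument, so your concluding ``consistency check'' against \tref{odometer} is in fact load-bearing.

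Beyond these two points, the obstacle you correctly flag as the crux---that the glued union is a topological disk and that translates abut cleanly---is resolved in the paper by a dedicated tool, \lref{topological}: a winding-number/contraction argument showing that a periodic $3$-connected planar triangulation of tiles with suitable local intersection data must be a tiling, applied to the family in which the doubled precursor copy is excised so that areas sum correctly. Your Euler-characteristic count on the torus for the six-neighbor claim is a clean alternative to reading the degree off that triangulation, but it presupposes the triple-point structure of the corners, which is again what the touching-triple machinery and \lref{topological} deliver.
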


The tiles $T_C$ have the peculiar feature of being more symmetric than their plane tilings (which are only $180^\circ$ symmetric).  We expect this to be a strong restriction.  In particular, call a tiling {\em regular} if it has the form $T+L$ for some tile $T$ and lattice $L \sbs \Z^2$, and \emph{hexagonal} if each tile borders exactly 6 other tiles.  For regular tilings $\ttt, \ttt'$ of $\Z^2$, write $\ttt'\prec \ttt$ if each tile in $T\in \ttt$ is a union of tiles from $\ttt'$, and call the regular tiling $\ttt$ \emph{primitive} if $\ttt'\prec \ttt$ implies that either $\ttt'=\ttt$, or $\ttt'$ is the tiling of $\Z^2$ by squares $s_x$.

\begin{conjecture}
If $\ttt$ is a primitive, regular, hexagonal tiling of $\Z^2$ by $90^\circ$ symmetric tiles, then $\ttt = T_C + L_C + v$ for some $C \in \B$ and some $v \in \Z^2$.
\end{conjecture}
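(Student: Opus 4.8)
\emph{A proof strategy.} The plan is to classify the pairs $(T,L)$ with $\ttt=T+L$ directly, and then to match each with a circle of $\B$ through \tref{superharmonic}. Since $T$ carries a $90^\circ$ rotation it is in particular centrally symmetric, which is the source of the $180^\circ$ symmetry of the tiling. For a regular tiling, hexagonality says that the set of contact vectors $\{\ell\in L\stm\{0\}:(T+\ell)\text{ borders }T\}$ has exactly six elements, which by central symmetry must be $\pm u,\pm w,\pm(u+w)$ for a basis $L=\gen{u,w}$. Equivalently, by the Beauquier--Nivat characterization of translational tilings, the boundary word $W$ of $T$ admits a hexagon factorization $W=XYZ\bar X\bar Y\bar Z$ with all three of $X,Y,Z$ nonempty, the three factors recording the contact directions $u$, $w$, $u+w$.

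The central idea is to exploit the extra symmetry. Writing $\rho$ for rotation by $90^\circ$, after centering we have $\rho(T)=T$, so $\rho$ produces a \emph{second} hexagonal tiling $T+\rho L$ by the \emph{same} tile, with contact vectors $\rho u,\rho w,\rho(u+w)$; on boundary words $\rho$ acts as an order-four symmetry of $W$ that must be compatible with the hexagon factorization. A tile simultaneously carrying a $90^\circ$ symmetry and two hexagon factorizations interchanged by it is extremely rigid, and the first goal is a rigidity lemma pinning down $W$, hence $(T,L)$, from this data. From $(T,L)$ one then reads off a $2\times 2$ real symmetric matrix $A$: the slopes along the three boundary arcs $X,Y,Z$ determine a quadratic form, the $90^\circ$ symmetry forcing $A$ to be genuinely symmetric, and one checks $L=\{v:Av\in\Z^2\}$. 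Realizing the $90^\circ$-symmetric tile as the fundamental domain of an odometer for $A$ (the converse of \tref{odometer}) makes $A$ integer superharmonic, while the six contact directions place $A$ at a point where six cones of \tref{superharmonic} meet, i.e.\ at an apex; hence $A=A_C$ for some $C\in\B$, whence $L=L_C$ and $T=T_C+v$.

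To establish the rigidity lemma and, in particular, the apex property, I would induct on the index $[\Z^2:L]$, equivalently on the total curvature of the prospective circle in the coordinates of \sref{descartes}. Primitivity is used to guarantee that the hexagon factorization of $W$ is essentially unique, so that $W$ is not a $\prec$-coarsening of a finer hexagonal word; the reduction step then passes from $(T,L)$ to strictly smaller-index $90^\circ$-symmetric hexagonal data, mirroring the passage from a child circle to its three parents and Soddy precursor. The inductive hypothesis identifies the reduced data with circles $C_1,C_2,C_3,C_4\in\B$, and the extended Descartes circle theorem together with the closure of $\B$ under Soddy circles reconstructs the unique child $C\in\B$ whose parents are the $C_i$. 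The base case is the minimal-index hexagonal data, tangent to the generating lines and unit circles of $\B$; the radius-$1$ circles are the exceptional non-hexagonal tiles of \tref{tiling} and enter only as generators.

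The hard part---and the reason this remains a conjecture---is the rigidity lemma together with the apex verification: proving that $90^\circ$ symmetry plus hexagonality is rigid enough to force the Descartes/Apollonian relation, with \emph{no} sporadic solutions. Concretely, one must rule out centrally symmetric polyominoes admitting a $90^\circ$-compatible hexagon factorization whose induced matrix fails to sit at an apex of \tref{superharmonic}, and one must show the reduction always yields a genuine Apollonian parent decomposition rather than some other self-similar gluing. Both come down to a delicate Diophantine analysis of the boundary-word factorizations compatible with the four-fold symmetry, and I expect that controlling the interaction between the two lattices $L$ and $\rho L$---equivalently, between $W$ and its $\rho$-image---will be the decisive obstacle.
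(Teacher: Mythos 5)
The statement you are addressing is stated in the paper as a conjecture, with no proof given, so there is nothing to compare your argument against; the relevant question is only whether your proposal closes the gap, and by your own admission it does not. Your outline is a sensible organization of the problem --- Beauquier--Nivat hexagon factorization of the boundary word, the interaction between the two lattices $L$ and $\rho L$ forced by the $90^\circ$ symmetry, and an induction on $[\Z^2:L]$ mimicking the Apollonian parent decomposition --- but every load-bearing step is deferred to an unproved ``rigidity lemma,'' and the proposal should not be read as a proof.

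Two of the deferred steps deserve to be flagged as more than technical. First, the passage from a tile--lattice pair $(T,L)$ to a symmetric matrix $A$ with $L=\{v: Av\in\Z^2\}$, and then to an integer superharmonic representative with fundamental domain $T$, is not ``the converse of Theorem~\ref{t.odometer}'': the paper constructs odometers recursively \emph{from} the circles of $\B$, and provides no mechanism for attaching an odometer (or even a well-defined matrix) to an arbitrary $90^\circ$-symmetric hexagonal tile. The ``slopes along the three boundary arcs'' are only defined once you already have function data on the tile, so this step is circular as written. Second, the claim that the six contact directions force $A$ to sit ``where six cones of Theorem~\ref{t.superharmonic} meet, i.e.\ at an apex'' conflates the combinatorial valence of the tiling with the geometry of the semidefinite order on $S_2$; the cones $\{A\le A_C\}$ are downward cones and their apices are not characterized in the paper by any six-fold incidence. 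Until one can produce the matrix $A$ intrinsically from $(T,L)$ and prove it is maximal integer superharmonic, the strategy does not reduce the conjecture to anything the paper establishes.
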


\subsection{Apollonian circle packings}
\label{s.descartes}
\begin{figure}[t]
\begin{center}
\nofig{\includegraphics[width=0.22\textwidth]{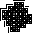} \hspace{0.5in} \includegraphics[width=0.22\textwidth]{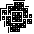}}
\end{center}
\caption{The tile of the circle $(153,17,120)$, decomposed into the tiles of its parents and Soddy precursor.}
\label{f.dec}
\end{figure}

Our proof of \tref{odometer} is a recursive construction which mimics the recursive structure of Apollonian circle packings.  Identifying a circle with center $(x_1,x_2)\in \R^2$ and radius $\frac 1 c\in \R$  with its {\em curvature coordinates} $(c,c x_1, c x_2)$ (some care must be taken in the case of lines), the Soddy circles $C_0$ and $C_4$ of a pairwise tangent triple of circles $C_1,C_2,C_3$ satisfy the linear equality 
\begin{equation}
\label{e.soddy}
C_0 + C_4 = 2(C_1 + C_2 + C_3).
\end{equation}
This is a consequence of the Extended Descartes Theorem of Lagarias, Mallows, and Wilks \cite{Lagarias-Mallows-Wilks}, and can be used, for example, to prove that every circle in $\B$ has integer curvature coordinates.   Pairwise tangent circles $C_1,C_2,C_3,C_4$ constitute a \emph{Descartes quadruple}. Under permutation of indices, \eref{soddy} gives four different ways of producing a new Descartes quadruple sharing three circles in common with the original.  These four transformations correspond to the four generators of the \emph{Apollonian group} of Graham, Lagarias, Mallows, Wilks, and Yan \cite{Graham-Lagarias-Mallows-Wilks-Yan}, which acts on the Descartes quadruples of a circle packing.  Our proof works by explicitly determining the action of the same Apollonian group on the set of maximal superharmonic representatives, by giving an operation on our family of integer superharmonic representatives analogous to the operation \eref{soddy} for circles.  In particular, referring to \fref{dec}, the example tile $T_0$ is seen to decompose into 2 copies each of $T_1,T_2,T_3$, with the copies of $T_1$ overlapping on a copy of $T_4$; note, for example, that the tile areas must therefore satisfy
$\abs{T_0}+\abs{T_4}=2(\abs{T_1}+\abs{T_2}+\abs{T_3}).$

It follows from \eqref{e.soddy} that if the three generating general circles of an Apollonian circle packing have integer curvatures, then every general circle in the packing has integer curvature.  In such a packing, the question of which integers arise as curvatures has attracted intense interest over the last decade \cite{Graham-Lagarias-Mallows-Wilks-Yan, Sarnak, kontorovich2011apollonian,
bourgain2011proof, bourgain2013local}; see \cite{fuchs2013counting} for a survey.

\tref{superharmonic} can be regarded as a new characterization of the circles appearing in the band packing: the circles in $\mathcal{B}$ correspond to integer superharmonic matrices that are maximal in the semidefinite order. 
In constructing $g_C$ and $T_C$ by an analogue of the Descartes rule \eqref{e.soddy}, we follow ideas of Stange \cite{Stange}, who associates circles to ``lax lattices'' and proves a Descartes rule relating the bases of lattices corresponding to four mutually tangent general circles.   Our proof also associates to each circle in $\mathcal{B}$ more detailed arithmetic information: Theorems \ref{t.odometer} and \ref{t.tiling} associate to each circle $C \in \B$ an integer superharmonic representative $g_C : \Z^2 \to \Z$ and a fundamental tile $T_C$. The curvature of $C$ can be recovered as the area of $T_C$.

\subsection{Abelian sandpile}
We briefly describe the Abelian sandpile model of Bak, Tang, and Wiesenfeld \cite{Bak-Tang-Wiesenfeld} that motivated our work.
Put $n$ chips at the origin of $\Z^2$.  In the sandpile model, a vertex having at least 4 chips \emph{topples} by sending one chip to each lattice neighbor. Dhar \cite{dhar1990self} observed that the final configuration of chips does not depend on the order of topplings. This final configuration $s_n$ displays impressive large-scale patterns \cite{ostojic2003patterns,dhar2009pattern,Caracciolo-Paoletti-Sportiello,sadhu2011effect} and has been proposed by Dhar and Sadhu as a model of ``proportionate growth'' \cite{dhar2013sandpile}: as $n$ increases, patterns inside $s_n$ scale up in proportion to the size of the whole.
The second and third authors \cite{Pegden-Smart} have shown that as $n \to \infty$ the sandpile $s_n$ has a scaling limit on $\R^d$. The \emph{sandpile PDE} that describes this limit depends on the set 
of integer superharmonic $d\times d$ matrices.
In the companion paper \cite{Levine-Pegden-Smart} we use \tref{superharmonic} to construct exact solutions to the sandpile PDE. An intriguing open problem is to describe the set of integer superharmonic matrices and analyze the associated sandpile processes for periodically embedded graphs other than $\Z^2$.   (See \cite{Wes}.)

The weak-$*$ limiting sandpile $s : \R^2 \to \R$ appears to have the curious property that it is locally constant on an open neighborhood of the origin. Regions of constancy in $s$ correspond to regions of periodicity in $s_n$.  Ostojic \cite{ostojic2003patterns} proposed classifying which periodic patterns occur in $s_n$. Caracciolo, Paoletti, and Sportiello \cite{Caracciolo-Paoletti-Sportiello} and Paoletti \cite{paoletti2012deterministic} give an experimental protocol that recursively generates $2$-dimensional periodic ``backgrounds'' and $1$-dimensional periodic ``strings.'' 
While this protocol makes no explicit reference to Apollonian circle packings, we believe that the $2$-dimensional backgrounds it generates are precisely the Laplacians $\Delta g_C$ for $C \in \mathcal{B}$. Moreover, periodic regions in $s_n$ empirically correspond to the Laplacians $\Delta g_C$ of our odometers $g_C$ ($C\in \B$), up to error sets of asymptotically negligible size.

\subsection{Supplementary Materials}  One view of this paper is as a proof that a certain recursive algorithm is correct.  That is, we verify that a recursive construction produces odometers.  We explicitly coded this algorithm in Matlab/Octave code and this is available on the second author's website and the arXiv.  We have also included a larger version of the appendix in the arXiv submission.

\subsection*{Acknowledgments} We appreciate several useful discussions with Guglielmo Paoletti, Scott Sheffield, Andrea Sportiello, Katherine Stange, Andr\'as Szenes, and David Wilson.  The authors were partially supported by NSF grants \href{http://nsf.gov/awardsearch/showAward?AWD_ID=1004696}{DMS-1004696}, \href{http://www.nsf.gov/awardsearch/showAward?AWD_ID=1004595}{DMS-1004595}, and \href{http://www.nsf.gov/awardsearch/showAward?AWD_ID=1243606}{DMS-1243606}, respectively.

\section{Overview of the proof}
\label{s.overview}
Here we outline the main elements of the proof of \tref{odometer}, and derive \tref{superharmonic} from \tref{odometer}.
The example in \fref{dec} motivates the following recursive strategy for constructing the odometers $g_C$:

\begin{enumerate}
\item \label{S.tiles} Construct fundamental domains (tiles) $T_C$ for each circle $C \in \B$ such that
\begin{enumerate}
\item \label{S.tile} $T_C$ tiles the plane periodically; and
\item \label{S.decompose} $T_C$ decomposes into copies of smaller tiles $T_{C'}$ with specified overlaps.
\end{enumerate}
\item \label{S.todom} Use the decomposition of $T_C$ to recursively define a \emph{\change{tile} odometer} on $T_C$.
\item \label{S.odom} Extend the \change{tile} odometer to $\Z^2$ via the periodicity condition \eref{periodic}, and
check that the resulting extension $g_C$ satisfies $\Delta g_C\leq 1$ and is maximal in the sense stated in \tref{odometer}.
\end{enumerate}
In \sref{degenerate}, we carry out this strategy completely for two especially simple classes of circles.  

For the general case of \Sref{tiles}, we begin by associating to each pair of tangent circles $C,C' \in \B$ a pair of Gaussian integers $v(C,C'), a(C,C') \in \Z[\I]$, in \sref{lattice}.  The $v(C,C')$'s will generate our tiling lattices, while the $a(C,C')$'s will describe affine relationships among tile odometers.  Recursive relations for these $v$'s and $a$'s, collected in \lref{lattice}, are used extensively throughout the paper.

In \sref{tiles} we construct the tile $T_C$ recursively by gluing copies of tiles of the parent circles of $C$ as pictured in \fref{dec}, using the vectors $v(C,C')$ to specify the relative locations of the subtiles. The difficulty now lies in checking that the resulting set is in fact a topological disk and tiles the plane. This is proved in \lref{tiling}, which recursively establishes certain compatibility relations for tiles corresponding to tangent circles in $\B$.  The argument uses a technical lemma, proved in \sref{topology}, which allows one to prove that a collection of tiles forms a tiling from its local intersection properties.

\sref{odometers} carries out \Sref{todom} by constructing a \emph{tile odometer} (an integer-valued function on $T_C$) for each $C\in \B$. The recursive construction of the tile odometers 
mirrors that of the tiles (compare Definitions \ref{d.T0} and \ref{d.O0})
using the additional data of the Gaussian integers $a(C,C')$, but we must check that it is well-defined where subtiles intersect. A crucial difference between the tiles and the tile odometers is that the former have $90^\circ$ rotational symmetry, a fact which is exploited in the proof of the key \lref{tiling}.
As tile odometers are merely $180^\circ$ symmetric, we require an extra ingredient to assert compatibility of some pairs of tile odometers.  In particular, we show in \sref{strings} that tile boundaries can be suitably decomposed into smaller tiles, a fact which is used in the induction to verify some of the compatible tile odometer pairs. 

The final step is carried out in \sref{maximality}. In \lref{periodic} we use the compatibility relations among tile odometers 
to show that each tile odometer has a well-defined extension $g_C : \Z^2 \to \Z$ satisfying \eref{periodic}. We then show $\Delta g_C\leq 1$ by giving an explicit description of the values of the Laplacian $\Delta g_C$ on subtile intersections (\lref{interior}). In \lref{maximal}, we use this same explicit description of $\Delta g_C$ to prove maximality (the fact that $\Delta g_C \equiv 1$ on the boundary of $T_C$ will be crucial here). The proof of \tref{odometer} is completed by showing that the lattice $L_C$ of \eref{LC} is the same as the lattice $\Lambda_C$ generated by the vectors $v(C,C_i)$, where $C_1,C_2,C_3$ are the parent circles of $C$. \change{Strangely, our proof of this elementary statement depends on everything else in this paper, as it uses the structure of the constructed odometers.  It seems reasonable to expect that a self-contained proof that $L_C=\Lambda_C$ can be found.}

\section{Integer functions on the lattice}

In this section we establish properties of integer functions on $\Z^2$.  In particular, we prove that $\Gamma$ is both downward and topologically closed and derive \tref{superharmonic} from \tref{odometer}.  Two key properties of the set of integer superharmonic matrices are used for this derivation: that the set is downward closed, and that the set is topologically closed.  To begin, we recall some basic properties of the Laplacian.

\begin{proposition}
  The Laplacian agrees with its continuum analogue on quadratic polynomials, is linear, and it monotone.  That is, the following properties hold.
  \begin{enumerate}
  \item  If $A \in \R^{2 \times 2}_{sym}$, $b \in \R^2$, $c \in \R$ and $u(x) = \tfrac12 x \cdot A x$, then $\Delta u \equiv \trace A.$

  \item  $\Delta (u + v) = \Delta u + \Delta v$.

  \item  $\Delta \max \{ u, v \} \leq \max \{ \Delta u, \Delta v \}$, where $\max$ is point-wise. \qed
  \end{enumerate}
\end{proposition}

In particular, this proposition tells us the adding and affine function $u(x) = b \cdot x + c$ does not change the Laplacian.  We next prove that quadratic polynomials with non-negative definite Hessians can be approximated by integer-valued functions with non-negative Laplacians.

\begin{lemma}
If $A \in S_2$ satisfies $A \geq \mathbf{0}$, then there exists $g : \Z^2 \to \Z$ such that
\begin{equation*}
\Delta g(x) \geq 0 \quad \mbox{and} \quad g(x) = \tfrac{1}{2} x^t A x + O(1 + |x|)
\end{equation*}
for all $x \in \Z^2$.
\end{lemma}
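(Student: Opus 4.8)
The plan is to realize $g$ as a discrete biconjugate (a double Legendre transform) of the convex quadratic $q(x) := \tfrac12 x^t A x$, regularized so as to stay finite in the directions in which $A$ degenerates. Writing $q$ as an upper envelope of affine functions and then passing to \emph{integer} slopes automatically produces an integer-valued subharmonic function: each affine function is discretely harmonic, and the pointwise maximum of harmonic (or subharmonic) functions is subharmonic. All the work is in controlling the error, and this is exactly where the regularization—and the linear budget $O(1+|x|)$—comes in.

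Concretely, to each base point $x_0\in\Z^2$ I would attach the affine function $\ell_{x_0}(y) = p(x_0)\cdot y + b(x_0)$, where $p(x_0)\in\Z^2$ is a nearest integer vector to $A x_0$ (so that $\delta := p(x_0)-Ax_0$ has $|\delta|\le 1$) and
\[ b(x_0) = \Big\lfloor \inf_{y\in\Z^2}\big( q(y)+|y| - p(x_0)\cdot y\big) \Big\rfloor \in\Z, \]
and then set $g(x) = \max_{x_0\in\Z^2} \ell_{x_0}(x)$. The infimum defining $b(x_0)$ is finite: the gradient $A x_0$ lies in the range of $A$, which is orthogonal to $\ker A$, so the component of $p(x_0)$ along $\ker A$ equals that of $\delta$ and has norm at most $1$; hence $q(y)+|y|-p(x_0)\cdot y$ is coercive, growing quadratically in the range directions where $q$ dominates, and staying nonnegative along $\ker A$ where $+|y|$ beats $p(x_0)\cdot y$. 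By the choice of $b(x_0)$ we have $\ell_{x_0}(y)\le q(y)+|y|$ for every $y$, so the values $\ell_{x_0}(x)$ are integers bounded above and the defining maximum is attained; thus $g$ is integer-valued, and since $\Delta\ell_{x_0}\equiv 0$, evaluating at a maximizing base point $x_0^*$ gives $\Delta g(x)\ge \Delta\ell_{x_0^*}(x)=0$.

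It remains to show $g(x)=q(x)+O(1+|x|)$. The upper bound is immediate from $\ell_{x_0}(y)\le q(y)+|y|$, which gives $g(x)\le q(x)+|x|$. For the lower bound I would test $g$ against the single piece $\ell_x$ with base point $x_0=x$. Using $b(x)\ge \inf_y(q(y)+|y|-p(x)\cdot y)-1$, substituting $z=y-x$, and expanding $q(x+z)=q(x)+(Ax)\cdot z+q(z)$ (exact, as $q$ is quadratic and $\nabla q(x)=Ax$), one obtains
\[ \ell_x(x) \ \ge\ q(x) + \inf_{z\in\Z^2}\big( q(z)+|x+z|-\delta\cdot z\big) - 1 . \]
Since $q(z)\ge 0$, $-\delta\cdot z\ge -|z|$, and $|x+z|-|z|\ge -|x|$ by the triangle inequality, the infimum is at least $-|x|$, so $g(x)\ge \ell_x(x)\ge q(x)-|x|-1$. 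Together the two bounds give $|g(x)-q(x)|\le |x|+1$, as claimed.

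The construction is uniform in the rank of $A$, and I expect the only genuine obstacle to be the finiteness of the regularized conjugate in the degenerate case: without the $+|y|$ term the conjugate $\sup_y(p\cdot y - q(y))$ is $+\infty$ for every integer slope $p$ not parallel to the range of $A$, and when $\ker A$ has irrational slope no nonzero integer slope is admissible, so the naive biconjugate collapses to a constant. Adding $|y|$ both restores coercivity along $\ker A$ and is precisely what forces the linear rather than $O(1)$ error (in the nondegenerate case the same argument yields the sharper $|g-q|=O(1)$). Finally, this lemma is exactly what makes the set of integer superharmonic matrices downward closed: given $A\le B$ with $B$ integer superharmonic and representative $h$, applying the lemma to $B-A\ge\mathbf 0$ yields $g$ with $\Delta g\ge 0$, whence $h-g$ satisfies $\Delta(h-g)\le 1$ and $h-g=\tfrac12 x^t A x+o(|x|^2)$, i.e. $h-g$ is an integer superharmonic representative of $A$.
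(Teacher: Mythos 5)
Your proposal is correct and is essentially the paper's own proof: both construct $g$ as a regularized discrete biconjugate $\sup_p \inf_y$ of $q(y)+|y|$ with integer slopes and a floor for integrality, obtain the lower bound $g(x)\ge q(x)-|x|-1$ by testing the slope obtained from rounding $Dq(x)=Ax$ (using the triangle inequality and convexity of $q$), the upper bound $g(x)\le q(x)+|x|+O(1)$ by taking $y=x$, and $\Delta g\ge 0$ because $g$ is a supremum of discretely harmonic affine functions. The only differences are cosmetic (you index the envelope by base points $x_0$ and floor the intercept rather than the integrand, and you spell out the coercivity/finiteness of the infimum in the degenerate case, which the paper leaves implicit in its chain of inequalities).
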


Together with the linearity of the Laplacian, this lemma establishes that the set of integer superharmonic matrices is downward closed in the semidefinite order by giving a suitable integer approximation of the difference $x \mapsto \frac 1 2 x^t (B-A) x$.

\begin{proof}
Define $q(x) = \tfrac{1}{2} x^t A x$ and
\begin{equation}
\label{e.gx}
g(x) = \sup_{p \in \Z^2} \inf_{y \in \Z^2} \lfloor q(y) + |y| \rfloor + p\cdot( x - y).
\end{equation}
If we set $p = \lfloor Dq(x) \rfloor$ in the supremum, then we obtain
\begin{equation*}
\begin{aligned}
g(x) & \geq \inf_{y \in \Z^2} q(y) + |y| + \lfloor Dq(x) \rfloor \cdot (x - y) - 1 \\
& \geq \inf_{y \in \Z^2} q(y) + Dq(x) \cdot (x - y) - |x| - 1\\
& \geq q(x) - |x| - 1,
\end{aligned}
\end{equation*}
where we used the triangle inequality in the second step and the convexity of $q$ in the third.  If we set $y = x$ in the infimum, then we obtain
\begin{equation*}
g(x) \leq \sup_{p \in \Z^2} q(x) + |x| + 1 = q(x) + |x| + 1.
\end{equation*}
Finally, since $g$ is a pointwise supremum of affine functions (since the $y$ value realizing the infemum in \eref{gx} is independent of $x$), we obtain $\Delta g \geq 0$ by the monotonicity of $\Delta$.
\end{proof}

For topological closure, we will use the following consequence of a discrete Harnack inequality \cite{Kuo-Trudinger}:
\begin{proposition}
  There is a $C > 0$ such that, if $u : \Z^2 \to \R$, $u(x) \geq 0$, and $\Delta u(x) \leq 1$, then $u(x) \leq C (u(0) + |x|^2)$. \qed
\end{proposition}

We can now prove:

\begin{lemma}
  \label{l.closure}
The set of integer superharmonic matrices is topologically closed.
\end{lemma}

\begin{proof}
  Suppose $A_k \in \Gamma$ and $\lim_{k \to \infty} A_k = A$.  We must show $A \in \Gamma$.

  Step 1. We find $u_k : \Z^2 \to \Z$ and $b_k \in \R^2$ such that $$\Delta u_k(x) \leq 1, \quad u_k(x) \geq \tfrac12 x \cdot A_k x + b_k \cdot x, \quad u_k(0) \leq 1, \quad |b_k| \leq 1.$$
  Replacing $A_k$ with $A_k - \tfrac1k I$ and adding a constant to each $u_k$, we may assume that $u_k(x) \geq \tfrac12 x \cdot A_k x$.  In fact, we may assume that $0 \leq u_k(x_k) - \tfrac12 x_k \cdot  A x_k \leq 1$ holds for some $x_k \in \Z^2$.  Replacing $u$ by $x \mapsto u(x + x_k) - \tfrac12 x_k \cdot A x_k$ and setting $b_k = A_k x_k$, we obtain the first three properties.  Subtracting from $u_k$ the integer harmonic linear function $x \mapsto \tilde b_k x$, for some integer rounding $\tilde b_k$ of $b_k$, we may replace $b_k$ by $b_k - \tilde b_k$ to obtain the fourth property.

  Step 2. We conclude by showing the $u_k$ are precompact.  Since $|A_k|$ is bounded independently of $k$, the Harnack inequality implies that $$v_k(x) = u_k(x) - \tfrac12 x \cdot A_k x - b_k \cdot x$$ satisfies $$|v_k(x)| \leq C(1 + |x|^2).$$ Combined with the boundedness of $|A_k|$ and $|b_k|$, we obtain $$|u_k(x)| \leq C(1 + |x|^2).$$ In particular, for fixed $x \in \Z^2$, there are only finitely many possibilities for $u_k(x)$.  By compactness, we may assume that $u_k \to u$ pointwise as $k \to \infty$.  Since the $u_k$ are integer-valued, this means that $k \mapsto u_k$ is eventually constant on any finite set $X \subseteq \Z^2$.  Since $\Delta u(x)$ only depends on nearest neighbors, we obtain in the limit $$\Delta u(x) \leq 1.$$ By compactness, we may assume $b_k \to b$ and obtain in the limit $$u(x) \geq \tfrac12 x \cdot A x + b \cdot x.$$  It follows that $A \in \Gamma$.
\end{proof}

We are now ready to derive \tref{superharmonic} from \tref{odometer}.
\begin{proof}[Proof of \tref{superharmonic}]
The downward closure of integer superharmonic matrices now makes the implication $(A\leq A_C\implies A\text{ superharmonic})$ from \tref{superharmonic} an immediate consequence of \tref{odometer}.  The implication $(\trace(A)\leq 0\implies A\text{ superharmonic})$ follows from \lref{closure}, since the circle packing $\B$ is dense in the points exterior to its circles.

For the ``only if'' direction, observe that points of tangency are dense on each circle of the band packing $\B$.
In particular, any circle in the plane is either enclosed by some circle of $\B$ or 
strictly encloses some circle of $\B$.  
Therefore any matrix $A \in S_2$ with $\trace(A)>0$ satisfies either $A\leq A_C$ for some $C\in \B$ or $A_C<A$ (that is, $A-A_C$ is positive definite) for some $C \in \B$.  In the latter case, the existence of an integer superharmonic representative $g$ for $A$ would contradict the maximality of $g_{C}$: by \eqref{e.super}, some constant offset $g+c$ of $g$ would satisfy $g+c\geq g_{C}$, but $A_{C}<A$ implies that $g+c-g_{C}$ is unbounded.
\end{proof}

\section{Two Degenerate Cases}
\label{s.degenerate}

\subsection{Explicit constructions}

Before defining odometers for general circles, we consider two subfamilies of $\B$, the Ford and Diamond circles.  The odometers for these subfamilies have simpler structure than the general case, and this allows us to give a more explicit description of their construction.  The purpose of carrying this out is twofold.  First, these cases, particularly the Ford circles, serve as a concrete illustration of our general construction that avoids most of the technical complexity.  Second, by taking these subfamilies as our base-case in the general construction, we avoid several tedious degeneracies.  In particular, the general construction operates quite smoothly when the circle under consideration has parents with distinct non-zero curvatures; this condition fails precisely when the circle is either Ford or Diamond.

\subsection{The Ford circles}
\label{s.ford}

It is a classical fact that the reduced fractions, namely, the pairs of integers $(p,q) \in \Z^2$ such that $q \geq 1$ and $\gcd(q,p) = 1$, are in algebraic bijection with the circles in $\mathcal{B}_0$ that are tangent to the vertical line through the origin.  In curvature coordinates, this bijection is given by
\begin{equation*}
(p,q) \mapsto (q^2, 1, 2 pq)
\end{equation*}
and the circles are called the Ford circles.  We write $C_{pq}$ for the Ford circle associated to the reduced fraction $(p,q)$.

The tangency structure of the Ford circles is famously simple.  If $q \geq 2$, then the parents of the Ford circle $C_{pq}$ in the packing $\B_0$ (\fref{band}) are the vertical line through the origin, and the unique \emph{Ford parents} $C^1_{pq} := C_{p_1 q_1}$ and $C^2_{pq} := C_{p_2 q_2}$ determined by the constraints $p_1 q - q_1 p = 1$, $p_2 q - q_2 p = -1$, $0 \leq q_1 < q$ and $0 < q_2 \leq q$.  The Ford parents are the only Ford circles tangent to $C_{p q}$ having smaller curvature (except that when $q=1$, we have  $q_1=0$ and thus the Ford parents are not both Ford circles). We observe that the definition of parents implies $(p,q) = (p_1,q_1) + (p_2,q_2)$ and $p_1 / q_1 > p/q > p_2/q_2$, which gives the classical relationship between the Ford circles and the Farey fractions.  This connection between the circles and Farey fractions
was used by Ford \cite{Ford} (and later, by Nichols \cite{Nicholls}) to prove results about Diophantine approximation.

The odometer $g_{pq} : \Z^2 \to \Z$ associated to the Ford circle $C_{pq}$ also enjoys a simple description.  To understand the structure of $g_{pq}$, we consider the Ford circle $C_{3,8}$ and its parents $C_{2,5}$ and $C_{1,3}$. These circles have the tangency structure and periodic odometer Laplacians $\Delta g_{pq}$ displayed in \fref{egford}.

\begin{figure}
\raisebox{-0.5\height}{
\begin{tikzpicture}[scale=10,thick,font=\small]
\draw (0,2/3-1/9) -- (0,4/5+1/25);
\draw (1/25,4/5) circle (1/25) node {$C_{2,5}$};
\draw (1/9,2/3) circle (1/9) node {$C_{1,3}$};
\draw (1/64,6/8) circle (1/64);
\draw[thin] (-1/64,6/8) -- (1/64,6/8);
\draw (-3/64,6/8) node {$C_{3,8}$};
\end{tikzpicture}}
\hfill
\raisebox{-0.5\height}{\includegraphics[scale=5]{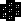}}
\hfill
\raisebox{-0.5\height}{\includegraphics[scale=5]{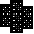}}
\hfill
\raisebox{-0.5\height}{\includegraphics[scale=5]{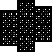}}
\caption{The Ford circles $C_{1,3}$, $C_{2,5}$, and $C_{3,8}$ and several periods of the Laplacian of their odometers $g_{pq} : \Z^2 \to \Z$.  }
\label{f.egford}
\end{figure}

Examining the patterns in \fref{egford}, we see that $[1,q]^2$ is a fundamental domain for the periodic Laplacian $\Delta g_{pq}$.  Moreover, the fundamental tile of $\Delta g_{3,8}$ decomposes (with a few errors on points of overlap) into two copies each of the fundamental tiles of $\Delta g_{1,3}$ and $\Delta g_{2,5}$.

These observations lead to the following construction.  For a general Ford circle $C_{pq}$, we define an odometer $g_{pq} : \Z^2 \to \Z$ by specifying its values on the domain $[0,q]^2 \subseteq \Z^2$ and then extending periodically.  The values of $g_{pq}$ on $[0,q]^2$ are determined recursively by copying data from the parent odometers $g_{p_1 q_1}$ and $g_{p_2 q_2}$ onto the subdomains $E_1 := [0,q_1]^2$, $E_2 := [q_2,q]^2$, $E_3 := [q_1,q] \times [0,q_2]$, and $E_4 := [0,q_2] \times [q_1,q]$.  These subdomains always have one of the three overlapping structures displayed in \fref{fordEi}, depending on the relative sizes of $q_1$ and $q_2$.  This construction is encoded precisely in the following lemma.  \change{Note that our conditions below are redundant, since one only needs two translations to generate a lattice.}

\begin{figure}
\begin{tikzpicture}[scale=.3]
\draw[lightgray,dashed,ultra thin] (0,0) grid (9,9);
\draw (0,0) -- (6,0) -- (6,6) -- (0,6) -- (0,0); \draw (2,2) node {$E_1$};
\draw (9,9) -- (3,9) -- (3,3) -- (9,3) -- (9,9); \draw (7,7) node {$E_2$};
\draw (5,0) -- (5,4) -- (9,4) -- (9,0) -- (5,0); \draw (7,2) node {$E_3$};
\draw (0,5) -- (4,5) -- (4,9) -- (0,9) -- (0,5); \draw (2,7) node {$E_4$};
\end{tikzpicture}
\quad
\begin{tikzpicture}[scale=.3,rotate=90]
\draw[lightgray,dashed,ultra thin] (0,0) grid (9,9);
\draw (0,0) -- (6,0) -- (6,6) -- (0,6) -- (0,0); \draw (2,2) node {$E_3$};
\draw (9,9) -- (3,9) -- (3,3) -- (9,3) -- (9,9); \draw (7,7) node {$E_4$};
\draw (5,0) -- (5,4) -- (9,4) -- (9,0) -- (5,0); \draw (7,2) node {$E_2$};
\draw (0,5) -- (4,5) -- (4,9) -- (0,9) -- (0,5); \draw (2,7) node {$E_1$};
\end{tikzpicture}
\quad
\begin{tikzpicture}[scale=.3]
\draw[lightgray,dashed,ultra thin] (0,0) grid (9,9);
\draw (0,0) -- (5,0) -- (5,5) -- (0,5) -- (0,0); \draw (2,2) node {$E_1$};
\draw (9,9) -- (4,9) -- (4,4) -- (9,4) -- (9,9); \draw (7,7) node {$E_2$};
\draw (4,0) -- (4,5) -- (9,5) -- (9,0) -- (4,0); \draw (7,2) node {$E_3$};
\draw (0,4) -- (5,4) -- (5,9) -- (0,9) -- (0,4); \draw (2,7) node {$E_4$};
\end{tikzpicture}
\caption{The three possible overlapping structures of the subtiles $E_i$ of a Ford tile $[0,q]^2$.  (\change{We are drawing outlines in the dual lattice; so for example, $E_1\cap E_3$ is a just a line of points in $\Z^2$.})}  
\label{f.fordEi}
\end{figure}
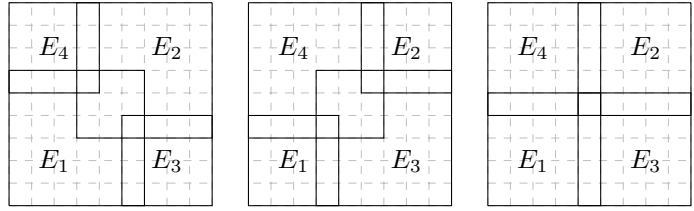

\begin{lemma}
\label{l.fordodo}
There is a unique family of functions $g_{pq} : \Z^2 \to \Z$, indexed by the Ford circles $C_{pq}$, satisfying
\begin{align*}
& g_{pq}(x) = \lceil \tfrac{p}{q} x_1 x_2 \rceil & & \mbox{for } x \in [0,q]^2 \setminus [2,q-2]^2 \\
& g_{pq}(x+(0,-q)) = g_{pq}(x) + (-p,0) \cdot x & & \mbox{for } x \in \Z^2 \\
& g_{pq}(x+(q,q_1)) = g_{pq}(x) + (p_1,p) \cdot x + q_1 p & & \mbox{for } x \in \Z^2 \\
& g_{pq}(x+(-q,q_2)) = g_{pq}(x) + (p_2,-p) \cdot x - q p_2 & & \mbox{for } x \in \Z^2
\intertext{and, when $q \geq 2$,}
& g_{pq}(x) = g_{p_1 q_1}(x) & & \mbox{for } x \in E_1 \\
& g_{pq}(x) = g_{p_1 q_1}(x - (q_2,q_2)) + (p_2,p_2) \cdot x - p_2 q_2 + 1 & & \mbox{for } x \in E_2 \\
& g_{pq}(x) = g_{p_2 q_2}(x - (q_1,0)) + (0,p_1) \cdot x & & \mbox{for } x \in E_3 \\
& g_{pq}(x) = g_{p_2 q_2}(x - (0,q_1)) + (p_1,0) \cdot x & & \mbox{for } x \in E_4,
\end{align*}
where $E_1 := [0,q_1]^2$, $E_2 := [q_2,q]^2$, $E_3 := [q_1,q] \times [0,q_2]$, and $E_4 := [0,q_2] \times [q_1,q]$.
\end{lemma}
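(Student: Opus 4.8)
The natural approach is induction on the denominator $q$, exploiting that for $q\ge 2$ the Ford parents $C_{p_1q_1}$ and $C_{p_2q_2}$ have $q_1,q_2<q$ with $q_1+q_2=q$, $p_1+p_2=p$, and $p_1q_2-p_2q_1=1$ (the last from $p_1q-q_1p=1$ together with $p=p_1+p_2$, $q=q_1+q_2$). I would first observe that the eight displayed conditions split into two roles. The three quasi-periodicity relations, with translations $v_1=(0,-q)$, $v_2=(q,q_1)$, $v_3=(-q,q_2)$, propagate values off any fundamental region; their vectors sum to $0$ and their increments $(-p,0)+(p_1,p)+(p_2,-p)=(0,0)$ cancel as well, and the lattice they generate has index $q^2$ in $\Z^2$, equal to the curvature of $C_{pq}$. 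The ceiling formula and, for $q\ge 2$, the four recursive clauses on $E_1,\dots,E_4$ instead prescribe the values on the square $[0,q]^2$. Thus uniqueness is the easy half: granted consistency, the recursion and ceiling formula fix $g_{pq}$ on $[0,q]^2$ and quasi-periodicity fixes it elsewhere, so two families satisfying all the conditions coincide by induction. The entire content lies in existence, i.e.\ in showing this overdetermined system is consistent.

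For the base case $q=1$ the recursive clauses are vacuous and $[0,1]^2\setminus[2,-1]^2=[0,1]^2$, so $g_{p,1}(x)=\clg{px_1x_2}=px_1x_2$ on the four corners; extending by the quasi-periodicity relations yields a well-defined function once one checks the single cocycle identity forced by $v_1+v_2+v_3=0$. For $q\ge 2$ I would assume $g_{p_1q_1}$ and $g_{p_2q_2}$ exist and satisfy all conditions, and build $g_{pq}$ on $[0,q]^2$ from the four $E_i$-clauses. A short case analysis using $q_1+q_2=q$ shows $E_1\cup E_2\cup E_3\cup E_4=[0,q]^2$, so it remains to check the clauses agree on overlaps. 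The only two-dimensional overlap is $E_3\cap E_4=[q_1,q_2]^2$ when $q_1<q_2$, and $E_1\cap E_2=[q_2,q_1]^2$ when $q_1>q_2$; all other overlaps are one-dimensional and are checked directly.

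To organize the overlap computation I would carry along an auxiliary invariant: that each $g_{pq}$, restricted to $[0,q]^2$, is symmetric under $\sigma:(x_1,x_2)\mapsto(x_2,x_1)$. This is preserved by the recursion, since $\sigma$ fixes $E_1$ and $E_2$ setwise and interchanges $E_3$ with $E_4$ while (using the parent's symmetry) interchanging their clauses, and it is immediate in the base case since $px_1x_2$ is symmetric. The diagonal symmetry makes the $E_3\cap E_4$ agreement automatic, as the two clauses there are exchanged by $\sigma$ on the $\sigma$-invariant square $[q_1,q_2]^2$. The remaining $E_1\cap E_2$ agreement amounts to an affine shift relation $g_{p_1q_1}(y+(q_2,q_2))-g_{p_1q_1}(y)=p_2(y_1+y_2)+p_2q_2+1$ on $y\in[0,q_1-q_2]^2$, which I would establish inductively, using the parent's own recursion and the Farey identity $p_1q_2-p_2q_1=1$ to make the arithmetic exact.

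The step I expect to be the main obstacle is verifying that the recursively constructed $g_{pq}$ actually agrees with the prescribed ceiling formula $\clg{\tfrac pq x_1x_2}$ on the boundary band $[0,q]^2\setminus[2,q-2]^2$, and that these boundary values are consistent with the quasi-periodic extension across the identified edges of the square. This requires tracking, edge by edge, how the parent ceiling formulas $\clg{\tfrac{p_i}{q_i}\,\cdot\,}$ transform under the affine shifts appearing in the $E_i$-clauses and collapse into $\clg{\tfrac pq x_1x_2}$ via $p=p_1+p_2$, $q=q_1+q_2$, and $p_1q_2-p_2q_1=1$, with careful handling of the $+1$ offset in the $E_2$-clause and of the finitely many lattice points where $\tfrac pq x_1x_2\in\Z$. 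Once the boundary agreement is in hand, the cocycle check from the base case shows the quasi-periodic extension is well defined, completing the induction and hence both existence and uniqueness.
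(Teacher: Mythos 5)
Your proposal follows the same inductive skeleton as the paper's proof: induction on $q$, with the ceiling formula and the four $E_i$-clauses prescribing $g_{pq}$ on $[0,q]^2$, pairwise consistency checks of these prescriptions, and the quasi-periodic extension justified by consistency on the boundary band. Your reduction of the $E_1\cap E_2$ overlap to a shift identity for $g_{p_1q_1}$, proved from the parent's own recursion and the Farey identity, is exactly the paper's Case 2, and your sketch of the boundary-band agreement (the distance-$\tfrac{1}{q}$ Farey arithmetic, using $\gcd(p,q)=1$ and $|\tfrac{p}{q}x_1-\tfrac{p_1}{q_1}x_1|\le\tfrac{1}{q}$) is the content of the paper's Case 3, though you leave it as a plan rather than a proof.

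There is, however, a genuine gap at the $E_3\cap E_4$ overlap. Writing $h_3(x)=g_{p_2q_2}(x-(q_1,0))+(0,p_1)\cdot x$ and $h_4(x)=g_{p_2q_2}(x-(0,q_1))+(p_1,0)\cdot x$, the parent's diagonal symmetry gives the exchange property $h_4=h_3\circ\sigma$; but on the $\sigma$-invariant square $S=E_3\cap E_4=[q_1,q_2]^2$ this only says that the required agreement $h_3|_S=h_4|_S$ is \emph{equivalent} to $h_3|_S$ being $\sigma$-symmetric — it does not deliver it. (Toy example: $h_3(x)=x_1$ and $h_4(x)=x_2$ satisfy $h_4=h_3\circ\sigma$ yet disagree off the diagonal.) The translate by $(q_1,0)$ destroys the parent's symmetry, and unwinding $h_3|_S=h_4|_S$ yields the nontrivial identity $g_{p_2q_2}(u_1,u_2+q_1)-g_{p_2q_2}(u_1+q_1,u_2)=p_1(u_1-u_2)$ for $u\in[0,q_2-q_1]^2$, a shift relation of exactly the same character as the one you prove on $E_1\cap E_2$, so it cannot be dismissed as automatic. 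The repair is short and is what the paper means by ``a symmetric argument'': when $q_2>q_1$, the circle $C_{p_1q_1}$ is a parent of $C_{p_2q_2}$ (since $p_1q_2-p_2q_1=1$), with other parent $C_{p_2-p_1,\,q_2-q_1}$; applying $g_{p_2q_2}$'s own $E_4$- and $E_3$-clauses at the points $(u_1,u_2+q_1)$ and $(u_1+q_1,u_2)$ expresses both values as $g_{p_2-p_1,\,q_2-q_1}(u)$ plus $p_1u_1$, respectively plus $p_1u_2$, and the identity follows. With this step restored (and the boundary-band computation actually carried out), your argument coincides with the paper's.
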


\begin{proof}
We prove by induction on $q \geq 1$ that there is a function $g_{pq} : \Z^2 \to \Z$ satisfying the above conditions.  For $p \in \Z$, we observe that
\begin{equation*}
g_{p,1}(x) := \tfrac{1}{2} x_1 (x_1 - 1) + p x_1 x_2
\end{equation*}
satisfies the first four rules.  Thus, we may assume that $q \geq 2$ and the induction hypothesis holds when $q' < q$.

We first observe that any map $g_{pq} : [0,q]^2 \to \Z$ that satisfies the first condition has a unique extension to all of $\Z^2$ that satisfies the next three conditions.  Since the translations of $[0,q]^2$ by the lattice $L_{pq}$ generated by $\{ (0,-q), (q,q_1), (-q,q_2) \}$ cover $\Z^2$, the three translation conditions prescribe values for $g_{pq}$ on the rest of $\Z^2$.  Since $[0,q-1]^2 + L_{pq}$ is a partition of $\Z^2$ and the composition of the three translations is the identity, it suffices to check the consistency of the translation conditions on the set $[0,q]^2 \setminus [1,q-1]^2$.  This follows from the first condition.

Thus, to construct $g_{pq}$, it suffices to check the consistency of the first and the last four conditions, which specify the values of $g_{pq}$ on $[0,q]^2$.  We observe that the first condition prescribes values on the doubled boundary set $B := [0,q]^2 \setminus [2,q-2]^2$ and the last four conditions prescribe values on the sets $E_i$ described above. We must check that the consistency of these five prescriptions.

{\em Case 1: consistency for the intersections $E_1 \cap E_3$, $E_1 \cap E_4$, $E_2 \cap E_3$, and $E_2 \cap E_4$.}  To check the first intersection, we must verify that
\begin{equation*}
g_{p_1 q_1}(x) = g_{p_2 q_2}(x - (q_1,0)) + (0,p_1) \cdot x,
\end{equation*}
for $x \in E_1 \cap E_3 = \{ q_1 \} \cap [0, \min \{ q_1, q_2 \}]$.  Applying the inductive version of the first condition, this reduces to
\begin{equation*}
\frac{p_1}{q_1} x_1 x_2 = \frac{p_2}{q_2} (x_1 - q_1) x_2 + p_1 x_2.
\end{equation*}
Since $x_1 = q_1$, this is easily seen to be true.  The other three intersections can by checked by symmetric arguments.

{\em Case 2: consistency for the intersections $E_1 \cap E_2$ and $E_3 \cap E_4$.}  To check the first intersection, we must verify that
\begin{equation*}
g_{p_1 q_1}(x) = g_{p_1 q_1}(x - (q_2,q_2)) + (p_2,p_2) \cdot x - p_2 q_2 + 1,
\end{equation*}
for all $x \in E_1 \cap E_2 = [q_2,q_1]^2$.  This is non-trivial if and only if $q_1 \geq q_2$, in which case $C_{p_2 q_2}$ is a parent of $C_{p_1 q_1}$.  Let $C_{p_3 q_3}$ denote the other parent. Since $p_3/q_3 > p_1/q_1 > p_2/q_2$, the inductive version of the last four conditions gives
\begin{equation*}
g_{p_1 q_1}(x) = g_{p_3 q_3}(x - (q_2,q_2)) + (p_2,p_2) \cdot x - p_2 q_2 + 1
\end{equation*}
and, since $q_3 = q_1-q_2$,
\begin{equation*}
g_{p_1 q_1}(x - (q_2,q_2)) = g_{p_3 q_3}(x - (q_2,q_2)),
\end{equation*}
for all $x \in [q_2,q_1]^2$.  Substituting this into the above equation, we easily see that equality holds.  The other intersection can be checked by a symmetric argument.

{\em Case 3: consistency for the intersections $B \cap E_i$}.  When $i = 1$, this amounts to verifying
\begin{equation*}
g_{p_1 q_1}(x) = \lceil \tfrac{p}{q} x_1 x_2 \rceil
\end{equation*}
for $x \in [0,q_1] \times [0,1]$.  By induction, this reduces to checking
\begin{equation*}
\lceil \tfrac{p_1}{q_1} x_1 x_2 \rceil = \lceil \tfrac{p}{q} x_1 x_2 \rceil,
\end{equation*}
for $x \in [0,q_1] \times [0,1]$.  When $x_1 x_2 = 0$, this is trivial, so we may assume $x_1 > 0$ and $x_2 = 1$.  Since $\gcd(p,q) = 1$, we see that the distance between $\tfrac{p}{q} x_1$ and the nearest integer is at least $\tfrac{1}{q}$.  Using the relation $q p_1 - p q_1 = 1$, we see that $|\tfrac{p}{q} x_1 - \tfrac{p_1}{q_1} x_1| = \tfrac{1}{q q_1} |x_1| \leq \tfrac{1}{q}$ and therefore $\lceil \tfrac{p_1}{q_1} x_1 x_2 \rceil = \lceil \tfrac{p}{q} x_1 x_2 \rceil$, as desired.  The other intersections can be checked by symmetric arguments.
\end{proof}

In contrast to our construction of general circle odometers in later sections, the description of the Ford circle odometers $g_{pq}$ in \lref{fordodo} has two enormous advantages: the fundamental tiles are square shaped and, more importantly, there is a closed formula for the odometer on the outer two layers of the fundamental tile.  This renders geometric questions about the fundamental tile moot and makes it possible to compute the Laplacian $\Delta g_{pq}$ in a relatively straightforward manner.

Define the {\em boundary} of a subset $X \subseteq \Z^2$ to be
\begin{equation*}
\partial X = \{ x \in X : |x - y| < 2 \mbox{ for some } y \in \Z^2 \setminus X \},
\end{equation*}
and call $X \setminus \partial X$ the {\em interior} of $X$. \change{Thus, $\partial X$ consists of the points in $X$ whose distance to $\Z^2 \setminus X$ is $1$ or $\sqrt{2}$.}  In the case of a square $[0,q]^2$, we have $\partial [0,q]^2 = [0,q]^2 \setminus [1,q-1]^2$.

\begin{lemma}
\label{l.fordlap}
The Laplacian $\Delta g_{pq} : \Z^2 \to \Z$ satisfies
\begin{equation*}
\Delta g_{pq}(x) = \Delta g_{pq}(x + (0,q)) = \Delta g_{pq}(x + (q,q_1)) \quad \mbox{for } x \in \Z^2
\end{equation*}
and
\begin{equation*}
\Delta g_{pq}(x) = 1 \quad \mbox{for } x \in \change{\partial [0,q]^2}.
\end{equation*}
Moreover, if $q \geq 2$ and $x \in [1,q-1]^2$ lies in $2$, $3$, or $4$ of the boundaries \change{$\partial E_i$}, then $\Delta g_{pq}(x) = 1$, $0$, or $-2$, respectively.
\end{lemma}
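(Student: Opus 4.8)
The plan is to reduce all three assertions to computations of discrete Laplacians of ceiling functions. The key elementary observation is that the discrete Laplacian kills the bilinear function $\tfrac pq x_1x_2$, since $x_1x_2$ is affine in each coordinate separately, so that $\Delta(x_1x_2)\equiv 0$. Consequently, if $c\in\Q$ and $\ell$ is affine then $h(x)=c\,x_1x_2+\ell(x)$ is discrete harmonic, and the rounded function satisfies $\Delta\clg{h}(x)=\Delta\phi(x)$, where $\phi(x)=\clg{h(x)}-h(x)\in[0,1)$ is the \emph{ceiling defect}. Thus wherever $g_{pq}$ is given by an affine shift of a ceiling formula, its Laplacian becomes a finite signed combination of nearby ceiling defects, i.e.\ a computation with fractional parts. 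The first assertion (periodicity) is then immediate: each of the three translation rules of \lref{fordodo} expresses $g_{pq}(x+v)-g_{pq}(x)$ as an affine function of $x$, and since $\Delta$ annihilates affine functions we get $\Delta g_{pq}(x+v)=\Delta g_{pq}(x)$ for every generator $v$ of $L_{pq}$, in particular for $v=(0,q)$ and $v=(q,q_1)$.

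For the second assertion, let $x\in\partial[0,q]^2$. Its neighbors inside $[0,q]^2$ lie in the second layer, hence in the doubled boundary $B=[0,q]^2\stm[2,q-2]^2$, where the first condition of \lref{fordodo} gives $g_{pq}=\clg{\tfrac pq x_1x_2}$; the one or two neighbors outside $[0,q]^2$ are evaluated through the affine translation rules. Using the $180^\circ$ symmetry $x\mapsto(q,q)-x$, under which $\clg{\tfrac pq x_1x_2}$ changes by an affine function and hence $\Delta g_{pq}$ is invariant, it suffices to treat one vertical edge, one horizontal edge, and one corner. Along an edge such as $x=(0,j)$ with $1\le j\le q-1$, the relation $qp_1-pq_1=1$ shows that the exterior neighbor $(-1,j)$ still agrees with the ceiling formula $\clg{-\tfrac pq j}$, so that $\Delta g_{pq}(0,j)=(1-\{\tfrac pq j\})+\{\tfrac pq j\}=1$. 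At a corner the exterior neighbor no longer matches the ceiling formula (for instance $g_{pq}(-1,0)=1$, computed directly from the translation rule again using $qp_1-pq_1=1$), but the analogous short computation once more gives $\Delta g_{pq}=1$.

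The third assertion I would prove by induction on $q$, with base case $q=1$ given by $g_{p,1}(x)=\tfrac12 x_1(x_1-1)+px_1x_2$, for which $\Delta g_{p,1}\equiv1$. For $q\ge2$ the crucial point is that on the inner boundary $\partial E_i$ the piece formula for $g_{pq}$ is an affine shift of a \emph{parent} ceiling formula: under the shift $s_i$, the set $\partial E_i$ corresponds to the boundary of the parent's fundamental square, which lies in the parent's doubled boundary, where the parent odometer equals its own ceiling formula by the inductive first condition of \lref{fordodo}. Hence at a point $x\in[1,q-1]^2$ lying in several of the $\partial E_i$, the values of $g_{pq}$ at $x$ and at each of its four neighbors are all governed by affine shifts of parent ceiling formulas (of modulus $q_1$ or $q_2$), so $\Delta g_{pq}(x)$ reduces to a finite combination of the corresponding ceiling defects. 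Reading off, for each of the three overlap configurations of \fref{fordEi}, which parent governs $x$ and each neighbor and evaluating the defect combination with the relations $q_1+q_2=q$, $p_1+p_2=p$, and $qp_1-pq_1=1$, should yield the values $1$, $0$, and $-2$ at the $2$-, $3$-, and $4$-fold junctions respectively.

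The main obstacle is the bookkeeping in this last step. Configuration by configuration, one must correctly assign each of the four neighbors of a junction point to the parent piece governing it, checking in particular that no neighbor slips into the deep interior of a piece, where the parent odometer is not a ceiling formula, so that the harmonic-part cancellation genuinely applies; one then verifies that the resulting fractional-part combinations collapse to exactly $1$, $0$, and $-2$. The remaining interior points, lying in at most one $\partial E_i$, are not needed for this lemma: they inherit the bound $\Delta g_{pq}\le1$ from the parents by induction.
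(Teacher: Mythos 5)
Your proposal is correct and takes essentially the same route as the paper: periodicity because each translation rule differs from the identity by an affine function, the boundary value $\Delta g_{pq}=1$ by evaluating $g_{pq}$ on the outer layers and their exterior neighbors via the translation rules (your ceiling-defect bookkeeping is just a clean repackaging of the paper's explicit ceiling computations), and the junction values by reducing to parent ceiling formulas near the $\partial E_i$ — where the paper, exactly like you, works out only one representative configuration and declares the rest similar. Your attention to the corner is in fact more careful than the paper's own write-up: the paper's displayed simplification $-\lfloor \tfrac{p}{q}a-\tfrac{1}{q}\rfloor = -\lfloor \tfrac{p}{q}a\rfloor$ silently fails at $a=0$, where the true value is $g_{pq}(-1,0)=1$ as you compute, and it is precisely this value that makes $\Delta g_{pq}(0,0)=1$; the only caveat on your side is that the $180^\circ$-symmetry reduction should be backed by the already-established periodicity (which handles the exterior neighbors and the remaining corners $(0,q)$, $(q,0)$), since invariance of $\Delta g_{pq}$ under $x\mapsto (q,q)-x$ is not immediate from the ceiling formula alone at points whose neighbors leave $[0,q]^2$.
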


\begin{proof}
To obtain the periodicity conditions, we simply compute the Laplacian of the periodicity conditions in \lref{fordodo}, recalling that the Laplacian of an affine function vanishes.

We next check $\Delta g_{pq}(x) = 1$ for $x \in [0,q]^2 \setminus [1,q-1]^2$.  We claim that, for $0 \leq a \leq q$,
\begin{equation*}
g_{pq}(a,0) = g_{pq}(0,a) = 0,
\end{equation*}
\begin{equation*}
g_{pq}(a,1) = g_{pq}(1,a) = \lceil \tfrac{p}{q} a \rceil,
\end{equation*}
and
\begin{equation*}
g_{pq}(a,-1) = g_{pq}(-1,a) = - \lfloor \tfrac{p}{q} a \rfloor.
\end{equation*}
The first two are immediate from \lref{fordodo}, so we check the third.  Using the periodicity, we compute
\begin{equation*}
g_{pq}(a,-1) = g_{pq}(a,q-1) - (p,0) \cdot (a,-1) = \lceil \tfrac{p}{q} a (q-1) \rceil - p a = - \lfloor \tfrac{p}{q} a \rfloor.
\end{equation*}
Similarly, we compute
\begin{equation*}
\begin{aligned}
g_{pq}(-1,a) & = g_{pq}(q-1,a+q_1) - (p_1,p) \cdot (-1,a) - q_1 p \\
& = \lceil \tfrac{p}{q} (q-1)(a+q_1) \rceil + p_1 - p a - q_1 p \\
& = - \lfloor \tfrac{p}{q} (a + q_1) \rfloor + p_1 \\
& = - \lfloor \tfrac{p}{q} a - \tfrac{1}{q} \rfloor \\
& = - \lfloor \tfrac{p}{q} a \rfloor, \\
\end{aligned}
\end{equation*}
for $0 \leq a \leq q_2$ and
\begin{equation*}
\begin{aligned}
g_{pq}(-1,a) & = g_{pq}(q - 1,a - q_2) - (-p_2,p) \cdot (-1,a) + p_2 q \\
& = \lceil \tfrac{p}{q} (q-1)(q-q_2) \rceil + p_2 - p a + p_2 q \\
& = - \lfloor \tfrac{p}{q} (a - q_2) \rfloor + p_2 \\
& = - \lfloor \tfrac{p}{q} a - \tfrac{1}{q} \rfloor \\
& = - \lfloor \tfrac{p}{q} a \rfloor, \\
\end{aligned}
\end{equation*}
for $q_2 \leq a \leq q$.  Using $\gcd(p,q) = 1$, we then obtain
\begin{equation*}
\Delta g_{pq}(0,a) = \Delta g_{pq}(a,0) = \lceil \tfrac{p}{q} a \rceil - \lfloor \tfrac{p}{q} a \rfloor = 1
\end{equation*}
for $0 \leq a < q$.  That $\Delta g_{pq} = 1$ on the rest of $[0,q]^2 \setminus [1,q-1]^2$ follows by periodicity.

The moreover clause can be handled similarly, using \lref{fordodo} to explicitly evaluate the Laplacian at the intersections of the boundaries of the sets.  For example, suppose that $x \in (E_1 \cap E_3) \setminus (E_2 \cup E_4)$.  In this case, $x = (q_1, h)$ for some $0 < h < \operatorname{min}\{ q_1, q_2 \}$.  Using $g_{pq} = g_{p_1 q_1}$ on $E_1$, we compute
\begin{equation*}
g_{pq}(x + (0,k)) = \lceil \tfrac{p_1}{q_1} q_1 (h + k) \rceil = p_1 (h+k) \quad \mbox{for } k = -1, 0, 1,
\end{equation*}
and
\begin{equation*}
g_{pq}(x - (1,0) ) = \lceil \tfrac{p_1}{q_1} (q_1 - 1) h \rceil  = p_1 h - \lfloor \tfrac{p_1}{q_1} h \rfloor.
\end{equation*}
Using $q_{pq}(y) = g_{p_2 q_2}(y - (q_1,0)) + (0,p_1) \cdot x$ for $y \in E_3$, we compute
\begin{equation*}
\begin{aligned}
g_{pq}(x + (1,0)) & = g_{p_2 q_2}(1,h) + p_1 h \\
& = \lceil \tfrac{p_2}{q_2} h \rceil + p_1 h \\
& = \lceil \tfrac{p_1}{q_1} h - \tfrac{1}{q_1 q_2} h \rceil + p_1 h\\
& = \lceil \tfrac{p_1}{q_1} h \rceil + p_1 h.
\end{aligned}
\end{equation*}
Putting these together, we obtain $\Delta g_{pq}(x) = 1$.
\end{proof}

The above lemma tells us how to compute $\Delta g_{pq}$.  Indeed, suppose we wish to compute $\Delta g_{pq}(x)$ for some $x \in \Z^2$.  We first reduce to the case $x \in [0,q]^2$ using the periodicity of $\Delta g_{pq}$.  Now, if $x$ lies on the boundary of $[0,q]^2$ or least two of the boundaries of the $E_i$, then we can read off $\Delta g_{pq}(x)$ from \lref{fordlap}.  Otherwise, $x$ lies in the interior of one of the $E_i$, and we can proceed recursively to the corresponding parent $\Delta g_{p_i q_i}(x) = \Delta g_{pq}(x)$.

Following this line of reasoning, we prove \tref{odometer} in the special case of the Ford circles. Observe that the peak associated to the Ford circle $(p,q)$ is the matrix
\begin{equation*}
A_{pq} := \frac{1}{q^2} \left[ \begin{matrix} 1 & pq \\ pq & 0 \end{matrix} \right],
\end{equation*}
which has lattice $L_{pq} := \{ x \in \Z^2 : A_{pq} x \in \Z^2 \}$ generated by $\{ (0,-q), (q,q_1) \}$.

\change{Call $X \subset \Z^2$ \emph{connected} if for all $x,y\in X$ there is a sequence $x=x_0,\ldots,x_k=y$ with $x_i \in X$ and $|x_i-x_{i-1}| = 1$ for all $i=1,\ldots,k$. Call a bounded set $X$ \emph{simply connected} if $\Z^2\setminus X$ is connected.}

\begin{proposition}
\label{p.ford}
For every Ford circle $C_{pq}$, the odometer $g_{pq} : \Z^2 \to \Z$ satisfies
\begin{equation*}
\Delta g_{pq}(x) \leq 1 \quad \mbox{and} \quad g_{pq}(x + v) = g_{pq}(x) + x \cdot A_{pq} v + g_{pq}(v),
\end{equation*}
for all $x \in \Z^2$ and $v \in L_{pq}$.  Moreover, the only infinite connected subset $X \subseteq \Z^2$ such that $\Delta (g_{pq} + \mathbf{1}_X) \leq 1$ is $X = \Z^2$.
\end{proposition}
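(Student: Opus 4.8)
The plan is to dispatch the two displayed identities quickly and then concentrate on the maximality assertion, which carries all the content.

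For the inequality $\Delta g_{pq}\le 1$ and the periodicity identity I would argue directly from \lref{fordlap} and \lref{fordodo}. By \lref{fordlap} the Laplacian $\Delta g_{pq}$ equals $1$ on $\partial[0,q]^2$ and takes values in $\{1,0,-2\}$ at interior points lying on at least two of the boundaries of the $E_i$; every remaining interior point lies in the interior of a single $E_i$, where $g_{pq}$ agrees with a parent odometer up to an affine function, so its Laplacian is inherited. An induction on $q$ then gives $\Delta g_{pq}\in\{1,0,-2\}$ everywhere, hence $\Delta g_{pq}\le 1$. For the cocycle $g_{pq}(x+v)=g_{pq}(x)+x\cdot A_{pq}v+g_{pq}(v)$, I would check it on the three generators $(0,-q),(q,q_1),(-q,q_2)$ of $L_{pq}$ using the translation rules of \lref{fordodo}: each rule has the form $g_{pq}(x+v)-g_{pq}(x)=\ell_v\cdot x+c_v$, and one verifies $\ell_v=A_{pq}v$ (this is exactly the Ford relation $p_1q-pq_1=1$ when $v=(q,q_1)$) and $c_v=g_{pq}(v)$; the identity for general $v$ follows since the expression is a cocycle.

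For maximality, first reformulate the hypothesis. Writing $U=\Z^2\setminus X$, the condition $\Delta(g_{pq}+\mathbf{1}_X)\le 1$ says that every $x\in U$ with $\Delta g_{pq}(x)=v$ has at most $1-v$ neighbors in $X$, i.e. at least $3+v$ neighbors in $U$. Thus a value-$1$ cell in $U$ has \emph{no} neighbor in $X$, a value-$0$ cell in $U$ has at least three neighbors in $U$, and a value-$(-2)$ cell in $U$ has at least one neighbor in $U$. Let $W=\{\Delta g_{pq}=1\}$. Since $W$ contains the net $\partial[0,q]^2+L_{pq}$, which separates $\Z^2$ into the bounded tile interiors $[1,q-1]^2+L_{pq}$, every component of $\Z^2\setminus W$ is bounded; as $X$ is infinite and connected it must meet $W$. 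I would then use that $W$ is connected (the internal walls attach to the tile frames) to force $W\subseteq X$: a point $w\in W\setminus X$ adjacent to a point of $X$ would violate the value-$1$ constraint, so connectedness of $W$ together with $W\cap X\neq\emptyset$ rules out $W\setminus X\neq\emptyset$.

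It remains to fill the bounded regions. The key geometric input is that every value-$(-2)$ cell has all four of its neighbors equal to $1$; granting this, no value-$(-2)$ cell can lie in $U$, since it would need a neighbor in $U$ while all four of its neighbors lie in $W\subseteq X$. Hence $U$ consists solely of value-$0$ cells. If $U\neq\emptyset$, pick $u\in U$ maximizing $x_1+x_2$; then $u+(1,0)$ and $u+(0,1)$ lie in $X$, so $u$ has at most two neighbors in $U$, contradicting the requirement of three. Therefore $U=\emptyset$ and $X=\Z^2$. The main obstacle is exactly the two structural facts about $\Delta g_{pq}$ invoked here—that $W$ is connected and that each value-$(-2)$ cell is surrounded by value-$1$ cells. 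I expect both to be proved by induction on $q$ from \lref{fordlap}, the delicate step being the bookkeeping of $\Delta g_{pq}$ across the interfaces $\partial E_i$ where the subtiles overlap, since that is where the value-$(-2)$ crossings occur and where the internal walls meet the tile frame.
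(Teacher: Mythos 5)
Your handling of the two displayed identities is fine and matches the paper's proof (the cocycle checked on the generators of $L_{pq}$ via \lref{fordodo}, the bound $\Delta g_{pq}\le 1$ from \lref{fordlap} plus recursion into subtile interiors), and your first reduction — that the value-$1$ web forces $X$ to contain all frame cells, so $U=\Z^2\setminus X$ lives in the bounded tile interiors — is essentially the paper's first step. The fatal problem is the structural fact your endgame rests on: it is \emph{false} that every value-$(-2)$ cell has all four neighbors equal to $1$. Take the Ford circle $C_{1,3}$: by \lref{fordodo}, $g_{1,3}(x)=\lceil x_1x_2/3\rceil$ on all of $[0,3]^2$, so $\Delta g_{1,3}(1,1)=0+1+0+1-4=-2$ while $\Delta g_{1,3}(2,1)=1+1+0+2-4=0$. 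Thus the $-2$ cell $(1,1)$ has the two neighbors $(2,1)$ and $(1,2)$ of value $0$ (visible as white cells touching patterned cells in \fref{egford}). This is not an accident of $q=3$: a $-2$ cell is the center of a $[0,2]^2$-copy, and the cells flanking it on two sides are points lying on exactly three subtile boundaries, which by \lref{fordlap} carry the value $0$.

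Consequently your argument breaks at two linked points: a $-2$ cell \emph{can} lie in $U$, since its required single $U$-neighbor can be an adjacent $0$ cell rather than a cell of $W\subseteq X$; and once $-2$ cells are admitted into $U$, the extremal argument collapses, because the cell of a component of $U$ maximizing $x_1+x_2$ may be a $-2$ cell, which needs only one neighbor in $U$ and can find it at $u-(1,0)$ or $u-(0,1)$. This configuration — a $-2$ center together with its adjacent $0$ cells nearly supporting an increment — is exactly what the paper's inductive Claim is designed to kill: Cases 2 and 3 use that a boundary point of $Y$ on two (resp.\ three) subtile boundaries has at least one (resp.\ two) neighbors outside $Y$, and Cases 1 and 4 recurse into the subtiles $E_i$; some recursion on the tile decomposition seems unavoidable, and a one-shot peeling argument cannot replace it. (A secondary issue: your step $W\subseteq X$ needs connectivity of the entire web $W=\{\Delta g_{pq}=1\}$, which you also leave unproved; it is true, but note the paper's corresponding reduction needs only connectivity of the frame net $\partial[0,q]^2+L_{pq}$, which is immediate.)
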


\begin{proof}
The periodicity condition is immediate from \lref{fordodo}, after we observe $A_{pq} (0,q) = (p,0)$, $A_{pq} (q,q_1) = (p_1,p)$, $g_{pq}(0,q) = 0$, and $g_{pq}(q,q_1) = q_1 p$.  That $\Delta g_{pq} \leq 1$ is immediate from \lref{fordlap}.

To check the moreover clause, we suppose $X \subseteq \Z^2$ is infinite and connected and let $X^C = \Z^2 \setminus X$ denote its complement.  If $\tilde X$ is the complement of any connected component of $X^C$, then $\Delta \mathbf{1}_{\tilde X} \leq \Delta \mathbf{1}_{X}$.  In particular, we may assume that both $X$ and its complement $X^C$ are connected.

If $X^C$ intersects an $L_{pq}$ translation of the boundary of $[0,q]^2$, then, since $X$ is infinite and connected, the set $\{ x \in X^C : \Delta \mathbf{1}_X(x) > 0 \}$ must intersect an $L_{pq}$ translation of the boundary of $[0,q]^2$.  By \lref{fordlap}, $\Delta g_{pq}(x) = 1$ at any such point.  Thus, we may assume that $X^C$ is contained in the interior of $[0,q]^2$.  In particular, it suffices to prove the following claim.

{\em Claim.} Suppose $Y \subseteq [0,q]^2$ is simply connected, $Y \cap [1,q-1]^2$ is non-empty, $Y \setminus [1,q-1]^2$ is connected, and $(Y \setminus [1,q-1]^2) \cap E_i$ is non-empty for at most one $E_i$.  Then there is a point $x \in Y \cap [1,q-1]^2$ such that $\Delta g_{pq}(x) - \Delta \mathbf{1}_Y(x) > 1$.

We proceed by induction on $q$.  This is trivial when $q = 1$, since $[1,q-1]^2$ is empty.  When $q = 2$ \change{we have $[1,q-1]=\{(1,1)\}$ and each of the four nearest neighbors of $(1,1)$ belongs to two of the $E_i$, so} $Y = \{ (1,1) \}$ and, by \lref{fordlap}, we must have $\Delta g_{pq}(1,1) = -2$. We may therefore assume $q > 2$, \change{ which in particular implies $q_1 \neq q_2$}.  By symmetry, we may assume $q_1 > q_2$, so the $E_i$ enjoy the intersection structure displayed on the left of \fref{fordEi}.   We divide the analysis into several cases.

{\em Case 1.} Suppose $Y$ is contained in the interior of some $E_i$. We apply the induction hypothesis to find $x$.

{\em Case 2.} Suppose $x \in \partial Y \cap [1,q-1]^2$ lies in the boundary of exactly two $E_i$. Since \lref{fordlap} gives $\Delta g_{pq}(x) = 1$, we have $\Delta (g_{pq} - \1_Y)(x) > 1$.

{\em Case 3.} Suppose $x \in \partial Y \cap [1,q-1]^2$ lies in the boundary of exactly three $E_i$. Since we are not in the previous case, there are least two neighbors not in $Y$ and thus $\Delta \mathbf{1}_Y(x) < -2$.  By \lref{fordlap}, $\Delta g_{pq}(x) = 0$.

{\em Case 4.} Suppose none of the above cases hold.  Assuming $q_1 > q_2$, then $Y$ does not intersect $E_3$ or $E_4$.  Moreover, it must \change{fail to} intersect either $E_1 \setminus [1,q-1]^2$ or $\change{E_2} \setminus [1,q-1]^2$.  By symmetry, we may assume the former.  If $Y \subseteq E_1$, then we can apply the induction hypothesis to $Y$.  Otherwise, we can apply the induction hypothesis to $Y \cap E_2$. 
\end{proof}

The proof of \tref{odometer} for the Ford circles is completed by the following lemma, which shows that the moreover clause of \pref{ford} suffices to show maximality.

\begin{lemma}
\label{l.suffmax}
If $g : \Z^2 \to \Z$ is superharmonic, $\Delta g$ is doubly periodic, and the only infinite connected set $X \subseteq \Z^2$ such that $\Delta (g + \1_X) \leq 1$ is $X = \Z^2$, then $g$ is maximal in the sense of \tref{odometer}.
\end{lemma}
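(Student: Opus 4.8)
The plan is to prove the contrapositive: assuming $g$ is not maximal, I will produce an infinite connected set $X \subsetneq \Z^2$ with $\Delta(g + \1_X) \le 1$, contradicting the third hypothesis. So suppose there is a competitor $h : \Z^2 \to \Z$ with $\Delta h \le 1$ and $h \ge g$ but with $u := h - g$ unbounded. Since $u \ge 0$ is integer valued, its minimum is attained, and subtracting that constant from $h$ I may arrange $\min u = 0$ while keeping $\Delta h \le 1$, $h \ge g$, and $u$ unbounded. Set $X = \{x : u(x) \ge 1\}$. Then $X$ is infinite (because $u$ is unbounded) and $X \ne \Z^2$ (because $u$ vanishes somewhere).

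First I would verify that $X$ is a \emph{bad set}, i.e. $\Delta(g + \1_X) \le 1$. At $x \in X$ this is immediate, since $\Delta \1_X(x) \le 0$ and $\Delta g(x) \le 1$. At $x \notin X$ one has $u(x) = 0$, so $\Delta \1_X(x) = \#\{y \sim x : y \in X\} \le \sum_{y \sim x} u(y) = \Delta u(x) = \Delta h(x) - \Delta g(x)$, whence $\Delta(g + \1_X)(x) \le \Delta h(x) \le 1$. The same computation, localized to one connected component, shows that \emph{every} connected component $Y$ of any bad set is again bad: points of $Y$ and points outside $X$ are handled by the bounds above, while a point of $X \setminus Y$ has no neighbour in $Y$ and so contributes $\Delta \1_Y = 0$. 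In particular, if $X$ has an infinite connected component, that component is an infinite connected bad set different from $\Z^2$, contradicting the hypothesis and finishing the proof.

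The remaining, and main, difficulty is the case in which every component of $X$ is finite, yet $u$ is unbounded. Here I would exploit the double periodicity of $\Delta g$. Since $\Delta g$ is periodic it is bounded below, so $\Delta u = \Delta h - \Delta g \le K$ for a fixed constant $K$; combined with $u = 0$ off $X$, a discrete maximum-principle and slope estimate bounds $\max_Y u$ by a function of $\mathrm{diam}(Y)$ (growing like $\mathrm{diam}(Y)^2$) on each finite component $Y$. As $u$ is unbounded, the components must therefore have unbounded diameter. Choosing components $Y_n$ with $\mathrm{diam}(Y_n) \to \infty$ together with a boundary site $p_n \notin X$ adjacent to $Y_n$, I would translate by a period $t_n$ of $\Delta g$ so that $p_n - t_n$ lands in a fixed fundamental domain; periodicity guarantees that each translate $S_n := Y_n - t_n$ is again a bad set. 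Passing to a subsequence so that the anchor $p_n - t_n$ is a single fixed site $z_\ast$, and then (by a diagonal argument over the finitely many $\{0,1\}$ possibilities at each site) so that $\1_{S_n}$ converges pointwise, I obtain a limiting bad set $W = \lim_n S_n$ with $z_\ast \notin W$.

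The one genuine subtlety is that connectivity is not preserved under pointwise limits, so $W$ might a priori split into finite pieces. To overcome this I would apply K\"onig's lemma: each $S_n$ is connected, adjacent to $z_\ast$, and of diameter exceeding any prescribed $R$ once $n$ is large, so it contains a self-avoiding path of length $R$ starting near $z_\ast$; finiteness of the number of such paths yields, along the same subsequence, an infinite self-avoiding path contained in $W$. Hence $W$ has an infinite connected component $Y^\ast$, which by the component-is-bad observation satisfies $\Delta(g + \1_{Y^\ast}) \le 1$, and $Y^\ast \ne \Z^2$ since $z_\ast \notin W$. This infinite connected bad set contradicts the hypothesis, completing the contrapositive. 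I expect this compactness-plus-K\"onig step, producing an infinite connected set in the limit, to be the crux; the bad-set bookkeeping and the diameter bound are comparatively routine.
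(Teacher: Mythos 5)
Your proposal is correct and follows essentially the same route as the paper's proof: normalize so that $\min(h-g)=0$, show the superlevel set $\{h-g\geq 1\}$ is ``bad'' (the paper does this via the truncation $f=\min\{h,g+1\}$ and monotonicity of $\Delta$, you via the equivalent neighbor-counting inequality at points where $h-g=0$), use the periodicity of $\Delta g$ to bound $\Delta(h-g)$ above and force components of unbounded size, translate anchor points into a fixed fundamental domain, and pass to a pointwise limit to extract an infinite connected bad set $\neq \Z^2$. Your K\"onig's-lemma step simply makes explicit the extraction of an infinite connected component of the limit, which the paper asserts tersely, and the precise growth rate in your diameter bound (quadratic versus merely finite) is immaterial to the argument.
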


\begin{proof}
Suppose for contradiction that $h : \Z^2 \to \Z$ satisfies $h \geq g$ and $\Delta h \leq 1$, and that $h - g$ is unbounded.  Replacing $h$ with $h - \min_{x \in \Z^2} (h - g)$, we may assume that $\{ h - g = 0 \} \subseteq \Z^2$ is non-empty.  Using the monotonicity of the Laplacian, we see that $f = \min \{ h, g + 1 \}$ satisfies $\Delta f \leq 1$ and $f - g \in \{ 0 , 1 \}$.  Since $h - g$ is unbounded and $\Delta (h-g)$ is bounded above (here we use the periodicity of $\Delta g$), the set $\{ f - g = 1 \} \subseteq \Z^2$ must contain connected components of arbitrarily large size.  Note that, if $X$ is any connected component of $\{ f - g = 1 \}$, then $\Delta (g + \1_X) \leq 1$.  Using the periodicity of $\Delta g$ and compactness, we can select an infinite connected $X \subsetneq \Z^2$ such that $\Delta (g + \1_X) \leq 1$.  Indeed, let $X_n$ be a sequence of connected components of $\{ f - g = 1 \}$ such that $|X_n| \to \infty$.  Since $\{ f - g = 0 \} \subseteq \Z^2$ is non-empty, we may also select $x_n \in \{ f - g = 0 \}$ which is adjacent to $X_n$.  Using the fact that $\Delta g$ is periodic, we can translate so that $x_n$ always lies in the same period of $\Delta g$.  Now, we can pass to a subsequence $n_k$ such that $x_{n_k} = x^*$ is \change{a constant sequence. Now we can pass to a further subsequence 
such that for each $r \in \N$ the set $X_{n_k} \cap \{ y \in \Z^2 : |y - x^*| \leq r \}$ does not depend on $k$ for $k \geq r$.  Then $\lim_{k \to \infty} X_{n_k}$ exists and has an infinite connected component $X^*$ adjacent to $x^*$. Note that $X^* \neq \Z^2$ (since $x^* \notin X^*$) and} $\Delta g + \Delta 1_{X^*} \leq 1$, contradicting our hypothesis.
\end{proof}

\subsection{The diamond circles}
\label{s.diamond}

The {\em diamond circles} are the circles $(c, c x_1, c x_2) \in \B_0$ which are tangent to $(1,1,0)$ and $(1,1,2)$ and satisfy $0 < x_1 < 1$.  In curvature coordinates, the diamond circles can be parameterized via
\begin{equation*}
C_k := ( 2 k (k+1), 2 k^2-1, 2 k (k+1)),
\end{equation*}
for $k \in \Z^+$.  The peak matrix associated to the diamond circle $C_k$ is
\begin{equation*}
A_k := \frac{1}{2} \left[ \begin{matrix} \frac{k}{k+1} & 1 \\ 1 & \frac{1 - k}{k} \end{matrix} \right],
\end{equation*}
which has lattice $L_k := \{ x \in \Z^2 : A_k x \in \Z^2 \}$ generated by $\{ (0,-2k), (k+1,k) \}$.

To better understand the structure of the diamond circle odometers $g_k : \Z^2 \to \Z$, we examine their Laplacians.  \fref{diamond} displays the patterns associated to the first four diamond circles.  We observe that the periodicity of each pattern is exactly $L_k$.  Moreover, the internal structure of the fundamental tiles are similar enough that we can immediately conjecture what the general case should be.  In fact, for the diamond circles, we have a closed formula for the odometer on each fundamental tile, making this family even simpler than the Ford circles.

\begin{figure}
\raisebox{-0.5\height}{
\begin{tikzpicture}[scale=2,thick,font=\tiny]
\clip (-1/6,0) rectangle (1,2);
\draw (0,-1) -- (0,3);
\draw (1,0) circle (1);
\draw (1,2) circle (1);
\draw (1/4,1) circle (1/4) node {$C_1$};
\draw (7/12,1) circle (1/12);
\draw[thin] (7/12,1) -- (7/12,2/3); \draw (7/12,2/3-.1) node {$C_2$};
\draw (17/24,1) circle (1/24);
\draw[thin] (17/24,1) -- (17/24,4/3); \draw (17/24,4/3+.1) node {$C_3$};
\draw (31/40,1) circle (1/40);
\draw[thin] (31/40,1) -- (31/40,2/3); \draw (31/40,2/3-.1) node {$C_4$};
\end{tikzpicture}}
\raisebox{-0.5\height}{\includegraphics[scale=5]{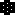}}
\raisebox{-0.5\height}{\includegraphics[scale=5]{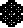}}
\raisebox{-0.5\height}{\includegraphics[scale=5]{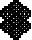}}
\raisebox{-0.5\height}{\includegraphics[scale=5]{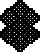}}
\caption{Several periods of the Laplacian of the odometers $g_k : \Z^2 \to \Z$ of the first four diamond circles.  Here, the black, dark patterned, light patterned, and white cells correspond to Laplacian values $1$, $0$, $-1$, and $-2$, respectively.}
\label{f.diamond}
\end{figure}

\begin{lemma}
\label{l.diamondodo}
For each $k \in \Z_+$, there is a unique function $g_k : \Z^2 \to \Z$ such that
\begin{align*}
& g_k(x + (0,-2k)) = g_k(x) + (-k,k-1) \cdot x - k (k-2) & & \mbox{for } x \in \Z^2, \\
& g_k(x + (k+1,k)) = g_k(x) + (k,1) \cdot x + \tfrac{1}{2} k(k+1) & & \mbox{for } x \in \Z^2, \\
& g_k(x + (-k-1,k)) = g_k(x) + (0,-k) \cdot x - \tfrac{1}{2} k(k+1) & & \mbox{for } x \in \Z^2,
\intertext{and}
& g_k(x) = \tfrac{1}{2} |x_1| (|x_1| - 1) - \lfloor \tfrac{1}{4} (x_1 - x_2)^2 \rfloor & & \mbox{for } x \in T_k,
\end{align*}
where
\begin{equation*}
T_k := \{ x \in \Z^2 : \max \{ |x_1|, |x_2 - k |, |x_1| + |x_2 - k | - 1 \} \leq k \}.
\end{equation*}
\end{lemma}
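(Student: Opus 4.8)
The plan is to follow the template of \lref{fordodo}: prescribe $g_k$ on the single tile $T_k$ by the closed formula, extend it to $\Z^2$ using the three translation rules, and then verify that this extension is both forced (uniqueness) and consistent (existence). Write $w_1 = (0,-2k)$, $w_2 = (k+1,k)$, and $w_3 = (-k-1,k)$ for the three translation vectors, and set $\Phi(x) = \tfrac12 |x_1|(|x_1|-1) - \lfloor \tfrac14 (x_1-x_2)^2\rfloor$. A direct computation gives $A_k w_1 = (-k,k-1)$, $A_k w_2 = (k,1)$, and $A_k w_3 = (0,-k)$, so each of the first three rules has the periodicity form $g_k(x+w_i) = g_k(x) + (A_k w_i)\cdot x + c_i$ of \tref{odometer}, with constants $c_1 = -k(k-2)$, $c_2 = \tfrac12 k(k+1)$, and $c_3 = -\tfrac12 k(k+1)$. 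The key structural facts are that $w_1 + w_2 + w_3 = 0$ and that any two of the $w_i$ generate $L_k$.

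First I would dispose of uniqueness and reduce existence to consistency. Since $T_k$ is a fundamental domain for $L_k$ --- its $L_k$-translates cover $\Z^2$, overlapping only along the outermost boundary layer, as one sees by comparing the lattice-point count $|T_k| = 2k^2 + 6k + 1$ with the index $[\Z^2 : L_k] = 2k(k+1)$ --- every point of $\Z^2$ is an $L_k$-translate of a point of $T_k$. Hence $\Phi$ on $T_k$ together with the translation rules determines $g_k$ everywhere, which gives uniqueness outright; what remains for existence is to check that the prescription is \emph{consistent}, in exactly two places.

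The first consistency check is that the three rules close up as a cocycle. Composing the translations by $w_1$, then $w_2$, then $w_3 = -(w_1+w_2)$ returns every argument to itself; the linear parts cancel because $A_k(w_1+w_2+w_3) = 0$, and, using that $A_k$ is symmetric, the constants cancel provided $c_1 + c_2 + c_3 = w_1^t A_k w_1 + w_1^t A_k w_2 + w_2^t A_k w_2$, a single finite identity verified directly from the data above. The second --- and main --- check is that $\Phi$ itself already obeys each rule on the overlaps: for each $i$ and each $x$ with $x, x - w_i \in T_k$ (such $x$ lie on one opposite pair of edges of the octagon $T_k$), one must verify $\Phi(x) = \Phi(x - w_i) + (A_k w_i)\cdot(x-w_i) + c_i$. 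Just as in the $B \cap E_i$ step of \lref{fordodo}, this reduces to elementary identities for the two pieces $\tfrac12|x_1|(|x_1|-1)$ and $\lfloor\tfrac14(x_1-x_2)^2\rfloor$, separated according to the parity of $x_1 - x_2$.

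I expect the main obstacle to be this last edge computation, for two reasons. The octagon $T_k$ has three distinct edge-types --- the horizontal caps at $x_2 = 0, 2k$ matched by $w_1$, and two families of slanted edges matched by $w_2$ and $w_3$ --- each requiring its own identity, and the floor term $\lfloor\tfrac14(x_1-x_2)^2\rfloor$ must be tracked carefully across all three. More delicate are the cut corners of the octagon, the doubled-boundary vertices where two edges meet: there two rules apply simultaneously and must yield the same value, and it is precisely the cocycle relation of the first check that guarantees this corner compatibility. Once the caps, slanted edges, and corners are verified, defining $g_k := \Phi$ on $T_k$ and extending by the rules yields a well-defined function satisfying all four conditions, completing both existence and uniqueness.
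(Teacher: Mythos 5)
Your proposal is correct and takes essentially the same route as the paper: the paper also extends the closed formula from $T_k$ via the translation rules and reduces existence to checking consistency of those rules on the boundary overlaps $T_k \setminus T_k'$, carrying out exactly the edge computation you describe for the vector $(k+1,k)$ (whose overlap points it enumerates as $(-j,j)$ and $(-1-j,j)$) and declaring the other two cases similar, with the cocycle/composition identity inherited from the Ford-circle template of \lref{fordodo} that it cites. Your explicit checks that the linear parts are $A_k w_i$, that the constants satisfy the closing identity, and that the lattice-point count $2k^2+6k+1$ exceeds the index $2k(k+1)$ by exactly the overlap count are all consistent with, and slightly more detailed than, what the paper records.
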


\begin{proof}
This is similar to the corresponding proof for the Ford circles above.  In particular, it suffices to check the consistency of the translation conditions on the set $T_k \setminus T_k'$, where
\begin{equation*}
T_k' := \{ x \in \Z^2 : |x_1| + |x_2 - k|  \leq k - 1  \}
\end{equation*}
is the interior of $T_k$.  We claim that this follows from the fourth condition.  If $x, x + (k+1,k) \in T_k$, then either $x = (-j,j)$ for $j = 1, ..., k-1$ or $x = (-1-j,j)$ for $j = 0, ..., k-1$.  We then compute
\begin{equation*}
g_k(x + (k+1,k)) = g_k(x) + (k,1) \cdot x + \tfrac{1}{2} k (k+1)
\end{equation*}
in either case.  This implies the consistency of the second condition.  The first and third conditions follow similarly.
\end{proof}

\begin{lemma}
\label{l.diamondlap}
For each $k \in \Z_+$, the Laplacian $\Delta g_k$ satisfies
\begin{align*}
& \Delta g_k(x + (0,2k)) = \Delta g_k(x + (k+1,k)) = \Delta g_k(x) & & \mbox{for } x \in \Z^2 \\
& \Delta g_k(x) = 1 & & \mbox{for } x \in T_k \setminus T_k' \\
& \Delta g_k(x) = (-1)^{x_1 + x_2} - \mathbf{1}_{\{ 0 \}}(x_1) & & \mbox{for } x \in T_k'.
\end{align*}
\end{lemma}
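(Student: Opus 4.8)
The plan is to establish the three assertions in the order listed. The first (periodicity) is immediate: applying $\Delta$ to each of the three translation identities in \lref{diamondodo} and using that the Laplacian annihilates affine functions gives $\Delta g_k(x+(0,-2k)) = \Delta g_k(x)$, $\Delta g_k(x+(k+1,k)) = \Delta g_k(x)$, and $\Delta g_k(x+(-k-1,k)) = \Delta g_k(x)$. Since $(0,2k)$ is an integer combination of these lattice vectors, the stated periodicity $\Delta g_k(x+(0,2k)) = \Delta g_k(x+(k+1,k)) = \Delta g_k(x)$ follows. This is exactly the argument used for the Ford case in \lref{fordlap}.

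For the interior formula I would compute $\Delta g_k$ directly from the closed form on $T_k$. The key geometric observation is that if $x \in T_k'$, i.e. $|x_1| + |x_2-k| \le k-1$, then all four neighbors $x \pm (1,0)$ and $x \pm (0,1)$ still lie in $T_k$, so the fourth rule of \lref{diamondodo} applies to the entire five-point stencil and no translation identities are needed. Writing $g_k(x) = A(x_1) - B(x_1-x_2)$ with $A(n) = \tfrac12 |n|(|n|-1)$ and $B(m) = \lfloor \tfrac14 m^2 \rfloor$, the horizontal and vertical second differences separate: $A$ contributes only through the horizontal direction, with second difference $1 - \mathbf{1}_{\{0\}}(x_1)$, while $B$ contributes through \emph{both} directions the common second difference of $B$ in the variable $m = x_1 - x_2$. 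A short parity computation shows this second difference of $B$ equals $1$ when $m$ is odd and $0$ when $m$ is even, i.e. $\tfrac12(1-(-1)^{x_1+x_2})$. Assembling the pieces yields $\Delta g_k(x) = (1 - \mathbf{1}_{\{0\}}(x_1)) - (1 - (-1)^{x_1+x_2}) = (-1)^{x_1+x_2} - \mathbf{1}_{\{0\}}(x_1)$, as claimed.

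The boundary claim $\Delta g_k \equiv 1$ on $T_k \setminus T_k'$ is the main obstacle, and I would handle it as in \lref{fordlap}: at a boundary site one or more neighbors leave $T_k$, so I must first evaluate $g_k$ at those outside neighbors by applying the appropriate translation identity to pull them back into $T_k$ and then using the closed form there. Because $g_k$ itself is not invariant under the reflections of $T_k$ (the term $(x_1-x_2)^2$ is not symmetric in $x_1$), I would not symmetrize $g_k$ directly; instead I would use the $L_k$-periodicity of $\Delta g_k$ to organize the sites of $T_k \setminus T_k'$ into a few representative families — the flat edges $|x_1| = k$ and $x_2 \in \{0,2k\}$, the two outer diagonal layers $|x_1| + |x_2-k| \in \{k,k+1\}$, and the corner sites where a flat edge meets a diagonal layer — and verify $\Delta g_k = 1$ on one representative of each. (The reflection $x_1 \mapsto -x_1$ preserves $L_k$, swapping the generators $(k+1,k)$ and $(-k-1,k)$, so once its compatibility with $g_k$ is checked via the translation relations it can be used to halve the casework.) The bookkeeping here is the delicate part: each site type determines which neighbors exit $T_k$ and hence which of the three translation vectors is relevant, and the corner and vertex sites — where the square constraints $|x_1| \le k$ and $|x_2-k| \le k$ become active and cut off the extreme points of the diamond — must be treated separately.

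Finally, throughout these computations the values of $g_k$ are unambiguous because \lref{diamondodo} already guarantees that the translation identities and the closed form are mutually consistent, so every evaluation of $g_k$ at a neighbor — inside or outside $T_k$ — is well-defined regardless of which rule is used to compute it.
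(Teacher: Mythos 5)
Your proposal is correct and follows essentially the same route as the paper, whose entire proof consists of the remark that the first two conditions follow by computations analogous to \lref{fordlap} and that the interior formula follows by inspection of the formula for $g_k$ on $T_k$: your periodicity argument is the fordlap computation, your interior calculation (splitting $g_k(x)=\tfrac12|x_1|(|x_1|-1)-\lfloor\tfrac14(x_1-x_2)^2\rfloor$ into the parts $A(x_1)$ and $B(x_1-x_2)$, noting all four neighbors of a point of $T_k'$ stay in $T_k$, and summing the second differences $1-\mathbf{1}_{\{0\}}(x_1)$ and $2\cdot\tfrac12(1-(-1)^{x_1+x_2})$) is exactly that inspection made explicit, and your boundary plan is the fordlap-style evaluation via the translation identities that the paper invokes. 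One small caution: the reflection $x_1\mapsto-x_1$ fixes neither $g_k$ nor its translation identities (it sends the $(0,-2k)$ relation with slope $(-k,k-1)$ to one with slope $(k,k-1)$), so the symmetry of $\Delta g_k$ is only available a posteriori; since you use it merely to halve casework you could carry out directly, this does not affect correctness.
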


\begin{proof}
The first two conditions follow by computations analogous to those in the proof of \lref{fordlap}. The formula in the third condition follows by inspection of the formula for $g_k$ on $T_k$.
\end{proof}

\begin{proposition}
\label{p.diamond}
For every diamond circle $C_k$, the odometer $g_k : \Z^2 \to \Z$ satisfies
\begin{equation*}
\Delta g_k(x) \leq 1 \quad \mbox{and} \quad g_k(x + v) = g_k(x) + x \cdot A_k v + g_k(v)
\end{equation*}
for all $x \in \Z^2$ and $v \in L_k$.  Moreover, the only infinite connected subset $X \subseteq \Z^2$ such that $\Delta (g_k + \mathbf{1}_X) \leq 1$ is $X = \Z^2$.
\end{proposition}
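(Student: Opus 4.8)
The plan is to verify the three assertions in turn, treating the explicit data of Lemmas \ref{l.diamondodo} and \ref{l.diamondlap} as given. The periodicity identity is a direct computation: it suffices to check $g_k(x+v)=g_k(x)+x\cdot A_k v+g_k(v)$ on the generators $(0,-2k)$ and $(k+1,k)$ of $L_k$, since once it holds for two vectors and all $x$, the symmetry of $A_k$ propagates it to their integer combinations. For $(0,-2k)$ I would match the first translation rule of \lref{diamondodo} against the identity by checking $A_k(0,-2k)=(-k,k-1)$ and $g_k(0,-2k)=-k(k-2)$ (using $g_k(0)=0$ from the closed formula), and likewise $A_k(k+1,k)=(k,1)$, $g_k(k+1,k)=\tfrac12 k(k+1)$ for the second generator. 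The bound $\Delta g_k\le 1$ is then immediate from \lref{diamondlap}: the Laplacian equals $1$ on $T_k\stm T_k'$ and equals $(-1)^{x_1+x_2}-\1_{\{0\}}(x_1)\le 1$ on $T_k'$, and by the periodicity of $\Delta g_k$ these values cover all of $\Z^2$.

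The substance is the maximality clause. Arguing exactly as in \pref{ford} (using $\Delta\1_{\tilde X}\le\Delta\1_X$ when $\tilde X$ is the complement of a connected component of $X^C$), I may assume $X$ and $X^C$ are both connected, with $X$ infinite and $X^C$ nonempty. The governing observation is that at any $x\in X^C$ adjacent to $X$ we have $\Delta\1_X(x)=\#\{y\sim x:y\in X\}\ge 1$, so $\Delta(g_k+\1_X)(x)\le 1$ forces $\Delta g_k(x)\le 0$; dually, any $x\in X^C$ with $\Delta g_k(x)=1$ must have all four neighbors in $X^C$.

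Next I would localize $X^C$ to one tile. Let $S_\partial=\bigcup_{v\in L_k}(T_k\stm T_k'+v)$ be the connected union of doubled tile boundaries, on which $\Delta g_k\equiv 1$ and whose complement is the disjoint union of the open diamonds $T_k'+v$. If $X^C$ met $S_\partial$, then since $S_\partial$ is connected, either it would contain a point of $X^C$ adjacent to $X$ (impossible, as $\Delta g_k=1$ there), or $S_\partial\sbs X^C$, which would trap the connected infinite set $X$ inside a single diamond. Hence $X^C\sbs\bigsqcup_v(T_k'+v)$, and connectedness pins it inside one diamond, which by $L_k$-periodicity of $\Delta g_k$ I may take to be $T_k'$; in particular $X^C$ is now finite. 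The endgame is an extreme-column argument on $T_k'$. Set $a:=\max\{x_1:x\in X^C\}$ and suppose $a>0$. Any $X^C$-point $(a,b)$ in this rightmost column has $\Delta g_k\le 0$ (a value-$1$ cell would need its absent right neighbor $(a+1,b)$ in $X^C$), hence value $-1$, so $a+b$ is odd. Then the vertical neighbors $(a,b\pm1)$ cannot lie in $X^C$: each either falls outside $T_k'$, or is an even-parity value-$1$ cell whose absent right neighbor would have to be in $X^C$. Thus $(a,b)$ has at least three neighbors in $X$, giving $\Delta(g_k+\1_X)(a,b)\ge -1+3=2$, a contradiction. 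So $a\le 0$, and by the reflection symmetry $x_1\mapsto -x_1$ of both $T_k$ and the formula for $\Delta g_k$, also $\min\{x_1:x\in X^C\}\ge 0$; hence $X^C$ lies on the defect axis $\{x_1=0\}$. A connected axial set is a vertical segment whose points all have both horizontal neighbors off-axis, hence in $X$; an even-height axis point (where $\Delta g_k=0$) then gives $\Delta(g_k+\1_X)\ge 2$, while a lone odd-height point gives $\Delta(g_k+\1_X)=-2+4=2$. Either way $X^C=\emptyset$, i.e. $X=\Z^2$.

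I expect the main obstacle to be exactly the interior value-$1$ cells (the black checkerboard sites inside each diamond), which block the naive claim that $X^C$ avoids every site with $\Delta g_k=1$. The resolution is the forcing principle above: such cells may belong to $X^C$ only as interior cells, which is precisely what makes an extreme column unable to contain one, closing the argument. Combined with \lref{suffmax}, this moreover clause upgrades to the maximality asserted in \tref{odometer} for the diamond circles.
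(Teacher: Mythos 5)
Your proof is correct and takes essentially the same approach as the paper's: periodicity and $\Delta g_k\leq 1$ are read off from Lemmas~\ref{l.diamondodo} and~\ref{l.diamondlap}, the complement $X^C$ is localized to a single tile interior exactly as in the first part of the argument of Proposition~\ref{p.ford}, and the explicit checkerboard formula for $\Delta g_k$ on $T_k'$ replaces the Ford recursion. Your extreme-column and axis analysis simply spells out the endgame that the paper compresses into the remark that the recursive argument can be avoided ``because of the explicit computation of $\Delta g_k$ on $T_k'$ in \lref{diamondlap}.''
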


\begin{proof}
This follows from the above lemma and the first part of the argument of \pref{ford}.  We can avoid the recursive argument in this case because of the explicit computation of $\Delta g_k$ on $T_k'$ in \lref{diamondlap}.
\end{proof}

\section{Circles and Lattices}
\label{s.lattice}

\subsection{Periodicity conditions}
When defining odometers for the Ford and diamond circles above, we extended a finite amount of data to all of $\Z^2$ via periodicity conditions of the form
\begin{equation*}
g(x + v) = g(x) + a \cdot x + k \quad \mbox{for } x \in \Z^2,
\end{equation*}
where $v, a \in \Z^2$ and $k \in \Z$.  In each of \lref{fordodo} and \lref{diamondodo}, three pairs of vectors $(v_i, a_i)$ appear in such conditions, and these vectors have several suggestive properties.  If we view each vector as a Gaussian integer $\Z[\I]$ via the usual identification $x \mapsto x_1 + \I x_2$, then we have
\begin{equation*}
v_1 + v_2 + v_3 = 0\quad\mbox{and}\quad a_1 + a_2 + a_3 = 0.
\end{equation*}
In this section, we generalize these vectors, associating pairs $(v_i, a_i)$ of vectors to each circle in $\B$. Our calculations largely follow those of Stange \cite{Stange}, who, motivated by data on our lattices $L_C$ for $C\in \B$ and Conway's association of lattices to quadratic forms \cite{Conway}, studied ways to associate lattices to circles in an Apollonian circle packing.

\subsection{Action of the Apollonian group}
For the rest of this paper, we identify $\C$ with $\R^2$ and $\Ga$ with $\Z^2$ in the usual way.  Curvature coordinates are made complex by writing $C=(c,cz)\in \R\times \C$ for a circle with radius $c^{-1}$ and center $z\in \C$.  Part of the band packing in complex curvature coordinates is shown in \fref{band}.  As noted in the introduction, a pairwise-tangent triple $(C_1,C_2,C_3)$ is related linearly to its Soddy circles $C_0,C_4$ in curvature coordinates by
$$C_0+C_4=2(C_1+C_2+C_3).$$
This relation works also for lines, with the convention that the curvature coordinates of a line $\ell$ are $(0,z)$ where $z$ is the unit normal vector to the line, oriented away from the component of $\R^2\stm \ell$ containing the other circles in the triple.  In particular, all lines in the circle packing $\B$ have coordinates $(0,-1)$ or $(0,1)$.

A {\em Descartes quadruple} is a list of four circles such that any three form a pairwise-tangent triple.  As any pairwise tangent triple of circles has exactly two Soddy circles, any pairwise tangent triple of circles can likewise be completed to exactly two Descartes quadruples, up to permutation. We call a Descartes quadruple $(C_0,C_1,C_2,C_3) \in \B^4$ {\em proper} if the curvatures satisfy $c_0 > \max \{ c_1, c_2, c_3 \}$ and points of tangency between $C_0$ and $C_1$, $C_2$, $C_3$ are clockwise around $C_0$.

Note that, if $(C_0,C_1,C_2,C_3) \in \B^4$ is a proper Descartes quadruple, then so is the {\em parent rotation} $(C_0,C_2,C_3,C_1)$ and the {\em successor} $(2(C_0 + C_2 + C_3) - C_1, C_0, C_2, C_3)$.  Each circle $C_0\in \B$ determines a Descartes quadruple $(C_0,C_1,C_2,C_3)$ up to parent rotation. Moreover, any pairwise tangent triple $(C_1,C_2,C_3) \in \B^3$ can be completed to a proper Descartes quadruple $(C_0,C_1,C_2,C_3)$ in at most one way.

\begin{figure}
\begin{center}
\nofig{\input{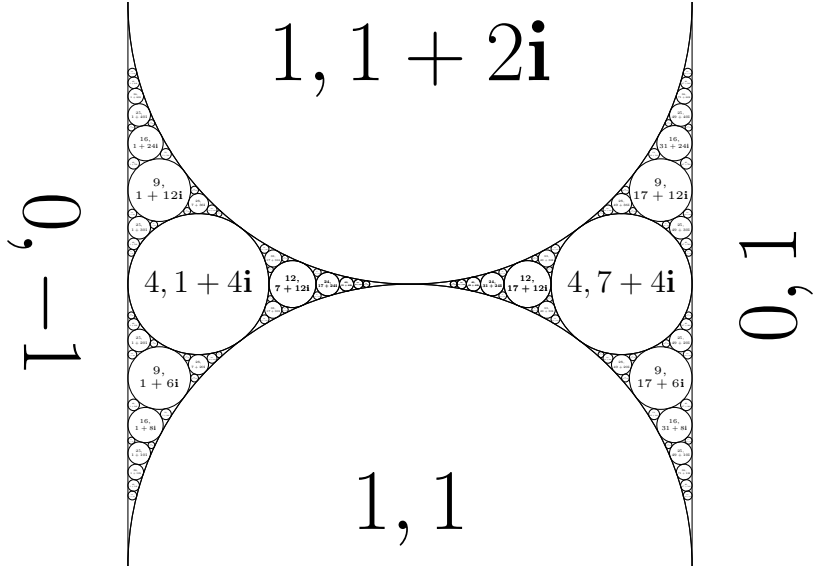}}
\end{center}
\caption{\label{f.band} The Apollonian Band packing $\B_0$ with complex curvature coordinates, in the region $0<\Im(z)<2$.  The circles tangent to the line $(0,-1)$ are Ford circles.}
\end{figure}

\subsection{Lattice vectors}
\label{s.latticevectors}

We now assign vectors $v(C,C'), a(C,C') \in \Z[\I]$ to each pair of tangent circles $C, C' \in \B$.  In analogy to the two degenerate cases, the vectors $v(C,C')$ and $a(C,C')$ will generate periodicity conditions of the odometers associated to $C$ and $C'$.

To describe our recursive construction, let us call a Descartes quadruple {\em semi-proper} if it is either proper or a parent rotation of
\[
(C_0,C_1,C_2,C_3)=(\,(1,1+2z),\,(1,1+2z+2\I),\,(0,1),\,(0,-1)\,)
\]
for some $z \in \Ga$.  These latter quadruples are associated to the large circles in $\B$ and fail to be proper because one of the parents has the same curvature as the child.  Note that, modulo parent rotations, the set of semi-proper quadruples is a directed forest in which each node has exactly one parent and three children.  We induct along this tree structure to define our vectors.

For each semi-proper Descartes quadruple $(C_0,C_1,C_2,C_3)$ we define $v(C_j,C_i)$ and $a(C_j,C_i)$ for all rotations $(i,j,k)$ of $(1,2,3)$.  For the base case, we consider the clockwise Descartes quadruple
\[
(C_0,C_1,C_2,C_3)=(\,(1,1+2z),\,(1,1+2z+2\I),\,(0,1),\,(0,-1)\,)\quad\mbox{for}\quad z\in \Ga,
\]
and set
\begin{equation*}
\begin{aligned}
v(C_3,C_2) & = 0 \\
v(C_2,C_1) & = 1 \\
v(C_1,C_3) & = -1
\end{aligned}
\qquad
\begin{aligned}
a(C_3,C_2) & = 1 \\
a(C_2,C_1) & = z\\
a(C_1,C_3) & = -1-z.
\end{aligned}
\end{equation*}
For the induction step, we fix a semi-proper Descartes quadruple $(C_0,C_1,C_2,C_3)$ and suppose that $v(C_j,C_i)$ and $a(C_j,C_i)$ have been defined for all rotations $(i,j,k)$ of $(1,2,3)$.  For the successor quadruple $(2(C_0+C_2+C_3)-C_1,C_0,C_2,C_3)$, we define
\begin{equation*}
\label{e.latticeinductdefin}
\begin{aligned}
v(C_2,C_0) & = v(C_2,C_1) - \I v(C_3,C_2) \\
v(C_0,C_3) & = v(C_1,C_3) + \I v(C_3,C_2) \\
a(C_2,C_0) & = a(C_2,C_1) + \I a(C_3,C_2) \\
a(C_0,C_3) & = a(C_1,C_3) - \I a(C_3,C_2),
\end{aligned}
\end{equation*}
so that $v(C_j,C_i)$ and $a(C_j,C_i)$ are defined for all rotations $(i,j,k)$ of $(0,2,3)$.  Since every pair of tangent circles $C,C' \in \B$ can be completed to a proper Descartes quadruple of the form $(C_0,C,C',C_3)$, this recursive construction generates vectors for all pairs of tangent circles in $\B$.  Since the quadruple of the big circle $(1,1+2 z)$ is a successor of the quadruple of $(1,1+2z + 2\I)$, we must also check everything is well-defined, but this is immediate from the definition.

\begin{lemma}
\label{l.lattice}
If $(C_0,C_1,C_2,C_3) \in \B^4$ is a proper Descartes quadruple and we write $C_i = (c_i, c_i z_i)$, $v_{ij} = v(C_i,C_j)$, and $a_{ij} = a(C_i,C_j)$, then the following hold.
\begin{subequations}
\label{e.lattice}
\begin{align}
& v_{32} + v_{13} + v_{21} = 0 & &a_{32} + a_{13} + a_{21} = 0 \label{e.lattice.superbasis}\\
& v_{10} = v_{13} - \I v_{21} & &a_{10} = a_{13} + \I a_{21} \label{e.lattice.induct}\\
& v_{01} = \I v_{10} & &a_{01} = -\I a_{10} \label{e.lattice.flip}\\
& v_{32}^2 = c_3 c_2 (z_3 - z_2) & &2 v_{32} a_{32} = c_3 z_3 + c_2 z_2 \label{e.lattice.matrix}\\
& \bar v_{13} v_{21} + v_{13} \bar v_{21} =  -2c_1 \label{e.lattice.determinant} && 
\end{align}
\end{subequations}
\end{lemma}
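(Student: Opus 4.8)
The plan is to prove all five identity-pairs simultaneously by induction along the directed forest of semi-proper Descartes quadruples introduced just before the lemma, since the quadratic relations \eref{lattice.matrix}, \eref{lattice.determinant} and the linear relations \eref{lattice.superbasis}--\eref{lattice.flip} feed into one another. For the base case I would substitute the explicit values $v(C_3,C_2)=0$, $v(C_2,C_1)=1$, $v(C_1,C_3)=-1$ and $a(C_3,C_2)=1$, $a(C_2,C_1)=z$, $a(C_1,C_3)=-1-z$ of the clockwise quadruple $(\,(1,1+2z),(1,1+2z+2\I),(0,1),(0,-1)\,)$ into each identity; using $c_2=c_3=0$ (the two lines) and the convention that $c_iz_i$ is the second curvature coordinate, every identity collapses to a one-line check (for instance $\bar v_{13}v_{21}+v_{13}\bar v_{21}=-1-1=-2=-2c_1$, since $c_1=1$). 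Note that \eref{lattice.induct} and \eref{lattice.flip} are vacuous here, as no child--parent vectors have yet been created.

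For the inductive step I fix a proper quadruple $(C_0,C_1,C_2,C_3)$ satisfying the lemma and verify it for the successor $Q'=(C_0',C_0,C_2,C_3)$ with $C_0'=2(C_0+C_2+C_3)-C_1$, whose two new parent--parent vectors are $v(C_2,C_0)=v_{21}-\I v_{32}$ and $v(C_0,C_3)=v_{13}+\I v_{32}$ (similarly for $a$). The superbasis relation propagates by pure substitution: for $Q'$ it reads $v_{32}+(v_{13}+\I v_{32})+(v_{21}-\I v_{32})=v_{32}+v_{13}+v_{21}$, which is exactly the hypothesis \eref{lattice.superbasis}, and the $a$-version is identical. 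The relation \eref{lattice.induct}, $v_{10}=v_{13}-\I v_{21}$, is in fact definitional: reading off the successor rule after the parent rotation that moves $C_3$ into the first-parent slot assigns $v(C_1,C_0)=v(C_1,C_3)-\I v(C_2,C_1)$, which is precisely \eref{lattice.induct} (and likewise for $a$). The orientation-flip \eref{lattice.flip}, $v_{01}=\I v_{10}$, is a genuine compatibility: the unordered pair $\{C_0,C_1\}$ acquires its two ordered vectors from the two parent rotations of the same forest node, and one must check these agree up to the factor $\I$; this I would carry through the induction together with the quadratic relations. By parent-rotation invariance I am free to read \eref{lattice.matrix} off any parent pair, so it suffices to treat the two newly created pairs.

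The main obstacle is propagating the quadratic relations \eref{lattice.matrix} and \eref{lattice.determinant}. For the $v^2$ part I would expand $v(C_2,C_0)^2=v_{21}^2-v_{32}^2-2\I v_{21}v_{32}$ and remove the cross term using $v_{13}=-(v_{21}+v_{32})$ from \eref{lattice.superbasis} together with the inductive instances $v_{21}^2=c_2c_1(z_2-z_1)$, $v_{32}^2=c_3c_2(z_3-z_2)$, $v_{13}^2=c_1c_3(z_1-z_3)$; the target $c_2c_0(z_2-z_0)$ then follows by inserting the Soddy relation $C_0+C_4=2(C_1+C_2+C_3)$ componentwise, $c_0=2(c_1+c_2+c_3)-c_4$ and $c_0z_0=2(c_1z_1+c_2z_2+c_3z_3)-c_4z_4$. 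The delicate point is \eref{lattice.determinant}: expanding $\bar v(C_0,C_3)\,v(C_2,C_0)$ plus its complex conjugate produces real combinations governed by the inductive \eref{lattice.determinant} (yielding $-2c_1$), a norm term $-2|v_{32}|^2$, and \emph{imaginary} combinations $\bar v_{13}v_{32}-v_{13}\bar v_{32}$ that \eref{lattice.determinant} does not control. The missing bridge is the norm identity $|v_{32}|^2=|v_{32}^2|=c_2c_3|z_3-z_2|=c_2+c_3$, obtained from \eref{lattice.matrix} and the tangency relation $|z_3-z_2|=c_2^{-1}+c_3^{-1}$; matching the remaining imaginary part forces an identity of the shape $\mathrm{Im}(\bar v_{21}v_{32})=\tfrac12(c_1+c_2+c_3-c_4)$, which is accessible only through the $v^2$-relations \eref{lattice.matrix} and the Descartes constraint on $C_4$. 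Reconciling both the real and the imaginary parts of these cross products against the curvature data is where the induction is tightest, and it is precisely why all five families must be transported through the induction together rather than one at a time.
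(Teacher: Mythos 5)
Your base case and your treatment of the linear relations \eref{lattice.superbasis}--\eref{lattice.flip} are essentially the paper's (they are indeed built into the recursive definition; in fact \eref{lattice.flip} needs no induction at all, since comparing the two successor rules coming from the parent rotations gives $v_{01}=v_{21}+\I v_{13}$ and $v_{10}=v_{13}-\I v_{21}$, whence $v_{01}=\I v_{10}$ directly). The genuine gap is in your propagation of \eref{lattice.matrix}. Your expansion yields $v(C_2,C_0)^2=(1+\I)v_{21}^2-(1-\I)v_{32}^2-\I v_{13}^2$, an expression involving only data of $C_1,C_2,C_3$; but the target $c_2c_0(z_2-z_0)$, rewritten via the Soddy relation, contains $c_4$ and $c_4z_4$ with nonzero coefficients. ``Inserting the Soddy relation componentwise'' therefore cannot finish the computation: Soddy is spent expressing $C_0$ in terms of $C_1,\dots,C_4$, and your toolkit then supplies no relation whatsoever tying the precursor data $(c_4,c_4z_4)$ to the rest. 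The identity you need is true, but it requires the matrix relation for pairs involving $C_4$. This is exactly why the paper's induction hypothesis covers \emph{two} quadruples, $(C_0,C_1,C_2,C_3)$ and $(C_1,C_4,C_2,C_3)$, and why its computation first uses \eref{lattice.flip} to write $v_{10}=v_{13}-v_{12}$ and then the parallelogram identity $(a-b)^2=2a^2+2b^2-(a+b)^2$, together with $v_{12}+v_{13}=\pm v_{14}$ (a consequence of \eref{lattice.superbasis}--\eref{lattice.flip} for the precursor quadruple), to get $v_{10}^2=2v_{13}^2+2v_{12}^2-v_{14}^2$: every term is a pure square controlled by induction --- the last one, $v_{14}^2=c_1c_4(z_1-z_4)$, by the precursor quadruple --- and only then does Soddy close the computation. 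Your step can be repaired the same way (for $v(C_2,C_0)$ one uses $v_{21}+v_{23}=v_{24}$ and $v_{24}^2=c_2c_4(z_2-z_4)$), but as written it does not go through; note also that you never address the companion relation $2v_{32}a_{32}=c_3z_3+c_2z_2$, which needs the parallel computation with the $a$'s.

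For \eref{lattice.determinant} you propose inductive propagation and correctly locate the obstruction --- imaginary cross terms such as $\Im(\bar v_{21}v_{32})$, which none of the five stated families controls --- but you then leave precisely that step unproved; transporting ``all five families together'' does not supply it, since these $\Im$-identities are not instances of any of the five (you would have to add them to the induction as a sixth family and prove their propagation too). The paper avoids this entirely: \eref{lattice.determinant} is \emph{not} proved by induction. Once \eref{lattice.matrix} is available, tangency gives $|v_{ij}|^2=c_ic_j|z_i-z_j|=c_i+c_j$ (you derived this), and then
\begin{equation*}
v_{13}v_{21}\bigl(\bar v_{13}v_{21}+v_{13}\bar v_{21}\bigr)=|v_{13}|^2v_{21}^2+|v_{21}|^2v_{13}^2=(c_1+c_3)v_{21}^2+(c_1+c_2)v_{13}^2
=c_1\bigl(v_{21}^2+v_{13}^2-v_{32}^2\bigr)=-2c_1v_{13}v_{21},
\end{equation*}
where the third equality uses the matrix relations and the last uses $v_{32}=-(v_{13}+v_{21})$; dividing by $v_{13}v_{21}\neq 0$ gives \eref{lattice.determinant} for every proper quadruple at once. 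Adopting this non-inductive derivation (which also settles your cross-term identities as a by-product) would eliminate the most delicate and currently unsupported part of your plan.
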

The properties \eref{lattice.superbasis}, \eref{lattice.induct}, and \eref{lattice.flip} give inductive relationships among the vectors. 
The property \eref{lattice.matrix} expresses the vectors $v(C,C')$ and $a(C,C')$ up to sign in terms of the circles $C,C'\in \B$.  
Finally, \eref{lattice.determinant} implies that the determinant of the lattice generated by $v_{31}$ and $v_{21}$ is $c_1$.  Observe that since each statement holds also for any rotation of the Descartes quadruple, we have, e.g., that $v_{02}=\I v_{20}$, etc.

\begin{proof}[Proof of \lref{lattice}]
The inductive construction gives \eref{lattice.superbasis}--\eref{lattice.flip} immediately.  To check \eref{lattice.matrix}, we first observe that it holds for the base case and its immediate successors.  By induction, it suffices to assume \eref{lattice.matrix} holds for the proper Descartes quadruples $(C_0,C_1,C_2,C_3), (C_1,C_4,C_2,C_3) \in \B^4$ and check the conditions for each of the three successors of $(C_0,C_1,C_2,C_3)$. We write $w_i = c_i z_i$ and compute
\begin{equation*}
\begin{aligned}
v_{10}^2 & = (v_{13} - \I v_{21})^2 \\
& = (v_{13} - v_{12})^2 \\
& = 2 v_{13}^2 + 2 v_{12}^2 - (v_{13} + v_{12})^2 \\
& = 2 v_{13}^2 + 2 v_{12}^2 - v_{14}^2 \\
& = 2 (c_3 w_1 - c_1 w_3) + 2(c_2 w_1- c_1 w_2) - (c_4 w_1 - c_1 w_4) \\
& = (2(c_1 + c_2 + c_3) - c_4) w_1 - c_1 (2(w_1+w_2+w_3) - w_4) \\
& = c_0 w_1 - c_1 w_0,
\end{aligned}
\end{equation*}
using the induction hypotheses and the linear Soddy relation. We also compute
\begin{equation*}
\begin{aligned}
v_{10} a_{10} & = (v_{13} - \I v_{21})(a_{13} + \I a_{21}) \\
& = v_{13} a_{13} + v_{21} a_{21} + \I (v_{13} a_{21} - v_{21} a_{13}) \\
& = v_{13} a_{13} + v_{21} a_{21} - v_{13} a_{12} - v_{12} a_{13} \\
& = 2 v_{13} a_{13} + 2 v_{21} a_{21} - (v_{13} + v_{12})(a_{13}+a_{12}) \\
& = 2 v_{13} a_{13} + 2 v_{21} a_{21} - v_{14} a_{14} \\
& = \tfrac{1}{2} (w_1 + 2 (w_1 + w_2 + w_3) - w_4) \\
& = \tfrac{1}{2}(w_1 + w_0).
\end{aligned}
\end{equation*}
The relations $v_{i0}^2 = c_i c_0 (z_i - z_0)$ and $2 v_{i0} a_{i0} = c_i z_i + c_0 z_0$ for $i = 2, 3$ follow by analogous computations.

The relation \eref{lattice.determinant} can be obtained as follows.  Observe that $v_{ij}^2 = c_i c_j (z_i - z_j)$ implies $|v_{ij}|^2 = c_i + c_j$ and therefore
\begin{equation*}
\begin{aligned}
v_{13} v_{21} (\bar v_{13} v_{21} + v_{13} \bar v_{21}) & = (c_1 + c_3) v_{21}^2 + (c_2 + c_1) v_{13}^2 \\
& = c_1 (v_{21}^2 + v_{13}^2) + c_3(c_1 w_2 - c_2 w_1) + c_2(c_3 w_1 - c_1 w_3) \\
& = c_1 (v_{21}^2 + v_{13}^2) - c_1 (c_3 w_2 - c_2 w_3) \\
& = c_1 (v_{21}^2 + v_{13}^2 - v_{32}^2) \\
& = - 2 c_1 v_{21} v_{13}.
\end{aligned}
\end{equation*}
Since at most one of the $C_i$ is a line when we are not in the base case, we have $v_{ij} \neq 0$ and thus obtain the next relation.  
\end{proof}

Given a circle $C_0$ in the band packing $\B$ with the  proper Descartes quadruple $(C_0,C_1,C_2,C_3)$, we define the lattice
	\[ \Lambda_{C_0} = \Z v(C_1,C_0) + \Z v(C_2,C_0) + \Z v(C_3,C_0) \]
(any two of the three summands suffice since $\sum_{i=1}^3 v(C_i,C_0) = 0$).
Next we compare $\Lambda_C$ with the lattice $L_C$ of \eref{LC}.

\begin{lemma}
\label{l.lattice1}
$\Lambda_C \subseteq L_{C}$ for all $C \in \B$.
\end{lemma}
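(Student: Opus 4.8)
The plan is to prove the stronger, cleaner identity $A_{C_0}v_{i0}=a_{i0}$. Since $\Lambda_{C_0}$ is generated by the three vectors $v_{i0}=v(C_i,C_0)$ $(i=1,2,3)$ attached to the proper (or base-case semi-proper) Descartes quadruple $(C_0,C_1,C_2,C_3)$ of $C_0$, and since $L_{C_0}=\{v\in\Z^2\mid A_{C_0}v\in\Z^2\}$ is closed under $\Z$-linear combinations, it suffices to check that $A_{C_0}v_{i0}\in\Ga$ for each $i$. I will in fact show the exact equality $A_{C_0}v_{i0}=a_{i0}\in\Ga$, which both yields the inclusion and explains the role of the auxiliary vectors $a(C,C')$.

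First I would rewrite the matrix action in complex form. Identifying $\R^2$ with $\C$, a direct computation with $A_C=\tfrac12\smat{r+x_1 & x_2 \\ x_2 & r-x_1}$ shows that for a circle $C=(c,w)$ with $c>0$ (center $z=w/c$, radius $r=1/c$) one has
\[ A_C v = \tfrac{1}{2c}\paren{v + w\bar v} \qquad (v\in\C). \]
The child $C_0$ of any (semi-)proper quadruple is a genuine circle, so $c_0\ge 1>0$ and this formula applies with $C=C_0$.

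The computation then closes using \lref{lattice}. By \eref{lattice.matrix}, applied to the pair $(C_i,C_0)$ — these relations are established for every parent--child pair $(C_i,C_0)$, and for the base-case big circles, inside the proof of \lref{lattice} — we have $v_{i0}^2=c_0 w_i-c_i w_0$ and $2v_{i0}a_{i0}=w_i+w_0$, where $w_j=c_j z_j$; and, as observed there, $|v_{i0}|^2=c_i+c_0$. Multiplying $v_{i0}+w_0\bar v_{i0}$ by $v_{i0}$ gives
\[ v_{i0}\paren{v_{i0}+w_0\bar v_{i0}} = v_{i0}^2 + w_0|v_{i0}|^2 = (c_0 w_i - c_i w_0)+w_0(c_i+c_0)=c_0(w_i+w_0)=2c_0\,v_{i0}a_{i0}. \]
Since $c_0>0$, the relation $v_{i0}^2=c_0w_i-c_iw_0$ forces $v_{i0}\neq0$ (it vanishes only when $C_i$ and $C_0$ coincide), so I may cancel $v_{i0}$ to obtain $v_{i0}+w_0\bar v_{i0}=2c_0 a_{i0}$, i.e.\ $A_{C_0}v_{i0}=a_{i0}\in\Ga$. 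This gives $\Lambda_{C_0}\subseteq L_{C_0}$.

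The main obstacle I anticipate is bookkeeping rather than depth: pinning down the complex form of $A_C$ correctly, and making sure the algebraic relations of \lref{lattice} — stated there only for the pair $(C_3,C_2)$ — are legitimately available for the parent--child pairs $(C_i,C_0)$ and for the base-case big circles, where two of the parents are lines. Using the curvature form $v_{i0}^2=c_0w_i-c_iw_0$ (rather than $c_ic_0(z_i-z_0)$) sidesteps the undefined centers $z_i$ of lines, and the nonvanishing $v_{i0}\neq0$ is exactly what lets the whole identity collapse to $A_{C_0}v_{i0}=a_{i0}$.
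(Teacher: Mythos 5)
For children of \emph{proper} Descartes quadruples your computation is correct and is, in substance, the paper's own proof of \lref{lattice1}: the paper derives exactly the identity $a_{i0}=A_{C_0}v_{i0}$ (equation \eref{ai0vi0}) from the relations $2v_{i0}a_{i0}=c_iz_i+c_0z_0$, $v_{i0}^2=c_ic_0(z_i-z_0)$, $|v_{i0}|^2=c_i+c_0$ and the complex form of $A_{C_0}$; whether one divides by $v_{i0}$ (the paper) or multiplies through by $v_{i0}$ and cancels (you) is immaterial, and your use of the curvature form $c_0w_i-c_iw_0$ to accommodate a line parent matches what the paper does inside the proof of \lref{lattice}.

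The genuine gap is the base case. You assert that the relations of \lref{lattice} are ``legitimately available \dots for the base-case big circles,'' and hence that $A_{C_0}v_{i0}=a_{i0}$ holds when $C_0$ has curvature $1$. This is not justified, and in fact it is false. \lref{lattice} is stated only for proper quadruples, and the quadruple of a big circle is semi-proper but not proper (one parent has the same curvature as the child), so the lemma asserts nothing there. Moreover, applying the successor formulas of \sref{latticevectors} directly to the base case $((1,1+2z),(1,1+2z+2\I),(0,1),(0,-1))$ gives $v(C_2,C_0)=v(C_2,C_1)-\I v(C_3,C_2)=1$ and $a(C_2,C_0)=a(C_2,C_1)+\I a(C_3,C_2)=z+\I$, whereas
\begin{equation*}
A_{C_0}=\mat{1+\Re z & \Im z\\ \Im z & -\Re z},\qquad A_{C_0}\,v(C_2,C_0)=1+z\ \neq\ z+\I=a(C_2,C_0),
\end{equation*}
and correspondingly $2v_{20}a_{20}=2z+2\I$ while $c_2z_2+c_0z_0=2+2z$, so the second half of \eref{lattice.matrix} simply does not hold for the circle--line pairs of the base case and cannot be invoked there. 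The repair is cheap and is precisely the paper's first sentence: when $C_0$ has radius $1$, the matrix $A_{C_0}$ displayed above has integer entries, so $L_{C_0}=\Z^2$ and $\Lambda_{C_0}\subseteq L_{C_0}$ is trivial (note that both $1+z$ and $z+\I$ lie in $\Ga$, so the conclusion survives even though your identity does not). With that case split restored, your argument for the remaining circles is valid and coincides with the paper's.
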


\begin{proof}

If $C_0 \in {\B}$ has radius $1$, then $\Lambda_{C_0} = L_{C_0}=\Z^2$ and there is nothing to prove.  Otherwise, we may assume that $C_0$ is part of a proper Descartes quadruple $(C_0,C_1,C_2,C_3) \in {\B}^4$.  Using \lref{lattice}, we compute for $i=1,2,3$
\begin{equation}
\begin{aligned}
\label{e.ai0vi0}
a_{i0} & = \tfrac{1}{2} (c_i z_i + c_0 z_0) v_{i0}^{-1} \\
& = \tfrac{1}{2} (c_0^{-1} v_{i0}^2 + (c_i + c_0) z_0) v_{i0}^{-1} \\
& = \tfrac{1}{2} c_0^{-1} v_{i0} + \tfrac{1}{2} z_0  \overline{v_{i0}} \\
& = A_{C_0} v_{i0}.
\end{aligned}
\end{equation}
Since $A_{C_0}v_{i0}=a_{i0} \in \Z[\I]$ we conclude that $v_{i0} \in L_{C_0}$ for $i=1,2,3$.
\end{proof}

In fact $\Lambda_{C}=L_C$ for all $C\in \B$, but our proof of the inclusion $L_{C}\subseteq \Lambda_{C}$ uses our construction of the odometers $g_C$, and is thus postponed to the end of \sref{maximality}.
\smallskip

Observe that \eref{lattice.induct} can be rewritten as $v_{0i}= v_{ji} \pm v_{ki}$, where $(i,j,k)$ is a rotation of $(1,2,3)$ and sign of $v_{ki}$ depends on whether or not $C_i$ is a parent of $C_k$. Inductively, this implies that a vector $v(C,C_0)$ lives in the lattice of the circle $C_0$:

\begin{lemma}
\label{l.biglatticevectors}
Given a proper Descartes quadruple $(C_0,C_1,C_2,C_3) \in \B^4$ and any $C\in\B$ which is tangent to and smaller than $C_0$, we have that $v(C,C_0)\in \Lambda_{C_0}$.\qed
\end{lemma}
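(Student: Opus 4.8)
The plan is to prove the statement by induction along the directed forest of semi-proper Descartes quadruples, restricting attention to those quadruples in which $C_0$ appears as a parent. The one piece of algebra I need first is the restatement of \eref{lattice.induct} indicated in the paragraph preceding the lemma: combining \eref{lattice.induct} with the flip \eref{lattice.flip} gives $v_{01} = \I v_{10} = v_{21} + \I v_{13}$, while \eref{lattice.matrix} yields $v_{13}^2 = -v_{31}^2$, hence $\I v_{13} = \pm v_{31}$. Therefore, for a proper quadruple $(C_0,C_1,C_2,C_3)$ and any rotation $(i,j,k)$ of $(1,2,3)$, the child-to-parent vector satisfies $v_{0i} = v_{ji} \pm v_{ki}$. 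Since lattice membership is insensitive to signs, the content is that the vector from the child to a fixed parent $C_i$ is a $\Z$-combination of the two vectors joining the other two parents to $C_i$.

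Now fix $C_0$ with proper quadruple $(C_0,C_1,C_2,C_3)$, so that $\Lambda_{C_0} = \Z v_{10} + \Z v_{20} + \Z v_{30}$ where $v_{i0} = v(C_i,C_0)$. I would prove, by induction over the quadruples $Q$ of the forest that have $C_0$ as a parent, the invariant: for both parents $D \neq C_0$ of $Q$, the vector $v(D,C_0)$ lies in $\Lambda_{C_0}$. The roots of this part of the forest are the three successors of $(C_0,C_1,C_2,C_3)$; for instance $(2(C_0+C_2+C_3)-C_1,\,C_0,\,C_2,\,C_3)$ has the two relevant vectors $v(C_2,C_0)=v_{20}$ and $v(C_3,C_0)=v_{30}$, which are generators of $\Lambda_{C_0}$. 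The other two roots are handled identically, so the base case is immediate.

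For the inductive step, let $Q = (D_0,D_1,D_2,D_3)$ with $D_1 = C_0$ satisfy the invariant, i.e. $v(D_2,C_0), v(D_3,C_0) \in \Lambda_{C_0}$. The restated relation applied to $Q$ gives $v(D_0,C_0) = v(D_2,C_0) \pm v(D_3,C_0) \in \Lambda_{C_0}$; since $D_0$ is the child of $Q$, this already establishes the lemma for $C = D_0$. Exactly two of the three successors of $Q$ retain $C_0$ as a parent, with parent triples $\{C_0,D_0,D_2\}$ and $\{C_0,D_0,D_3\}$, and for each the two parent-to-$C_0$ vectors belong to $\{v(D_0,C_0),v(D_2,C_0),v(D_3,C_0)\} \subseteq \Lambda_{C_0}$, so the invariant propagates. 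Finally, every circle $C\in\B$ tangent to and smaller than $C_0$ is the child of a unique such quadruple, because in an Apollonian packing the only circles tangent to $C$ and larger than $C$ are its three parents; hence $C_0$ is a parent of $C$, and $v(C,C_0) \in \Lambda_{C_0}$ as computed.

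The algebra is routine and the signs are irrelevant, so the real care goes into the combinatorics of the forest: verifying that passing to successors keeps $C_0$ a parent in precisely two of the three branches, that these branches (rooted at the three successors of $C_0$'s own quadruple) are exactly the quadruples having $C_0$ as a parent, and that their children exhaust the circles tangent to and smaller than $C_0$. The last point rests on the structural fact that every circle tangent to and larger than a given circle in the packing is one of its three parents; I would either cite this or derive it from the forest structure already established, and it is the step I expect to require the most attention.
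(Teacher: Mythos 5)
Your proposal is correct and takes essentially the same approach as the paper: the paper's proof of this lemma consists solely of the observation that \eref{lattice.induct} rewrites as $v_{0i}=v_{ji}\pm v_{ki}$ followed by the word ``inductively,'' and your argument---inducting along the subforest of quadruples having $C_0$ as a parent, with the invariant that the vectors from the two non-$C_0$ parents to $C_0$ lie in $\Lambda_{C_0}$---is exactly the induction being gestured at. The one point you flag as needing care (that every circle of $\B$ tangent to and smaller than $C_0$ has $C_0$ among its parents) is likewise taken for granted by the paper, which assumes throughout that tangent pairs in $\B$ always share a Descartes quadruple.
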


Finally, we check that our general lattice vector construction agrees with the degenerate cases.

\begin{proposition}
\label{p.fordlattice}
Every Ford circle $C_0 = (q^2, 1 + 2 pq \I)$ is part of a proper quadruple $(C_0,C_1,C_2,C_3)$, where $C_1 = (q_1^2, 1 + 2 p_1 q_1 \I)$, $C_2 = (q_2^2, 1 + 2 p_2 q_2 \I)$, and $C_3 = (0,-1)$.  Moreover, we have
\begin{equation*}
\begin{aligned}
v(C_1,C_0) & = q + q_1 \I \\
v(C_2,C_0) & = - q + q_2 \I \\
v(C_3,C_0) & = - q \I \\
\end{aligned}
\qquad
\begin{aligned}
a(C_1,C_0) & = p_1 + p \I \\
a(C_2,C_0) & = p_2 - p \I \\
a(C_3,C_0) & = -p \I, \\
\end{aligned}
\end{equation*}
which are exactly the vectors appearing in \lref{fordodo}.
\end{proposition}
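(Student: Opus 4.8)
The plan is to separate the two assertions: first that $C_0 = C_{pq}$ sits in the stated quadruple, and second that the six vectors take the claimed values. The first is essentially the classical Ford/Farey structure recalled in \sref{ford}: the two Ford parents $C_{p_1 q_1}, C_{p_2 q_2}$ together with the line $(0,-1)$ are pairwise tangent to each other and to $C_0$, so they form a Descartes quadruple, and for $q \geq 2$ the curvature inequality $c_0 = q^2 > \max\{q_1^2, q_2^2, 0\}$ together with the clockwise ordering of the tangency points makes it proper; the case $q = 1$ is the degenerate semi-proper quadruple in which the slot $C_1$ is occupied by a line. So the substance is the vector computation, which I would carry out by induction along the Farey (equivalently successor) tree.

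For the computation itself, the cleanest engine is \eref{lattice.matrix} of \lref{lattice}, which pins down $v(C_i,C_0)$ up to sign and then determines $a(C_i,C_0)$. Writing $C_0 = (q^2, 1 + 2pq\I)$ and $C_i = (q_i^2, 1 + 2 p_i q_i\I)$, a direct expansion using $c_i c_0(z_i-z_0)=c_0(c_i z_i)-c_i(c_0 z_0)$ gives $v(C_1,C_0)^2 = (q^2-q_1^2) + 2qq_1\I = (q+q_1\I)^2$ and $v(C_2,C_0)^2 = (q^2-q_2^2) - 2qq_2\I = (-q+q_2\I)^2$, where the cross terms collapse via the Farey identities $p_1 q - q_1 p = 1$ and $p_2 q - q_2 p = -1$; for the line one reads $v(C_3,C_0)^2 = -c_0 = -q^2$. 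The companion relation $2 v(C_i,C_0) a(C_i,C_0) = c_i z_i + c_0 z_0$ then forces the claimed $a(C_i,C_0)$ once the sign of $v(C_i,C_0)$ is fixed (for the line this gives $a(C_3,C_0) = -p$, matching the vector $(-p,0)$ of \lref{fordodo}). One can also check directly that $A_{C_0}v(C_i,C_0) = a(C_i,C_0)$, so these values are consistent with the identity $a_{i0} = A_{C_0}v_{i0}$ from the proof of \lref{lattice1}.

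It remains to fix the signs, which is where the induction does real work. Since $\sum_{i=1}^3 v(C_i,C_0) = 0$ and the three magnitudes are known, the signs are forced to be globally equal, so only one global sign per circle must be determined, and I would determine it inductively. For $q \geq 2$ assume without loss of generality $q_1 > q_2$; then $C_{p_2 q_2}$ is itself a Ford parent of $C_{p_1 q_1}$, and the proper quadruple of $C_0$ is a parent rotation of the successor of the quadruple of $C_{p_1 q_1}$. Feeding the inductive values $v(C_3,C_0)$ and $v(C_2,C_0)$ for $C_{p_1 q_1}$, namely $-q_1\I$ and $-q_1 + q_2\I$, into the successor relation \eref{lattice.induct}, together with the flip \eref{lattice.flip} in the form $v(C_0,C_i) = \I\, v(C_i,C_0)$ (which turns $v(C_{p_1 q_1},(0,-1))$ into $q_1$), yields exactly $v(C_1,C_0) = v_{13} - \I v_{21} = q_1 - \I(-q_1+q_2\I) = q + q_1\I$, while the rotated relation gives $v(C_3,C_0) = v_{32} - \I v_{13} = -q_2\I - q_1\I = -q\I$, and $v(C_2,C_0)$ follows from $\sum_i v(C_i,C_0)=0$; the $a$'s propagate identically through the $a$-versions of these relations. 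The base case $q = 1$ is handled by specializing the base-case assignments of \sref{latticevectors} (with $z = p\I$) and reading the vectors off directly, cross-checking against \lref{fordodo}.

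The main obstacle I anticipate is precisely this sign-and-rotation bookkeeping: keeping straight which Farey parent is the tree-predecessor (hence which successor and parent rotation realizes $C_0$), the $\pm\I$ flips relating $v_{ij}$ to $v_{ji}$, and the genuinely degenerate $q=1$ base case, where one parent is the line, \eref{lattice.matrix} degenerates (so that $v_{32}$ vanishes rather than being an honest square root), and the signs must be taken from the explicit base assignment rather than deduced from the curvatures. Everything else is routine verification of algebraic identities driven by the two Farey relations and $q = q_1 + q_2$.
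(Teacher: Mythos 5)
Your proposal is correct and takes essentially the same route as the paper's proof: the paper likewise cites the discussion of \sref{ford} for properness, observes that \eref{lattice.superbasis} and \eref{lattice.matrix} determine the tuple $(v_{10},v_{20},v_{30},a_{10},a_{20},a_{30})$ up to one overall sign, and fixes that sign by the same inductive computation through \eref{lattice.induct} and \eref{lattice.flip} (computing only $v(C_3,C_0) = v_{32} - \I v_{13} = -q\I$, which suffices), with the same appeal to the base-case assignments of \sref{latticevectors}. Your additional explicit expansions of the squares via the Farey identities, the determination of the $a$'s from the second half of \eref{lattice.matrix}, and the value $a(C_3,C_0) = -p$ (which is what actually matches \lref{fordodo}, the statement's $-p\I$ being a typo) are consistent refinements of, not departures from, the paper's argument.
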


\begin{proof}
That $(C_0,C_1,C_2,C_3)$ is proper follows from the discussion in \sref{ford}.  Since \eref{lattice.superbasis} and \eref{lattice.matrix} determines the tuple $(v_{10},v_{20},v_{30},a_{10},a_{20},a_{30})$ up to sign, it is enough to show $v_{30} = - q \I$.  Using \eref{lattice.induct} and \eref{lattice.flip}, we inductively compute
\begin{equation*}
v_{30} = v_{32} - \I v_{13} = v_{32} + \I v_{13} = - \I q_2 - \I q_1 = - \I q.
\end{equation*}
We conclude after reading off the base case $C_0 = (1,1+2)$ from the beginning of this section.
\end{proof}

\begin{proposition}
\label{p.diamondlattice}
Every diamond circle $C_0 = (2 k (k+1), 2 k^2 - 1 + 2 k (k+1) \I)$ is part of a proper quadruple $(C_0,C_1,C_2,C_3)$, where $C_1 = (1,1+2\I)$, $C_2 = (1,1)$, and $C_3 = (2 (k-1) k, 2 (k-1)^2 - 1 + 2 (k-1) k \I)$.  Moreover, we have
\begin{equation*}
\begin{aligned}
v(C_1,C_0) & = k+1+k\I \\
v(C_2,C_0) & = - k - 1 + k\I \\
v(C_3,C_0) & = - 2 k \I \\
\end{aligned}
\qquad
\begin{aligned}
a(C_1,C_0) & = k+\I \\
a(C_2,C_0) & = -k \I \\
a(C_3,C_0) & = -k + (k-1)\I, \\
\end{aligned}
\end{equation*}
which are exactly the vectors appearing in \lref{diamondodo}.
\end{proposition}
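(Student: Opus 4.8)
The plan is to follow the proof of \pref{fordlattice} essentially verbatim, with the fixed line $C_3=(0,-1)$ replaced by the receding diamond circle $C_{k-1}$ and with an induction on $k$. First I would verify that the stated quadruple $(C_0,C_1,C_2,C_3)=(C_k,(1,1+2\I),(1,1),C_{k-1})$ is proper. This is cleanest to do through the successor operation: after the parent rotation $(C_k;C_{k-1},(1,1+2\I),(1,1))$, a short check against the linear Soddy relation $C_0+C_4=2(C_1+C_2+C_3)$ shows this is the successor (dropping $C_{k-2}$) of the parent rotation $(C_{k-1};C_{k-2},(1,1+2\I),(1,1))$ of the diamond quadruple for $C_{k-1}$, with Soddy precursor $C_{k-2}$. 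Since the successor of a proper quadruple is proper, it suffices to treat the base case $k=1$: there $C_3=(0,-1)$ is a line, and $((4,1+4\I),(1,1+2\I),(1,1),(0,-1))$ is itself the successor (dropping $(0,1)$) of the base quadruple $((1,1),(1,1+2\I),(0,1),(0,-1))$ of \sref{latticevectors} with $z=0$; hence it is semi-proper, in fact proper, and its vectors are read straight off the base-case definition.

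With properness established I would invoke \lref{lattice}. The relations $v_{i0}^2=c_ic_0(z_i-z_0)$ and $2v_{i0}a_{i0}=c_iz_i+c_0z_0$ (the $i0$ analogues of \eref{lattice.matrix}) determine each $v_{i0}$ up to sign and then each $a_{i0}$ uniquely. Substituting the diamond curvature coordinates yields $v_{10}^2=(2k+1)+2k(k+1)\I$, $v_{20}^2=(2k+1)-2k(k+1)\I$, and $v_{30}^2=-4k^2$, which are exactly the squares of the claimed values. The superbasis relation $v_{10}+v_{20}+v_{30}=0$ of \eref{lattice.superbasis} then forces the three signs to be compatible, collapsing the ambiguity to a single global sign; the companion relation $a_{10}+a_{20}+a_{30}=0$ ties the $a$-signs to the same choice. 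Thus everything is pinned down once one vector is known.

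As in the Ford case, I would fix the global sign by computing $v(C_3,C_0)=v(C_{k-1},C_k)$ directly. Using the successor recursion \eref{lattice.induct} and the flip \eref{lattice.flip}, together with the fact that the parent--parent vector between the fixed unit circles satisfies $v((1,1),(1,1+2\I))=\pm(1-\I)$ and that the parent--parent vectors relating $C_{k-1}$ to these two unit circles advance by a constant increment at each successor step, an induction on $k$ anchored at the base case gives $v(C_{k-1},C_k)=-2k\I$. This selects the global sign; the remaining entries are then read off from the relations above and coincide with the vectors appearing in \lref{diamondodo}, completing the proof.

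The main obstacle I anticipate is not any one computation but the bookkeeping around the degenerate cases and the parent rotations: one must correctly identify which parent is dropped in the successor step, carry the purely imaginary vector $v(C_{k-1},C_k)$ through the recursion to fix its sign, and handle $k=1$ and $k=2$ separately, since there $C_3$, respectively the Soddy precursor $C_4$, becomes a line, so that some $v_{ij}$ vanish and the uses of \eref{lattice.matrix} and \eref{lattice.determinant} must be checked by hand rather than quoted directly.
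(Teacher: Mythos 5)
Your proposal is correct and follows essentially the same route as the paper: properness of the quadruple, then the observation (as in \pref{fordlattice}) that \eref{lattice.superbasis} and \eref{lattice.matrix} pin down all six vectors up to one global sign, then an inductive computation of a single vector via \eref{lattice.induct} and \eref{lattice.flip} anchored at the Ford base case $(4,1+4\I)$. The only cosmetic difference is that you fix the sign by computing $v(C_3,C_0)=-2k\I$ (mirroring the Ford proof), whereas the paper computes $v(C_1,C_0)=k+1+k\I$ directly from $v_{10}=v_{13}-\I v_{21}$.
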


\begin{proof}
That $(C_0,C_1,C_2,C_3)$ is proper follows from the discussion in \sref{diamond}.  As in the previous proposition, it is enough to show $v(C_1,C_0) = k + 1 + k \I$.  Using \eref{lattice.induct} and \eref{lattice.flip}, we inductively compute
\begin{equation*}
v_{10} = v_{13} - \I v_{21} = (k + (k-1) \I) + (1 + \I) = k + 1 + k \I.
\end{equation*}
We conclude after reading off the base case $C_0 = (4,1 + 4 \I)$ from the previous proposition.
\end{proof}

\subsection{Symmetry reduction}
\label{s.lattice-sym}

The band packing $\B$ is invariant under the operations
\begin{equation*}
\begin{aligned}
& (c,z) \mapsto (c,-z) \\
& (c,z) \mapsto (c,\bar z) \\
& (c,z) \mapsto (c,z+2c \I) \\
& (c,z) \mapsto (c,z+2c).
\end{aligned}
\end{equation*}
These operations can be extended to the vectors $v(C,C')$ and $a(C,C')$ in the obvious way.  For example, if we apply the shift $(c,z) \mapsto (c,z+2cw)$ for some $w \in \Ga$, then $v(C,C')$ is unchanged while $a(C,C')$ is replaced by $a(C,C') + c c' v(C,C')^{-1}$.

We can extend the results of \sref{degenerate} to the orbit of the Ford and diamond circles under these symmetries.  For example, suppose that $C = (c,z)$ is a Ford circle and we want to construct an odometer for the shifted circle $C' = (c,z+2c w)$ for $w = a + b \I \in \Ga$.  Observe that
\begin{equation*}
A_{C'} - A_C = \mat{ a & b \\ b & - a }
\end{equation*}
and that the function $h_w : \Z^2 \to \Z$ given by
\begin{equation*}
h_w(x) = \frac{a}{2} x_1 (x_1 + 1) - \frac{a}{2} x_2 (x_2 + 1) + b x_1 x_2
\end{equation*}
satisfies $\Delta h_w \equiv 0$.  In particular, $g_{C'} = g_C + h_w$ is an odometer for $C'$.

\section{A topological lemma}
\label{s.topology}
In the course of our general tile construction, it is necessary to translate local knowledge of tile compatibility to global knowledge regarding the intersection structure of the collection of tiles.  This section concerns a technical lemma which is our primary tool for checking that certain collections of tiles form tilings using 
local conditions.

We continue to identify $\Ga$ with $\Z^2$ in the natural way.  Thus a tile $T$ is a set of squares $s_x=\{x,x+1,x+\I,x+1+\I\}\sbs \Ga$, where $I(T)$ (adapted from \eref{disc} to $\C$ in the obvious way) is a topological disc.  We let $\Ga$ inherit the standard degree-4 square lattice graph of $\Z^2$.    For any tile, define the \emph{footprint} $\foot(T)$ to be the subgraph of $\Ga$ induced by the union of squares $s_x\in T$.  For two tiles $T_1, T_2$ write $\foot(T_1) \cap \foot(T_2)$ for the induced subgraph on the intersection of the vertex sets of $\foot(T_1)$ and $\foot(T_2)$.
In an abuse of terminology, we say that tiles $T_1$ and $T_2$ \emph{intersect} if $\foot(T_1)\cap \foot(T_2)\neq \varnothing$, and \emph{overlap} if $T_1\cap T_2\neq \varnothing$.  If $T_1\cap T_2=\varnothing$, they are \emph{non-overlapping}.

\change{Recall from Section~\ref{s.hextilings} that we define a \emph{tiling} as a collection of tiles such that every square $s_x$ of $\Ga$ belongs to a unique tile. Thus, the tiles in a tiling are permitted to intersect, but not to overlap.}

\begin{lemma}
\label{l.topological}
Suppose $\mathcal{T}$ is an (infinite) collection of tiles and $\mathcal{E}$ is a set of two-element subsets of $\mathcal{T}$ satisfying the following four hypotheses.
\begin{enumerate}
\item \label{P.graph} The graph $G=(\mathcal{T},\mathcal{E})$ is a 3-connected planar triangulation.
\item \label{P.periodic} $\mathcal{T}$ and $\mathcal{E}$ are invariant under translation by some full-rank lattice $L \subseteq \Z[\I]$, and $\sum_{T \in \mathcal{T}/L} |T| = |\det L|$.
\item \label{P.ints} If $\{ T_1, T_2 \}\in \eee$ then the intersection $\foot(T_1) \cap \foot(T_2)$ contains at least 2 vertices.  
\item \label{P.threes} We can select for each face $F=\{T_1,T_2,T_3\}$ of $G$ a point $\rho(F)\in \foot(T_1)\cap \foot(T_2)\cap \foot(T_3)$ such that, for the face $F'=\{T_1,T_2,T_4\}$ of $G$, $\rho(F)$ and $\rho(F')$ lie on some path contained in $\foot(T_1) \cap \foot(T_2)$.
\end{enumerate}
Then $\ttt$ is a tiling.  Moreover, if $T_1, T_2 \in \mathcal{T}$ and $T_1$ and $T_2$ intersect, then $\{ T_1, T_2 \} \in \mathcal{E}$, and $\foot(T_1) \cap \foot(T_2)$ is a path joining $\rho(F)$ and $\rho(F')$ for the faces $F,F'$ with $F\cap F'=\{T_1,T_2\}$. 
\end{lemma}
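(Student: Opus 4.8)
The plan is to realize the \emph{dual} graph $G^{\ast}$ of the triangulation $G$ geometrically inside the lattice $\Ga$, using the points $\rho(F)$ of \hyref{threes} as dual vertices and the connecting paths as dual edges, and then to show that this realization cuts $\R^{2}$ into the regions $I(T)$, one per tile $T\in\ttt$. By \hyref{periodic} the quotient $G/L$ is a finite triangulation of the torus $\R^{2}/L$, so $G$ is locally finite and the area identity $\sum_{T\in\ttt/L}\abs{T}=\abs{\det L}$ becomes the statement that the average covering multiplicity of a square equals $1$; this is exactly what will upgrade ``every square is covered'' to ``every square is covered exactly once,'' i.e. to a tiling, and it is also the ingredient that prevents the construction from folding over itself.

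In order, the steps are as follows. First, since $G$ is a $3$-connected planar triangulation (\hyref{graph}), Whitney's theorem gives it an essentially unique embedding; in particular the neighbors $T_{1},\dots,T_{m}$ of a fixed tile $T$ acquire a well-defined cyclic order, the faces containing $T$ are exactly the triangles $F_{i}=\{T,T_{i},T_{i+1}\}$, and the link of $T$ is an $m$-cycle. Second, the edge $\{T,T_{i}\}$ lies in the faces $F_{i-1}$ and $F_{i}$, so \hyref{threes} supplies a path $P_{i}\sbs F(T)\cap F(T_{i})$ from $\rho(F_{i-1})$ to $\rho(F_{i})$; concatenating $P_{1},\dots,P_{m}$ cyclically yields a closed walk $\gamma_{T}$ lying in $F(T)$, hence in the closed disk $\overline{I(T)}$. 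Third, and this is the heart of the matter, I would show that $\gamma_{T}$ traverses the boundary circle $\partial I(T)$ exactly once, so that the arcs $P_{i}$ partition $\partial I(T)$ into one arc per neighbor, meeting only at the $\rho$-points. Granting this for every $T$, the $\rho$-points together with the paths form a proper planar embedding of $G^{\ast}$ whose complementary open regions are precisely the interiors $I(T)^{\circ}$. Fourth, the tiling then drops out: an open unit square $\bar s_{x}^{\circ}$ meets no grid line and hence no dual edge, so it lies in a single region $I(T)^{\circ}$, giving $s_{x}\in T$ for a unique $T$; moreover two tiles adjacent in $G$ meet exactly along the shared arc $F(T)\cap F(T_{i})$, three tiles meet exactly at a common point $\rho(F)$, and no other intersections occur, which is precisely the ``moreover'' clause.

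The main obstacle is the third step: one must show \emph{a priori} that the paths $P_{i}$ stay on $\partial I(T)$ and sweep it out once, rather than cutting across the interior of $I(T)$ or backtracking. Nothing local forbids $F(T)\cap F(T_{i})$ from reaching into the interior, so the control has to come from the global, simply connected topology of $|G|\cong\R^{2}$. I would attack it by a developing/shelling argument: exhaust the faces of $G$ by an increasing sequence $U_{1}\sbs U_{2}\sbs\cdots$ of closed combinatorial disks, each obtained from the previous by adjoining one triangle along a connected boundary arc (possible because $|G|$ is simply connected and $G$ is $3$-connected), maintaining as an inductive invariant that the union of the tiles over the vertices of $U_{n}$ is a topological disk whose boundary is traced by the dual paths of the boundary edges of $U_{n}$, with pairwise disjoint tile interiors. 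The new tile attaches in the unique gap prescribed by the cyclic order from the first step; the two-vertex condition \hyref{ints} keeps each attaching arc nondegenerate, so the dual realization never collapses, and the local picture at a $\rho$-point -- three tiles occupying three of the four lattice squares surrounding a single vertex -- rules out spurious overlaps. Simple connectivity guarantees the induction closes up with no monodromy, while the bound that the running covering multiplicity can never exceed the average $1$ allowed by \hyref{periodic} is what forbids the growing disk from overlapping itself as the exhaustion proceeds.
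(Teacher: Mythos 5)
Your overall architecture (realize the dual graph $G^{\ast}$ via the points $\rho(F)$ and the connecting paths, then read off the tiling from the complementary regions) is reasonable, but the step you yourself identify as the heart of the matter --- that the concatenated walk $\gamma_T$ sweeps $\partial I(T)$ exactly once, with no overlaps between tiles --- is not actually proved by your shelling argument, and the one tool you invoke to close it does not do the job. Hypothesis~(\ref{P.periodic}) is an \emph{average} statement over the quotient by $L$: it says the mean covering multiplicity of a square is $1$. This upgrades ``every square is covered at least once'' to ``every square is covered exactly once,'' but it cannot bound the multiplicity of a \emph{partial} union of tiles built during an induction, because an overlap inside your growing disk $U_n$ can be compensated by uncovered squares elsewhere; no contradiction with the average arises until global coverage is already known. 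So the invariant ``pairwise disjoint tile interiors, boundary traced by dual paths'' has no mechanism maintaining it: when you attach the next tile ``in the unique gap prescribed by the cyclic order,'' nothing you have said prevents it from cutting across the interior of the existing disk, and hypothesis~(\ref{P.ints}) (two shared vertices) is far too weak to prevent that. The auxiliary claim that at a $\rho$-point three tiles occupy exactly three of the four lattice squares around the vertex is likewise an unproven assumption at this stage, not a consequence of the hypotheses.

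The paper's proof is ordered precisely to avoid this circularity. It first proves \emph{coverage only}: given a square $s_x$, periodicity produces a dual cycle whose associated closed walk has nonzero winding number about $s_x$, and a minimality argument (remove one boundary face of the enclosed region at a time, using that each dual face's walk lies inside a single simply connected tile, so winding numbers add) contracts this cycle until the walk lies in one tile, which must then contain $s_x$. Only \emph{then} does hypothesis~(\ref{P.periodic}) convert coverage into a tiling. The structure of pairwise intersections (that $F(T_1)\cap F(T_2)$ is a path with endpoints $\rho(F),\rho(F')$) is proved \emph{afterwards}, using the already-established non-overlapping property together with a maximality argument and $3$-connectedness, and adjacency of intersecting tiles comes from a separate planar drawing of $G$. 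If you want to salvage your approach, you would need an independent argument for the no-fold-over property of the developing map --- the winding-number contraction in the paper is essentially the replacement for it --- rather than an appeal to the area identity.
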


The proof is essentially a homotopy argument.   The task is complicated slightly by the fact that we assume in \eqref{P.threes} merely that each intersection $\foot(T_1)\cap \foot(T_2)$ \emph{contains} a path, and not that it is equal to a path.  Although this weaker assumption prevents us from deducing the Lemma from standard topological facts, it will significantly simplify our inductive argument in \sref{tiles} (see Remark \ref{r.topologyvscases}).

Before commencing, recall that the \emph{winding number} of a closed walk in $\Ga$ about a point $z^*\in \Ga^*$ is the number of times the walk circles the point $z^*$ counterclockwise (here, $\Ga^*$ is the dual lattice to $\Ga$, whose vertices are centered in the faces of $\Ga$).   Thus in terms of the vertices $z_1,z_2,\dots,z_t=z_1$ of the walk, the winding number is given by
\newcommand{\wind}{\mathrm{wind}}
\[
\wind(z_1,\dots,z_t;z^*):=\frac{1}{2\pi} \sum_{i=1}^{t-1}\arg\left(\frac{z_{i+1}-z^*}{z_i-z^*}\right).
\]
We say that the closed walk $W$ \emph{encloses} a point $z^*$ if the winding number of $W$ about $z^*$ is nonzero; similarly, we say that $W$ encloses a square $s_x=\{x,x+1,x+\I,x+1+\I\} \sbs \Ga$ if the winding number of the walk about  $s_x^*=x+\tfrac 1 2 +\tfrac \I 2$ is nonzero. Note that the definition of a tile (via \eref{disc}) implies that any tile $T$ is a set of squares enclosed by a simple cycle $\partial T$ in $\Z^2$, which is the set of points and edges of $T$ which each also lie in a square $s_x\notin T$.   

\begin{proof}[Proof of \lref{topological}]
Our first goal is to show that $\ttt$ is a tiling.  Note that it suffices to show that every square $s_x=\{x,x+1,x+\I,x+1+\I\}$ lies in some tile in $\ttt$, since \hyref{periodic} implies that the average number of tiles a square lies in is 1. The idea is to use the periodicity to draw a large cycle that surrounds a given square $s_x$, and then use the graph structure to contract this cycle to the boundary of a single tile.

We fix an $L$-periodic drawing of $G=(\ttt,\eee)$, and work with the dual graph $G^*$, whose vertex set $\fff$ is the set of triangles in the graph $(\ttt,\eee)$.  For $F\in \fff$, we have that $\rho(F)$ is a point in the intersection of the three tiles in $F$, and for adjacent $F,F'$, let $\rho(F,F')$ be a choice of path from $\rho(F)$ to $\rho(F')$ in the intersection of the two tiles in $F\cap F'$, guaranteed to exist by \hyref{threes}.  Given a sequence $F^0,F^1,\dots,F^n=F^0$ with $\abs{F^i\cap F^{i+1}}=2$ (indices evaluated modulo $n$), we define the closed walk $\ell(F^0,\dots,F^n)$ in $\Z^2$ as the consecutive concatenation of the paths $\rho(F^i,F^{i+1})$ $(0\leq i<n)$.

 Since the graph $(\ttt,\eee)$ is connected and periodic under a nontrivial lattice, we can, given any finite set $Z\sbs \Ga^*$, find a cycle $F^0, \dots, F^n=F^0$ in $G^*$ which wraps around each point $z^*\in Z$, just in the sense that  $\wind(\rho(F^1),\dots,\rho(F^n);z^*)=1$ for all $z^*\in Z$.  Since \hyref{periodic} also implies that the tiles $T\in \ttt$ are of bounded size, this means that given the point $s_x^*\in \Ga^*$, we can (by choosing $Z$ to be the set of all points within some sufficiently large distance of $s_x^*$) find a cycle $C$ given by $F^0,F^1,\dots,F^n=F^0$ such that $\ell(C)$ encloses $s_x^*$ and so $s_x$.  Letting $\kkk$ denote the set of faces of $G^*$  in the region of the plane bounded by $C$ in our fixed embedding (so, each element of $\kkk$ corresponds to some $T\in \ttt$), we choose this $C$ such that $\abs \kkk$ is as small as possible, subject to the condition that $\ell(C)$ encloses $s_x^*$.

\begin{figure}
\begin{center}
  \begin{pdfpic}
    \psset{radius=1pt}
    \begin{pspicture}(0,0)(5,5)

      \psset{linewidth=.05pt}
      
\SpecialCoor
      \rput(! 0 Rand .4 mul add 0 Rand .4 mul add ){\Cnode*{A1}}
      \rput(! 1 Rand .4 mul add 0 Rand .4 mul add ){\Cnode*{B1}}
      \rput(! 2 Rand .4 mul add 0 Rand .4 mul add ){\Cnode*{C1}}
      \rput(! 3 Rand .4 mul add 0 Rand .4 mul add ){\Cnode*{D1}}
      \rput(! 4 Rand .4 mul add 0 Rand .4 mul add ){\Cnode*{E1}}
      \rput(! 5 Rand .4 mul add 0 Rand .4 mul add ){\Cnode*{F1}}

      \psline(A1)(B1)
      \psline(B1)(C1)
      \psline(C1)(D1)
      \psline(D1)(E1)
      \psline(E1)(F1)

      \rput(! 0.5 Rand .4 mul add .866 Rand .4 mul add ){\Cnode*{A2}}
      \rput(! 1.5 Rand .4 mul add .866 Rand .4 mul add ){\Cnode*{B2}}
      \rput(! 2.5 Rand .4 mul add .866 Rand .4 mul add ){\Cnode*{C2}}
      \rput(! 3.5 Rand .4 mul add .866 Rand .4 mul add ){\Cnode*{D2}}
      \rput(! 4.5 Rand .4 mul add .866 Rand .4 mul add ){\Cnode*{E2}}

      \psline(A1)(A2)
      \psline(B1)(B2)
      \psline(C1)(C2)
      \psline(E1)(C2)
      \psline(E1)(E2)

      \psline(B1)(A2)
      \psline(C1)(B2)
      \psline(D1)(C2)
      \psline(E1)(D2)
      \psline(F1)(E2)

      \psline(A2)(B2)
      \psline(B2)(C2)
      \psline(C2)(D2)
      \psline(D2)(E2)

      \rput(! 0 Rand .4 mul add 1.732 Rand .4 mul add ){\Cnode*{A3}}
      \rput(! 1 Rand .4 mul add 1.732 Rand .4 mul add ){\Cnode*{B3}}
      \rput(! 2 Rand .4 mul add 1.732 Rand .4 mul add ){\Cnode*{C3}}
      \rput(! 3 Rand .4 mul add 1.732 Rand .4 mul add ){\Cnode*{D3}}
      \rput(! 4 Rand .4 mul add 1.732 Rand .4 mul add ){\Cnode*{E3}}
      \rput(! 5 Rand .4 mul add 1.732 Rand .4 mul add ){\Cnode*{F3}}

     \psline(A3)(A2)
      \psline(B3)(B2)
      \psline(C3)(C2)

      \psline(D3)(D2)
      \psline(E3)(E2)

      \psline(B3)(A2)
      \psline(C3)(B2)
      \psline(D3)(C2)
      \psline(E3)(D2)
      \psline(F3)(E2)

      \psline(A3)(B3)
      \psline(B3)(C3)
      \psline(C3)(D3)
      \psline(D3)(E3)
      \psline(E3)(F3)
      
      \rput(! 0.5 Rand .4 mul add 2.598 Rand .4 mul add ){\Cnode*{A4}}
      \rput(! 1.5 Rand .4 mul add 2.598 Rand .4 mul add ){\Cnode*{B4}}
      \rput(! 2.5 Rand .4 mul add 2.598 Rand .4 mul add ){\Cnode*{C4}}
      \rput(! 3.5 Rand .4 mul add 2.598 Rand .4 mul add ){\Cnode*{D4}}
      \rput(! 4.5 Rand .4 mul add 2.598 Rand .4 mul add ){\Cnode*{E4}}

     \psline(A3)(A4)
      \psline(B3)(B4)
      \psline(C3)(C4)
      \psline(D3)(D4)
      \psline(E3)(E4)

      \psline(B3)(A4)
      \psline(C3)(B4)
      \psline(D3)(C4)
      \psline(E3)(D4)
      \psline(F3)(E4)
      
      \psline(A4)(B4)
      \psline(B4)(C4)
      \psline(C4)(D4)
      \psline(D4)(E4)
      
      \rput(! 0 Rand .4 mul add 3.464 Rand .4 mul add ){\Cnode*{A5}}
      \rput(! 1 Rand .4 mul add 3.464 Rand .4 mul add ){\Cnode*{B5}}
      \rput(! 2 Rand .4 mul add 3.464 Rand .4 mul add ){\Cnode*{C5}}
      \rput(! 3 Rand .4 mul add 3.464 Rand .4 mul add ){\Cnode*{D5}}
      \rput(! 4 Rand .4 mul add 3.464 Rand .4 mul add ){\Cnode*{E5}}
      \rput(! 5 Rand .4 mul add 3.464 Rand .4 mul add ){\Cnode*{F5}}

    \psline(A5)(A4)
      \psline(B5)(B4)
      \psline(C5)(C4)
      \psline(D5)(D4)
      \psline(E5)(E4)

      \psline(B5)(A4)
      \psline(C5)(B4)
      \psline(D5)(C4)
      \psline(E5)(D4)
      \psline(F5)(E4)
      
      \psline(A5)(B5)
      \psline(B5)(C5)
      \psline(C5)(D5)
      \psline(D5)(E5)
      \psline(E5)(F5)

      \psset{linewidth=1pt}

      \pscircle(B3){.1}
      \pscircle(B3){.2}

      \pscircle(C4){.1}
      
      \pscircle(C3){.1}
      \pscircle(B2){.1}

      \pscircle(D3){.1}
      \pscircle(E3){.1}
      \pscircle(D4){.1}

      \psset{PointName=none,PointSymbol=o,PointNameSep=3pt}

      \pstCGravABC[PointSymbol=square*]{C3}{B2}{B3}{m0}
      
      \pstCGravABC{A2}{B2}{B3}{c1}
      \pstCGravABC{A2}{A3}{B3}{c2}
      \pstCGravABC{A4}{A3}{B3}{c3}
      \pstCGravABC{A4}{B4}{B3}{c4}
      \pstCGravABC{C3}{B4}{B3}{c5}
      \pstCGravABC{C3}{B4}{C4}{c6}
      \pstCGravABC{C5}{B4}{C4}{c7}
      \pstCGravABC{C5}{D5}{C4}{c8}
      \pstCGravABC{D4}{D5}{C4}{c9}
      \pstCGravABC{D4}{D5}{E5}{c10}
      \pstCGravABC{D4}{E4}{E5}{c11}
      \pstCGravABC{D4}{E4}{E3}{c12}
      \pstCGravABC{F3}{E4}{E3}{c13}
      \pstCGravABC{F3}{E2}{E3}{c14}
      \pstCGravABC{D2}{E2}{E3}{c15}
      \pstCGravABC{D2}{D3}{E3}{c16}
      \pstCGravABC{D2}{D3}{C2}{c17}
      \pstCGravABC{C3}{D3}{C2}{c18}
      \pstCGravABC{C3}{B2}{C2}{c19}
      \pstCGravABC{C1}{B2}{C2}{c20}
      \pstCGravABC{C1}{B2}{B1}{c21}
      \pstCGravABC{A2}{B2}{B1}{c22}
      \pstCGravABC{A2}{B2}{B3}{c23}

      \psccurve(c1)(c2)(c3)(c4)(c5)(c6)(c7)(c8)(c9)(c10)(c11)(c12)(c13)(c14)(c15)(c16)(c17)(c18)(c19)(c20)(c21)(c22)(c23)

      \psline[linestyle=dashed,linewidth=.5pt,dash=2pt 1pt](c5)(m0)(c1)
      
      \end{pspicture}
\end{pdfpic}
\end{center}
\caption{\emph{Pulling across $\kkk$}.   \label{f.topology} The triangulation $G$ is drawn.  The 6 circled vertices correspond to faces of $G^*$ which belong to $\kkk$.    The curve drawn is the cycle $C$; the faces $F^0,\dots,F^{n-1}$ of $G$ are marked by $\circ$'s, and the $E_i$'s (there is only one here) is marked by $\blacksquare$.  The twice circled vertex of $G$ corresponds to the face $C''$ of $G^*$.  }
\end{figure}
Note that if $\abs{\kkk}=1$, then $s_x$ is indeed covered by $\ttt$, since then all vertices in $\ell(C)$ belong to a single tile, which is simply connected and would thus cover $s_x$.  Moreover, any element of $\kkk$ whose intersection with $C$ is discontiguous is a cut-vertex $\{f\in \kkk\mid C\cap f\neq \varnothing\}$, when this collection is viewed as a subgraph of the original graph $G$.   (This occus for the third leftmost member of $\kkk$ in \fref{topology}, for example.) Since not every vertex in a finite graph can be a cut-vertex, we may therefore assume without loss of generality that there is face $C''$ of $G^*$ which is a member of $\kkk$ and whose intersection with $C$ \emph{is} contiguous; in other words, whose boundary cycle is
\[
F^{n},F^{n-1},\dots,F^{\ell},E_1,\dots,E_{t},F^0\quad(t>0)
\]
where no $E_i$ lies on $C$.  We now consider the closed walk $\ell(C')$ in $\Ga$, where $C'$ is the cycle 
\[
F^\ell,F^{\ell-1},F^0,E_t,\dots,E_1,F^{\ell}
\]
in $G^*$.  Note that the region bounded by $C'$ has exactly one fewer face than that bounded by $C$. We will show that if $\ell(C)$ enclosed $s_x$, then so must $\ell(C')$, contradicting minimality of $C$.  

To begin, note that the face $C''$ of the dual graph $G^*$ corresponds to some vertex $T\in \ttt$, which completely contains the walk $\ell(C'')$.  Thus, $\ell(C'')$ does not enclose $s_x$ unless $s_x$ lies in $T$.  Finally, the winding number about any square $s_x$ of the loop $\ell(C')$ is the same as the winding number about the square $s_x$ of the concatenation of $\ell(C)$ with $\ell(C'')$.  In particular, since $s_x$ is not enclosed by $\ell(C')$, the winding number of $\ell(C)$ and $\ell(C'')$ about $s_x$ must be equal.  Thus $\ell(C'')$ also encloses $s_x$, contradicting the minimality of $C$ with this property.

\bigskip

Having shown that $\ttt$ is a tiling, we next wish to show that if $\foot(T_1)\cap \foot(T_2)\neq \varnothing$, then $\foot(T_1)\cap \foot(T_2)$ is a path in $\Ga$ from $\rho(F)$ to $\rho(F')$ for the faces $F,F'$ whose intersection is the pair $\{T_1,T_2\}$.  We begin by showing that the intersection is a path.

If it is not, there are paths $P_1\sbs \partial T_1$ and $P_2\sbs \partial T_2$ with the same pair of endpoints, whose concatenation $C$ is a cycle enclosing a region $S^*$ of ${\Ga}^*$ disjoint from $T_1^*$ and $T_2^*$.  (The dual $T^*$ of $T$ consists of the points of the dual of $\Z^2$ which are centers of squares $s_x\sbs T$.)  Among all possible pairs $T_1,T_2$, we may assume we have chosen such that $\abs {S^*}$ is as large as possible (note that there is some absolute bound on $\abs {S^*}$, since, for example, \hyref{periodic} implies that tiles have bounded size).

We let $\ttt_S$ denote the tiles in the region bounded by $C$.  Since $(\ttt,\eee)$ is 3-connected, there must be at least 3 tiles in $\ttt\stm \ttt_S$ which are adjacent to tiles in $\ttt_S$.  By \hyref{ints}, such a tile $T$ must have the property that it shares two vertices with some tile in $\ttt_S$; however, the only candidate points to be shared between a tile $T\notin \ttt_S$, $T\neq T_1,T_2$, and a tile in $\ttt_S$ are the 2 common endpoints of $P_1$ and $P_2$, and for $C$ to be a (simple) cycle there is, for each of these two endpoints, at most one square of $\Ga$ containing the point and lying outside $C$ and outside of the tiles $T_1,T_2$.  In particular, there must be \emph{exactly} three tiles in $\ttt\stm \ttt_S$ adjacent to tiles in $\ttt_S$; namely, $T_1,T_2,$ and a third tile $T_3$ which includes both endpoints of the paths $P_1,P_2$.  But now either the pair $\{T_3,T_2\}$ or the pair $\{T_3,T_1\}$ contradict the maximality of the choice of $S^*$.  Thus $\foot(T_1)\cap \foot(T_2)$ is indeed a path.

$\rho(F)$ and $\rho(F')$ both lie in $\foot(T_1)\cap \foot(T_2)$.  If they are not the endpoints of the path, then, without loss of generality, let $e_1,e_2$ be the two edges of the path $\foot(T_1)\cap \foot(T_2)$ which are incident with $\rho(F)$.  Since $T_1$ and $T_2$ are nonoverlapping, for each $i=1,2$, $e_i$ lies in one square from $T_1$ and one square of $T_2$.  Moreover, their shared endpoint $\rho(F)$ lies also in a third tile $T_3$ (which again, is nonoverlapping with $T_1,T_2$).  In particular, either $T_1$ or $T_2$ must contain two diagonally opposite squares about $\rho(F)$ without containing the other two squares, contradicting the definition of a tile.

\bigskip
Finally, we wish to show that if $\foot(T_1) \cap \foot(T_2) \neq \emptyset$, then $\{ T_1, T_2 \} \in \mathcal{E}$.  We do this by giving a suitable plane drawing of the graph $G=(\ttt,\eee)$.  For each tile $T\in \ttt$, we draw a vertex $v_T$ corresponding to $T$ at some point of the dual $T^*$.   Identifying $\C$ with the Euclidean plane, let $\bar s_x=\{x+s+t\I\st 0\leq s,t\leq 1\}\sbs \C$.  Since we know that tiles $T,T'\in \ttt$ which are adjacent in $G$ must share an edge of $\Ga$, we can draw a curve from $v_T$ to $v_{T'}$ such that every point in the curve lies in the interior of $\bar s_x\cup \bar s_y$ where $s_x$, $s_y$ each lie in $T$ or $T'$; in particular, we can draw all edges of $G$ such that they are pairwise nonintersecting (except at shared endpoints) and such that the edge from $v_T$ to $v_{T'}$ is disjoint from any $\bar s_x$ for an $s_x$ not contained in $T$ or $T'$.   With this drawing the curve $C_T$ in $\C$ corresponding to the cycle through the neighbors of a tile $T$ is disjoint from $T$, and bounds a region containing $T$.  Since $C_T$ and $C_{T'}$ bound disjoint regions of $\C$ when $T,T'$ are nonadjacent in $G$, we see that any nonadjacent tiles are nonintersecting.
\end{proof}

\section{Tiles}
\label{s.tiles}

In this section, we associate to each circle $C \in {\B}$ a tile (unique up to translation) which will be $90^\circ$ symmetric and tile the plane under translation by the lattice $\Lambda_C$.  
Before beginning our construction, we need a few additional definitions regarding tiles.  We let $\cent(T)$ and $|T|$ denote the {\em centroid} and {\em area} of a tile $T$, which are, respectively, the centroid and area of the real subset $I(T)$ from \eref{disc}.  We say that $T_1$ and $T_2$ \emph{touch} if they are non-overlapping and $\partial T_1\cap \partial T_2$ is a simple path of $\Z^2$ with at least two vertices.  We say that three tiles form a \emph{touching triple} of tiles if they are pairwise touching, and share exactly one common boundary vertex.

Recall that a tile is a set of squares $s_x$, and that the \emph{footprint} of a tile $T$ is
\[
\foot(T):=\bigcup_{s_x\in T} s_x.
\]

It will be convenient to allow a degenerate case of our tile definition.  Note that if $T=\varnothing$, then the centroid $\cent(T)$ would be undefined.  We will allow tiles $T=\varnothing$, whose footprints $\foot(T)$ may be any singleton from $\Ga$; this singleton then gives the centroid of $T$.  In particular, note that the degenerate tiles are in bijective correspondence with $\Ga$.   If $T$ is degenerate, we say that $T,T'$ touch if $\foot(T)\sbs \partial T'$.  We emphasize that $T'\stm T=T'$ whenever $T$ is degenerate.

Let a \emph{prototile} $T$ be a set of squares $s_x\sbs \Ga$, and, if empty, have $\foot(T)$ assigned as a singleton in $\Ga$ (compared with the definition of a tile, we are dropping the requirement that $I(T)$ is a topological disk).  We begin by recursively associating a prototile to each $C\in \B$; in \lref{tiling} we will verify that these prototiles are in fact tiles.  

\begin{definition}
\label{d.T0}
If $(C_0,C_1,C_2,C_3) \in {\B}^4$ is a proper Descartes quadruple, then a set $T_0$ of squares $s_x\sbs \Ga$ is a \emph{prototile} for $C_0$ if $T_0$ has the \emph{tile decomposition} 
\begin{equation}
\label{e.tdecomp}
T_0 = T_1^+ \cup T_1^- \cup T_2^+ \cup T_2^- \cup T_3^+ \cup T_3^-,
\end{equation}
with $\foot(T_i^\pm)\sbs \foot(T_0)$ even if $T_i$ is degenerate, where, for each rotation $(i,j,k)$ of $(1,2,3)$, $T_i^\pm$ is a prototile of $C_i$ satisfying
\begin{equation}
\label{e.tdecompconst}
\cent(T_i^\pm) - \cent(T_0)= \pm \tfrac{1}{2} (v_{kj} - \I v_{kj}).
\end{equation}
where $v_{ij}:=v(C_i,C_j)$.

The base cases are those circles in $\B$ which are not the first circle of any proper Descartes quadruple:  $T_0$ is a prototile for $C_0=(0,\pm 1)$ if $T_0=\varnothing$ and $\foot(T_0)=\{x\}$ for any $x\in \Ga$, while $T_0$ is a prototile for $C_0=(1,1+2z)$ if $T_0=\{s_x\}$, for any $x,z\in \Ga$.
\end{definition}

Note that by induction, any circle in $\B$ has at most one prototile up to translation, and any circle's prototiles must necessarily be $180^\circ$ symmetric.   An example of a decomposition as \eref{tdecomp} can be seen in large tile in the center of \fref{righttile}.

When a prototile $T$ for a circle $C$ satisfies the definition of a tile (i.e., $I(T)$ is a topological disk), we say that $T$ is a tile for $C$.  Given a tile $T_0$ for a circle $C_0\in \B$ with $c_0>1$, we say $T$ is a \emph{subtile} of $T_0$ if $T$ is one of the tiles in the decomposition \eref{tdecomp} for $T_0$.  In the proof of \lref{tiling}, below, we will see that the decomposition of $T$ into subtiles is nonoverlapping, except for prescribed overlap between the largest pair of subtiles.

Our construction in \sref{ford} recursively assigns tiles to each Ford circle with decompositions 
\[
T_0=T_1^+\cup T_1^-\cup T_2^+\cup T_2^-
\]
where the $T_i^\pm$'s were constructions for the two Ford parents of $C_0$.   As a general circle in the Apollonian packing, the third parent of a Ford circle is a line $(0,-1)$ with the degenerate tile $\varnothing$; thus to see that $T_0$ can be realized as a tile for $C_0$ via \dref{T0}, it is only necessary to check, via \pref{fordlattice}, that assigning $\foot(T_3^\pm)=\cent(T_0)\pm \tfrac 1 2 (v_{12}-\I v_{12})$ (from \eref{tdecompconst}) gives that $\foot(T_3^\pm)\sbs \foot(T_0)$.

The presence of the coefficient $\tfrac 1 2$ in \eref{tdecompconst} means that even the existence of prototiles for general circles is not quite immediate.

\begin{lemma}
\label{l.prototile}
There is a prototile $T_0$ for every $C_0\in \B$.
\end{lemma}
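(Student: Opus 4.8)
The plan is to induct on the curvature $c_0$ of $C_0$. The base cases $c_0=0$ (the lines $(0,\pm1)$) and $c_0=1$ (the big circles $(1,1+2z)$) are precisely the circles for which \dref{T0} already prescribes an explicit prototile, so there is nothing to prove; these are the only circles that are not the child of a \emph{proper} Descartes quadruple, since properness forces $c_0>\max\{c_1,c_2,c_3\}$, and any circle tangent to both lines has radius $1$. For $c_0>1$ I would fix the proper quadruple $(C_0,C_1,C_2,C_3)$ and assume inductively that each parent $C_i$ (of strictly smaller curvature) has a prototile $T_i$, which by \dref{T0} is $180^\circ$ symmetric.

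I would then build $T_0$ by placing two integer translates $T_i^\pm=T_i+w_i^\pm$ (with $w_i^\pm\in\Ga$) of each parent prototile and setting $T_0:=\bigcup_i(T_i^+\cup T_i^-)$. Writing $\cent(T_i)$ for the (half-integer) center of symmetry of $T_i$ and choosing a common center $\cent(T_0)$, the constraint \eref{tdecompconst} reads $\cent(T_i)+w_i^\pm=\cent(T_0)\pm\tfrac12(1-\I)v_{kj}$ for the rotation $(i,j,k)$ of $(1,2,3)$. Since each $T_i$ is $180^\circ$ symmetric and the pair $T_i^+,T_i^-$ is interchanged by the point reflection through $\cent(T_0)$, the union $T_0$ is $180^\circ$ symmetric about $\cent(T_0)$; hence its centroid is automatically $\cent(T_0)$ and \eref{tdecompconst} holds. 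For a non-degenerate parent $F(T_i^\pm)\sbs F(T_0)$ is immediate because $T_i^\pm\sbs T_0$ as sets of squares. So everything reduces to (A) producing a single $\cent(T_0)$ for which integer translates $w_i^\pm$ exist, and (B) the inclusion for degenerate (line) parents.

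For (A): translating by $\Ga$ shifts a center by $\Ga$, so $w_i^\pm\in\Ga$ exists iff $\cent(T_0)\equiv\cent(T_i)+\tfrac12(1-\I)v_{kj}\pmod\Ga$, and the sign is irrelevant modulo $\Ga$ because $(1-\I)v_{kj}\in\Ga$. The crux is a parity identity: for $v=a+b\I$ one has $\tfrac12(1-\I)v\equiv\tfrac{1+\I}{2}\,[\![a+b\text{ odd}]\!]\pmod\Ga$, and $a+b$ is odd exactly when $|v|^2=a^2+b^2$ is odd; combined with the relation $|v_{kj}|^2=c_k+c_j$ (obtained in the proof of \lref{lattice} from $v_{kj}^2=c_kc_j(z_k-z_j)$ and tangency) this gives $\tfrac12(1-\I)v_{kj}\equiv\tfrac{1+\I}{2}(c_k+c_j)\pmod\Ga$. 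I would strengthen the induction hypothesis to $\cent(T_C)\equiv\tfrac{1+\I}{2}c_C\pmod\Ga$, which holds in the base cases ($\tfrac{1+\I}{2}$ when $c=1$, and $0$ when $c=0$). Each required residue then becomes $\cent(T_i)+\tfrac12(1-\I)v_{kj}\equiv\tfrac{1+\I}{2}(c_i+c_k+c_j)=\tfrac{1+\I}{2}(c_1+c_2+c_3)\pmod\Ga$, which is independent of $i$; setting $\cent(T_0)$ to this common value proves (A). Finally the Descartes quadratic relation $(c_0+c_1+c_2+c_3)^2=2\sum_ic_i^2$ forces $c_0\equiv c_1+c_2+c_3\pmod2$, so $\cent(T_0)\equiv\tfrac{1+\I}{2}c_0$ and the strengthened hypothesis propagates.

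The remaining point (B) is where I expect the real work to lie, just as the paper flags the coefficient $\tfrac12$ in \eref{tdecompconst} as the reason existence is not immediate: when a parent $C_i$ is a line I must show that the forced location $\cent(T_0)\pm\tfrac12(1-\I)v_{kj}$ of its degenerate subtile vertex actually lands in $F(T_0)$. The parity computation above already shows this point lies in $\Ga$ (a line contributes $c_i=0$, so the two competing residues cancel and the point is a genuine lattice vertex). To see it is a vertex of the assembled tile I would construct $T_0$ concretely from the decomposition of its largest parent as in \fref{maketile}, using that for $c_0>1$ at most one parent is a line (so $T_0\ne\varnothing$), and identify the prescribed point as the common corner where the non-degenerate subtiles abut; this generalizes the explicit Ford check recorded after \dref{T0}. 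This geometric step, rather than the parity bookkeeping, is the main obstacle, and it is exactly the local compatibility that \lref{tiling} will later upgrade to the statement that $I(T_0)$ is a topological disk.
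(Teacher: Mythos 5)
Your proposal is correct, but it reaches the prototile by a genuinely different mechanism than the paper's proof. The paper never faces the congruence problem that your part (A) solves: rather than placing fresh integer translates of all three parents' prototiles, it invokes only the prototile $T_1$ of the largest parent, decomposes it via the Soddy-precursor quadruple $(C_1,C_4,C_2,C_3)$ into subtiles $S_4^\pm, S_2^\pm, S_3^\pm$, and assembles $T_0$ from two copies of $T_1$ together with translates of $S_2^\pm,S_3^\pm$ by $\pm v_{32}$ and $\mp\I v_{32}$, which lie in $\Ga$ (see \fref{maketile}); the half-integer offsets demanded by \eref{tdecompconst} are thereby inherited from the parent's own decomposition rather than produced by a parity argument, and the centroid identities are read off from the lattice rules \eref{lattice} plus $180^\circ$ symmetry, exactly as in your symmetry step. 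Moreover, the paper runs this construction only for circles not equivalent to Ford circles under the symmetries of \sref{lattice-sym}, precisely so that no parent is a line: then the clause $F(T_i^\pm)\sbs F(T_0)$ of \dref{T0} is vacuous and your issue (B) never arises, while Ford circles are covered by the explicit construction of \sref{ford} together with the check recorded after \dref{T0}. Your congruence invariant $\cent(T_C)\equiv\tfrac{1+\I}{2}c_C \pmod\Ga$ is correct, and you could even avoid appealing to the Descartes quadratic relation, since $\sum_i v_{i0}=0$ and $|v_{i0}|^2=c_i+c_0$ from \lref{lattice} already give $c_0\equiv c_1+c_2+c_3\pmod 2$; so your route yields a uniform, self-contained solution of the placement problem and makes explicit an arithmetic fact that the paper's construction only encodes implicitly. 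What it does not yield is the double decomposition, which is not incidental: that $Q_i^\pm/S_i^\pm$ structure is exactly what the proofs of \lref{tiling} and \lref{hasodom} consume later, and the paper's version needs a prototile for only one parent where yours needs all three. Finally, your step (B) is the only part left at the level of a sketch, but it is no worse than the paper's own deferral, and it is a short finite computation rather than the main obstacle you fear: for a Ford circle your construction in (A) reassembles exactly the square tile $[0,q]^2$ of \sref{ford}, and the vectors of \pref{fordlattice} give $v_{21}=\pm(q_1-q_2\I)$, so the two prescribed degenerate points come out to $(q_1,0)$ and $(q_2,q)$, which visibly lie in $F(T_0)$.
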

\begin{proof}
If $C_0$ is not equivalent to a Ford circle under the symmetries discussed in \sref{lattice-sym}, then it is a member of a proper Descartes quadruple $(C_0,C_1,C_2,C_3)$ for $C_i\neq (0,\pm 1)$ for each $i$, and $(C_1,C_4,C_2,C_3)$ is also a proper Descartes quadruple, where $C_4=2(C_1+C_2+C_3)-C_0$ is the Soddy precursor of $C_0$.  By induction, $C_1$ has a prototile $T_1$ falling into one two cases.  Either, it is a translate of $T_1=\{s_0\}$, in which case a direct application of \dref{T0} verifies that $T_0=\{s_{-1-\I},s_{-1},s_{-\I},s_0\}$  is a prototile for $C_0$.  Otherwise it is given as
\[
T_1 = S_4^+ \cup S_4^- \cup S_2^+ \cup S_2^- \cup  S_3^+ \cup S_3^-,
\]
where each $S_i^\pm$ is a prototile for $C_i$ such that
\begin{equation*}
\cent(S_i^\pm) - \cent(T_1) = \pm \tfrac{1}{2} (v_{kj} - \I v_{kj}),
\end{equation*}
for all rotations $(i,j,k)$ of $(4,2,3)$.   In this latter case, we set 
\begin{equation*}
\begin{aligned}
T_1^+ & = T_1 + v_{32} - \I v_{32} & T_2^+ & = S_2^+ + v_{32} & T_3^+ & = S_3^+ - \I v_{32} \\
T_1^- & = T_1 & T_2^- & = S_2^- - \I v_{32}  & T_3^- & = S_3^- + v_{32}.
\end{aligned}
\end{equation*}
See \fref{maketile} for an illustration.  The key point is that $v_{32}\in \Z[\I]$, and thus that each $T_i^\pm$ is a prototile for $C_i$, allowing us to define
\[
T_0:= T_1^+ \cup T_1^- \cup T_2^+ \cup T_2^- \cup T_3^+ \cup T_3^-.
\]
By the lattice rules (\lref{lattice}), we compute that
\[
\cent(T_i^\pm)-p_0=\pm \tfrac{1}{2} (v_{kj} - \I v_{kj}),
\]
if $(i,j,k)$ is a rotation of $(1,2,3)$, and $p_0:=\cent(T_1^-)+\tfrac 1 2 (v_{32} - \I v_{32})$.  Since each $T_i^\pm$ is $180^\circ$ symmetric, we must have $p_0=\cent(T_0).$  Thus $T_0$ is a prototile for $C_0$ according to \dref{T0}.  Note that the condition that $\foot(T_i^\pm)\sbs T_0$ cannot fail, since $C_i\neq (0,\pm 1)$ for any $i$ implies that no $T_i^\pm$ is degenerate.
\end{proof}

\renewcommand{\topfraction}{.8}
\begin{figure}[t!]
\begin{center}
\nofig{\input{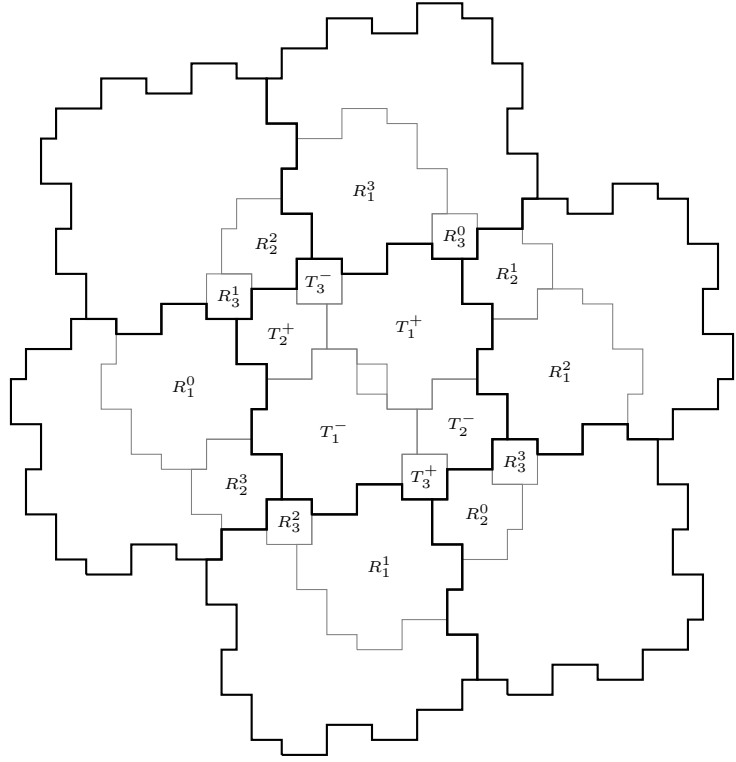}}
\end{center}
\caption{Verifying a tiling by decomposing into parent tiles.  The large tile in the center is $T_0$.} 
\label{f.righttile}
\end{figure}

\lref{tiling}, below, is the heart of our inductive construction, showing that prototiles for circles in $\B$ are in fact $90^\circ$ symmetric tiles.  \lref{tiling} concerns a proper Descartes quadruple $(C_0,C_1,C_2,C_3)$, writing $C_i=(c_i,z_i)$ and $v_{ij}=v(C_i,C_j)$, and where $T_i$ denotes some prototile for $C_i$.  For simplicity of notation, we assume $c_1\geq c_2\geq c_3$, using the symmetries discussed in \sref{lattice-sym}.    A major goal of the lemma is to understand the relationship between $T_0$ and the tiles $T_0 \pm v_{0i}$, for $i = 1, 2, 3$.  We do this by decomposing tiles, to deduce that tiles form a touching triple as a consequence of the fact that some smaller tiles form a touching triple.   In particular, we write $T_0$ as a union of $T_i^\pm$ as in \eref{tdecomp} and then define

\begin{equation}
\label{e.leftdecomp}
\begin{aligned}
R_1^0 & = T_1^+ + v_{03} \\
R_1^1 & = T_1^+ - v_{02} \\
R_1^2 & = T_1^- - v_{03} \\
R_1^3 & = T_1^- + v_{02}
\end{aligned}
\qquad
\begin{aligned}
R_2^0 & = T_2^+ + v_{01} \\
R_2^1 & = T_2^+ - v_{03} \\
R_2^2 & = T_2^- - v_{01} \\
R_2^3 & = T_2^- + v_{03}
\end{aligned}
\qquad
\begin{aligned}
R_3^0 & = T_3^+ + v_{02} \\
R_3^1 & = T_3^+ - v_{01} \\
R_3^2 & = T_3^- - v_{02} \\
R_3^3 & = T_3^- + v_{01}.
\end{aligned}
\end{equation}
(See \fref{righttile}.) When $c_1>1$, \dref{T0} implies that the subtiles $T_1^\pm$ have decompositions as in \eref{tdecomp}, and we write
\begin{align*}
T_1^+&=Q_2^+\cup Q_2^-\cup Q_3^+\cup Q_3^-\cup Q_4^+\cup Q_4^-,\\
T_1^-&=S_2^+\cup S_2^-\cup S_3^+\cup S_3^-\cup S_4^+\cup S_4^-,
\end{align*}
where $Q_i^\pm$ and $S_i^\pm$ are each tiles for $C_i$, and $(C_1,C_4,C_2,C_3)$ is the proper Descartes quadruple given by letting $C_4=2(C_1+C_2+C_3)-C_0\in \B$ be the Soddy precursor of $C_0$.  The \emph{double decomposition} of $T_0$ is then the collection of tiles 
\[
\{T_2^+,T_2^-,T_3^+,T_3^-\}\cup \{S_i^\pm\st i=2,3,4\}\cup \{Q_i^\pm\st i=2,3,4\}
\]
shown in \fref{doubledecomp}.  Note that in the course of proving \lref{tiling}, we will show that $Q_4^-=S_4^+$.

\begin{lemma}
\label{l.tiling}
Let $(C_0,C_1,C_2,C_3)$ be a Descartes quadruple, let $(i,j,k)$ indicate any rotation of $(1,2,3)$, and let $v_{ij} = v(C_i, C_j)$.   Denoting the tiles of the decomposition and double decomposition of $T_0$ as above, the following properties hold:
\begin{enumerate}
\renewcommand{\theenumi}{T\arabic{enumi}}
\item \label{e.tarea}
$T_0$ is a tile, 
and $|T_0| = c_0$.
Moreover, $T_0\stm T_i^\pm$ is a tile, which touches $T_i^\pm$, for each $i=1,2,3$.
\item \label{e.t90}
$T_0$ is $90^\circ$ symmetric.

\item \label{e.ttile}
$T_0$, $T_0 + v_{i0},$ and $T_0 - v_{j0}$ form a touching triple, provided $c_0>1$.

\item \label{e.ttouch1}
$T_0,$ $T_0 + v_{i0},$ and $T_i$ form a touching triple whenever $\cent(T_i)-\cent(T_0)= \tfrac{1}{2} (v_{i0} + v_{0i})$ and $c_0>1$.

\item \label{e.ttouch2}
$T_0,$ $T_i$, and $T_j$ form a touching triple whenever $\cent(T_i)-\cent(T_0)= \tfrac{1}{2} (v_{i0} + v_{0i}),$ and $\cent(T_j)- \cent(T_0)= \tfrac{1}{2} (v_{j0} - v_{0j})$.

\item\label{e.tgooddecomp}
If $c_1\geq c_2>1$, then among other labeled tiles from \fref{righttile}, the subtile $T_i^\pm$ intersects only those which are drawn adjacent to it or with overlap. Moreover, 
$\foot(T_1^+)\cap \foot(T_2^+)\sbs \foot(Q_2^+)\cap \foot(T_2^+)$, and $\foot(T_1^+)\cap \foot(T_1^-)\sbs (\foot(Q_2^+)\cap \foot(S_3^-))\cup \foot(Q_4^-)\cup (\foot(Q_3^+)\cap \foot(S_2^-)).$
\end{enumerate}
\end{lemma}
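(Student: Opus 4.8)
The plan is to prove properties (T1)--(T6) simultaneously by induction along the directed forest of semi-proper Descartes quadruples from \sref{lattice}, taking as base cases the Ford and diamond circles together with their images under the symmetries of \sref{lattice-sym}; for those circles the tiles, the area count, the $90^\circ$ symmetry and the touching structure were already produced explicitly in \sref{degenerate} (via \pref{ford}, \pref{diamond}, \pref{fordlattice}, \pref{diamondlattice}). For the inductive step I fix a proper quadruple $(C_0,C_1,C_2,C_3)$ with $c_1\ge c_2\ge c_3$ and assume all six properties for the parents $C_1,C_2,C_3$ and for the Soddy precursor $C_4=2(C_1+C_2+C_3)-C_0$. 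The properties are interlocking: the touching triples (T3)--(T5) for a smaller circle are precisely the hypotheses one needs to control how the subtiles of a larger circle meet, and the $90^\circ$ symmetry (T2) of the parent tiles is used (as the excerpt flags) inside the analysis of $C_0$, so they must be threaded through the same induction rather than proved one after another.

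The engine of the step is the double decomposition of \fref{doubledecomp}: although the two largest subtiles $T_1^+$ and $T_1^-$ of $T_0$ overlap, refining each into tiles for $C_2,C_3,C_4$ (the $Q_i^\pm$ and $S_i^\pm$) replaces the overlap by a family of sub-subtiles whose pairwise meetings are dictated entirely by the inductive touching triples (T3)--(T5) applied to $C_2,C_3,C_4$. First I would prove $Q_4^-=S_4^+$, so that the overlap of $T_1^+$ and $T_1^-$ is exactly one copy of $T_4$; this is the single crucial identity. Reading off from the inductive data which sub-subtiles are forced to be adjacent or to overlap then yields the intersection bookkeeping of (T6), namely that each $T_i^\pm$ meets only the tiles drawn adjacent to it in \fref{righttile}, that $F(T_1^+)\cap F(T_2^+)\sbs F(Q_2^+)\cap F(T_2^+)$, and the three-piece description of $F(T_1^+)\cap F(T_1^-)$. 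Once the subtiles are known to fill $T_0$ with no overlaps beyond the predicted copy of $T_4$, $T_0$ is seen to be a topological disk (hence a genuine tile), which is (T1); the area identity $|T_0|=2(c_1+c_2+c_3)-c_4=c_0$ then follows from $|T_i|=c_i$ and the linear Soddy relation \eref{soddy}, and the touching statement that $T_0\stm T_i^\pm$ is a tile touching $T_i^\pm$ comes out of the same local picture.

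With the local intersection structure of (T6) in hand, the remaining global statements follow by feeding this data into the topological criterion \lref{topological}. To obtain the touching triples (T3)--(T5) I would take for $\ttt$ the $\Lambda_{C_0}$-orbit of $T_0$ (respectively the appropriate finite configurations of $T_0$ with its parent tiles), declare $\eee$ according to the adjacencies produced above, and verify the four hypotheses of \lref{topological}: the planar-triangulation condition \hyref{graph} from the combinatorics of the decomposition, the periodicity condition \hyref{periodic} from $|T_0|=c_0=|\det\Lambda_{C_0}|$ (which is the determinant computation of \lref{lattice}, and for which $\Lambda_{C_0}\sbs L_{C_0}$ was recorded in \lref{lattice1}), the two-vertex overlaps \hyref{ints} and the coherent-path condition \hyref{threes} from (T6). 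The lemma then returns that $\ttt$ is a tiling and that intersecting tiles are adjacent with path-shaped intersections; the faces of the triangulation are exactly the asserted touching triples, and the centroid offsets $\tfrac12(v_{i0}+v_{0i})$ and $\tfrac12(v_{j0}-v_{0j})$ are read off from \eref{lattice.flip}.

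The main obstacle I anticipate lies in two coupled places. The first is establishing $Q_4^-=S_4^+$ and the precise containments of (T6): everything downstream (disk-ness, the area, and every application of \lref{topological}) rests on the overlap of the two copies of $T_1$ being exactly the single copy of $T_4$ and nothing more, and pinning this down requires combining the inductive touching structure of the Soddy precursor $C_4$ with the $90^\circ$ symmetry of the parents. The second, and genuinely delicate, point is property (T2): the assembled region $T_0$ must be shown $90^\circ$ symmetric even though its decomposition into the $T_i^\pm$ is only $180^\circ$ symmetric --- this ``extra symmetry'' is not inherited from any symmetry of the band packing $\B$ (which has none of order $4$), so it must be extracted from the combinatorics of the decomposition and the inductive $90^\circ$ symmetry of the smaller tiles, and it is precisely this symmetry that makes the selection of coherent triangle-points in hypothesis \hyref{threes} possible. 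I expect the bulk of the technical work to be the careful verification of \hyref{threes} for the double decomposition, where the asymmetry between the two overlapping copies of $T_1$ has to be reconciled with the symmetric region it helps produce.
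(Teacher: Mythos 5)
Your skeleton (induction with Ford/diamond base cases, the double decomposition, the identity $Q_4^-=S_4^+$, the Soddy area count, \lref{topological}, and $90^\circ$ symmetry extracted from the inductive symmetry of smaller tiles) matches the paper's, but the order in which you draw the conclusions contains a genuine gap. You propose to establish \eref{tgooddecomp} and then \eref{tarea} by ``reading off from the inductive data which sub-subtiles are forced to be adjacent or to overlap,'' and only afterwards to invoke \lref{topological}. The inductive touching data (the paper's claims about adjacent labelled tiles in \fref{righttile} and \fref{doubledecomp}) is purely positive: it exhibits touching triples among \emph{designated} pairs, but it says nothing that excludes unexpected intersections or overlaps between pairs that are \emph{not} drawn adjacent --- for instance it does not by itself rule out $T_1^+$ and $T_2^+$ overlapping, or a subtile wandering into contact with a distant $R_i^j$. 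That exclusion is a global planarity/winding-number statement, and it is exactly what \lref{topological} supplies; the paper flags this explicitly, saying that the double-decomposition data is not enough to conclude even that $T_1^+$ and $T_2^+$ touch, and that proceeding further requires \lref{topological}. Without it, neither the claim that the subtiles fill $T_0$ with no overlap beyond the single copy $Q_4^-=S_4^+$ (hence neither disk-ness nor $|T_0|=2(c_1+c_2+c_3)-c_4=c_0$) nor the ``intersects only those drawn adjacent'' clause of \eref{tgooddecomp} is justified.

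The companion problem is your intended application of \lref{topological} to the $\Lambda_{C_0}$-orbit of $T_0$: hypothesis~(\ref{P.periodic}) there needs $T_0$ to be a tile with $|T_0|=|\det\Lambda_{C_0}|$, and hypotheses~(\ref{P.ints}) and~(\ref{P.threes}) need control of $F(T_0)\cap F(T_0+v)$, which is essentially \eref{ttile} --- the statement you are trying to prove --- so the argument becomes circular. The paper's resolution is to apply \lref{topological} once, not to translates of $T_0$, but to the $L$-translates (with $L$ generated by $v_{01},v_{02},v_{03}$) of the \emph{subtile} collection in which $T_1^+$ is replaced by $T_1^+\stm Q_4^-$: each member is a tile by induction, the areas sum to $2(c_1+c_2+c_3)-c_4=c_0=|\det L|$ by the Soddy relation, and hypotheses~(\ref{P.ints}) and~(\ref{P.threes}) are checked from the local touching claims together with the boundary-path argument splitting $\partial Q_4^-$ into the two arcs $L^+$ and $L^-$. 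All of \eref{tarea}, \eref{tgooddecomp}, and the intersection data feeding \eref{ttile}--\eref{ttouch2} then fall out of that single application; the tiling of $\Z^2$ by $T_0+\Lambda_{C_0}$ that you want to quote appears only afterwards, as \tref{Lambdatiling}. A further small correction: the $90^\circ$ symmetry is not what enables the choice of points in hypothesis~(\ref{P.threes}) (that step uses the touching claims and a $180^\circ$, not $90^\circ$, reflection through $\cent(Q_4^-)$); in the paper \eref{t90} is proved separately --- $T_0$ is the bounded complementary component of the surrounding ring $\bigcup R_i^k$, which is $90^\circ$ symmetric because $v_{0i}=\I v_{i0}$ and the $R_i^k$ are inductively symmetric --- and it is used at the end, to convert tiling adjacencies along the $v_{0i}$ into the touching triples along the $v_{i0}$ asserted in \eref{ttile}--\eref{ttouch2}.
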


Note that \eref{tgooddecomp} could be removed from \lref{tiling} without compromising the induction, but this technical information will be necessary for our use of tiles in the construction of integer superharmonic representatives.

Before commencing with our inductive proof of \lref{tiling} in the general case, we use it to prove the following version of the tiling theorem from the introduction. This will give a very simple example of an application of \lref{topological}.

\begin{theorem}
\label{t.Lambdatiling}
For every circle $C\in \B$, 
there is a tile $T_C \sbs \Z^2$ with $90^\circ$ rotational symmetry, such that $T_C+\Lambda_C$ is a tiling.  Moreover, except when $C$ has radius 1, each tile in $T_C+\Lambda_C$ borders exactly 6 other tiles.
\end{theorem}
Note that \tref{tiling} follows from this and our confirmation in \tref{lattice2} that $\Lambda_C=L_C$.
\begin{proof}[Proof of \tref{Lambdatiling}]
In the case where $C\in \B$ has curvature 1, the lattice $\Lambda_C$ is \change{$\Z^2$} 
and the corresponding tile $T$ is simply a single square $s_x$.  Thus we let $T=T_0$ be the tile for a circle $C_0$ in a Descartes quadruple $(C_0,C_1,C_2,C_3)$ with $c_0>1$, and write $v_{ij}$ for the vectors $v(C_i,C_j)$.  We consider a $\Z^2$-periodic planar graph $G$ whose vertices correspond to the tiles $T_0+\Lambda_{C_0}$, where two vertices are adjacent if the corresponding tiles differ by a vector $\pm v_{0i}$.  \eref{ttile} and the area condition from \eref{tarea} now allow us to verify all the hypotheses of \lref{topological}.  \lref{topological} implies that each square of $\Z^2$ lies in exactly one tile among $T_0+\Lambda_{C_0}$ (indeed, $T_0+\Lambda_{C_0}$ is a tiling) and that tile intersections are in one-to-one correspondence to edges in $G$, which has degree 6.
\end{proof}

\begin{figure}[t!]
\centering
\begin{minipage}[t]{.49\textwidth}
\centering{\begin{tikzpicture}[scale=1/5,font=\tiny]
\begin{scope}[draw=gray]
\draw (27.5,27.5) -- (28.5,27.5) -- (29.5,27.5) -- (29.5,28.5) -- (29.5,29.5) -- (28.5,29.5) -- (27.5,29.5) -- (27.5,28.5) -- (27.5,27.5);
\draw (28.500000,28.500000) node {$S_4^+$};
\draw (22.5,22.5) -- (23.5,22.5) -- (24.5,22.5) -- (25.5,22.5) -- (25.5,23.5) -- (25.5,24.5) -- (26.5,24.5) -- (26.5,25.5) -- (26.5,26.5) -- (26.5,27.5) -- (25.5,27.5) -- (24.5,27.5) -- (24.5,28.5) -- (23.5,28.5) -- (22.5,28.5) -- (21.5,28.5) -- (21.5,27.5) -- (21.5,26.5) -- (20.5,26.5) -- (20.5,25.5) -- (20.5,24.5) -- (20.5,23.5) -- (21.5,23.5) -- (22.5,23.5) -- (22.5,22.5);
\draw (23.500000,25.500000) node {$S_2^+$};
\draw (24.5,19.5) -- (25.5,19.5) -- (26.5,19.5) -- (27.5,19.5) -- (27.5,20.5) -- (27.5,21.5) -- (27.5,22.5) -- (26.5,22.5) -- (25.5,22.5) -- (24.5,22.5) -- (24.5,21.5) -- (24.5,20.5) -- (24.5,19.5);
\draw (26.000000,21.000000) node {$S_3^+$};
\draw (22.5,20.5) -- (23.5,20.5) -- (24.5,20.5) -- (24.5,21.5) -- (24.5,22.5) -- (23.5,22.5) -- (22.5,22.5) -- (22.5,21.5) -- (22.5,20.5);
\draw (23.500000,21.500000) node {$S_4^-$};
\draw (27.5,21.5) -- (28.5,21.5) -- (29.5,21.5) -- (30.5,21.5) -- (30.5,22.5) -- (30.5,23.5) -- (31.5,23.5) -- (31.5,24.5) -- (31.5,25.5) -- (31.5,26.5) -- (30.5,26.5) -- (29.5,26.5) -- (29.5,27.5) -- (28.5,27.5) -- (27.5,27.5) -- (26.5,27.5) -- (26.5,26.5) -- (26.5,25.5) -- (25.5,25.5) -- (25.5,24.5) -- (25.5,23.5) -- (25.5,22.5) -- (26.5,22.5) -- (27.5,22.5) -- (27.5,21.5);
\draw (28.500000,24.500000) node {$S_2^-$};
\draw (24.5,27.5) -- (25.5,27.5) -- (26.5,27.5) -- (27.5,27.5) -- (27.5,28.5) -- (27.5,29.5) -- (27.5,30.5) -- (26.5,30.5) -- (25.5,30.5) -- (24.5,30.5) -- (24.5,29.5) -- (24.5,28.5) -- (24.5,27.5);
\draw (26.000000,29.000000) node {$S_3^-$};
\end{scope}
\begin{scope}[thick]
\draw (29.5,26.5) -- (30.5,26.5) -- (31.5,26.5) -- (32.5,26.5) -- (32.5,27.5) -- (32.5,28.5) -- (33.5,28.5) -- (34.5,28.5) -- (35.5,28.5) -- (35.5,29.5) -- (35.5,30.5) -- (36.5,30.5) -- (36.5,31.5) -- (36.5,32.5) -- (36.5,33.5) -- (35.5,33.5) -- (34.5,33.5) -- (34.5,34.5) -- (34.5,35.5) -- (34.5,36.5) -- (33.5,36.5) -- (32.5,36.5) -- (32.5,37.5) -- (31.5,37.5) -- (30.5,37.5) -- (29.5,37.5) -- (29.5,36.5) -- (29.5,35.5) -- (28.5,35.5) -- (27.5,35.5) -- (26.5,35.5) -- (26.5,34.5) -- (26.5,33.5) -- (25.5,33.5) -- (25.5,32.5) -- (25.5,31.5) -- (25.5,30.5) -- (26.5,30.5) -- (27.5,30.5) -- (27.5,29.5) -- (27.5,28.5) -- (27.5,27.5) -- (28.5,27.5) -- (29.5,27.5) -- (29.5,26.5);
\draw (31.000000,32.000000) node {$T_1^+ \setminus S_4^+$};
\draw (21.5,28.5) -- (22.5,28.5) -- (23.5,28.5) -- (24.5,28.5) -- (24.5,29.5) -- (24.5,30.5) -- (25.5,30.5) -- (25.5,31.5) -- (25.5,32.5) -- (25.5,33.5) -- (24.5,33.5) -- (23.5,33.5) -- (23.5,34.5) -- (22.5,34.5) -- (21.5,34.5) -- (20.5,34.5) -- (20.5,33.5) -- (20.5,32.5) -- (19.5,32.5) -- (19.5,31.5) -- (19.5,30.5) -- (19.5,29.5) -- (20.5,29.5) -- (21.5,29.5) -- (21.5,28.5);
\draw (22.500000,31.500000) node {$T_2^+$};
\draw (30.5,20.5) -- (31.5,20.5) -- (32.5,20.5) -- (33.5,20.5) -- (33.5,21.5) -- (33.5,22.5) -- (33.5,23.5) -- (32.5,23.5) -- (31.5,23.5) -- (30.5,23.5) -- (30.5,22.5) -- (30.5,21.5) -- (30.5,20.5);
\draw (32.000000,22.000000) node {$T_3^+$};
\draw (24.5,19.5) -- (25.5,19.5) -- (26.5,19.5) -- (27.5,19.5) -- (27.5,20.5) -- (27.5,21.5) -- (28.5,21.5) -- (29.5,21.5) -- (30.5,21.5) -- (30.5,22.5) -- (30.5,23.5) -- (31.5,23.5) -- (31.5,24.5) -- (31.5,25.5) -- (31.5,26.5) -- (30.5,26.5) -- (29.5,26.5) -- (29.5,27.5) -- (29.5,28.5) -- (29.5,29.5) -- (28.5,29.5) -- (27.5,29.5) -- (27.5,30.5) -- (26.5,30.5) -- (25.5,30.5) -- (24.5,30.5) -- (24.5,29.5) -- (24.5,28.5) -- (23.5,28.5) -- (22.5,28.5) -- (21.5,28.5) -- (21.5,27.5) -- (21.5,26.5) -- (20.5,26.5) -- (20.5,25.5) -- (20.5,24.5) -- (20.5,23.5) -- (21.5,23.5) -- (22.5,23.5) -- (22.5,22.5) -- (22.5,21.5) -- (22.5,20.5) -- (23.5,20.5) -- (24.5,20.5) -- (24.5,19.5);

\draw (33.5,22.5) -- (34.5,22.5) -- (35.5,22.5) -- (36.5,22.5) -- (36.5,23.5) -- (36.5,24.5) -- (37.5,24.5) -- (37.5,25.5) -- (37.5,26.5) -- (37.5,27.5) -- (36.5,27.5) -- (35.5,27.5) -- (35.5,28.5) -- (34.5,28.5) -- (33.5,28.5) -- (32.5,28.5) -- (32.5,27.5) -- (32.5,26.5) -- (31.5,26.5) -- (31.5,25.5) -- (31.5,24.5) -- (31.5,23.5) -- (32.5,23.5) -- (33.5,23.5) -- (33.5,22.5);
\draw (34.500000,25.500000) node {$T_2^-$};
\draw (23.5,33.5) -- (24.5,33.5) -- (25.5,33.5) -- (26.5,33.5) -- (26.5,34.5) -- (26.5,35.5) -- (26.5,36.5) -- (25.5,36.5) -- (24.5,36.5) -- (23.5,36.5) -- (23.5,35.5) -- (23.5,34.5) -- (23.5,33.5);
\draw (25.000000,35.000000) node {$T_3^-$};
\end{scope}

\end{tikzpicture}}%
\begin{minipage}[c]{1.2\textwidth}
\caption{Constructing a prototile from the decomposition of its largest parent tile.}%
\label{f.maketile}%
\end{minipage}%
\end{minipage}%
\begin{minipage}[t]{.49\textwidth}
\centering
\begin{tikzpicture}[scale=1/5,font=\tiny]
\begin{scope}[draw=gray]
\draw (27.5,27.5) -- (28.5,27.5) -- (29.5,27.5) -- (29.5,28.5) -- (29.5,29.5) -- (28.5,29.5) -- (27.5,29.5) -- (27.5,28.5) -- (27.5,27.5);
\draw (28.500000,28.500000) node {$S_4^+$};
\draw (22.5,22.5) -- (23.5,22.5) -- (24.5,22.5) -- (25.5,22.5) -- (25.5,23.5) -- (25.5,24.5) -- (26.5,24.5) -- (26.5,25.5) -- (26.5,26.5) -- (26.5,27.5) -- (25.5,27.5) -- (24.5,27.5) -- (24.5,28.5) -- (23.5,28.5) -- (22.5,28.5) -- (21.5,28.5) -- (21.5,27.5) -- (21.5,26.5) -- (20.5,26.5) -- (20.5,25.5) -- (20.5,24.5) -- (20.5,23.5) -- (21.5,23.5) -- (22.5,23.5) -- (22.5,22.5);
\draw (23.500000,25.500000) node {$S_2^+$};
\draw (24.5,19.5) -- (25.5,19.5) -- (26.5,19.5) -- (27.5,19.5) -- (27.5,20.5) -- (27.5,21.5) -- (27.5,22.5) -- (26.5,22.5) -- (25.5,22.5) -- (24.5,22.5) -- (24.5,21.5) -- (24.5,20.5) -- (24.5,19.5);
\draw (26.000000,21.000000) node {$S_3^+$};
\draw (22.5,20.5) -- (23.5,20.5) -- (24.5,20.5) -- (24.5,21.5) -- (24.5,22.5) -- (23.5,22.5) -- (22.5,22.5) -- (22.5,21.5) -- (22.5,20.5);
\draw (23.500000,21.500000) node {$S_4^-$};
\draw (27.5,21.5) -- (28.5,21.5) -- (29.5,21.5) -- (30.5,21.5) -- (30.5,22.5) -- (30.5,23.5) -- (31.5,23.5) -- (31.5,24.5) -- (31.5,25.5) -- (31.5,26.5) -- (30.5,26.5) -- (29.5,26.5) -- (29.5,27.5) -- (28.5,27.5) -- (27.5,27.5) -- (26.5,27.5) -- (26.5,26.5) -- (26.5,25.5) -- (25.5,25.5) -- (25.5,24.5) -- (25.5,23.5) -- (25.5,22.5) -- (26.5,22.5) -- (27.5,22.5) -- (27.5,21.5);
\draw (28.500000,24.500000) node {$S_2^-$};
\draw (24.5,27.5) -- (25.5,27.5) -- (26.5,27.5) -- (27.5,27.5) -- (27.5,28.5) -- (27.5,29.5) -- (27.5,30.5) -- (26.5,30.5) -- (25.5,30.5) -- (24.5,30.5) -- (24.5,29.5) -- (24.5,28.5) -- (24.5,27.5);
\draw (26.000000,29.000000) node {$S_3^-$};
\draw (32.5,34.5) -- (33.5,34.5) -- (34.5,34.5) -- (34.5,35.5) -- (34.5,36.5) -- (33.5,36.5) -- (32.5,36.5) -- (32.5,35.5) -- (32.5,34.5);
\draw (33.500000,35.500000) node {$Q_4^+$};
\draw (27.5,29.5) -- (28.5,29.5) -- (29.5,29.5) -- (30.5,29.5) -- (30.5,30.5) -- (30.5,31.5) -- (31.5,31.5) -- (31.5,32.5) -- (31.5,33.5) -- (31.5,34.5) -- (30.5,34.5) -- (29.5,34.5) -- (29.5,35.5) -- (28.5,35.5) -- (27.5,35.5) -- (26.5,35.5) -- (26.5,34.5) -- (26.5,33.5) -- (25.5,33.5) -- (25.5,32.5) -- (25.5,31.5) -- (25.5,30.5) -- (26.5,30.5) -- (27.5,30.5) -- (27.5,29.5);
\draw (28.500000,32.500000) node {$Q_2^+$};
\draw (29.5,26.5) -- (30.5,26.5) -- (31.5,26.5) -- (32.5,26.5) -- (32.5,27.5) -- (32.5,28.5) -- (32.5,29.5) -- (31.5,29.5) -- (30.5,29.5) -- (29.5,29.5) -- (29.5,28.5) -- (29.5,27.5) -- (29.5,26.5);
\draw (31.000000,28.000000) node {$Q_3^+$};
\draw (32.5,28.5) -- (33.5,28.5) -- (34.5,28.5) -- (35.5,28.5) -- (35.5,29.5) -- (35.5,30.5) -- (36.5,30.5) -- (36.5,31.5) -- (36.5,32.5) -- (36.5,33.5) -- (35.5,33.5) -- (34.5,33.5) -- (34.5,34.5) -- (33.5,34.5) -- (32.5,34.5) -- (31.5,34.5) -- (31.5,33.5) -- (31.5,32.5) -- (30.5,32.5) -- (30.5,31.5) -- (30.5,30.5) -- (30.5,29.5) -- (31.5,29.5) -- (32.5,29.5) -- (32.5,28.5);
\draw (33.500000,31.500000) node {$Q_2^-$};
\draw (29.5,34.5) -- (30.5,34.5) -- (31.5,34.5) -- (32.5,34.5) -- (32.5,35.5) -- (32.5,36.5) -- (32.5,37.5) -- (31.5,37.5) -- (30.5,37.5) -- (29.5,37.5) -- (29.5,36.5) -- (29.5,35.5) -- (29.5,34.5);
\draw (31.000000,36.000000) node {$Q_3^-$};
\end{scope}\begin{scope}[thick]
\draw (29.5,26.5) -- (30.5,26.5) -- (31.5,26.5) -- (32.5,26.5) -- (32.5,27.5) -- (32.5,28.5) -- (33.5,28.5) -- (34.5,28.5) -- (35.5,28.5) -- (35.5,29.5) -- (35.5,30.5) -- (36.5,30.5) -- (36.5,31.5) -- (36.5,32.5) -- (36.5,33.5) -- (35.5,33.5) -- (34.5,33.5) -- (34.5,34.5) -- (34.5,35.5) -- (34.5,36.5) -- (33.5,36.5) -- (32.5,36.5) -- (32.5,37.5) -- (31.5,37.5) -- (30.5,37.5) -- (29.5,37.5) -- (29.5,36.5) -- (29.5,35.5) -- (28.5,35.5) -- (27.5,35.5) -- (26.5,35.5) -- (26.5,34.5) -- (26.5,33.5) -- (25.5,33.5) -- (25.5,32.5) -- (25.5,31.5) -- (25.5,30.5) -- (26.5,30.5) -- (27.5,30.5) -- (27.5,29.5) -- (27.5,28.5) -- (27.5,27.5) -- (28.5,27.5) -- (29.5,27.5) -- (29.5,26.5);
\draw (21.5,28.5) -- (22.5,28.5) -- (23.5,28.5) -- (24.5,28.5) -- (24.5,29.5) -- (24.5,30.5) -- (25.5,30.5) -- (25.5,31.5) -- (25.5,32.5) -- (25.5,33.5) -- (24.5,33.5) -- (23.5,33.5) -- (23.5,34.5) -- (22.5,34.5) -- (21.5,34.5) -- (20.5,34.5) -- (20.5,33.5) -- (20.5,32.5) -- (19.5,32.5) -- (19.5,31.5) -- (19.5,30.5) -- (19.5,29.5) -- (20.5,29.5) -- (21.5,29.5) -- (21.5,28.5);
\draw (22.500000,31.500000) node {$T_2^+$};
\draw (30.5,20.5) -- (31.5,20.5) -- (32.5,20.5) -- (33.5,20.5) -- (33.5,21.5) -- (33.5,22.5) -- (33.5,23.5) -- (32.5,23.5) -- (31.5,23.5) -- (30.5,23.5) -- (30.5,22.5) -- (30.5,21.5) -- (30.5,20.5);
\draw (32.000000,22.000000) node {$T_3^+$};
\draw (24.5,19.5) -- (25.5,19.5) -- (26.5,19.5) -- (27.5,19.5) -- (27.5,20.5) -- (27.5,21.5) -- (28.5,21.5) -- (29.5,21.5) -- (30.5,21.5) -- (30.5,22.5) -- (30.5,23.5) -- (31.5,23.5) -- (31.5,24.5) -- (31.5,25.5) -- (31.5,26.5) -- (30.5,26.5) -- (29.5,26.5) -- (29.5,27.5) -- (29.5,28.5) -- (29.5,29.5) -- (28.5,29.5) -- (27.5,29.5) -- (27.5,30.5) -- (26.5,30.5) -- (25.5,30.5) -- (24.5,30.5) -- (24.5,29.5) -- (24.5,28.5) -- (23.5,28.5) -- (22.5,28.5) -- (21.5,28.5) -- (21.5,27.5) -- (21.5,26.5) -- (20.5,26.5) -- (20.5,25.5) -- (20.5,24.5) -- (20.5,23.5) -- (21.5,23.5) -- (22.5,23.5) -- (22.5,22.5) -- (22.5,21.5) -- (22.5,20.5) -- (23.5,20.5) -- (24.5,20.5) -- (24.5,19.5);
\draw (33.5,22.5) -- (34.5,22.5) -- (35.5,22.5) -- (36.5,22.5) -- (36.5,23.5) -- (36.5,24.5) -- (37.5,24.5) -- (37.5,25.5) -- (37.5,26.5) -- (37.5,27.5) -- (36.5,27.5) -- (35.5,27.5) -- (35.5,28.5) -- (34.5,28.5) -- (33.5,28.5) -- (32.5,28.5) -- (32.5,27.5) -- (32.5,26.5) -- (31.5,26.5) -- (31.5,25.5) -- (31.5,24.5) -- (31.5,23.5) -- (32.5,23.5) -- (33.5,23.5) -- (33.5,22.5);
\draw (34.500000,25.500000) node {$T_2^-$};
\draw (23.5,33.5) -- (24.5,33.5) -- (25.5,33.5) -- (26.5,33.5) -- (26.5,34.5) -- (26.5,35.5) -- (26.5,36.5) -- (25.5,36.5) -- (24.5,36.5) -- (23.5,36.5) -- (23.5,35.5) -- (23.5,34.5) -- (23.5,33.5);
\draw (25.000000,35.000000) node {$T_3^-$};
\end{scope}
\end{tikzpicture}%
\begin{minipage}[c]{1.2\textwidth}
\caption{The double decomposition of a tile, for the proof of \lref{tiling}.}
\label{f.doubledecomp}
\end{minipage}
\end{minipage}
\end{figure}

The rest of this section is devoted to the proof of \lref{tiling}.  Since we have already addressed the case of Ford circles and diamond circles, we may assume that $(C_0,C_1,C_2,C_3)\in \B$ is a proper Descartes quadruple, such that no $C_i$ is a line, and at most one $C_i$ has curvature $c_i=1$.  In particular, $c_0>4$, and, rotating the parents $(C_1,C_2,C_3)$ and possibly conjugating the original tuple using the symmetries in \sref{lattice-sym}, we may assume $C_3$ is a parent of $C_2$ and $C_2$ is a parent of $C_1$ so that $c_1> c_2\geq 4$.  In particular, we also have that $(C_1,C_4,C_2,C_3)$ and $(C_2,C_3,C_5, C_6)$ are both proper Descartes quadruples for some $C_5, C_6 \in {\B}$ where $C_4$ is the Soddy twin of $C_0$, and by induction, we may assume \lref{tiling} holds for both of these quadruples.   

For the sake of clarity and brevity, Claims \ref{c.outtouch} and \ref{c.doubledecomp} below are stated with the aid of \fref{righttile} and \fref{doubledecomp}, respectively.  For the purposes of the statements of the claims, tiles $S_1,S_2,S_3$ (from among the $T_i^\pm$ and $R_i^j$, or $T_i^\pm$, $Q_i^\pm$ and  $S_i^\pm$ labeled in the figure) are considered to be drawn adjacently if their corresponding regions in the figure overlap or share some portion of their boundaries as drawn. ($T_1^-$ and $T_1^+$ in \fref{righttile} are the only tiles drawn with overlap.)  In the course of our proof of \lref{tiling}, we will be verifying that the basic tile layout in the figure is correct.
\begin{claim}
\label{c.outtouch}
If $(S_1,S_2,S_3)$ is a triple of adjacent labelled tiles in \fref{righttile}, not all of which are contained in $T_0$, then the $S_i$ form a touching triple, unless $\abs{S_1}=\abs{S_2}=\abs{S_3}=1$, in which case the $S_i$'s are pairwise intersecting squares.
\end{claim}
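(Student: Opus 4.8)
The plan is to prove \ref{c.outtouch} by matching each adjacency displayed in \fref{righttile} to a touching triple already guaranteed by the inductive hypothesis. The essential observation is that every labelled tile in \fref{righttile}---each $T_i^\pm$ and each $R_i^j$---is by construction a translate of a prototile of one of the three parent circles $C_1,C_2,C_3$; by \lref{prototile} and the inductive hypothesis these prototiles are genuine $90^\circ$ symmetric tiles. Consequently each triple $(S_1,S_2,S_3)$ of mutually adjacent tiles in the figure is a translate of a triple of parent tiles whose underlying circles are pairwise tangent, with the relative offsets between the three tiles prescribed by the lattice vectors $v_{ij}$ through the definitions \eref{leftdecomp} and \eref{tdecompconst}.

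I would first enumerate the adjacency types. Since $C_3$ is a parent of $C_2$ and $C_2$ is a parent of $C_1$, the two relevant proper Descartes quadruples are $(C_1,C_4,C_2,C_3)$ and $(C_2,C_3,C_5,C_6)$, and by induction \lref{tiling} holds for both, with $C_1$ (respectively $C_2$) playing the role of the child $C_0$. Each adjacent triple in \fref{righttile} that is not entirely contained in $T_0$ then matches, after translation, exactly one of the touching triples supplied by \eref{ttile}, \eref{ttouch1}, or \eref{ttouch2} for one of these quadruples, sorted by the number of distinct parent circles involved: three translates of a single parent tile come from \eref{ttile}; a triple using two distinct parent circles comes from \eref{ttouch1}; and a triple whose three tiles come from three distinct circles comes from \eref{ttouch2}.

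For each case the only thing left to verify is that the centroid differences between the three tiles agree with the offsets demanded by the cited property; this is a direct computation with the lattice recursions \eref{lattice.superbasis}--\eref{lattice.flip} of \lref{lattice}, starting from the explicit expressions in \eref{leftdecomp}. The degenerate branch is handled separately: when $c_3=1$ the tiles $T_3^\pm$ and the $R_3^j$ are single squares $s_x$, and a triple of single squares meeting at a common corner shares only one boundary vertex in each pair, hence is pairwise intersecting but fails the ``path of at least two vertices'' requirement for touching. This is exactly the stated exception.

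The main obstacle will be the bookkeeping at the seam between the $C_1$ centered and $C_2$ centered portions of the figure: certain adjacent triples mix a tile whose adjacency is governed by the quadruple $(C_1,C_4,C_2,C_3)$ with one governed by $(C_2,C_3,C_5,C_6)$, and one must check that these two inductive pictures glue consistently, so that every triple is accounted for and the offsets computed from either side coincide. Verifying this compatibility---rather than any individual touching-triple computation---is where the real care is needed.
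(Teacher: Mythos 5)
Your main line is the same as the paper's: use the lattice rules \eref{lattice} to compute the centroid offsets between tiles drawn adjacently in \fref{righttile}, and observe that every such triple not all contained in $T_0$ is, after translation, an instance of the inductive \eref{ttouch1} or \eref{ttouch2} for one of the quadruples $(C_1,C_4,C_2,C_3)$ or $(C_2,C_3,C_5,C_6)$. Two peripheral remarks: your \eref{ttile} case is vacuous, since no three tiles of a single parent circle are mutually adjacent in \fref{righttile}; and the ``seam'' you single out as the hard part is not a separate compatibility check at all --- each triple is matched to exactly one of the two inductive quadruples, and the only gluing is the lattice arithmetic itself (identities such as $v_{0i} = \I v_{i0}$ and \eref{lattice.superbasis}, which rewrite offsets expressed through $C_0$'s vectors as offsets internal to the parents).

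The genuine gap is in your degenerate branch. The exception in the claim fires only when all three tiles have area one, i.e.\ when each $S_i$ is a tile for a curvature-one circle (this is the case the paper disposes of by hand); it is not triggered by the hypothesis $c_3=1$. Under the standing normalization $c_1 > c_2 \geq 4$, a figure triple containing a unit-square tile $T_3^{\pm}$ or $R_3^{j}$ still contains a tile for $C_1$ or $C_2$ of area at least four, and the claim asserts that such a triple is a genuine touching triple. Your inductive matching does prove this --- \eref{ttouch2}, and \eref{ttouch1} with child $C_1$ or $C_2$, impose no lower bound on the parents' curvatures --- but as written your proposal routes the whole case $c_3 = 1$ into the exceptional branch, where those triples would either be left unproven or, worse, be asserted to be merely ``pairwise intersecting squares,'' which is false. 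Your geometric picture of the exceptional configuration is also off: three pairwise intersecting unit squares cannot meet pairwise in single vertices; in the relevant configuration two of the pairs share a full edge and exactly one pair meets in a single corner, which is precisely why the triple is pairwise intersecting yet fails to be a touching triple. So restrict the degenerate branch to triples with $\abs{S_1}=\abs{S_2}=\abs{S_3}=1$, and keep every triple containing a tile of area greater than one inside the inductive matching.
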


\begin{proof}[Proof of claim]
To prove this claim, we use the lattice rules \eref{lattice} to simplify the differences between centers of adjacent tiles.  One can check that adjacent tiles for $C_i$ are related by $\pm v_{ji}$ for some $j < i$.  One can also check that, when $i \neq j$, adjacent tiles for $C_i$ and $C_j$  are related by $\tfrac{1}{2} \I^s (v_{ij} + v_{ji})$ for $s = 0, 1, 2, 3$.  From this, we see that all of the triples $(S_1, S_2, S_3)$ from the claim fall into one of the inductive versions of \eref{ttouch1} or \eref{ttouch2} for $(C_1,C_4,C_2,C_3)$ or $(C_2,C_3,C_5,C_6)$ so long as $\abs{S_i}\neq 1$ for some $i$.  The case where each $S_i$ is a tile for a circle of curvature $1$ is easily checked by hand.
\end{proof}

To analyze touching tiles within $T_0$, we need to make use of the double decomposition. \fref{doubledecomp}'s depiction of the double decomposition is a bit deceptive, however, as it shows $S_4^\pm$ smaller than $S_3^\pm$ and $S_2^\pm$, even though the size of $S_4^\pm$ relative to $S_2^\pm,S_3^\pm$ is not constrainted by the hypotheses of the Lemma (it depends on the relative size of $C_4$ to $C_2,C_3$).  In particular, this is why the scope of Claim \ref{c.outtouch} is limited to triples of tiles not all lying inside $T_0$.   To emphasize this point, examples of the six generally possible tangency structures (from the six possible relative size orders of the circles $C_1,C_2,C_3$) are illustrated in \fref{sixdecompositions}. 

The following claim is sufficient for our purposes, however, and does not depend on the relative size of $C_4$ to $C_2$ and $C_3$:

\begin{claim}
\label{c.doubledecomp}
$S_4^+=Q_4^-$, and if $(R_1,R_2,R_3)$ is a triple of adjacent labelled tiles in \fref{doubledecomp}, not all of which are contained in a single $T_1^\pm$, then the $R_i$ form a touching triple of tiles.  
\end{claim}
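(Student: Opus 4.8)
The plan is to prove the two assertions separately: first I would pin down the shared tile $S_4^+=Q_4^-$ by a centroid computation, and then reduce the touching-triple statements to inductive instances of \eref{ttouch1} and \eref{ttouch2}, exactly in the spirit of the proof of Claim~\ref{c.outtouch}. Note at the outset that, by the induction hypothesis for \lref{tiling} applied to the quadruples $(C_1,C_4,C_2,C_3)$ and $(C_2,C_3,C_5,C_6)$, all of the pieces $S_i^\pm,Q_i^\pm,T_2^\pm,T_3^\pm$ are genuine tiles (via the inductive \eref{tarea}), so only their touching structure is at issue.

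For the identity $S_4^+=Q_4^-$, I would argue purely from the centroid constraint \eref{tdecompconst}. Applying it to the decomposition of $T_0$ with the rotation $(i,j,k)=(1,2,3)$ gives $\cent(T_1^+)-\cent(T_1^-)=v_{32}-\I v_{32}$. The crucial point is that $C_2$ and $C_3$ are common to the quadruples $(C_0,C_1,C_2,C_3)$ and $(C_1,C_4,C_2,C_3)$, so the vector $v(C_3,C_2)=v_{32}$ is the same in both; hence applying \eref{tdecompconst} to the decompositions of $T_1^-$ and of $T_1^+$ (rotation $(4,2,3)$, for which the relevant vector is again $v_{32}$) yields
\[
\cent(S_4^+)=\cent(T_1^-)+\tfrac12(v_{32}-\I v_{32})=\cent(T_1^+)-\tfrac12(v_{32}-\I v_{32})=\cent(Q_4^-).
\]
Since $S_4^+$ and $Q_4^-$ are both tiles for $C_4$, and any circle's prototile is unique up to translation, equality of centroids forces $S_4^+=Q_4^-$.

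For the touching triples, I would proceed as in Claim~\ref{c.outtouch}: record that adjacent tiles for a single circle $C_i$ differ by $\pm v_{ji}$ for some $j<i$, while adjacent tiles for distinct circles $C_i,C_j$ differ by $\tfrac12\I^s(v_{ij}+v_{ji})$, and then simplify each relevant centroid difference using the lattice identities \eref{lattice}. A triple $(R_1,R_2,R_3)$ not contained in a single $T_1^\pm$ must either bridge the interface $S_4^+=Q_4^-$ between $T_1^+$ and $T_1^-$, or involve one of the outer subtiles $T_2^\pm,T_3^\pm$ of $T_0$. In each case the simplified differences exhibit the triple as an inductive instance of \eref{ttouch1} or \eref{ttouch2} for one of the sub-quadruples $(C_1,C_4,C_2,C_3)$ or $(C_2,C_3,C_5,C_6)$, with one $R_\ell$ playing the role of the child tile; these hold by the induction hypothesis. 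The finitely many configurations in which every $R_\ell$ is a curvature-$1$ unit square are checked directly, as in Claim~\ref{c.outtouch}.

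The main obstacle is the bookkeeping in this second part: one must correctly enumerate all adjacent triples of the double decomposition that straddle $T_1^+$, $T_1^-$, and the outer tiles, and for each verify that the centroid differences — after applying \eref{lattice} — match the precise positional hypotheses of \eref{ttouch1}/\eref{ttouch2}. Keeping straight which of the shared vectors $v_{21},v_{13},v_{32}$ and the quadruple-specific vectors appear, and confirming that $T_2^\pm,T_3^\pm$ sit in the right relative position to the refined subtiles of $T_1^\pm$, is where essentially all the work lies; the identity $S_4^+=Q_4^-$ from the first part is precisely what makes the bridging triples across the $T_1^+/T_1^-$ interface well-posed.
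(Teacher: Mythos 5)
Your proposal is correct and is essentially the paper's own proof: the paper likewise verifies $S_4^+=Q_4^-$ from the lattice rules (your computation via \eref{tdecompconst}, using that $v_{32}=v(C_3,C_2)$ is attached to the pair of circles and hence common to both quadruples, plus uniqueness of prototiles up to translation, is exactly that verification made explicit), and then checks that every covered triple is an inductive instance of \eref{ttouch1} or \eref{ttouch2} after simplifying centroid differences with \eref{lattice}. The one bookkeeping slip is your attribution of some instances to $(C_1,C_4,C_2,C_3)$: none of the labelled tiles in \fref{doubledecomp} is a tile for $C_1$, so that quadruple can never supply the child tile, and the paper instead credits all of these instances to $(C_2,C_3,C_5,C_6)$, the tiles involved being tiles for $C_2$, $C_3$, $C_4$ only.
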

\begin{proof}[Proof of claim]
Again we use the lattice rules \eref{lattice} to simplify the differences between centers of adjacent tiles.  In particular, we verify that $S_4^+=Q_4^-$, and that each triple covered by the claim is a case of \eref{ttouch1} or \eref{ttouch2} for the quadruple $(C_2,C_3,C_5,C_6)$.
\end{proof}

The information from the double decomposition is not enough for us to claim yet that, e.g., $T_1^+$ and $T_2^+$ touch (in particular, that they intersect only on their boundaries), as we have not analyzed the topological relationships among all the tiles in the double decomposition from \fref{doubledecomp}, which would be quite cumbersome in light of the unknown relative size of the circle $C_4$.  To proceed further at this point will require \lref{topological}.   

\medskip

In principle, we would like to apply some topology to rule out the presence of extraneous tile relationships.  However, \lref{topological} is designed to apply to tilings, so the 
overlap of tiles $T_1^+$ and $T_1^-$ precludes its direct use for this purpose.

To work around this, we will modify some of our tiles so that we actually have a tiling.   In particular, we define $\tilde \ttt_L$ by replacing each translate of $T_1^+$ by the corresponding translate of $T_1^+\stm Q_4^-$, which, by induction, is a tile by \eref{tarea}.  We consider the graph $G$ whose vertex set is in correspondence with $\ttt_L$, and, also, therefore, $\tilde \ttt_L$.  For any vertex $v\in V(G)$, we write $T(v)$ for the corresponding tile in $\ttt_L$, and $\tilde T(v)$ for the corresponding tile in $\tilde \ttt_L$. (Note that unless $T(v)$ is a $L$-translate of $T_1^+$, we have that $T(v)=\tilde T(v)$.)  A pair of vertices $\{u,v\}$ is joined by an edge in $G$ if $T(u)$ and $T(v)$ are translates by a common vector in $L$ of a tile pair drawn adjacently or with overlap in \fref{righttile}.  
Viewed as an abstract graph, $G$ is easily seen to have a drawing as a $\Z^2$-periodic planar triangulation.  

We let $\eee$ and $\tilde \eee$ be the sets of pairs $\{T(u),T(v)\}$ and $\{\tilde T(u),\tilde T(v)\}$, respectively, for $\{u,v\}\in E(G)$.
\lref{topological} will allow us to prove the following claim:
\begin{claim}
  \label{c.applytop}
The tiles in $\tilde \ttt$ form a tiling, the only intersecting pairs of tiles in $\tilde\ttt$ are those in $\eee$, and all nonempty intersections of tile-pairs are paths.
\end{claim}

\bigskip

\noindent Assuming the claim, we can now verify \eref{tarea}-\eref{ttouch2} for the quadruple $(C_0,C_1,C_2,C_3)$.

\ipart{\eref{tarea}}{$T_0$ is a tile, 
and $|T_0| = c_0$.
Moreover, $T_0\stm T_i^\pm$ is a tile, which touches $T_i^\pm$, for each $i=1,2,3$.}

\begin{proof}
Since $T_0$ is a set of squares whose centers form a connected subgraph of the dual lattice ${\Z^2}^*$, the fact that $T_0$ is a tile follows from the fact that $T_0+L$ covers the plane without overlap (so $T_0$ has no ``holes''), by \lref{topological}.  Similarly, since any 5 tiles from among
\[
T_1^+\stm Q_4^-, T_1^-, T_2^+, T_2^-, T_3^+, T_3^-
\]
induce a connected subgraph of $(\tilde \ttt, \tilde \eee)$ whose complement is also connected, we get that $T_0\stm T_i^\pm$ is a tile.
It also follows that $\abs{T_0}=c_0$, since $c_0$ is the determinant of $L$.  
\end{proof}

\ipart{\eref{t90}}{$T_0$ is $90^\circ$ symmetric.}
\begin{proof}
Claim \ref{c.outtouch} implies that the tiles $R_i^k$, for $i = 1, 2, 3$ and $k = 1, 2, 3, 4$, surround $T_0$.  By induction, each of the $R_i^k$ is $90^\circ$ symmetric.  Moreover, using the lattice rules \eref{lattice}, we see that the union $S = \cup R_i^k$ is a $90^\circ$ symmetric union of squares.  Since $T_0$ is the bounded component of the complement of $S \setminus \partial S$, it too must be $90^\circ$ symmetric.
\end{proof}

\ipart{\eref{ttile}}{$T_0$, $T_0 + v_{i0},$ and $T_0 - v_{j0}$ form a touching triple, provided $c_0 > 1$.}

\begin{proof}
Our use of \lref{topological} tells us that the only pairs of tiles in $\tilde \ttt$ which intersect are those in $\tilde \eee$.  Thus the pairwise intersections of the tiles $T_0$, $T_0 + v_{0i},$ and $T_0 - v_{0j}$ can be written as the union of the subtiles from the respective tiles which are drawn adjacently in \fref{righttile}.  Each of these intersections is a simple path between points $\rho(F), \rho(F')$ which can be concatenated to a simple path.  (The resulting path is simple since the subtiles are all nonoverlapping.)  We are done by \eref{t90}.
\end{proof}

\ipart{\eref{ttouch1}}{$T_0,$ $T_0 + v_{i0},$ and $T_i$ form a touching triple, if $\cent(T_i)-\cent(T_0)=\tfrac{1}{2} (v_{i0} + v_{0i})$, provided $c_0 > 1$.}

\begin{proof}
Using the lattice rules \eref{lattice}, one can verify for each $i$ in $(1,2,3)$ that $T_i=R_i^1$, as defined in \eref{leftdecomp}.  In particular, as above, \lref{topological} implies that $T_0,$ $T_0+v_{0i}$, and $R_i^1$ form a touching triple.  The statement follows now from \eref{t90}.
\end{proof}

\ipart{\eref{ttouch2}}{$T_0,$ $T_i$, and $T_j$ form a touching triple, if $\cent(T_i)-\cent(T_0)=\tfrac{1}{2} (v_{i0} + v_{0i}),$ and $\cent(T_j)- \cent(T_0)=\tfrac{1}{2} (v_{j0} - v_{0j})$.}

\begin{proof}
Using the lattice rules \eref{lattice}, we can verify that $T_j=R_j^2$
\lref{topological}  gives that $T_0,$ $R_i^1$, and $R_j^2$ form a touching triple.  Thus, since we also had that $T_i=R_i^1$, the statement follows now from \eref{t90}.
\end{proof}

\ipart{\eref{tgooddecomp}}{
If $c_1\geq c_2>1$, then among other labeled tiles from \fref{righttile}, the subtile $T_i^\pm$ intersects only those which are drawn adjacent to it or with overlap. Moreover, 
$\foot(T_1^+)\cap \foot(T_2^+)\sbs \foot(Q_2^+)\cap \foot(T_2^+)$, and $\foot(T_1^+)\cap \foot(T_1^-)\sbs (\foot(Q_2^+)\cap \foot(S_3^-))\cup \foot(Q_4^-)\cup (\foot(Q_3^+)\cap \foot(S_2^-)).$
}

\begin{proof}
These are both consequences of the application of \lref{topological}.  The first follows from the fact that $\foot(T_1)\cap \foot(T_2)$ is a path between $\rho(F)$ and $\rho(F')$ for the faces $F,F'$ which contain $T_1,T_2$.  Similarly, the second follows from the fact that $\foot(T_1^+\stm Q_4^-)\cap \foot(T_1^-)$ is a path in $(\foot(Q_2^+)\cap \foot(S_3^-))\cup \foot(Q_4^-)\cup (\foot(Q_3^+)\cap \foot(S_2^-)).$
\end{proof}

\begin{remark}\label{rem.90}
  An essential feature of \eref{ttile}, \eref{ttouch1}, \eref{ttouch2} is that their proofs make use of 90 degree symmetry.  In particular, each induction step essentially rotates known relationships by 90 degrees, which is then fixed by using \eref{t90}.  The lack of 90 degree symmetry will force a significant increase in the complexity of our odometer construction in comparison with this tile induction.
\end{remark}

\afterpage{
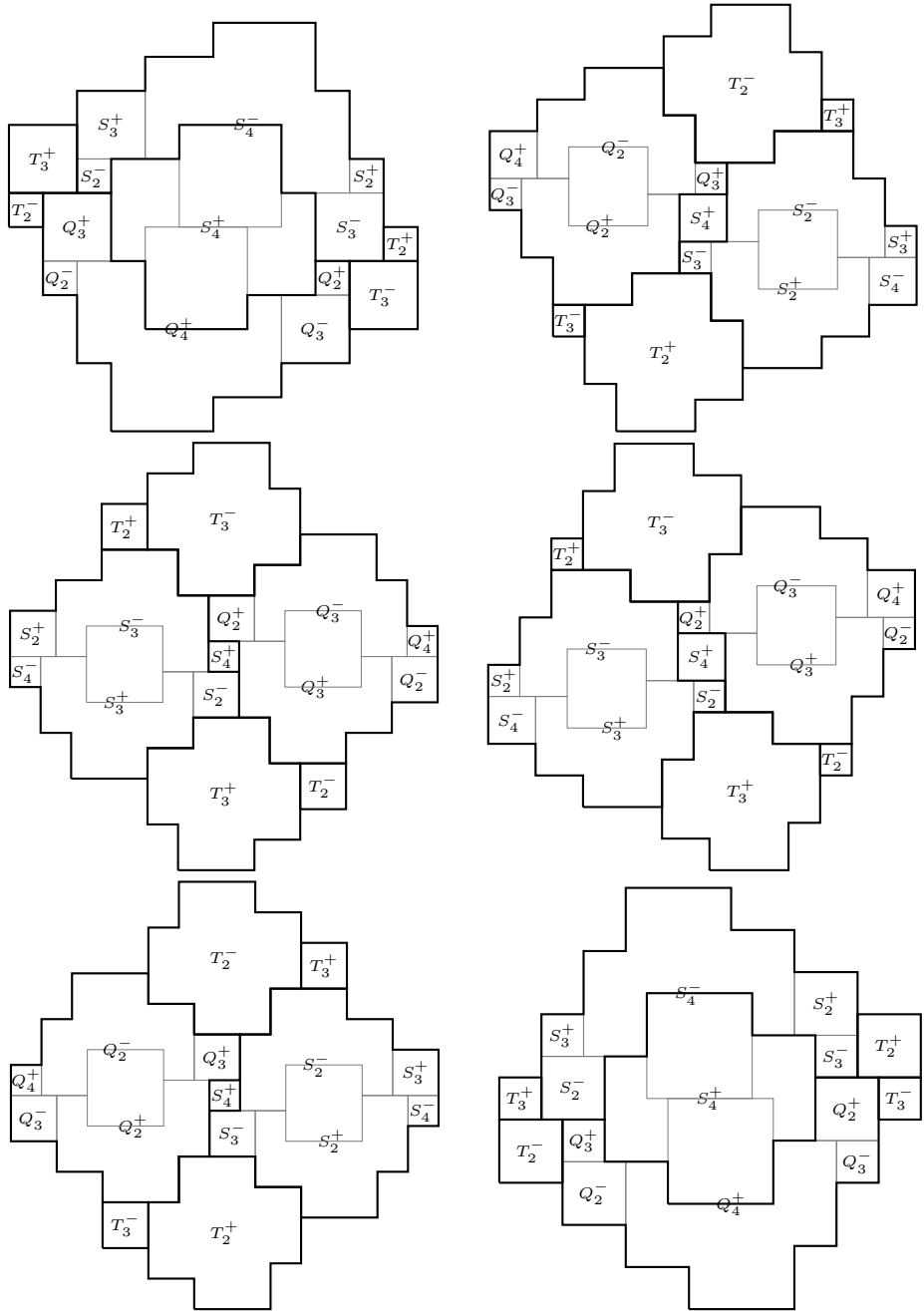
\begin{figure}[t]
  \begin{tabular}{lr}
    \begin{tikzpicture}[x=0.035897\textwidth,y=0.035897\textwidth,font=\tiny]
\begin{scope}[draw=gray]
\draw (17.5,16.5) -- (18.5,16.5) -- (19.5,16.5) -- (20.5,16.5) -- (20.5,17.5) -- (21.5,17.5) -- (22.5,17.5) -- (22.5,18.5) -- (22.5,19.5) -- (22.5,20.5) -- (21.5,20.5) -- (21.5,21.5) -- (21.5,22.5) -- (20.5,22.5) -- (19.5,22.5) -- (18.5,22.5) -- (18.5,21.5) -- (17.5,21.5) -- (16.5,21.5) -- (16.5,20.5) -- (16.5,19.5) -- (16.5,18.5) -- (17.5,18.5) -- (17.5,17.5) -- (17.5,16.5);
\draw (19.500000,19.500000) node {$S_4^+$};
\draw (23.5,20.5) -- (24.5,20.5) -- (24.5,21.5) -- (23.5,21.5) -- (23.5,20.5);
\draw (24.000000,21.000000) node {$S_2^+$};
\draw (15.5,21.5) -- (16.5,21.5) -- (17.5,21.5) -- (17.5,22.5) -- (17.5,23.5) -- (16.5,23.5) -- (15.5,23.5) -- (15.5,22.5) -- (15.5,21.5);
\draw (16.500000,22.500000) node {$S_3^+$};
\draw (18.5,19.5) -- (19.5,19.5) -- (20.5,19.5) -- (21.5,19.5) -- (21.5,20.5) -- (22.5,20.5) -- (23.5,20.5) -- (23.5,21.5) -- (23.5,22.5) -- (23.5,23.5) -- (22.5,23.5) -- (22.5,24.5) -- (22.5,25.5) -- (21.5,25.5) -- (20.5,25.5) -- (19.5,25.5) -- (19.5,24.5) -- (18.5,24.5) -- (17.5,24.5) -- (17.5,23.5) -- (17.5,22.5) -- (17.5,21.5) -- (18.5,21.5) -- (18.5,20.5) -- (18.5,19.5);
\draw (20.500000,22.500000) node {$S_4^-$};
\draw (15.5,20.5) -- (16.5,20.5) -- (16.5,21.5) -- (15.5,21.5) -- (15.5,20.5);
\draw (16.000000,21.000000) node {$S_2^-$};
\draw (22.5,18.5) -- (23.5,18.5) -- (24.5,18.5) -- (24.5,19.5) -- (24.5,20.5) -- (23.5,20.5) -- (22.5,20.5) -- (22.5,19.5) -- (22.5,18.5);
\draw (23.500000,19.500000) node {$S_3^-$};
\draw (16.5,13.5) -- (17.5,13.5) -- (18.5,13.5) -- (19.5,13.5) -- (19.5,14.5) -- (20.5,14.5) -- (21.5,14.5) -- (21.5,15.5) -- (21.5,16.5) -- (21.5,17.5) -- (20.5,17.5) -- (20.5,18.5) -- (20.5,19.5) -- (19.5,19.5) -- (18.5,19.5) -- (17.5,19.5) -- (17.5,18.5) -- (16.5,18.5) -- (15.5,18.5) -- (15.5,17.5) -- (15.5,16.5) -- (15.5,15.5) -- (16.5,15.5) -- (16.5,14.5) -- (16.5,13.5);
\draw (18.500000,16.500000) node {$Q_4^+$};
\draw (22.5,17.5) -- (23.5,17.5) -- (23.5,18.5) -- (22.5,18.5) -- (22.5,17.5);
\draw (23.000000,18.000000) node {$Q_2^+$};
\draw (14.5,18.5) -- (15.5,18.5) -- (16.5,18.5) -- (16.5,19.5) -- (16.5,20.5) -- (15.5,20.5) -- (14.5,20.5) -- (14.5,19.5) -- (14.5,18.5);
\draw (15.500000,19.500000) node {$Q_3^+$};
\draw (14.5,17.5) -- (15.5,17.5) -- (15.5,18.5) -- (14.5,18.5) -- (14.5,17.5);
\draw (15.000000,18.000000) node {$Q_2^-$};
\draw (21.5,15.5) -- (22.5,15.5) -- (23.5,15.5) -- (23.5,16.5) -- (23.5,17.5) -- (22.5,17.5) -- (21.5,17.5) -- (21.5,16.5) -- (21.5,15.5);
\draw (22.500000,16.500000) node {$Q_3^-$};
\end{scope}\begin{scope}[thick]
\draw (16.5,13.5) -- (17.5,13.5) -- (18.5,13.5) -- (19.5,13.5) -- (19.5,14.5) -- (20.5,14.5) -- (21.5,14.5) -- (21.5,15.5) -- (22.5,15.5) -- (23.5,15.5) -- (23.5,16.5) -- (23.5,17.5) -- (23.5,18.5) -- (22.5,18.5) -- (22.5,19.5) -- (22.5,20.5) -- (21.5,20.5) -- (21.5,21.5) -- (21.5,22.5) -- (20.5,22.5) -- (19.5,22.5) -- (18.5,22.5) -- (18.5,21.5) -- (17.5,21.5) -- (16.5,21.5) -- (16.5,20.5) -- (15.5,20.5) -- (14.5,20.5) -- (14.5,19.5) -- (14.5,18.5) -- (14.5,17.5) -- (15.5,17.5) -- (15.5,16.5) -- (15.5,15.5) -- (16.5,15.5) -- (16.5,14.5) -- (16.5,13.5);
\draw (24.5,18.5) -- (25.5,18.5) -- (25.5,19.5) -- (24.5,19.5) -- (24.5,18.5);
\draw (25.000000,19.000000) node {$T_2^+$};
\draw (13.5,20.5) -- (14.5,20.5) -- (15.5,20.5) -- (15.5,21.5) -- (15.5,22.5) -- (14.5,22.5) -- (13.5,22.5) -- (13.5,21.5) -- (13.5,20.5);
\draw (14.500000,21.500000) node {$T_3^+$};
\draw (17.5,16.5) -- (18.5,16.5) -- (19.5,16.5) -- (20.5,16.5) -- (20.5,17.5) -- (21.5,17.5) -- (22.5,17.5) -- (22.5,18.5) -- (23.5,18.5) -- (24.5,18.5) -- (24.5,19.5) -- (24.5,20.5) -- (24.5,21.5) -- (23.5,21.5) -- (23.5,22.5) -- (23.5,23.5) -- (22.5,23.5) -- (22.5,24.5) -- (22.5,25.5) -- (21.5,25.5) -- (20.5,25.5) -- (19.5,25.5) -- (19.5,24.5) -- (18.5,24.5) -- (17.5,24.5) -- (17.5,23.5) -- (16.5,23.5) -- (15.5,23.5) -- (15.5,22.5) -- (15.5,21.5) -- (15.5,20.5) -- (16.5,20.5) -- (16.5,19.5) -- (16.5,18.5) -- (17.5,18.5) -- (17.5,17.5) -- (17.5,16.5);
\draw (13.5,19.5) -- (14.5,19.5) -- (14.5,20.5) -- (13.5,20.5) -- (13.5,19.5);
\draw (14.000000,20.000000) node {$T_2^-$};
\draw (23.5,16.5) -- (24.5,16.5) -- (25.5,16.5) -- (25.5,17.5) -- (25.5,18.5) -- (24.5,18.5) -- (23.5,18.5) -- (23.5,17.5) -- (23.5,16.5);
\draw (24.500000,17.500000) node {$T_3^-$};
\end{scope}
\end{tikzpicture}
    &
    \begin{tikzpicture}[x=0.016667\textwidth,y=0.016667\textwidth,font=\tiny]
\begin{scope}[draw=gray]
\draw (40.5,40.5) -- (41.5,40.5) -- (42.5,40.5) -- (43.5,40.5) -- (43.5,41.5) -- (43.5,42.5) -- (43.5,43.5) -- (42.5,43.5) -- (41.5,43.5) -- (40.5,43.5) -- (40.5,42.5) -- (40.5,41.5) -- (40.5,40.5);
\draw (42.000000,42.000000) node {$S_4^+$};
\draw (44.5,32.5) -- (45.5,32.5) -- (46.5,32.5) -- (47.5,32.5) -- (48.5,32.5) -- (49.5,32.5) -- (49.5,33.5) -- (49.5,34.5) -- (50.5,34.5) -- (51.5,34.5) -- (52.5,34.5) -- (52.5,35.5) -- (52.5,36.5) -- (52.5,37.5) -- (52.5,38.5) -- (52.5,39.5) -- (51.5,39.5) -- (50.5,39.5) -- (50.5,40.5) -- (50.5,41.5) -- (50.5,42.5) -- (49.5,42.5) -- (48.5,42.5) -- (47.5,42.5) -- (46.5,42.5) -- (45.5,42.5) -- (45.5,41.5) -- (45.5,40.5) -- (44.5,40.5) -- (43.5,40.5) -- (42.5,40.5) -- (42.5,39.5) -- (42.5,38.5) -- (42.5,37.5) -- (42.5,36.5) -- (42.5,35.5) -- (43.5,35.5) -- (44.5,35.5) -- (44.5,34.5) -- (44.5,33.5) -- (44.5,32.5);
\draw (47.500000,37.500000) node {$S_2^+$};
\draw (53.5,39.5) -- (54.5,39.5) -- (55.5,39.5) -- (55.5,40.5) -- (55.5,41.5) -- (54.5,41.5) -- (53.5,41.5) -- (53.5,40.5) -- (53.5,39.5);
\draw (54.500000,40.500000) node {$S_3^+$};
\draw (52.5,36.5) -- (53.5,36.5) -- (54.5,36.5) -- (55.5,36.5) -- (55.5,37.5) -- (55.5,38.5) -- (55.5,39.5) -- (54.5,39.5) -- (53.5,39.5) -- (52.5,39.5) -- (52.5,38.5) -- (52.5,37.5) -- (52.5,36.5);
\draw (54.000000,38.000000) node {$S_4^-$};
\draw (45.5,37.5) -- (46.5,37.5) -- (47.5,37.5) -- (48.5,37.5) -- (49.5,37.5) -- (50.5,37.5) -- (50.5,38.5) -- (50.5,39.5) -- (51.5,39.5) -- (52.5,39.5) -- (53.5,39.5) -- (53.5,40.5) -- (53.5,41.5) -- (53.5,42.5) -- (53.5,43.5) -- (53.5,44.5) -- (52.5,44.5) -- (51.5,44.5) -- (51.5,45.5) -- (51.5,46.5) -- (51.5,47.5) -- (50.5,47.5) -- (49.5,47.5) -- (48.5,47.5) -- (47.5,47.5) -- (46.5,47.5) -- (46.5,46.5) -- (46.5,45.5) -- (45.5,45.5) -- (44.5,45.5) -- (43.5,45.5) -- (43.5,44.5) -- (43.5,43.5) -- (43.5,42.5) -- (43.5,41.5) -- (43.5,40.5) -- (44.5,40.5) -- (45.5,40.5) -- (45.5,39.5) -- (45.5,38.5) -- (45.5,37.5);
\draw (48.500000,42.500000) node {$S_2^-$};
\draw (40.5,38.5) -- (41.5,38.5) -- (42.5,38.5) -- (42.5,39.5) -- (42.5,40.5) -- (41.5,40.5) -- (40.5,40.5) -- (40.5,39.5) -- (40.5,38.5);
\draw (41.500000,39.500000) node {$S_3^-$};
\draw (28.5,44.5) -- (29.5,44.5) -- (30.5,44.5) -- (31.5,44.5) -- (31.5,45.5) -- (31.5,46.5) -- (31.5,47.5) -- (30.5,47.5) -- (29.5,47.5) -- (28.5,47.5) -- (28.5,46.5) -- (28.5,45.5) -- (28.5,44.5);
\draw (30.000000,46.000000) node {$Q_4^+$};
\draw (32.5,36.5) -- (33.5,36.5) -- (34.5,36.5) -- (35.5,36.5) -- (36.5,36.5) -- (37.5,36.5) -- (37.5,37.5) -- (37.5,38.5) -- (38.5,38.5) -- (39.5,38.5) -- (40.5,38.5) -- (40.5,39.5) -- (40.5,40.5) -- (40.5,41.5) -- (40.5,42.5) -- (40.5,43.5) -- (39.5,43.5) -- (38.5,43.5) -- (38.5,44.5) -- (38.5,45.5) -- (38.5,46.5) -- (37.5,46.5) -- (36.5,46.5) -- (35.5,46.5) -- (34.5,46.5) -- (33.5,46.5) -- (33.5,45.5) -- (33.5,44.5) -- (32.5,44.5) -- (31.5,44.5) -- (30.5,44.5) -- (30.5,43.5) -- (30.5,42.5) -- (30.5,41.5) -- (30.5,40.5) -- (30.5,39.5) -- (31.5,39.5) -- (32.5,39.5) -- (32.5,38.5) -- (32.5,37.5) -- (32.5,36.5);
\draw (35.500000,41.500000) node {$Q_2^+$};
\draw (41.5,43.5) -- (42.5,43.5) -- (43.5,43.5) -- (43.5,44.5) -- (43.5,45.5) -- (42.5,45.5) -- (41.5,45.5) -- (41.5,44.5) -- (41.5,43.5);
\draw (42.500000,44.500000) node {$Q_3^+$};
\draw (33.5,41.5) -- (34.5,41.5) -- (35.5,41.5) -- (36.5,41.5) -- (37.5,41.5) -- (38.5,41.5) -- (38.5,42.5) -- (38.5,43.5) -- (39.5,43.5) -- (40.5,43.5) -- (41.5,43.5) -- (41.5,44.5) -- (41.5,45.5) -- (41.5,46.5) -- (41.5,47.5) -- (41.5,48.5) -- (40.5,48.5) -- (39.5,48.5) -- (39.5,49.5) -- (39.5,50.5) -- (39.5,51.5) -- (38.5,51.5) -- (37.5,51.5) -- (36.5,51.5) -- (35.5,51.5) -- (34.5,51.5) -- (34.5,50.5) -- (34.5,49.5) -- (33.5,49.5) -- (32.5,49.5) -- (31.5,49.5) -- (31.5,48.5) -- (31.5,47.5) -- (31.5,46.5) -- (31.5,45.5) -- (31.5,44.5) -- (32.5,44.5) -- (33.5,44.5) -- (33.5,43.5) -- (33.5,42.5) -- (33.5,41.5);
\draw (36.500000,46.500000) node {$Q_2^-$};
\draw (28.5,42.5) -- (29.5,42.5) -- (30.5,42.5) -- (30.5,43.5) -- (30.5,44.5) -- (29.5,44.5) -- (28.5,44.5) -- (28.5,43.5) -- (28.5,42.5);
\draw (29.500000,43.500000) node {$Q_3^-$};
\end{scope}\begin{scope}[thick]
\draw (32.5,36.5) -- (33.5,36.5) -- (34.5,36.5) -- (35.5,36.5) -- (36.5,36.5) -- (37.5,36.5) -- (37.5,37.5) -- (37.5,38.5) -- (38.5,38.5) -- (39.5,38.5) -- (40.5,38.5) -- (40.5,39.5) -- (40.5,40.5) -- (41.5,40.5) -- (42.5,40.5) -- (43.5,40.5) -- (43.5,41.5) -- (43.5,42.5) -- (43.5,43.5) -- (43.5,44.5) -- (43.5,45.5) -- (42.5,45.5) -- (41.5,45.5) -- (41.5,46.5) -- (41.5,47.5) -- (41.5,48.5) -- (40.5,48.5) -- (39.5,48.5) -- (39.5,49.5) -- (39.5,50.5) -- (39.5,51.5) -- (38.5,51.5) -- (37.5,51.5) -- (36.5,51.5) -- (35.5,51.5) -- (34.5,51.5) -- (34.5,50.5) -- (34.5,49.5) -- (33.5,49.5) -- (32.5,49.5) -- (31.5,49.5) -- (31.5,48.5) -- (31.5,47.5) -- (30.5,47.5) -- (29.5,47.5) -- (28.5,47.5) -- (28.5,46.5) -- (28.5,45.5) -- (28.5,44.5) -- (28.5,43.5) -- (28.5,42.5) -- (29.5,42.5) -- (30.5,42.5) -- (30.5,41.5) -- (30.5,40.5) -- (30.5,39.5) -- (31.5,39.5) -- (32.5,39.5) -- (32.5,38.5) -- (32.5,37.5) -- (32.5,36.5);
\draw (36.5,28.5) -- (37.5,28.5) -- (38.5,28.5) -- (39.5,28.5) -- (40.5,28.5) -- (41.5,28.5) -- (41.5,29.5) -- (41.5,30.5) -- (42.5,30.5) -- (43.5,30.5) -- (44.5,30.5) -- (44.5,31.5) -- (44.5,32.5) -- (44.5,33.5) -- (44.5,34.5) -- (44.5,35.5) -- (43.5,35.5) -- (42.5,35.5) -- (42.5,36.5) -- (42.5,37.5) -- (42.5,38.5) -- (41.5,38.5) -- (40.5,38.5) -- (39.5,38.5) -- (38.5,38.5) -- (37.5,38.5) -- (37.5,37.5) -- (37.5,36.5) -- (36.5,36.5) -- (35.5,36.5) -- (34.5,36.5) -- (34.5,35.5) -- (34.5,34.5) -- (34.5,33.5) -- (34.5,32.5) -- (34.5,31.5) -- (35.5,31.5) -- (36.5,31.5) -- (36.5,30.5) -- (36.5,29.5) -- (36.5,28.5);
\draw (39.500000,33.500000) node {$T_2^+$};
\draw (49.5,47.5) -- (50.5,47.5) -- (51.5,47.5) -- (51.5,48.5) -- (51.5,49.5) -- (50.5,49.5) -- (49.5,49.5) -- (49.5,48.5) -- (49.5,47.5);
\draw (50.500000,48.500000) node {$T_3^+$};
\draw (44.5,32.5) -- (45.5,32.5) -- (46.5,32.5) -- (47.5,32.5) -- (48.5,32.5) -- (49.5,32.5) -- (49.5,33.5) -- (49.5,34.5) -- (50.5,34.5) -- (51.5,34.5) -- (52.5,34.5) -- (52.5,35.5) -- (52.5,36.5) -- (53.5,36.5) -- (54.5,36.5) -- (55.5,36.5) -- (55.5,37.5) -- (55.5,38.5) -- (55.5,39.5) -- (55.5,40.5) -- (55.5,41.5) -- (54.5,41.5) -- (53.5,41.5) -- (53.5,42.5) -- (53.5,43.5) -- (53.5,44.5) -- (52.5,44.5) -- (51.5,44.5) -- (51.5,45.5) -- (51.5,46.5) -- (51.5,47.5) -- (50.5,47.5) -- (49.5,47.5) -- (48.5,47.5) -- (47.5,47.5) -- (46.5,47.5) -- (46.5,46.5) -- (46.5,45.5) -- (45.5,45.5) -- (44.5,45.5) -- (43.5,45.5) -- (43.5,44.5) -- (43.5,43.5) -- (42.5,43.5) -- (41.5,43.5) -- (40.5,43.5) -- (40.5,42.5) -- (40.5,41.5) -- (40.5,40.5) -- (40.5,39.5) -- (40.5,38.5) -- (41.5,38.5) -- (42.5,38.5) -- (42.5,37.5) -- (42.5,36.5) -- (42.5,35.5) -- (43.5,35.5) -- (44.5,35.5) -- (44.5,34.5) -- (44.5,33.5) -- (44.5,32.5);
\draw (41.5,45.5) -- (42.5,45.5) -- (43.5,45.5) -- (44.5,45.5) -- (45.5,45.5) -- (46.5,45.5) -- (46.5,46.5) -- (46.5,47.5) -- (47.5,47.5) -- (48.5,47.5) -- (49.5,47.5) -- (49.5,48.5) -- (49.5,49.5) -- (49.5,50.5) -- (49.5,51.5) -- (49.5,52.5) -- (48.5,52.5) -- (47.5,52.5) -- (47.5,53.5) -- (47.5,54.5) -- (47.5,55.5) -- (46.5,55.5) -- (45.5,55.5) -- (44.5,55.5) -- (43.5,55.5) -- (42.5,55.5) -- (42.5,54.5) -- (42.5,53.5) -- (41.5,53.5) -- (40.5,53.5) -- (39.5,53.5) -- (39.5,52.5) -- (39.5,51.5) -- (39.5,50.5) -- (39.5,49.5) -- (39.5,48.5) -- (40.5,48.5) -- (41.5,48.5) -- (41.5,47.5) -- (41.5,46.5) -- (41.5,45.5);
\draw (44.500000,50.500000) node {$T_2^-$};
\draw (32.5,34.5) -- (33.5,34.5) -- (34.5,34.5) -- (34.5,35.5) -- (34.5,36.5) -- (33.5,36.5) -- (32.5,36.5) -- (32.5,35.5) -- (32.5,34.5);
\draw (33.500000,35.500000) node {$T_3^-$};
\end{scope}
\end{tikzpicture} \\
    \begin{tikzpicture}[x=0.016092\textwidth,y=0.016092\textwidth,font=\tiny]
\begin{scope}[draw=gray]
\draw (42.5,42.5) -- (43.5,42.5) -- (44.5,42.5) -- (44.5,43.5) -- (44.5,44.5) -- (43.5,44.5) -- (42.5,44.5) -- (42.5,43.5) -- (42.5,42.5);
\draw (43.500000,43.500000) node {$S_4^+$};
\draw (29.5,43.5) -- (30.5,43.5) -- (31.5,43.5) -- (32.5,43.5) -- (32.5,44.5) -- (32.5,45.5) -- (32.5,46.5) -- (31.5,46.5) -- (30.5,46.5) -- (29.5,46.5) -- (29.5,45.5) -- (29.5,44.5) -- (29.5,43.5);
\draw (31.000000,45.000000) node {$S_2^+$};
\draw (33.5,35.5) -- (34.5,35.5) -- (35.5,35.5) -- (36.5,35.5) -- (37.5,35.5) -- (38.5,35.5) -- (38.5,36.5) -- (38.5,37.5) -- (39.5,37.5) -- (40.5,37.5) -- (41.5,37.5) -- (41.5,38.5) -- (41.5,39.5) -- (41.5,40.5) -- (41.5,41.5) -- (41.5,42.5) -- (40.5,42.5) -- (39.5,42.5) -- (39.5,43.5) -- (39.5,44.5) -- (39.5,45.5) -- (38.5,45.5) -- (37.5,45.5) -- (36.5,45.5) -- (35.5,45.5) -- (34.5,45.5) -- (34.5,44.5) -- (34.5,43.5) -- (33.5,43.5) -- (32.5,43.5) -- (31.5,43.5) -- (31.5,42.5) -- (31.5,41.5) -- (31.5,40.5) -- (31.5,39.5) -- (31.5,38.5) -- (32.5,38.5) -- (33.5,38.5) -- (33.5,37.5) -- (33.5,36.5) -- (33.5,35.5);
\draw (36.500000,40.500000) node {$S_3^+$};
\draw (29.5,41.5) -- (30.5,41.5) -- (31.5,41.5) -- (31.5,42.5) -- (31.5,43.5) -- (30.5,43.5) -- (29.5,43.5) -- (29.5,42.5) -- (29.5,41.5);
\draw (30.500000,42.500000) node {$S_4^-$};
\draw (41.5,39.5) -- (42.5,39.5) -- (43.5,39.5) -- (44.5,39.5) -- (44.5,40.5) -- (44.5,41.5) -- (44.5,42.5) -- (43.5,42.5) -- (42.5,42.5) -- (41.5,42.5) -- (41.5,41.5) -- (41.5,40.5) -- (41.5,39.5);
\draw (43.000000,41.000000) node {$S_2^-$};
\draw (34.5,40.5) -- (35.5,40.5) -- (36.5,40.5) -- (37.5,40.5) -- (38.5,40.5) -- (39.5,40.5) -- (39.5,41.5) -- (39.5,42.5) -- (40.5,42.5) -- (41.5,42.5) -- (42.5,42.5) -- (42.5,43.5) -- (42.5,44.5) -- (42.5,45.5) -- (42.5,46.5) -- (42.5,47.5) -- (41.5,47.5) -- (40.5,47.5) -- (40.5,48.5) -- (40.5,49.5) -- (40.5,50.5) -- (39.5,50.5) -- (38.5,50.5) -- (37.5,50.5) -- (36.5,50.5) -- (35.5,50.5) -- (35.5,49.5) -- (35.5,48.5) -- (34.5,48.5) -- (33.5,48.5) -- (32.5,48.5) -- (32.5,47.5) -- (32.5,46.5) -- (32.5,45.5) -- (32.5,44.5) -- (32.5,43.5) -- (33.5,43.5) -- (34.5,43.5) -- (34.5,42.5) -- (34.5,41.5) -- (34.5,40.5);
\draw (37.500000,45.500000) node {$S_3^-$};
\draw (55.5,43.5) -- (56.5,43.5) -- (57.5,43.5) -- (57.5,44.5) -- (57.5,45.5) -- (56.5,45.5) -- (55.5,45.5) -- (55.5,44.5) -- (55.5,43.5);
\draw (56.500000,44.500000) node {$Q_4^+$};
\draw (42.5,44.5) -- (43.5,44.5) -- (44.5,44.5) -- (45.5,44.5) -- (45.5,45.5) -- (45.5,46.5) -- (45.5,47.5) -- (44.5,47.5) -- (43.5,47.5) -- (42.5,47.5) -- (42.5,46.5) -- (42.5,45.5) -- (42.5,44.5);
\draw (44.000000,46.000000) node {$Q_2^+$};
\draw (46.5,36.5) -- (47.5,36.5) -- (48.5,36.5) -- (49.5,36.5) -- (50.5,36.5) -- (51.5,36.5) -- (51.5,37.5) -- (51.5,38.5) -- (52.5,38.5) -- (53.5,38.5) -- (54.5,38.5) -- (54.5,39.5) -- (54.5,40.5) -- (54.5,41.5) -- (54.5,42.5) -- (54.5,43.5) -- (53.5,43.5) -- (52.5,43.5) -- (52.5,44.5) -- (52.5,45.5) -- (52.5,46.5) -- (51.5,46.5) -- (50.5,46.5) -- (49.5,46.5) -- (48.5,46.5) -- (47.5,46.5) -- (47.5,45.5) -- (47.5,44.5) -- (46.5,44.5) -- (45.5,44.5) -- (44.5,44.5) -- (44.5,43.5) -- (44.5,42.5) -- (44.5,41.5) -- (44.5,40.5) -- (44.5,39.5) -- (45.5,39.5) -- (46.5,39.5) -- (46.5,38.5) -- (46.5,37.5) -- (46.5,36.5);
\draw (49.500000,41.500000) node {$Q_3^+$};
\draw (54.5,40.5) -- (55.5,40.5) -- (56.5,40.5) -- (57.5,40.5) -- (57.5,41.5) -- (57.5,42.5) -- (57.5,43.5) -- (56.5,43.5) -- (55.5,43.5) -- (54.5,43.5) -- (54.5,42.5) -- (54.5,41.5) -- (54.5,40.5);
\draw (56.000000,42.000000) node {$Q_2^-$};
\draw (47.5,41.5) -- (48.5,41.5) -- (49.5,41.5) -- (50.5,41.5) -- (51.5,41.5) -- (52.5,41.5) -- (52.5,42.5) -- (52.5,43.5) -- (53.5,43.5) -- (54.5,43.5) -- (55.5,43.5) -- (55.5,44.5) -- (55.5,45.5) -- (55.5,46.5) -- (55.5,47.5) -- (55.5,48.5) -- (54.5,48.5) -- (53.5,48.5) -- (53.5,49.5) -- (53.5,50.5) -- (53.5,51.5) -- (52.5,51.5) -- (51.5,51.5) -- (50.5,51.5) -- (49.5,51.5) -- (48.5,51.5) -- (48.5,50.5) -- (48.5,49.5) -- (47.5,49.5) -- (46.5,49.5) -- (45.5,49.5) -- (45.5,48.5) -- (45.5,47.5) -- (45.5,46.5) -- (45.5,45.5) -- (45.5,44.5) -- (46.5,44.5) -- (47.5,44.5) -- (47.5,43.5) -- (47.5,42.5) -- (47.5,41.5);
\draw (50.500000,46.500000) node {$Q_3^-$};
\end{scope}\begin{scope}[thick]
\draw (46.5,36.5) -- (47.5,36.5) -- (48.5,36.5) -- (49.5,36.5) -- (50.5,36.5) -- (51.5,36.5) -- (51.5,37.5) -- (51.5,38.5) -- (52.5,38.5) -- (53.5,38.5) -- (54.5,38.5) -- (54.5,39.5) -- (54.5,40.5) -- (55.5,40.5) -- (56.5,40.5) -- (57.5,40.5) -- (57.5,41.5) -- (57.5,42.5) -- (57.5,43.5) -- (57.5,44.5) -- (57.5,45.5) -- (56.5,45.5) -- (55.5,45.5) -- (55.5,46.5) -- (55.5,47.5) -- (55.5,48.5) -- (54.5,48.5) -- (53.5,48.5) -- (53.5,49.5) -- (53.5,50.5) -- (53.5,51.5) -- (52.5,51.5) -- (51.5,51.5) -- (50.5,51.5) -- (49.5,51.5) -- (48.5,51.5) -- (48.5,50.5) -- (48.5,49.5) -- (47.5,49.5) -- (46.5,49.5) -- (45.5,49.5) -- (45.5,48.5) -- (45.5,47.5) -- (44.5,47.5) -- (43.5,47.5) -- (42.5,47.5) -- (42.5,46.5) -- (42.5,45.5) -- (42.5,44.5) -- (42.5,43.5) -- (42.5,42.5) -- (43.5,42.5) -- (44.5,42.5) -- (44.5,41.5) -- (44.5,40.5) -- (44.5,39.5) -- (45.5,39.5) -- (46.5,39.5) -- (46.5,38.5) -- (46.5,37.5) -- (46.5,36.5);
\draw (35.5,50.5) -- (36.5,50.5) -- (37.5,50.5) -- (38.5,50.5) -- (38.5,51.5) -- (38.5,52.5) -- (38.5,53.5) -- (37.5,53.5) -- (36.5,53.5) -- (35.5,53.5) -- (35.5,52.5) -- (35.5,51.5) -- (35.5,50.5);
\draw (37.000000,52.000000) node {$T_2^+$};
\draw (40.5,29.5) -- (41.5,29.5) -- (42.5,29.5) -- (43.5,29.5) -- (44.5,29.5) -- (45.5,29.5) -- (45.5,30.5) -- (45.5,31.5) -- (46.5,31.5) -- (47.5,31.5) -- (48.5,31.5) -- (48.5,32.5) -- (48.5,33.5) -- (48.5,34.5) -- (48.5,35.5) -- (48.5,36.5) -- (47.5,36.5) -- (46.5,36.5) -- (46.5,37.5) -- (46.5,38.5) -- (46.5,39.5) -- (45.5,39.5) -- (44.5,39.5) -- (43.5,39.5) -- (42.5,39.5) -- (41.5,39.5) -- (41.5,38.5) -- (41.5,37.5) -- (40.5,37.5) -- (39.5,37.5) -- (38.5,37.5) -- (38.5,36.5) -- (38.5,35.5) -- (38.5,34.5) -- (38.5,33.5) -- (38.5,32.5) -- (39.5,32.5) -- (40.5,32.5) -- (40.5,31.5) -- (40.5,30.5) -- (40.5,29.5);
\draw (43.500000,34.500000) node {$T_3^+$};
\draw (33.5,35.5) -- (34.5,35.5) -- (35.5,35.5) -- (36.5,35.5) -- (37.5,35.5) -- (38.5,35.5) -- (38.5,36.5) -- (38.5,37.5) -- (39.5,37.5) -- (40.5,37.5) -- (41.5,37.5) -- (41.5,38.5) -- (41.5,39.5) -- (42.5,39.5) -- (43.5,39.5) -- (44.5,39.5) -- (44.5,40.5) -- (44.5,41.5) -- (44.5,42.5) -- (44.5,43.5) -- (44.5,44.5) -- (43.5,44.5) -- (42.5,44.5) -- (42.5,45.5) -- (42.5,46.5) -- (42.5,47.5) -- (41.5,47.5) -- (40.5,47.5) -- (40.5,48.5) -- (40.5,49.5) -- (40.5,50.5) -- (39.5,50.5) -- (38.5,50.5) -- (37.5,50.5) -- (36.5,50.5) -- (35.5,50.5) -- (35.5,49.5) -- (35.5,48.5) -- (34.5,48.5) -- (33.5,48.5) -- (32.5,48.5) -- (32.5,47.5) -- (32.5,46.5) -- (31.5,46.5) -- (30.5,46.5) -- (29.5,46.5) -- (29.5,45.5) -- (29.5,44.5) -- (29.5,43.5) -- (29.5,42.5) -- (29.5,41.5) -- (30.5,41.5) -- (31.5,41.5) -- (31.5,40.5) -- (31.5,39.5) -- (31.5,38.5) -- (32.5,38.5) -- (33.5,38.5) -- (33.5,37.5) -- (33.5,36.5) -- (33.5,35.5);
\draw (48.5,33.5) -- (49.5,33.5) -- (50.5,33.5) -- (51.5,33.5) -- (51.5,34.5) -- (51.5,35.5) -- (51.5,36.5) -- (50.5,36.5) -- (49.5,36.5) -- (48.5,36.5) -- (48.5,35.5) -- (48.5,34.5) -- (48.5,33.5);
\draw (50.000000,35.000000) node {$T_2^-$};
\draw (40.5,47.5) -- (41.5,47.5) -- (42.5,47.5) -- (43.5,47.5) -- (44.5,47.5) -- (45.5,47.5) -- (45.5,48.5) -- (45.5,49.5) -- (46.5,49.5) -- (47.5,49.5) -- (48.5,49.5) -- (48.5,50.5) -- (48.5,51.5) -- (48.5,52.5) -- (48.5,53.5) -- (48.5,54.5) -- (47.5,54.5) -- (46.5,54.5) -- (46.5,55.5) -- (46.5,56.5) -- (46.5,57.5) -- (45.5,57.5) -- (44.5,57.5) -- (43.5,57.5) -- (42.5,57.5) -- (41.5,57.5) -- (41.5,56.5) -- (41.5,55.5) -- (40.5,55.5) -- (39.5,55.5) -- (38.5,55.5) -- (38.5,54.5) -- (38.5,53.5) -- (38.5,52.5) -- (38.5,51.5) -- (38.5,50.5) -- (39.5,50.5) -- (40.5,50.5) -- (40.5,49.5) -- (40.5,48.5) -- (40.5,47.5);
\draw (43.500000,52.500000) node {$T_3^-$};
\end{scope}
\end{tikzpicture}
    &
    \begin{tikzpicture}[x=0.016667\textwidth,y=0.016667\textwidth,font=\tiny]
\begin{scope}[draw=gray]
\draw (40.5,40.5) -- (41.5,40.5) -- (42.5,40.5) -- (43.5,40.5) -- (43.5,41.5) -- (43.5,42.5) -- (43.5,43.5) -- (42.5,43.5) -- (41.5,43.5) -- (40.5,43.5) -- (40.5,42.5) -- (40.5,41.5) -- (40.5,40.5);
\draw (42.000000,42.000000) node {$S_4^+$};
\draw (28.5,39.5) -- (29.5,39.5) -- (30.5,39.5) -- (30.5,40.5) -- (30.5,41.5) -- (29.5,41.5) -- (28.5,41.5) -- (28.5,40.5) -- (28.5,39.5);
\draw (29.500000,40.500000) node {$S_2^+$};
\draw (34.5,32.5) -- (35.5,32.5) -- (36.5,32.5) -- (37.5,32.5) -- (38.5,32.5) -- (39.5,32.5) -- (39.5,33.5) -- (39.5,34.5) -- (39.5,35.5) -- (40.5,35.5) -- (41.5,35.5) -- (41.5,36.5) -- (41.5,37.5) -- (41.5,38.5) -- (41.5,39.5) -- (41.5,40.5) -- (40.5,40.5) -- (39.5,40.5) -- (38.5,40.5) -- (38.5,41.5) -- (38.5,42.5) -- (37.5,42.5) -- (36.5,42.5) -- (35.5,42.5) -- (34.5,42.5) -- (33.5,42.5) -- (33.5,41.5) -- (33.5,40.5) -- (33.5,39.5) -- (32.5,39.5) -- (31.5,39.5) -- (31.5,38.5) -- (31.5,37.5) -- (31.5,36.5) -- (31.5,35.5) -- (31.5,34.5) -- (32.5,34.5) -- (33.5,34.5) -- (34.5,34.5) -- (34.5,33.5) -- (34.5,32.5);
\draw (36.500000,37.500000) node {$S_3^+$};
\draw (28.5,36.5) -- (29.5,36.5) -- (30.5,36.5) -- (31.5,36.5) -- (31.5,37.5) -- (31.5,38.5) -- (31.5,39.5) -- (30.5,39.5) -- (29.5,39.5) -- (28.5,39.5) -- (28.5,38.5) -- (28.5,37.5) -- (28.5,36.5);
\draw (30.000000,38.000000) node {$S_4^-$};
\draw (41.5,38.5) -- (42.5,38.5) -- (43.5,38.5) -- (43.5,39.5) -- (43.5,40.5) -- (42.5,40.5) -- (41.5,40.5) -- (41.5,39.5) -- (41.5,38.5);
\draw (42.500000,39.500000) node {$S_2^-$};
\draw (33.5,37.5) -- (34.5,37.5) -- (35.5,37.5) -- (36.5,37.5) -- (37.5,37.5) -- (38.5,37.5) -- (38.5,38.5) -- (38.5,39.5) -- (38.5,40.5) -- (39.5,40.5) -- (40.5,40.5) -- (40.5,41.5) -- (40.5,42.5) -- (40.5,43.5) -- (40.5,44.5) -- (40.5,45.5) -- (39.5,45.5) -- (38.5,45.5) -- (37.5,45.5) -- (37.5,46.5) -- (37.5,47.5) -- (36.5,47.5) -- (35.5,47.5) -- (34.5,47.5) -- (33.5,47.5) -- (32.5,47.5) -- (32.5,46.5) -- (32.5,45.5) -- (32.5,44.5) -- (31.5,44.5) -- (30.5,44.5) -- (30.5,43.5) -- (30.5,42.5) -- (30.5,41.5) -- (30.5,40.5) -- (30.5,39.5) -- (31.5,39.5) -- (32.5,39.5) -- (33.5,39.5) -- (33.5,38.5) -- (33.5,37.5);
\draw (35.500000,42.500000) node {$S_3^-$};
\draw (52.5,44.5) -- (53.5,44.5) -- (54.5,44.5) -- (55.5,44.5) -- (55.5,45.5) -- (55.5,46.5) -- (55.5,47.5) -- (54.5,47.5) -- (53.5,47.5) -- (52.5,47.5) -- (52.5,46.5) -- (52.5,45.5) -- (52.5,44.5);
\draw (54.000000,46.000000) node {$Q_4^+$};
\draw (40.5,43.5) -- (41.5,43.5) -- (42.5,43.5) -- (42.5,44.5) -- (42.5,45.5) -- (41.5,45.5) -- (40.5,45.5) -- (40.5,44.5) -- (40.5,43.5);
\draw (41.500000,44.500000) node {$Q_2^+$};
\draw (46.5,36.5) -- (47.5,36.5) -- (48.5,36.5) -- (49.5,36.5) -- (50.5,36.5) -- (51.5,36.5) -- (51.5,37.5) -- (51.5,38.5) -- (51.5,39.5) -- (52.5,39.5) -- (53.5,39.5) -- (53.5,40.5) -- (53.5,41.5) -- (53.5,42.5) -- (53.5,43.5) -- (53.5,44.5) -- (52.5,44.5) -- (51.5,44.5) -- (50.5,44.5) -- (50.5,45.5) -- (50.5,46.5) -- (49.5,46.5) -- (48.5,46.5) -- (47.5,46.5) -- (46.5,46.5) -- (45.5,46.5) -- (45.5,45.5) -- (45.5,44.5) -- (45.5,43.5) -- (44.5,43.5) -- (43.5,43.5) -- (43.5,42.5) -- (43.5,41.5) -- (43.5,40.5) -- (43.5,39.5) -- (43.5,38.5) -- (44.5,38.5) -- (45.5,38.5) -- (46.5,38.5) -- (46.5,37.5) -- (46.5,36.5);
\draw (48.500000,41.500000) node {$Q_3^+$};
\draw (53.5,42.5) -- (54.5,42.5) -- (55.5,42.5) -- (55.5,43.5) -- (55.5,44.5) -- (54.5,44.5) -- (53.5,44.5) -- (53.5,43.5) -- (53.5,42.5);
\draw (54.500000,43.500000) node {$Q_2^-$};
\draw (45.5,41.5) -- (46.5,41.5) -- (47.5,41.5) -- (48.5,41.5) -- (49.5,41.5) -- (50.5,41.5) -- (50.5,42.5) -- (50.5,43.5) -- (50.5,44.5) -- (51.5,44.5) -- (52.5,44.5) -- (52.5,45.5) -- (52.5,46.5) -- (52.5,47.5) -- (52.5,48.5) -- (52.5,49.5) -- (51.5,49.5) -- (50.5,49.5) -- (49.5,49.5) -- (49.5,50.5) -- (49.5,51.5) -- (48.5,51.5) -- (47.5,51.5) -- (46.5,51.5) -- (45.5,51.5) -- (44.5,51.5) -- (44.5,50.5) -- (44.5,49.5) -- (44.5,48.5) -- (43.5,48.5) -- (42.5,48.5) -- (42.5,47.5) -- (42.5,46.5) -- (42.5,45.5) -- (42.5,44.5) -- (42.5,43.5) -- (43.5,43.5) -- (44.5,43.5) -- (45.5,43.5) -- (45.5,42.5) -- (45.5,41.5);
\draw (47.500000,46.500000) node {$Q_3^-$};
\end{scope}\begin{scope}[thick]
\draw (46.5,36.5) -- (47.5,36.5) -- (48.5,36.5) -- (49.5,36.5) -- (50.5,36.5) -- (51.5,36.5) -- (51.5,37.5) -- (51.5,38.5) -- (51.5,39.5) -- (52.5,39.5) -- (53.5,39.5) -- (53.5,40.5) -- (53.5,41.5) -- (53.5,42.5) -- (54.5,42.5) -- (55.5,42.5) -- (55.5,43.5) -- (55.5,44.5) -- (55.5,45.5) -- (55.5,46.5) -- (55.5,47.5) -- (54.5,47.5) -- (53.5,47.5) -- (52.5,47.5) -- (52.5,48.5) -- (52.5,49.5) -- (51.5,49.5) -- (50.5,49.5) -- (49.5,49.5) -- (49.5,50.5) -- (49.5,51.5) -- (48.5,51.5) -- (47.5,51.5) -- (46.5,51.5) -- (45.5,51.5) -- (44.5,51.5) -- (44.5,50.5) -- (44.5,49.5) -- (44.5,48.5) -- (43.5,48.5) -- (42.5,48.5) -- (42.5,47.5) -- (42.5,46.5) -- (42.5,45.5) -- (41.5,45.5) -- (40.5,45.5) -- (40.5,44.5) -- (40.5,43.5) -- (40.5,42.5) -- (40.5,41.5) -- (40.5,40.5) -- (41.5,40.5) -- (42.5,40.5) -- (43.5,40.5) -- (43.5,39.5) -- (43.5,38.5) -- (44.5,38.5) -- (45.5,38.5) -- (46.5,38.5) -- (46.5,37.5) -- (46.5,36.5);
\draw (32.5,47.5) -- (33.5,47.5) -- (34.5,47.5) -- (34.5,48.5) -- (34.5,49.5) -- (33.5,49.5) -- (32.5,49.5) -- (32.5,48.5) -- (32.5,47.5);
\draw (33.500000,48.500000) node {$T_2^+$};
\draw (42.5,28.5) -- (43.5,28.5) -- (44.5,28.5) -- (45.5,28.5) -- (46.5,28.5) -- (47.5,28.5) -- (47.5,29.5) -- (47.5,30.5) -- (47.5,31.5) -- (48.5,31.5) -- (49.5,31.5) -- (49.5,32.5) -- (49.5,33.5) -- (49.5,34.5) -- (49.5,35.5) -- (49.5,36.5) -- (48.5,36.5) -- (47.5,36.5) -- (46.5,36.5) -- (46.5,37.5) -- (46.5,38.5) -- (45.5,38.5) -- (44.5,38.5) -- (43.5,38.5) -- (42.5,38.5) -- (41.5,38.5) -- (41.5,37.5) -- (41.5,36.5) -- (41.5,35.5) -- (40.5,35.5) -- (39.5,35.5) -- (39.5,34.5) -- (39.5,33.5) -- (39.5,32.5) -- (39.5,31.5) -- (39.5,30.5) -- (40.5,30.5) -- (41.5,30.5) -- (42.5,30.5) -- (42.5,29.5) -- (42.5,28.5);
\draw (44.500000,33.500000) node {$T_3^+$};
\draw (34.5,32.5) -- (35.5,32.5) -- (36.5,32.5) -- (37.5,32.5) -- (38.5,32.5) -- (39.5,32.5) -- (39.5,33.5) -- (39.5,34.5) -- (39.5,35.5) -- (40.5,35.5) -- (41.5,35.5) -- (41.5,36.5) -- (41.5,37.5) -- (41.5,38.5) -- (42.5,38.5) -- (43.5,38.5) -- (43.5,39.5) -- (43.5,40.5) -- (43.5,41.5) -- (43.5,42.5) -- (43.5,43.5) -- (42.5,43.5) -- (41.5,43.5) -- (40.5,43.5) -- (40.5,44.5) -- (40.5,45.5) -- (39.5,45.5) -- (38.5,45.5) -- (37.5,45.5) -- (37.5,46.5) -- (37.5,47.5) -- (36.5,47.5) -- (35.5,47.5) -- (34.5,47.5) -- (33.5,47.5) -- (32.5,47.5) -- (32.5,46.5) -- (32.5,45.5) -- (32.5,44.5) -- (31.5,44.5) -- (30.5,44.5) -- (30.5,43.5) -- (30.5,42.5) -- (30.5,41.5) -- (29.5,41.5) -- (28.5,41.5) -- (28.5,40.5) -- (28.5,39.5) -- (28.5,38.5) -- (28.5,37.5) -- (28.5,36.5) -- (29.5,36.5) -- (30.5,36.5) -- (31.5,36.5) -- (31.5,35.5) -- (31.5,34.5) -- (32.5,34.5) -- (33.5,34.5) -- (34.5,34.5) -- (34.5,33.5) -- (34.5,32.5);
\draw (49.5,34.5) -- (50.5,34.5) -- (51.5,34.5) -- (51.5,35.5) -- (51.5,36.5) -- (50.5,36.5) -- (49.5,36.5) -- (49.5,35.5) -- (49.5,34.5);
\draw (50.500000,35.500000) node {$T_2^-$};
\draw (37.5,45.5) -- (38.5,45.5) -- (39.5,45.5) -- (40.5,45.5) -- (41.5,45.5) -- (42.5,45.5) -- (42.5,46.5) -- (42.5,47.5) -- (42.5,48.5) -- (43.5,48.5) -- (44.5,48.5) -- (44.5,49.5) -- (44.5,50.5) -- (44.5,51.5) -- (44.5,52.5) -- (44.5,53.5) -- (43.5,53.5) -- (42.5,53.5) -- (41.5,53.5) -- (41.5,54.5) -- (41.5,55.5) -- (40.5,55.5) -- (39.5,55.5) -- (38.5,55.5) -- (37.5,55.5) -- (36.5,55.5) -- (36.5,54.5) -- (36.5,53.5) -- (36.5,52.5) -- (35.5,52.5) -- (34.5,52.5) -- (34.5,51.5) -- (34.5,50.5) -- (34.5,49.5) -- (34.5,48.5) -- (34.5,47.5) -- (35.5,47.5) -- (36.5,47.5) -- (37.5,47.5) -- (37.5,46.5) -- (37.5,45.5);
\draw (39.500000,50.500000) node {$T_3^-$};
\end{scope}
\end{tikzpicture} \\
    \begin{tikzpicture}[x=0.016092\textwidth,y=0.016092\textwidth,font=\tiny]
\begin{scope}[draw=gray]
\draw (42.5,42.5) -- (43.5,42.5) -- (44.5,42.5) -- (44.5,43.5) -- (44.5,44.5) -- (43.5,44.5) -- (42.5,44.5) -- (42.5,43.5) -- (42.5,42.5);
\draw (43.500000,43.500000) node {$S_4^+$};
\draw (48.5,35.5) -- (49.5,35.5) -- (50.5,35.5) -- (51.5,35.5) -- (52.5,35.5) -- (53.5,35.5) -- (53.5,36.5) -- (53.5,37.5) -- (53.5,38.5) -- (54.5,38.5) -- (55.5,38.5) -- (55.5,39.5) -- (55.5,40.5) -- (55.5,41.5) -- (55.5,42.5) -- (55.5,43.5) -- (54.5,43.5) -- (53.5,43.5) -- (52.5,43.5) -- (52.5,44.5) -- (52.5,45.5) -- (51.5,45.5) -- (50.5,45.5) -- (49.5,45.5) -- (48.5,45.5) -- (47.5,45.5) -- (47.5,44.5) -- (47.5,43.5) -- (47.5,42.5) -- (46.5,42.5) -- (45.5,42.5) -- (45.5,41.5) -- (45.5,40.5) -- (45.5,39.5) -- (45.5,38.5) -- (45.5,37.5) -- (46.5,37.5) -- (47.5,37.5) -- (48.5,37.5) -- (48.5,36.5) -- (48.5,35.5);
\draw (50.500000,40.500000) node {$S_2^+$};
\draw (54.5,43.5) -- (55.5,43.5) -- (56.5,43.5) -- (57.5,43.5) -- (57.5,44.5) -- (57.5,45.5) -- (57.5,46.5) -- (56.5,46.5) -- (55.5,46.5) -- (54.5,46.5) -- (54.5,45.5) -- (54.5,44.5) -- (54.5,43.5);
\draw (56.000000,45.000000) node {$S_3^+$};
\draw (55.5,41.5) -- (56.5,41.5) -- (57.5,41.5) -- (57.5,42.5) -- (57.5,43.5) -- (56.5,43.5) -- (55.5,43.5) -- (55.5,42.5) -- (55.5,41.5);
\draw (56.500000,42.500000) node {$S_4^-$};
\draw (47.5,40.5) -- (48.5,40.5) -- (49.5,40.5) -- (50.5,40.5) -- (51.5,40.5) -- (52.5,40.5) -- (52.5,41.5) -- (52.5,42.5) -- (52.5,43.5) -- (53.5,43.5) -- (54.5,43.5) -- (54.5,44.5) -- (54.5,45.5) -- (54.5,46.5) -- (54.5,47.5) -- (54.5,48.5) -- (53.5,48.5) -- (52.5,48.5) -- (51.5,48.5) -- (51.5,49.5) -- (51.5,50.5) -- (50.5,50.5) -- (49.5,50.5) -- (48.5,50.5) -- (47.5,50.5) -- (46.5,50.5) -- (46.5,49.5) -- (46.5,48.5) -- (46.5,47.5) -- (45.5,47.5) -- (44.5,47.5) -- (44.5,46.5) -- (44.5,45.5) -- (44.5,44.5) -- (44.5,43.5) -- (44.5,42.5) -- (45.5,42.5) -- (46.5,42.5) -- (47.5,42.5) -- (47.5,41.5) -- (47.5,40.5);
\draw (49.500000,45.500000) node {$S_2^-$};
\draw (42.5,39.5) -- (43.5,39.5) -- (44.5,39.5) -- (45.5,39.5) -- (45.5,40.5) -- (45.5,41.5) -- (45.5,42.5) -- (44.5,42.5) -- (43.5,42.5) -- (42.5,42.5) -- (42.5,41.5) -- (42.5,40.5) -- (42.5,39.5);
\draw (44.000000,41.000000) node {$S_3^-$};
\draw (29.5,43.5) -- (30.5,43.5) -- (31.5,43.5) -- (31.5,44.5) -- (31.5,45.5) -- (30.5,45.5) -- (29.5,45.5) -- (29.5,44.5) -- (29.5,43.5);
\draw (30.500000,44.500000) node {$Q_4^+$};
\draw (35.5,36.5) -- (36.5,36.5) -- (37.5,36.5) -- (38.5,36.5) -- (39.5,36.5) -- (40.5,36.5) -- (40.5,37.5) -- (40.5,38.5) -- (40.5,39.5) -- (41.5,39.5) -- (42.5,39.5) -- (42.5,40.5) -- (42.5,41.5) -- (42.5,42.5) -- (42.5,43.5) -- (42.5,44.5) -- (41.5,44.5) -- (40.5,44.5) -- (39.5,44.5) -- (39.5,45.5) -- (39.5,46.5) -- (38.5,46.5) -- (37.5,46.5) -- (36.5,46.5) -- (35.5,46.5) -- (34.5,46.5) -- (34.5,45.5) -- (34.5,44.5) -- (34.5,43.5) -- (33.5,43.5) -- (32.5,43.5) -- (32.5,42.5) -- (32.5,41.5) -- (32.5,40.5) -- (32.5,39.5) -- (32.5,38.5) -- (33.5,38.5) -- (34.5,38.5) -- (35.5,38.5) -- (35.5,37.5) -- (35.5,36.5);
\draw (37.500000,41.500000) node {$Q_2^+$};
\draw (41.5,44.5) -- (42.5,44.5) -- (43.5,44.5) -- (44.5,44.5) -- (44.5,45.5) -- (44.5,46.5) -- (44.5,47.5) -- (43.5,47.5) -- (42.5,47.5) -- (41.5,47.5) -- (41.5,46.5) -- (41.5,45.5) -- (41.5,44.5);
\draw (43.000000,46.000000) node {$Q_3^+$};
\draw (34.5,41.5) -- (35.5,41.5) -- (36.5,41.5) -- (37.5,41.5) -- (38.5,41.5) -- (39.5,41.5) -- (39.5,42.5) -- (39.5,43.5) -- (39.5,44.5) -- (40.5,44.5) -- (41.5,44.5) -- (41.5,45.5) -- (41.5,46.5) -- (41.5,47.5) -- (41.5,48.5) -- (41.5,49.5) -- (40.5,49.5) -- (39.5,49.5) -- (38.5,49.5) -- (38.5,50.5) -- (38.5,51.5) -- (37.5,51.5) -- (36.5,51.5) -- (35.5,51.5) -- (34.5,51.5) -- (33.5,51.5) -- (33.5,50.5) -- (33.5,49.5) -- (33.5,48.5) -- (32.5,48.5) -- (31.5,48.5) -- (31.5,47.5) -- (31.5,46.5) -- (31.5,45.5) -- (31.5,44.5) -- (31.5,43.5) -- (32.5,43.5) -- (33.5,43.5) -- (34.5,43.5) -- (34.5,42.5) -- (34.5,41.5);
\draw (36.500000,46.500000) node {$Q_2^-$};
\draw (29.5,40.5) -- (30.5,40.5) -- (31.5,40.5) -- (32.5,40.5) -- (32.5,41.5) -- (32.5,42.5) -- (32.5,43.5) -- (31.5,43.5) -- (30.5,43.5) -- (29.5,43.5) -- (29.5,42.5) -- (29.5,41.5) -- (29.5,40.5);
\draw (31.000000,42.000000) node {$Q_3^-$};
\end{scope}\begin{scope}[thick]
\draw (35.5,36.5) -- (36.5,36.5) -- (37.5,36.5) -- (38.5,36.5) -- (39.5,36.5) -- (40.5,36.5) -- (40.5,37.5) -- (40.5,38.5) -- (40.5,39.5) -- (41.5,39.5) -- (42.5,39.5) -- (42.5,40.5) -- (42.5,41.5) -- (42.5,42.5) -- (43.5,42.5) -- (44.5,42.5) -- (44.5,43.5) -- (44.5,44.5) -- (44.5,45.5) -- (44.5,46.5) -- (44.5,47.5) -- (43.5,47.5) -- (42.5,47.5) -- (41.5,47.5) -- (41.5,48.5) -- (41.5,49.5) -- (40.5,49.5) -- (39.5,49.5) -- (38.5,49.5) -- (38.5,50.5) -- (38.5,51.5) -- (37.5,51.5) -- (36.5,51.5) -- (35.5,51.5) -- (34.5,51.5) -- (33.5,51.5) -- (33.5,50.5) -- (33.5,49.5) -- (33.5,48.5) -- (32.5,48.5) -- (31.5,48.5) -- (31.5,47.5) -- (31.5,46.5) -- (31.5,45.5) -- (30.5,45.5) -- (29.5,45.5) -- (29.5,44.5) -- (29.5,43.5) -- (29.5,42.5) -- (29.5,41.5) -- (29.5,40.5) -- (30.5,40.5) -- (31.5,40.5) -- (32.5,40.5) -- (32.5,39.5) -- (32.5,38.5) -- (33.5,38.5) -- (34.5,38.5) -- (35.5,38.5) -- (35.5,37.5) -- (35.5,36.5);
\draw (41.5,29.5) -- (42.5,29.5) -- (43.5,29.5) -- (44.5,29.5) -- (45.5,29.5) -- (46.5,29.5) -- (46.5,30.5) -- (46.5,31.5) -- (46.5,32.5) -- (47.5,32.5) -- (48.5,32.5) -- (48.5,33.5) -- (48.5,34.5) -- (48.5,35.5) -- (48.5,36.5) -- (48.5,37.5) -- (47.5,37.5) -- (46.5,37.5) -- (45.5,37.5) -- (45.5,38.5) -- (45.5,39.5) -- (44.5,39.5) -- (43.5,39.5) -- (42.5,39.5) -- (41.5,39.5) -- (40.5,39.5) -- (40.5,38.5) -- (40.5,37.5) -- (40.5,36.5) -- (39.5,36.5) -- (38.5,36.5) -- (38.5,35.5) -- (38.5,34.5) -- (38.5,33.5) -- (38.5,32.5) -- (38.5,31.5) -- (39.5,31.5) -- (40.5,31.5) -- (41.5,31.5) -- (41.5,30.5) -- (41.5,29.5);
\draw (43.500000,34.500000) node {$T_2^+$};
\draw (48.5,50.5) -- (49.5,50.5) -- (50.5,50.5) -- (51.5,50.5) -- (51.5,51.5) -- (51.5,52.5) -- (51.5,53.5) -- (50.5,53.5) -- (49.5,53.5) -- (48.5,53.5) -- (48.5,52.5) -- (48.5,51.5) -- (48.5,50.5);
\draw (50.000000,52.000000) node {$T_3^+$};
\draw (48.5,35.5) -- (49.5,35.5) -- (50.5,35.5) -- (51.5,35.5) -- (52.5,35.5) -- (53.5,35.5) -- (53.5,36.5) -- (53.5,37.5) -- (53.5,38.5) -- (54.5,38.5) -- (55.5,38.5) -- (55.5,39.5) -- (55.5,40.5) -- (55.5,41.5) -- (56.5,41.5) -- (57.5,41.5) -- (57.5,42.5) -- (57.5,43.5) -- (57.5,44.5) -- (57.5,45.5) -- (57.5,46.5) -- (56.5,46.5) -- (55.5,46.5) -- (54.5,46.5) -- (54.5,47.5) -- (54.5,48.5) -- (53.5,48.5) -- (52.5,48.5) -- (51.5,48.5) -- (51.5,49.5) -- (51.5,50.5) -- (50.5,50.5) -- (49.5,50.5) -- (48.5,50.5) -- (47.5,50.5) -- (46.5,50.5) -- (46.5,49.5) -- (46.5,48.5) -- (46.5,47.5) -- (45.5,47.5) -- (44.5,47.5) -- (44.5,46.5) -- (44.5,45.5) -- (44.5,44.5) -- (43.5,44.5) -- (42.5,44.5) -- (42.5,43.5) -- (42.5,42.5) -- (42.5,41.5) -- (42.5,40.5) -- (42.5,39.5) -- (43.5,39.5) -- (44.5,39.5) -- (45.5,39.5) -- (45.5,38.5) -- (45.5,37.5) -- (46.5,37.5) -- (47.5,37.5) -- (48.5,37.5) -- (48.5,36.5) -- (48.5,35.5);
\draw (41.5,47.5) -- (42.5,47.5) -- (43.5,47.5) -- (44.5,47.5) -- (45.5,47.5) -- (46.5,47.5) -- (46.5,48.5) -- (46.5,49.5) -- (46.5,50.5) -- (47.5,50.5) -- (48.5,50.5) -- (48.5,51.5) -- (48.5,52.5) -- (48.5,53.5) -- (48.5,54.5) -- (48.5,55.5) -- (47.5,55.5) -- (46.5,55.5) -- (45.5,55.5) -- (45.5,56.5) -- (45.5,57.5) -- (44.5,57.5) -- (43.5,57.5) -- (42.5,57.5) -- (41.5,57.5) -- (40.5,57.5) -- (40.5,56.5) -- (40.5,55.5) -- (40.5,54.5) -- (39.5,54.5) -- (38.5,54.5) -- (38.5,53.5) -- (38.5,52.5) -- (38.5,51.5) -- (38.5,50.5) -- (38.5,49.5) -- (39.5,49.5) -- (40.5,49.5) -- (41.5,49.5) -- (41.5,48.5) -- (41.5,47.5);
\draw (43.500000,52.500000) node {$T_2^-$};
\draw (35.5,33.5) -- (36.5,33.5) -- (37.5,33.5) -- (38.5,33.5) -- (38.5,34.5) -- (38.5,35.5) -- (38.5,36.5) -- (37.5,36.5) -- (36.5,36.5) -- (35.5,36.5) -- (35.5,35.5) -- (35.5,34.5) -- (35.5,33.5);
\draw (37.000000,35.000000) node {$T_3^-$};
\end{scope}
\end{tikzpicture}
    &
    \begin{tikzpicture}[x=0.022222\textwidth,y=0.022222\textwidth,font=\tiny]
\begin{scope}[draw=gray]
\draw (29.5,26.5) -- (30.5,26.5) -- (31.5,26.5) -- (32.5,26.5) -- (33.5,26.5) -- (34.5,26.5) -- (34.5,27.5) -- (34.5,28.5) -- (34.5,29.5) -- (35.5,29.5) -- (36.5,29.5) -- (36.5,30.5) -- (36.5,31.5) -- (36.5,32.5) -- (36.5,33.5) -- (36.5,34.5) -- (35.5,34.5) -- (34.5,34.5) -- (33.5,34.5) -- (33.5,35.5) -- (33.5,36.5) -- (32.5,36.5) -- (31.5,36.5) -- (30.5,36.5) -- (29.5,36.5) -- (28.5,36.5) -- (28.5,35.5) -- (28.5,34.5) -- (28.5,33.5) -- (27.5,33.5) -- (26.5,33.5) -- (26.5,32.5) -- (26.5,31.5) -- (26.5,30.5) -- (26.5,29.5) -- (26.5,28.5) -- (27.5,28.5) -- (28.5,28.5) -- (29.5,28.5) -- (29.5,27.5) -- (29.5,26.5);
\draw (31.500000,31.500000) node {$S_4^+$};
\draw (35.5,34.5) -- (36.5,34.5) -- (37.5,34.5) -- (38.5,34.5) -- (38.5,35.5) -- (38.5,36.5) -- (38.5,37.5) -- (37.5,37.5) -- (36.5,37.5) -- (35.5,37.5) -- (35.5,36.5) -- (35.5,35.5) -- (35.5,34.5);
\draw (37.000000,36.000000) node {$S_2^+$};
\draw (23.5,33.5) -- (24.5,33.5) -- (25.5,33.5) -- (25.5,34.5) -- (25.5,35.5) -- (24.5,35.5) -- (23.5,35.5) -- (23.5,34.5) -- (23.5,33.5);
\draw (24.500000,34.500000) node {$S_3^+$};
\draw (28.5,31.5) -- (29.5,31.5) -- (30.5,31.5) -- (31.5,31.5) -- (32.5,31.5) -- (33.5,31.5) -- (33.5,32.5) -- (33.5,33.5) -- (33.5,34.5) -- (34.5,34.5) -- (35.5,34.5) -- (35.5,35.5) -- (35.5,36.5) -- (35.5,37.5) -- (35.5,38.5) -- (35.5,39.5) -- (34.5,39.5) -- (33.5,39.5) -- (32.5,39.5) -- (32.5,40.5) -- (32.5,41.5) -- (31.5,41.5) -- (30.5,41.5) -- (29.5,41.5) -- (28.5,41.5) -- (27.5,41.5) -- (27.5,40.5) -- (27.5,39.5) -- (27.5,38.5) -- (26.5,38.5) -- (25.5,38.5) -- (25.5,37.5) -- (25.5,36.5) -- (25.5,35.5) -- (25.5,34.5) -- (25.5,33.5) -- (26.5,33.5) -- (27.5,33.5) -- (28.5,33.5) -- (28.5,32.5) -- (28.5,31.5);
\draw (30.500000,36.500000) node {$S_4^-$};
\draw (23.5,30.5) -- (24.5,30.5) -- (25.5,30.5) -- (26.5,30.5) -- (26.5,31.5) -- (26.5,32.5) -- (26.5,33.5) -- (25.5,33.5) -- (24.5,33.5) -- (23.5,33.5) -- (23.5,32.5) -- (23.5,31.5) -- (23.5,30.5);
\draw (25.000000,32.000000) node {$S_2^-$};
\draw (36.5,32.5) -- (37.5,32.5) -- (38.5,32.5) -- (38.5,33.5) -- (38.5,34.5) -- (37.5,34.5) -- (36.5,34.5) -- (36.5,33.5) -- (36.5,32.5);
\draw (37.500000,33.500000) node {$S_3^-$};
\draw (30.5,21.5) -- (31.5,21.5) -- (32.5,21.5) -- (33.5,21.5) -- (34.5,21.5) -- (35.5,21.5) -- (35.5,22.5) -- (35.5,23.5) -- (35.5,24.5) -- (36.5,24.5) -- (37.5,24.5) -- (37.5,25.5) -- (37.5,26.5) -- (37.5,27.5) -- (37.5,28.5) -- (37.5,29.5) -- (36.5,29.5) -- (35.5,29.5) -- (34.5,29.5) -- (34.5,30.5) -- (34.5,31.5) -- (33.5,31.5) -- (32.5,31.5) -- (31.5,31.5) -- (30.5,31.5) -- (29.5,31.5) -- (29.5,30.5) -- (29.5,29.5) -- (29.5,28.5) -- (28.5,28.5) -- (27.5,28.5) -- (27.5,27.5) -- (27.5,26.5) -- (27.5,25.5) -- (27.5,24.5) -- (27.5,23.5) -- (28.5,23.5) -- (29.5,23.5) -- (30.5,23.5) -- (30.5,22.5) -- (30.5,21.5);
\draw (32.500000,26.500000) node {$Q_4^+$};
\draw (36.5,29.5) -- (37.5,29.5) -- (38.5,29.5) -- (39.5,29.5) -- (39.5,30.5) -- (39.5,31.5) -- (39.5,32.5) -- (38.5,32.5) -- (37.5,32.5) -- (36.5,32.5) -- (36.5,31.5) -- (36.5,30.5) -- (36.5,29.5);
\draw (38.000000,31.000000) node {$Q_2^+$};
\draw (24.5,28.5) -- (25.5,28.5) -- (26.5,28.5) -- (26.5,29.5) -- (26.5,30.5) -- (25.5,30.5) -- (24.5,30.5) -- (24.5,29.5) -- (24.5,28.5);
\draw (25.500000,29.500000) node {$Q_3^+$};
\draw (24.5,25.5) -- (25.5,25.5) -- (26.5,25.5) -- (27.5,25.5) -- (27.5,26.5) -- (27.5,27.5) -- (27.5,28.5) -- (26.5,28.5) -- (25.5,28.5) -- (24.5,28.5) -- (24.5,27.5) -- (24.5,26.5) -- (24.5,25.5);
\draw (26.000000,27.000000) node {$Q_2^-$};
\draw (37.5,27.5) -- (38.5,27.5) -- (39.5,27.5) -- (39.5,28.5) -- (39.5,29.5) -- (38.5,29.5) -- (37.5,29.5) -- (37.5,28.5) -- (37.5,27.5);
\draw (38.500000,28.500000) node {$Q_3^-$};
\end{scope}\begin{scope}[thick]
\draw (30.5,21.5) -- (31.5,21.5) -- (32.5,21.5) -- (33.5,21.5) -- (34.5,21.5) -- (35.5,21.5) -- (35.5,22.5) -- (35.5,23.5) -- (35.5,24.5) -- (36.5,24.5) -- (37.5,24.5) -- (37.5,25.5) -- (37.5,26.5) -- (37.5,27.5) -- (38.5,27.5) -- (39.5,27.5) -- (39.5,28.5) -- (39.5,29.5) -- (39.5,30.5) -- (39.5,31.5) -- (39.5,32.5) -- (38.5,32.5) -- (37.5,32.5) -- (36.5,32.5) -- (36.5,33.5) -- (36.5,34.5) -- (35.5,34.5) -- (34.5,34.5) -- (33.5,34.5) -- (33.5,35.5) -- (33.5,36.5) -- (32.5,36.5) -- (31.5,36.5) -- (30.5,36.5) -- (29.5,36.5) -- (28.5,36.5) -- (28.5,35.5) -- (28.5,34.5) -- (28.5,33.5) -- (27.5,33.5) -- (26.5,33.5) -- (26.5,32.5) -- (26.5,31.5) -- (26.5,30.5) -- (25.5,30.5) -- (24.5,30.5) -- (24.5,29.5) -- (24.5,28.5) -- (24.5,27.5) -- (24.5,26.5) -- (24.5,25.5) -- (25.5,25.5) -- (26.5,25.5) -- (27.5,25.5) -- (27.5,24.5) -- (27.5,23.5) -- (28.5,23.5) -- (29.5,23.5) -- (30.5,23.5) -- (30.5,22.5) -- (30.5,21.5);
\draw (38.5,32.5) -- (39.5,32.5) -- (40.5,32.5) -- (41.5,32.5) -- (41.5,33.5) -- (41.5,34.5) -- (41.5,35.5) -- (40.5,35.5) -- (39.5,35.5) -- (38.5,35.5) -- (38.5,34.5) -- (38.5,33.5) -- (38.5,32.5);
\draw (40.000000,34.000000) node {$T_2^+$};
\draw (21.5,30.5) -- (22.5,30.5) -- (23.5,30.5) -- (23.5,31.5) -- (23.5,32.5) -- (22.5,32.5) -- (21.5,32.5) -- (21.5,31.5) -- (21.5,30.5);
\draw (22.500000,31.500000) node {$T_3^+$};
\draw (29.5,26.5) -- (30.5,26.5) -- (31.5,26.5) -- (32.5,26.5) -- (33.5,26.5) -- (34.5,26.5) -- (34.5,27.5) -- (34.5,28.5) -- (34.5,29.5) -- (35.5,29.5) -- (36.5,29.5) -- (36.5,30.5) -- (36.5,31.5) -- (36.5,32.5) -- (37.5,32.5) -- (38.5,32.5) -- (38.5,33.5) -- (38.5,34.5) -- (38.5,35.5) -- (38.5,36.5) -- (38.5,37.5) -- (37.5,37.5) -- (36.5,37.5) -- (35.5,37.5) -- (35.5,38.5) -- (35.5,39.5) -- (34.5,39.5) -- (33.5,39.5) -- (32.5,39.5) -- (32.5,40.5) -- (32.5,41.5) -- (31.5,41.5) -- (30.5,41.5) -- (29.5,41.5) -- (28.5,41.5) -- (27.5,41.5) -- (27.5,40.5) -- (27.5,39.5) -- (27.5,38.5) -- (26.5,38.5) -- (25.5,38.5) -- (25.5,37.5) -- (25.5,36.5) -- (25.5,35.5) -- (24.5,35.5) -- (23.5,35.5) -- (23.5,34.5) -- (23.5,33.5) -- (23.5,32.5) -- (23.5,31.5) -- (23.5,30.5) -- (24.5,30.5) -- (25.5,30.5) -- (26.5,30.5) -- (26.5,29.5) -- (26.5,28.5) -- (27.5,28.5) -- (28.5,28.5) -- (29.5,28.5) -- (29.5,27.5) -- (29.5,26.5);
\draw (21.5,27.5) -- (22.5,27.5) -- (23.5,27.5) -- (24.5,27.5) -- (24.5,28.5) -- (24.5,29.5) -- (24.5,30.5) -- (23.5,30.5) -- (22.5,30.5) -- (21.5,30.5) -- (21.5,29.5) -- (21.5,28.5) -- (21.5,27.5);
\draw (23.000000,29.000000) node {$T_2^-$};
\draw (39.5,30.5) -- (40.5,30.5) -- (41.5,30.5) -- (41.5,31.5) -- (41.5,32.5) -- (40.5,32.5) -- (39.5,32.5) -- (39.5,31.5) -- (39.5,30.5);
\draw (40.500000,31.500000) node {$T_3^-$};
\end{scope}
\end{tikzpicture}
  \end{tabular}
  \caption{The six possible internal tangency structures of the double decomposition.  The relative sizes of the parents of the largest parent determine how the subtiles of the largest parents interact with the other parents. Boldfaced lines are boundaries of the first decomposition, while light lines are boundaries of subtiles in the double decomposition.  Overlap in the decompositions makes it a little tricky to visually parse some of these cases; it is helpful to keep in mind that each subtile is centrally symmetric, and to know that its label is drawn here at its center.}
  \label{f.sixdecompositions}
\end{figure}
\clearpage
}

We now complete the proof of \lref{tiling} with the following.
\begin{proof}[Proof of Claim \ref{c.applytop}]
We claim that the graph $(\tilde \ttt_L,\tilde \eee)$ satisfies the hypotheses of \lref{topological}.  Hypothesis \ref{P.graph} is immediate, as is the first part of hypothesis \ref{P.periodic}.  For the second part  of hypothesis \ref{P.periodic}, observe that (using \eref{soddy}) we have
\[
\sum_{T\in \tilde \ttt_L/L}\abs{T}=2c_1+2c_2+2c_3-c_4=c_0=\abs{\det \Lambda_{C_0}}
\]
It remains to verify hypotheses \ref{P.ints} and \ref{P.threes}.  We will need the following:
\begin{claim}
\label{c.isolatetwin}
If $s_x\notin T_1^-\cup T_1^+$, then $s_x\cap \foot(Q_4^-)=\varnothing$, unless $\abs{T_3^-}=\abs{T_2^+}=1$.
\end{claim}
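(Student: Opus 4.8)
The plan is to exploit the identity $Q_4^-=S_4^+$ furnished by Claim~\ref{c.doubledecomp}, which exhibits the precursor tile $Q_4^-$ simultaneously as a subtile of $T_1^+$ and of $T_1^-$. Because $Q_4^-\sbs T_1^+$, every boundary edge of $Q_4^-$ is either interior to $T_1^+$ (and so shared with a square of $T_1^+\stm Q_4^-$) or lies on $\partial T_1^+$; and in the latter case, because also $Q_4^-\sbs T_1^-$, the opposite square lies in $T_1^-$ unless the edge lies on $\partial T_1^-$ as well. The same dichotomy applies to the vertices of $\partial Q_4^-$. Hence a square $s_x\notin T_1^+\cup T_1^-$ can meet $F(Q_4^-)$ only near a point where the boundaries $\partial T_1^+$ and $\partial T_1^-$ cross, and the claim reduces to a purely local analysis at these (two, by $180^\circ$ symmetry) crossing corners.

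To control the corners I would apply the inductive \lref{tiling} to the proper quadruple $(C_1,C_4,C_2,C_3)$, separately to the copies $T_1^+$ and $T_1^-$. Together with Claim~\ref{c.doubledecomp} and the decomposition hypothesis \eref{tgooddecomp}, this lists the complete collection of subtiles touching $Q_4^-$: within $T_1^+$ these are the $C_2$-, $C_3$-, and $C_4$-subtiles drawn adjacent to it in \fref{doubledecomp}, and within $T_1^-$ the mirror collection. Using the touching-triple relations \eref{ttouch1}--\eref{ttouch2} and the lattice rules \eref{lattice} to evaluate the centroid offsets $\tfrac12(v_{ij}+v_{ji})$ between these neighbours, I would verify that the neighbours from the two decompositions fit together into a closed collar covering all of $\partial Q_4^-$, so that at each crossing corner $Q_4^-$ is shielded from the outside by a $C_2$-subtile on one side and a $C_3$-subtile on the other.

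Granting this, any $s_x$ with $s_x\cap F(Q_4^-)\neq\varnothing$ lies in one of the collar subtiles, each of which is contained in $T_1^+\cup T_1^-$; thus $s_x\in T_1^+\cup T_1^-$, contrary to hypothesis, and so $s_x\cap F(Q_4^-)=\varnothing$. The collar fails to close exactly when the shielding $C_2$- and $C_3$-tiles both degenerate to single squares: since every $C_2$-tile has area $c_2=|T_2^+|$ and every $C_3$-tile has area $c_3=|T_3^-|$ by \eref{tarea}, this happens precisely when $|T_2^+|=|T_3^-|=1$, in which case a lone square meeting that corner does touch $Q_4^-$. Under the standing assumption $c_1>c_2\geq 4$ the condition $|T_2^+|=1$ is impossible, so in the present induction the claim delivers $s_x\cap F(Q_4^-)=\varnothing$ outright; the exceptional clause is retained only for the degenerate regimes.

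I expect the main obstacle to be the middle step: showing that the two families of neighbours — one coming from the decomposition of $T_1^+$ and one from that of $T_1^-$ — genuinely knit together into a single closed collar rather than leaving an exposed arc of $\partial Q_4^-$. This is where the mere $180^\circ$ (not $90^\circ$) symmetry of the subtiles, the exact geometry of the touching triples at the crossing corners, and the possibility that $C_3$ is small (with $c_3=1$ permitted even while $c_2\geq 4$) must all be reconciled; the delicacy lies in the corner bookkeeping rather than in any single estimate.
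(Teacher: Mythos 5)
Your opening reduction is fine: any square $s_x\notin T_1^+\cup T_1^-$ meeting $F(Q_4^-)$ must do so at a vertex of $\partial T_1^+\cap\partial T_1^-\cap F(Q_4^-)$. The genuine gap is the step you defer to ``verification'': that the neighbours $Q_2^+$, $Q_3^+$, $S_2^-$, $S_3^-$ knit into a \emph{closed} collar. The tools you cite cannot deliver this. The touching-triple relations and the lattice rules give you \emph{which} pairs touch and the offsets of their centroids; they do not tell you \emph{where} on the cycle $\partial Q_4^-$ the contact paths $F(Q_2^+)\cap F(Q_4^-)$, $F(S_2^-)\cap F(Q_4^-)$, etc.\ sit, nor that their union exhausts $\partial Q_4^-$, nor that the four squares around a crossing corner are all covered (a touching triple guarantees pairwise touching and a single common vertex, not coverage of the four squares at that vertex). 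The configuration you must exclude -- an uncovered square wedged against $F(Q_4^-)$ at a corner -- is consistent with all the pairwise adjacency and centroid data you propose to compute; excluding it \emph{is} the claim, localised at that corner, so your plan is essentially circular at the decisive point. (It can in fact be pushed through, but only by adding exactly the inputs you omit: one needs that the common vertex of the triple $(Q_2^+,S_3^-,Q_4^-)$ lies in both contact arcs, that each contact set is a \emph{single} arc, and that the two arcs are central reflections of one another, whence each contains an antipodal pair and the two arcs cover the whole cycle.)

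The paper avoids all corner bookkeeping with two ingredients your proposal never invokes. First, the inductive \eref{tarea} for the quadruple $(C_1,C_4,C_2,C_3)$: the complements $A:=T_1^+\stm Q_4^-$ and $B:=T_1^-\stm S_4^+$ are themselves \emph{tiles}, each touching $T_4:=Q_4^-=S_4^+$, so each contact set $F(A)\cap F(T_4)$, $F(B)\cap F(T_4)$ is a single simple path. Second, central symmetry: $T_1^\pm$ are symmetrically placed translates of one another, so the reflection through $\cent(S_4^+)$ swaps $A$ and $B$ and $\sss:=A\cup B$ is centrally symmetric; moreover the touching of $Q_2^+\sbs A$ with $S_3^-\sbs B$ (Claim~\ref{c.doubledecomp}) supplies an edge of $\Z^2$ whose two incident squares lie one in $A$ and one in $B$, so $\sss^*$ is connected. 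A connected, centrally symmetric $\sss$ of this form, touching the centrally placed $T_4$ and containing none of its squares, must wrap all the way around it, and no square outside $\sss$ can meet $F(S_4^+)$. That symmetry-plus-connectivity argument is precisely what substitutes for the collar analysis you could not complete; it is also where the exceptional clause comes from, since $\abs{T_3^-}=\abs{T_2^+}=1$ is exactly the case in which the edge shared by $A$ and $B$ (hence connectivity) can fail. Your side remark that this clause is vacuous under the standing assumption $c_2\geq 4$ is correct and consistent with the paper.
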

\begin{proof}
By \eref{tarea}, $T_1^+\stm Q_4^-$ and $T_1^-\stm S_4^+$ are both tiles, which each touch $T_4:=Q_4^-=S_4^+$.  
Thus neither of these tiles shares any squares with $S_4^+$, and unless $\abs{T_3^-}=\abs{T_2^+}=1$, Claim \ref{c.doubledecomp} or inductively, by \lref{tiling}, give that the intersection of $T_1^+\stm Q_4^-$ and $T_1^-\stm S_4^+$ contains edges of $\Z^2$.  Thus, the union $\sss\sbs \Z^2$ of these tiles satisfies that its dual $\sss^*$ is connected; since $\sss$ is centrally symmetric about the center of $S_4^+$, we have as a consequence that the squares of $\sss$ surround $T_4$.  In particular, no square of $S_4^+$ can intersect any square outside of $\sss$.    
\end{proof}

Consider now \hyref{ints} of \lref{topological}.  For any pair $\tilde T(u),\tilde T(v)$ in $\tilde \eee$, we have from the definition of $G$ that the pair $T(u),T(v)$ are drawn adjacently or with overlap in \fref{righttile}.  Claims \ref{c.outtouch} and \ref{c.doubledecomp} now imply that $\foot(T(u))\cap \foot(T(v))$ contains at least two vertices.  Finally, Claim \ref{c.isolatetwin} implies that $\foot(\tilde T(u))\cap \foot(\tilde T(v))=\foot(T(u))\cap \foot(T(v))$, unless, up to symmetry, we have $T(u)=T_1^+$, $T(v)=T_1^-$.  In this case, however, Claim \ref{c.doubledecomp} implies directly that $\foot(\tilde T(u))\cap \foot(\tilde T(v))$ contains at least 2 vertices.

To check hypothesis \ref{P.threes} for the graph $(\tilde \ttt,\tilde \eee)$, we warm up by examining this for the graph $(\ttt,\eee)$, where it also holds.   We choose an assignment of the points $\rho(F)$ for faces $F$ of $G$.  For any face $F=\{u,v,w\}$ whose three corresponding tiles form a triple covered by Claim \ref{c.outtouch}, we must choose $\rho(F)\in \Ga$ to be the unique point in the three-way intersection of the footprints of the tiles.  Remaining faces $F$ are those whose corresponding tile triple lies entirely within $T_0$: i.e., the triple is either $(T_1^\pm,T_2^\pm,T_3^\mp)$ or $(T_1^\pm,T_1^\mp,T_2^\pm)$.  In the first case, we use Claim \ref{c.doubledecomp} to choose (without loss of generality) $\rho(\{T_1^+,T_2^+,T_3^-\})$ to be the unique point in the intersection of the touching triple $Q_2^+,T_2^+,T_3^-$.  In the second case, we use Claim \ref{c.doubledecomp} to choose (without loss of generality) $\rho(\{T_1^+,T_1^-,T_2^+\})$ to be the unique point in the intersection of the touching triple $Q_2^+,T_2^+,S_3^-$.

With this choice for the $\rho(F)$'s, we see that for any adjacent $F,F'$ corresponding to tiles from \fref{righttile} \emph{other} than the pair of faces corresponding to the tile triples
\begin{equation}
\label{e.overlapFs}
\{T_1^+,T_1^-,T_2^+\},\quad \{T_1^+,T_1^-,T_2^-\},
\end{equation}
that the points $\rho(F),\rho(F')$ are joined by a simple path given as the intersection of a single pair of tiles $\foot(U^1)\cap \foot(U^2)$ known to touch either by Claim \ref{c.outtouch} or Claim \ref{c.doubledecomp}. This would verify hypothesis \ref{P.threes} of \lref{topological} in the graph $(\ttt,\eee)$ for these pairs of faces.  Note now that Claim \ref{c.isolatetwin} implies that the points $\rho(F)$ selected above also lie in the three way intersections $\foot(\tilde T(u))\cap \foot(\tilde T(v))\cap \foot(\tilde T(w))$ for $\{u,v,w\}=F$, and, moreover, that the simple paths in the intersections of the footprints of pairs of tiles $T(u)$, $T(v)$ used above remain in the intersection $\foot(\tilde T(u))\cap \foot(\tilde T(v))$.  In particular, we have verified that with this assignment of $\rho(F)$'s, hypothesis \ref{P.threes} holds also in the graph $(\tilde \ttt,\tilde \eee)$ for all faces \emph{other} than those corresponding to translates of triples 
\[
\{T_1^+\stm Q_4^-,T_1^-,T_2^+\},\quad \{T_1^+\stm Q_4^-,T_1^-,T_2^-\},
\]
and we deal with this final case separately, now.  

Let $L^+$ be the set of edges of $\partial Q_4^-$ whose two incident squares both lie in $T_1^+$, and define $L^-$ similarly.  Note that $L^-$ is the central reflection through $\cent(Q_4^-)$ of $L^+$, and that Claim \ref{c.isolatetwin} implies that $L^+\cup L^-$ equals the edge-set of $\partial Q_4^-$.  We claim that $L^+$ is the edge-set of a path, which suffices for us, since then, letting $V(L^+)$ denote the set of vertices incident with edges in $L^+$, we have that $\foot(S_3^-)\cap \foot(S_4^+)\cap \foot(Q_2^+)\sbs V(L^+)\cap V(L^-)$ and  $\foot(S_2^-)\cap \foot(S_4^+)\cap \foot(Q_3^+)\sbs V(L^+)\cap V(L^-)$, so that
\[
(S_3^-\cap Q_2^+)\cup V(L^+)\cup (Q_3^+\cap S_2^-)
\]
lies in the intersection $(T_1^+\stm Q_4^-)\cap T_1^-$, and contains a path joining the points assigned to the triples from \eref{overlapFs}.

To see that this $L^+$ is indeed the edge-set of a path, suppose it is false.    In particular, we can, in cyclic order in $\partial Q_4^-$, find edges $e_1$, $e_2$, $e_3$, $e_4$ such that $e_1,e_3\in L^+$ and $e_2,e_4\in L^-\stm L^+$.  But  then since $T_1^+\stm Q_4^-$ is connected, there is a path of squares in $T_1^+\stm Q_4^-$ from $e_1$ to $e_3$.  But, together with the squares of $Q_4^-$, these squares must enclose either $s_2$ or $s_4$, where $s_i$ is the square incident with $e_i$ lying outside of $Q_4^-$.  In particular, $T_1^+\stm Q_4^-$ cannot be simply connected, contradicting the inductive hypothesis \eref{tarea}.  

Thus the hypotheses of \lref{topological} are satisfied for $(\tilde \ttt,\tilde \eee)$.  
\end{proof}

\begin{remark}\label{r.topologyvscases}
  We emphasize that our proof of Claim \ref{c.applytop} did not work by showing that the intersections of adjacent tiles in $\tilde \ttt_L$ are all simple paths.  In particular, for the tile pair $(T_1^+\stm Q_4^-,T_1^-)$, the proof only establishes that the intersection contains a path.  The problem with using induction to establish that the intersection equals a path for this pair is that the tile relationships in the double decomposition (e.g., $S_3^-,S_4^+,Q_2^+$, etc.) are not fixed, and depend on the relative order of the sizes of the circles $C_1,C_2,C_3,C_4$.   Instead, the proof given above uses an ad-hoc argument for this pair based on Claim \ref{c.isolatetwin}.  In principle, one could prove a stronger version of Claim \ref{c.applytop} by consider several cases according to the relative sizes of the circles, and accounting for the various subtile-relationships that arise in each case.  This would allow one to apply a more straigthforward topological lemma at the expense of a blow-up in the tile analysis.
\end{remark}

\section{Boundary Strings}
\label{s.strings}

Having constructed tiles, we now turn our attention towards constructing odometers.  The main idea is to mimic the tile construction, using the duality between the rules for the $v_{ij}$ and $a_{ij}$ in \eref{lattice} to attach superharmonic data to the tiles.  However, there is a problem in directly lifting the tile construction:  the odometers are only $180^\circ$ symmetric in general, and we used $90^\circ$ symmetry in constructing the tiles.

An examination of the argument in \sref{tiles} suggests that we made essential use of $90^\circ$ symmetry only to conclude that the lattice $\Lambda_C$ generated by $\{ v_{10}, v_{20}, v_{30} \}$ is a tiling lattice for $T_0$ from the fact that $\I \Lambda_C$ is a tiling lattice for $T_0$ (Remark \ref{rem.90}).  To give a proof using only $180^\circ$ symmetry, we must therefore find a way to express the interface between $T_0$ and $T_0 + v_{i0}$ directly in terms of relationships between subtiles.

\begin{figure}[t]
\begin{center}
\nofig{\input{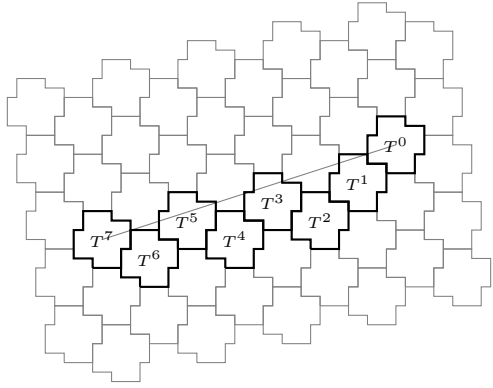}}
\end{center}
\caption{A $\ttt$-string $T^0, \dots, T^7$ in the regular tiling $\ttt=T_C + \Lambda_C$ for the circle $C = (28,7+20\I)$.}
\label{f.string}
\end{figure}

We need a new type of tile decomposition, which we call {\em boundary strings}. If $\ttt$ is a regular tiling, then we call a sequence $T^0, T^1, \dots, T^n$ of tiles in $\ttt$ the $\ttt$-\emph{string} (or simply: \emph{string}) from $T^0$ to $T^n$ if:
\begin{enumerate}
\item $T^i$ touches $T^{i+1}$ for $0 \leq i < n$,
\item Each $\cent(T^i)$ $(0<i<n)$ lies in the closed half-plane to the left of the ray from $\cent(T^0)$ to $\cent(T^n)$; i.e.:
\[
\Im\left(\frac{\cent(T^i)-\cent(T^0)}{\cent(T^n) - \cent(T^0)}\right) \geq 0
\]
\item Each $T^i$ $(0<i<n)$ touches some $S^i\in \ttt$  whose centroid lies outside of the closed half-plane to the left of the ray from $\cent(T^0)$ to $\cent(T^n)$.
\end{enumerate}
(See \fref{string}.)
A $\ttt$-string is the left-handed approximation of the line segment from $\cent(T^0)$ to $\cent(T^n)$ in $\ttt$; see \fref{string}.  It is not hard to show from the definition that there is a unique $\ttt$-string between any two tiles in a regular tiling (indeed, the partial sequences $T^0,T^1,\dots,T^j$ are uniquely  determined, inductively, given $T_0$ and the ray from $\cent(T^0)$ towards $\cent(T^n)$).   The \emph{interior tiles} of the $\ttt$-string $\sss$ from $T^0$ to $T^n$ are the tiles in the $\ttt$-string other than the endpoints $T^0,T^n$, and the \emph{interior} of the $\ttt$-string is the union of the footprints of all interior tiles.  

Given a tile $T_0$ for the circle $C_0$ in the proper Descartes quadruple $(C_0,C_1,C_2,C_3)$, the $C_i$ \emph{boundary-string} $(i=1,2,3)$ for the tile $T_0$ is the string from the tiles $R_i^-$ to $R_i^+$ for the tiling associated to $C_i$, where 
\begin{equation*}
\cent(R_i^\pm) = \cent(T_0) + \tfrac{1}{2} (v_{i0} \pm v_{0i}).
\end{equation*}

The following lemma shows that the boundary strings for a tile can be constructed by concatenating certain smaller strings together.  

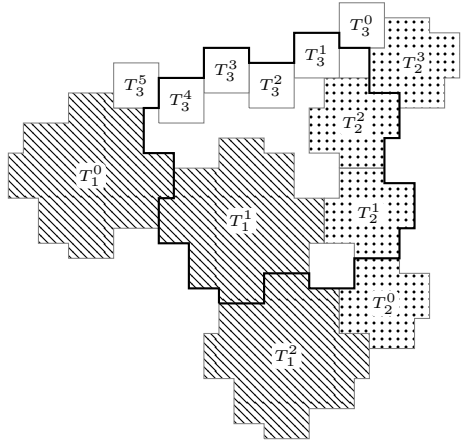
\begin{figure}[t]
\begin{center}
\nofig{\usetikzlibrary{patterns}
\begin{tikzpicture}[font=\tiny,scale=1/5]
\definecolor{myred}{rgb}{1,0.95,0.95}
\definecolor{mygreen}{rgb}{0.95,1,0.95}
\definecolor{myblue}{rgb}{0.95,0.95,1}
\begin{scope}[draw=gray]
\draw[pattern=north west lines] (14.5,22.5) -- (15.5,22.5) -- (16.5,22.5) -- (17.5,22.5) -- (17.5,23.5) -- (17.5,24.5) -- (18.5,24.5) -- (19.5,24.5) -- (20.5,24.5) -- (20.5,25.5) -- (20.5,26.5) -- (21.5,26.5) -- (21.5,27.5) -- (21.5,28.5) -- (21.5,29.5) -- (20.5,29.5) -- (19.5,29.5) -- (19.5,30.5) -- (19.5,31.5) -- (19.5,32.5) -- (18.5,32.5) -- (17.5,32.5) -- (17.5,33.5) -- (16.5,33.5) -- (15.5,33.5) -- (14.5,33.5) -- (14.5,32.5) -- (14.5,31.5) -- (13.5,31.5) -- (12.5,31.5) -- (11.5,31.5) -- (11.5,30.5) -- (11.5,29.5) -- (10.5,29.5) -- (10.5,28.5) -- (10.5,27.5) -- (10.5,26.5) -- (11.5,26.5) -- (12.5,26.5) -- (12.5,25.5) -- (12.5,24.5) -- (12.5,23.5) -- (13.5,23.5) -- (14.5,23.5) -- (14.5,22.5);
\node [fill=white,rounded corners=2pt,inner sep=1pt] at (16.000000,28.000000) {$T_1^{0}$};
\draw[pattern=north west lines] (24.5,19.5) -- (25.5,19.5) -- (26.5,19.5) -- (27.5,19.5) -- (27.5,20.5) -- (27.5,21.5) -- (28.5,21.5) -- (29.5,21.5) -- (30.5,21.5) -- (30.5,22.5) -- (30.5,23.5) -- (31.5,23.5) -- (31.5,24.5) -- (31.5,25.5) -- (31.5,26.5) -- (30.5,26.5) -- (29.5,26.5) -- (29.5,27.5) -- (29.5,28.5) -- (29.5,29.5) -- (28.5,29.5) -- (27.5,29.5) -- (27.5,30.5) -- (26.5,30.5) -- (25.5,30.5) -- (24.5,30.5) -- (24.5,29.5) -- (24.5,28.5) -- (23.5,28.5) -- (22.5,28.5) -- (21.5,28.5) -- (21.5,27.5) -- (21.5,26.5) -- (20.5,26.5) -- (20.5,25.5) -- (20.5,24.5) -- (20.5,23.5) -- (21.5,23.5) -- (22.5,23.5) -- (22.5,22.5) -- (22.5,21.5) -- (22.5,20.5) -- (23.5,20.5) -- (24.5,20.5) -- (24.5,19.5);
\node [fill=white,rounded corners=2pt,inner sep=1pt] at (26.000000,25.000000) {$T_1^{1}$};
\draw[pattern=north west lines] (27.5,10.5) -- (28.5,10.5) -- (29.5,10.5) -- (30.5,10.5) -- (30.5,11.5) -- (30.5,12.5) -- (31.5,12.5) -- (32.5,12.5) -- (33.5,12.5) -- (33.5,13.5) -- (33.5,14.5) -- (34.5,14.5) -- (34.5,15.5) -- (34.5,16.5) -- (34.5,17.5) -- (33.5,17.5) -- (32.5,17.5) -- (32.5,18.5) -- (32.5,19.5) -- (32.5,20.5) -- (31.5,20.5) -- (30.5,20.5) -- (30.5,21.5) -- (29.5,21.5) -- (28.5,21.5) -- (27.5,21.5) -- (27.5,20.5) -- (27.5,19.5) -- (26.5,19.5) -- (25.5,19.5) -- (24.5,19.5) -- (24.5,18.5) -- (24.5,17.5) -- (23.5,17.5) -- (23.5,16.5) -- (23.5,15.5) -- (23.5,14.5) -- (24.5,14.5) -- (25.5,14.5) -- (25.5,13.5) -- (25.5,12.5) -- (25.5,11.5) -- (26.5,11.5) -- (27.5,11.5) -- (27.5,10.5);
\node [fill=white,rounded corners=2pt,inner sep=1pt] at (29.000000,16.000000) {$T_1^{2}$};
\draw[pattern=dots] (34.5,16.5) -- (35.5,16.5) -- (36.5,16.5) -- (37.5,16.5) -- (37.5,17.5) -- (37.5,18.5) -- (38.5,18.5) -- (38.5,19.5) -- (38.5,20.5) -- (38.5,21.5) -- (37.5,21.5) -- (36.5,21.5) -- (36.5,22.5) -- (35.5,22.5) -- (34.5,22.5) -- (33.5,22.5) -- (33.5,21.5) -- (33.5,20.5) -- (32.5,20.5) -- (32.5,19.5) -- (32.5,18.5) -- (32.5,17.5) -- (33.5,17.5) -- (34.5,17.5) -- (34.5,16.5);
\node [fill=white,rounded corners=2pt,inner sep=1pt] at (35.500000,19.500000) {$T_2^{0}$};
\draw[pattern=dots] (33.5,22.5) -- (34.5,22.5) -- (35.5,22.5) -- (36.5,22.5) -- (36.5,23.5) -- (36.5,24.5) -- (37.5,24.5) -- (37.5,25.5) -- (37.5,26.5) -- (37.5,27.5) -- (36.5,27.5) -- (35.5,27.5) -- (35.5,28.5) -- (34.5,28.5) -- (33.5,28.5) -- (32.5,28.5) -- (32.5,27.5) -- (32.5,26.5) -- (31.5,26.5) -- (31.5,25.5) -- (31.5,24.5) -- (31.5,23.5) -- (32.5,23.5) -- (33.5,23.5) -- (33.5,22.5);
\node [fill=white,rounded corners=2pt,inner sep=1pt] at (34.500000,25.500000) {$T_2^{1}$};
\draw[pattern=dots] (32.5,28.5) -- (33.5,28.5) -- (34.5,28.5) -- (35.5,28.5) -- (35.5,29.5) -- (35.5,30.5) -- (36.5,30.5) -- (36.5,31.5) -- (36.5,32.5) -- (36.5,33.5) -- (35.5,33.5) -- (34.5,33.5) -- (34.5,34.5) -- (33.5,34.5) -- (32.5,34.5) -- (31.5,34.5) -- (31.5,33.5) -- (31.5,32.5) -- (30.5,32.5) -- (30.5,31.5) -- (30.5,30.5) -- (30.5,29.5) -- (31.5,29.5) -- (32.5,29.5) -- (32.5,28.5);
\node [fill=white,rounded corners=2pt,inner sep=1pt] at (33.500000,31.500000) {$T_2^{2}$};
\draw[pattern=dots] (36.5,32.5) -- (37.5,32.5) -- (38.5,32.5) -- (39.5,32.5) -- (39.5,33.5) -- (39.5,34.5) -- (40.5,34.5) -- (40.5,35.5) -- (40.5,36.5) -- (40.5,37.5) -- (39.5,37.5) -- (38.5,37.5) -- (38.5,38.5) -- (37.5,38.5) -- (36.5,38.5) -- (35.5,38.5) -- (35.5,37.5) -- (35.5,36.5) -- (34.5,36.5) -- (34.5,35.5) -- (34.5,34.5) -- (34.5,33.5) -- (35.5,33.5) -- (36.5,33.5) -- (36.5,32.5);
\node [fill=white,rounded corners=2pt,inner sep=1pt] at (37.500000,35.500000) {$T_2^{3}$};
\draw[fill=white] (32.5,36.5) -- (33.5,36.5) -- (34.5,36.5) -- (35.5,36.5) -- (35.5,37.5) -- (35.5,38.5) -- (35.5,39.5) -- (34.5,39.5) -- (33.5,39.5) -- (32.5,39.5) -- (32.5,38.5) -- (32.5,37.5) -- (32.5,36.5);
\draw (34.000000,38.000000) node {$T_3^{0}$};
\draw[fill=white] (29.5,34.5) -- (30.5,34.5) -- (31.5,34.5) -- (32.5,34.5) -- (32.5,35.5) -- (32.5,36.5) -- (32.5,37.5) -- (31.5,37.5) -- (30.5,37.5) -- (29.5,37.5) -- (29.5,36.5) -- (29.5,35.5) -- (29.5,34.5);
\draw (31.000000,36.000000) node {$T_3^{1}$};
\draw[fill=white] (26.5,32.5) -- (27.5,32.5) -- (28.5,32.5) -- (29.5,32.5) -- (29.5,33.5) -- (29.5,34.5) -- (29.5,35.5) -- (28.5,35.5) -- (27.5,35.5) -- (26.5,35.5) -- (26.5,34.5) -- (26.5,33.5) -- (26.5,32.5);
\draw (28.000000,34.000000) node {$T_3^{2}$};
\draw[fill=white] (23.5,33.5) -- (24.5,33.5) -- (25.5,33.5) -- (26.5,33.5) -- (26.5,34.5) -- (26.5,35.5) -- (26.5,36.5) -- (25.5,36.5) -- (24.5,36.5) -- (23.5,36.5) -- (23.5,35.5) -- (23.5,34.5) -- (23.5,33.5);
\draw (25.000000,35.000000) node {$T_3^{3}$};
\draw[fill=white] (20.5,31.5) -- (21.5,31.5) -- (22.5,31.5) -- (23.5,31.5) -- (23.5,32.5) -- (23.5,33.5) -- (23.5,34.5) -- (22.5,34.5) -- (21.5,34.5) -- (20.5,34.5) -- (20.5,33.5) -- (20.5,32.5) -- (20.5,31.5);
\draw (22.000000,33.000000) node {$T_3^{4}$};
\draw[fill=white] (17.5,32.5) -- (18.5,32.5) -- (19.5,32.5) -- (20.5,32.5) -- (20.5,33.5) -- (20.5,34.5) -- (20.5,35.5) -- (19.5,35.5) -- (18.5,35.5) -- (17.5,35.5) -- (17.5,34.5) -- (17.5,33.5) -- (17.5,32.5);
\draw (19.000000,34.000000) node {$T_3^{5}$};

\end{scope}\begin{scope}[thick]
\draw (24.5,19.5) -- (25.5,19.5) -- (26.5,19.5) -- (27.5,19.5) -- (27.5,20.5) -- (27.5,21.5) -- (28.5,21.5) -- (29.5,21.5) -- (30.5,21.5) -- (30.5,20.5) -- (31.5,20.5) -- (32.5,20.5) -- (33.5,20.5) -- (33.5,21.5) -- (33.5,22.5) -- (34.5,22.5) -- (35.5,22.5) -- (36.5,22.5) -- (36.5,23.5) -- (36.5,24.5) -- (37.5,24.5) -- (37.5,25.5) -- (37.5,26.5) -- (37.5,27.5) -- (36.5,27.5) -- (35.5,27.5) -- (35.5,28.5) -- (35.5,29.5) -- (35.5,30.5) -- (36.5,30.5) -- (36.5,31.5) -- (36.5,32.5) -- (36.5,33.5) -- (35.5,33.5) -- (34.5,33.5) -- (34.5,34.5) -- (34.5,35.5) -- (34.5,36.5) -- (33.5,36.5) -- (32.5,36.5) -- (32.5,37.5) -- (31.5,37.5) -- (30.5,37.5) -- (29.5,37.5) -- (29.5,36.5) -- (29.5,35.5) -- (28.5,35.5) -- (27.5,35.5) -- (26.5,35.5) -- (26.5,36.5) -- (25.5,36.5) -- (24.5,36.5) -- (23.5,36.5) -- (23.5,35.5) -- (23.5,34.5) -- (22.5,34.5) -- (21.5,34.5) -- (20.5,34.5) -- (20.5,33.5) -- (20.5,32.5) -- (19.5,32.5) -- (19.5,31.5) -- (19.5,30.5) -- (19.5,29.5) -- (20.5,29.5) -- (21.5,29.5) -- (21.5,28.5) -- (21.5,27.5) -- (21.5,26.5) -- (20.5,26.5) -- (20.5,25.5) -- (20.5,24.5) -- (20.5,23.5) -- (21.5,23.5) -- (22.5,23.5) -- (22.5,22.5) -- (22.5,21.5) -- (22.5,20.5) -- (23.5,20.5) -- (24.5,20.5) -- (24.5,19.5);
\end{scope}
\end{tikzpicture}}
\end{center}
\caption{The three boundary strings of a tile.}
\label{f.boundarystrings}
\end{figure}

\begin{lemma}
\label{l.boundarystring}
Let $(C_0,C_1,C_2,C_3) \in {\B}^4$ be a proper Descartes quadruple, write $v_{ij} = v(C_i,C_j)$, and suppose $T_0$ is a tile for $C_0$ with the tile decomposition $\{T_i^\pm\}$,  and suppose $i\in\{1, 2, 3\}$, $c_i>0$, and that $R_i^\pm$ is a tile for $C_i$ satisfying
\begin{equation*}
\cent(R_i^\pm) = \cent(T_0) + \tfrac{1}{2} (v_{i0} \pm v_{0i}).
\end{equation*}
Then we have that
\begin{equation}
\label{e.rioffset}
\cent(R_i^+)-\cent(T_i^-)=v_{ki},\quad \cent(T_i^-)-\cent(R_i^-)=-v_{ji},
\end{equation}
and the $(R_i^-+\Lambda_{C_i})$-string from $R_i^-$ to $R_i^+$ is the concatenation of the $(R_i^-+\Lambda_{C_i})$-strings from $R_i^-$ to $T_i^-$ and from $T_i^-$ to $R_i^+$, respectively.
\end{lemma}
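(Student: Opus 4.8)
The plan is to separate the statement into its two assertions: the centroid identities \eref{rioffset}, which are purely algebraic, and the string decomposition, which is a geometric statement that I would reduce to the uniqueness of strings in a regular tiling.

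For the identities \eref{rioffset}, I would compute directly. The centroid of $T_i^-$ is pinned down by the tile-decomposition constraint \eref{tdecompconst}, giving $\cent(T_i^-)-\cent(T_0)$ as an explicit combination of the parent vectors, while the centroids of $R_i^\pm$ are given in the hypothesis as $\cent(T_0)+\tfrac12(v_{i0}\pm v_{0i})$. Subtracting, both differences $\cent(R_i^+)-\cent(T_i^-)$ and $\cent(T_i^-)-\cent(R_i^-)$ become fixed $\Z[\I]$-linear expressions in the vectors attached to the quadruple $(C_0,C_1,C_2,C_3)$. I would then reduce these to the target vectors $v_{ki}$ and $-v_{ji}$ using the relations of \lref{lattice}: the superbasis relation \eref{lattice.superbasis} to eliminate one parent vector, the inductive relation \eref{lattice.induct} (and its parent-rotations) to rewrite $v_{i0}$ in terms of parent vectors, the flip \eref{lattice.flip} to replace $v_{0i}$ by $\I v_{i0}$, and the flip applied inside the sub-quadruple in which $C_i$ is the child of a quadruple containing $C_j$ and $C_k$ to relate the reversed vectors $v_{ji},v_{ki}$ to $v_{ij},v_{ik}$. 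Each identity then collapses to a one-line complex-number computation; a useful consistency check is that the two identities sum to $\cent(R_i^+)-\cent(R_i^-)=v_{0i}$, forcing $v_{0i}=v_{ki}-v_{ji}$, which is readily verified from the same relations.

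For the string decomposition, I would first record that all three tiles $R_i^-$, $T_i^-$, $R_i^+$ are tiles for the same circle $C_i$ lying in the common regular tiling $R_i^-+\Lambda_{C_i}$ (here the hypothesis $c_i>0$ guarantees $T_i^-$ is non-degenerate): this is exactly the content of \eref{rioffset} together with \lref{biglatticevectors}, which places the displacements $-v_{ji}$ and $v_{ki}$ in $\Lambda_{C_i}$. Since there is a unique string between any two tiles of a regular tiling, it suffices to prove that the concatenation of the unique string from $R_i^-$ to $T_i^-$ with the unique string from $T_i^-$ to $R_i^+$ is itself a legal string from $R_i^-$ to $R_i^+$; uniqueness then identifies it with the boundary string. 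Verifying the three defining properties of a string, the touching condition is immediate along each half and at the shared junction tile $T_i^-$, so the work is in the two half-plane conditions. Writing the three centroids as $0$, $-v_{ji}$, and $v_{0i}=v_{ki}-v_{ji}$, I must show that every interior tile of each half, which lies in the closed left half-plane of its own sub-segment, also lies in the closed left half-plane of the full ray of direction $v_{0i}$, and that $T_i^-$ itself lies in that half-plane.

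The main obstacle is precisely this half-plane transfer: in general the left half-plane of a sub-segment is not contained in the left half-plane of the full segment, so the argument cannot be purely formal. I expect to close the gap by combining the bounded-width property of strings (interior tiles hug their own sub-segment) with the explicit angular relationship between $v_{ji}$, $v_{ki}$ and $v_{0i}$ coming from the lattice rules and the geometric arrangement of $C_j,C_i,C_k$ about $C_0$, reducing everything to orientation inequalities such as $\Im(\overline{v_{0i}}\,v_{ki})\ge 0$ and $\Im(\overline{v_{0i}}\,v_{ji})\ge 0$ that guarantee the turn at $T_i^-$ is a left turn. Once these inequalities are in hand, the concatenation satisfies all three string axioms and the decomposition follows by uniqueness.
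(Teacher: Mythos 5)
Your treatment of the centroid identities \eref{rioffset} coincides with the paper's: they follow by direct computation from \eref{tdecompconst} and the relations of \lref{lattice}, and your consistency check $v_{0i}=v_{ki}-v_{ji}$ is the right sanity check. Your reduction of the concatenation statement to "the concatenation is itself a legal string, hence equals the unique string by uniqueness" is also fine in principle. The gap is in how you propose to verify legality. The orientation inequalities you name, $\Im(\overline{v_{0i}}\,v_{ki})\ge 0$ and $\Im(\overline{v_{0i}}\,v_{ji})\ge 0$, say only that $\cent(T_i^-)$ lies weakly to the left of the ray from $\cent(R_i^-)$ to $\cent(R_i^+)$. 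That is necessary but nowhere near sufficient: if any other point of $\Lambda_{C_i}$ lies in (the interior or on the boundary of) the triangle $\triangle\,\cent(T_i^-)\,\cent(R_i^-)\,\cent(R_i^+)$, the string from $R_i^-$ to $R_i^+$ hugs the chord more tightly than the two-leg path and can miss $T_i^-$ entirely, even though all your orientation inequalities hold. So no argument built from orientation data plus a width bound can close the gap; indeed the width bound itself (one tile diameter) degenerates exactly where you need it, near the corner at $\cent(T_i^-)$ and behind the endpoints, where the strip hugging one leg crosses to the strict right of the full ray. You also never address property (3) of the string definition for the concatenation (each interior tile must touch a tile strictly to the right of the \emph{full} ray), which suffers from the same transfer problem.

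What is actually needed, and what the paper proves, is that this triangle contains no point of $\Lambda_{C_i}$ other than its three vertices. The paper gets this quantitatively from the lattice rules: by \eref{lattice.determinant} (together with the flip relations), the pair $\{-v_{ji},\,v_{ki}\}$ spans a sublattice of determinant $c_i=\det\Lambda_{C_i}$, so the triangle has area exactly $\tfrac12 c_i$, half the lattice determinant; by Pick's theorem a lattice triangle of that area contains no lattice points besides its vertices, and the concatenation statement follows at once (the paper phrases this as: the statement can only fail if a tile center lies inside the triangle). Your proposal is missing exactly this determinant computation, which is the entire content of the second half of the lemma; once it is added, the orientation and width bookkeeping you describe becomes unnecessary.
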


Note that \eref{rioffset} implies via \lref{biglatticevectors} that the tiles $T_i^-$ and $R_i^+$ both lie in tiling $R_i^-+\Lambda_{C_i}$, ensuring that the referenced strings are well defined.

It is not hard at this point to use the lattice rules to strengthen \lref{boundarystring}, to show inductively that the strings from $R_i^-$ to $T_i^-$ and from $T_i^-$ to $R_i^+$ are themselves boundary strings of subtiles (when they have more than two tiles). This induction also gives, for example, that the interior tiles of a boundary string for $T_0$ lie in $T_0$, as seen in \fref{boundarystrings}.   We postpone this calculation until the next section, however, when we are prepared to simultaneously show that the strings have important compatibility properties with respect to our odometer construction.
\begin{proof}[Proof of \lref{boundarystring}]
The offsets in \eref{rioffset} result from a straightforward calculation using \eref{lattice}; for convenience, note that, referring to the same tile collection \eref{leftdecomp} used in the proof of \lref{tiling} (see \fref{righttile}) one can check that $R_i^+=R_i^1$, $R_i^-=R_i^0$ in that decomposition.

Examining the definition of a string, we see that the statement regarding the concatenation fails only if the interior of the triangle $\triangle \cent(T_i^-) \cent(R_i^-) \cent(R_i^+)$ contains the center of some tile in the tiling $T_i^- + \Lambda_{C_i}$.  However, the lattice generated by $-v_{ji}$ and $v_{ki}$ has determinant
\[
\frac 1 2 \left(\overline{v_{ik}} v_{ji} + v_{ik} \overline{v_{ji}}\right)=c_i
\]
by the lattice rules \eref{lattice}.  In particular, the triangle $\triangle \cent(T_i^-) \cent(R_i^-) \cent(R_i^+)$ has area $\frac 1 2 {c_i}$.  Thus the lemma follows from the fact that any triangle of area half the determinant of a lattice containing its three vertices can contain no other points of the lattice.  (This is a special case of Pick's theorem, for example.)
\end{proof}

The following topological lemma allows us to analyze tile interfaces -- and, in particular, odometer interfaces -- using only $180^\circ$ symmetry.
 
\begin{lemma}
\label{l.stringpair}
If $R$ and $S$ are tiles in the tiling $\ttt = T_0 + \Lambda_{C_0}$ corresponding to the circle $C_0\in \B$ with $c_0\geq 1$, then the intersection of the interiors of the $\ttt$-string $\rrr$ from $R$ to $S$ and the $\ttt$-string $\sss$ from $S$ to $R$ contains a path in $\Z[\I]$ from $R$ to $S$.
\end{lemma}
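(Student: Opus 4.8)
The plan is to realize the two strings as the left and right boundaries of the band of faces that the straight segment $L = [\cent(R),\cent(S)]$ crosses in the tile--adjacency graph of $\ttt$, and then to stitch together the interface paths supplied by \lref{topological}. Recall that the tiles of $\ttt = T_0 + \Lambda_{C_0}$ have centroids forming the lattice $\cent(T_0)+\Lambda_{C_0}$, and that (for $c_0>1$) adjacent tiles differ by one of $\pm v_{01},\pm v_{02},\pm v_{03}$, with $v_{01}+v_{02}+v_{03}=0$. Hence the graph $G$ whose vertices are the tiles and whose edges join touching tiles becomes, after placing each vertex at its centroid, a straight--line--embedded triangular lattice; by \tref{Lambdatiling} (via \lref{topological}) it is a $3$--connected planar triangulation in which each face $F$ is a touching triple with a common vertex $\rho(F)\in\Ga$, and any two touching tiles $A,B$ meet in a path $F(A)\cap F(B)$ joining $\rho(F)$ and $\rho(F')$ for the two faces $F,F'\ni\{A,B\}$. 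Throughout I write $\mathrm{int}(\cdot)$ for the interior of a string, i.e.\ the union of the footprints of its non--endpoint tiles.

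First I would treat the generic case in which $L$ meets no centroid other than $\cent(R),\cent(S)$. Then $L$ crosses a finite sequence of faces $F_1,\dots,F_m$ with $R\in F_1$, $S\in F_m$, consecutive faces sharing an edge $\{A_j,B_j\}=F_j\cap F_{j+1}$, and $L$ entering each triangle through one edge and leaving through another. Because a straight line cannot encircle a vertex, the union of these faces is a triangulated disk with no interior vertices, so its boundary is split by $R$ and $S$ into a left arc and a right arc. I would verify that the left arc satisfies the three defining conditions of a string from $R$ to $S$ (its interior vertices are strictly left of $L$, consecutive ones touch, and each is a vertex of some crossed triangle and hence touches the strictly--right third vertex of that triangle), and symmetrically that the right arc is a string from $S$ to $R$; by the uniqueness of strings these arcs are exactly $\rrr$ and $\sss$.

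Next I would build the path. Since $L$ leaves the starting vertex $R$ through the edge of $F_1$ opposite $R$ and terminates at $S$ through the edge of $F_m$ opposite $S$, no crossed edge $\{A_j,B_j\}$ contains $R$ or $S$; thus every $A_j,B_j$ is an interior tile of one of the two strings, and since $L$ crosses the interface between $A_j$ and $B_j$ these lie strictly on opposite sides of $L$, so one is an interior tile of $\rrr$ and the other of $\sss$. By \lref{topological} the set $F(A_j)\cap F(B_j)$ is a path joining $\rho(F_j)$ and $\rho(F_{j+1})$, and it lies in $\mathrm{int}(\rrr)\cap\mathrm{int}(\sss)$ (being contained in the footprint of an interior tile of each string). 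Concatenating these for $j=1,\dots,m-1$ yields a connected subgraph through $\rho(F_1),\dots,\rho(F_m)$ inside $\mathrm{int}(\rrr)\cap\mathrm{int}(\sss)$; since $\rho(F_1)\in F(R)$ and $\rho(F_m)\in F(S)$, extracting a simple path gives a path in $\Ga$ from $R$ to $S$ contained in the intersection of the two interiors, as required.

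Finally, the degenerate cases. If $L$ passes through intermediate centroids, those tiles are weakly on both sides of $L$ and hence, I would argue, lie on both $\rrr$ and $\sss$; I would then split $L$ at them and apply the generic argument to each resulting sub--segment, concatenating through the shared tiles. The curvature--$1$ case ($\Lambda_{C_0}=\Z^2$, tiles single squares, $G$ a square grid rather than a triangulation) I would dispatch directly from the same band--of--crossed--cells picture, checking the overlap of the two staircase strings by hand. I expect the main obstacle to be the bookkeeping in identifying the string boundaries with the crossed band---precisely verifying condition~(3) of the string definition and the no--interior--vertex claim---together with the degenerate alignments of $L$ with centroids; once these are in place, the construction of the interface path is an immediate application of \lref{topological}.
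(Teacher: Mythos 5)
Your proposal is correct and follows essentially the same route as the paper's proof: both place the tile-adjacency graph at the centroids to get a straight-line triangular lattice, follow the segment from $\cent(R)$ to $\cent(S)$ through the faces it crosses, and concatenate the interface paths between the $\rho(F)$'s (each interface lying in one tile of $\rrr$ and one of $\sss$), with the curvature-$1$ case checked by hand. The only cosmetic difference is that you handle the segment passing through an intermediate centroid by splitting $L$ there, while the paper folds that into the case of consecutive faces sharing a vertex, i.e.\ a tile lying on both strings.
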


\begin{proof}
We consider the graph $G$ on $\ttt$ where $T,T'$ are adjacent if they touch.  $G$ is isomorphic to the graph of the triangular lattice unless $c_0=1$, in which case the lemma is easy to verify directly.  

For the former case, we draw $G$ in the plane by placing each vertex at the center of the corresponding tile, and drawing straight line segments between adjacent vertices of $G$; the result is a planar embedding of $G$ which is affine-equivalent to the equilateral triangle embedding of the triangular lattice.

We now consider the sequences $F^0,F^1,\dots,F^k$ of faces of $G$ through which the line segment from $\cent(R)$ to $\cent(S)$ passes, in order.  If we associate to each $F$ the point $\rho(F)$ in $\Ga$ lying in the intersection of the three tiles of $F$, then the consecutive faces $F^i$ and $F^{i+1}$ either share an edge corresponding to touching tiles $T\in \rrr,T'\in \sss$, or they share a vertex corresponding to a tile in both $\rrr$ and $\sss$; in either case, there is a path in $\Ga$ from $\rho(F)$ to $\rho(F')$ lying in the intersection of the interiors of $\sss$ and $\rrr$.  Concatenating these paths consecutively, we get a walk in $\Ga$ from $\rho(F^0)$ to $\rho(F^k)$, lying in the intersection of the interiors of $\rrr$ and $\sss$.
\end{proof}

\section{Tile odometers}
\label{s.odometers}

In this section we attach function data to our tiles.  Since we are no longer concerned with topological issues, our definition of a tile as a set of squares $s_x$ is no longer useful, and from here on \change{we identify a tile $T$ with the vertex set of its footprint graph, $\bigcup_{s_x \in T} s_x \subset \Ga$.}
In particular, $T\cap T'$ is now denotes a subset of $\Ga$ \change{rather than a set of squares}.

\subsection{Basic definitions}
A {\em partial odometer} is a function $h : T \to \Z$ with a finite domain $T \sbs \Z^2$. We write $T(h)$ for the domain of $h$, and $\slope(h) \in \C$ for the {\em slope} of $h$, which is the average of
\begin{multline}
\frac 1 2 \left(h(x+1)-h(x)+h(x+1+\I)-h(x+\I)\right)+\\
\frac{\I}{2}\left(h(x+\I)-h(x)+h(x+1+\I)-h(x+1)\right)
\end{multline}
over squares $\{x,x+1,x+\I,x+1+\I\}\sbs T$; this is a measure of an average gradient for $h$.  Note that the slope is not defined when $T$ is a singleton.  We say that two partial odometers $h_1$ and $h_2$ are {\em translations} of one another if
\begin{equation}
\label{e.translate}
T(h_1) = T(h_2) + v \quad \mbox{and} \quad h_1(x) = h_2(x + v) + a \cdot x + b,
\end{equation}
for some $v, a \in \Z^2$ and $b \in \Z$.  
\begin{definition}
\label{d.comp}
We say that two partial odometers $h_1$ and $h_2$ are \emph{compatible} if $h_2-h_1=c$ on $T(h_1) \cap T(h_2)$ for some constant $c$, which we call the \emph{offset constant} for the pair $(h_1,h_2)$, or if $T(h_1)\cap T(h_2)=\varnothing$. \end{definition}
\noindent  When the offset constant is 0, or in the second case, we write $h_1 \cup h_2$ for the common extension of the $h_i$ to the union of their domains.  The next lemma allows us to glue together pairwise compatible partial odometers. Recall we have defined a \emph{tiling} as a collection of tiles $\ttt$ such that every square $s_x = \{x,x+1,x+i,x+1+i\}$ of $\Z^2$ is contained in exactly one element of $\ttt$.

\begin{lemma}
\label{l.gluing}
If $\hhh$ is a collection of pairwise compatible partial odometers such that $\ttt = \{ T(h) : h \in \hhh \}$ is a hexagonal tiling, then there is a function $g : \Z^2 \to \Z$, unique up to adding a constant, which is compatible with every $h \in \hhh$.
\end{lemma}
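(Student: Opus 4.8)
The plan is to reduce the construction of $g$ to a path-consistency (coboundary) argument on the adjacency graph of the tiling $\ttt$. First I would form the graph $G$ whose vertices are the tiles $T(h)$ $(h \in \hhh)$ and whose edges join pairs of tiles that intersect. Since $\ttt$ is a hexagonal tiling, each tile borders exactly six others, so $G$ is a connected planar triangulation whose bounded faces correspond to the triple points at which three tiles of $\ttt$ meet. For each edge $\{h,h'\}$ of $G$, \lref{topological} guarantees that the overlap $T(h) \cap T(h')$ is a nonempty path, and compatibility of $h$ and $h'$ then provides an integer offset constant $c(h,h')$ with $h' - h = c(h,h')$ on that overlap; evidently $c(h',h) = -c(h,h')$, so $c$ is an integer $1$-cochain on $G$.

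The key observation is that $c$ is a cocycle. Around each triangular face $\{h_1,h_2,h_3\}$ of $G$ the three tiles share a common boundary vertex $p$, and since each offset is realized by evaluating the relevant difference at any point of the corresponding overlap, the telescoping sum $(h_2(p)-h_1(p)) + (h_3(p)-h_2(p)) + (h_1(p)-h_3(p))$ yields $c(h_1,h_2) + c(h_2,h_3) + c(h_3,h_1) = 0$. Because $G$ is planar and connected, every cycle bounds a finite union of triangular faces, so $c$ sums to zero around every cycle. Fixing a base tile $h_0$ with $\lambda_{h_0} = 0$, I would then define $\lambda_h \in \Z$ as the accumulated sum of offsets along any path in $G$ from $h_0$ to $h$, the increments chosen so that $\lambda_h - \lambda_{h'} = c(h,h')$ across each edge $\{h,h'\}$; the vanishing of $c$ on cycles makes this well defined. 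With this choice $h + \lambda_h$ and $h' + \lambda_{h'}$ agree on $T(h) \cap T(h')$ for every adjacent pair, so setting $g := h + \lambda_h$ on each $T(h)$ produces a single well-defined, integer-valued function. Since $\ttt$ is a tiling, every square, and hence every vertex of $\Z^2$, lies in some tile, so $g$ is defined on all of $\Z^2$; and $g - h = \lambda_h$ is constant on $T(h)$, so $g$ is compatible with every $h \in \hhh$.

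For uniqueness, suppose $g$ and $g'$ are both compatible with every $h$. On each tile $T(h)$ the difference $g - g'$ is a difference of two constants, hence constant; since adjacent tiles overlap in at least one vertex, these constants agree across every edge of $G$, and connectedness of $G$ forces $g - g'$ to be globally constant. The step I expect to be the main obstacle is confirming that the offset cochain has trivial periods, i.e.\ that the $\lambda_h$ are genuinely path-independent. This is exactly where the hexagonal hypothesis enters: it is used, via \lref{topological} and the resulting triangulated dual $G$, both to ensure that adjacent overlaps are nonempty (so the offsets are defined) and that every cycle decomposes into triple-point triangles, on which the cocycle identity is immediate from evaluation at the shared vertex. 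The remaining verifications (integrality of the offsets, coverage of $\Z^2$, and well-definedness of $g$ on each overlap) are then routine.
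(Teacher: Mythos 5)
Your proposal is correct and takes essentially the same approach as the paper: both build the intersection graph of the hexagonal tiling (a planar triangulation), show the edge offsets have zero curl around each triangular face by evaluating at the common point of the three tiles, integrate this cocycle to a vertex potential using planarity, and define $g$ by adding the potential to each tile odometer. The only cosmetic difference is your appeal to \lref{topological} for the structure of pairwise overlaps, a fact the paper simply reads off from the hexagonality hypothesis, and your explicit uniqueness argument, which the paper leaves implicit in the uniqueness of the potential up to an additive constant.
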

\begin{proof}[Proof]
  Since $\ttt$ is a hexagonal tiling, its intersection graph $G$ is a planar triangulation, and each face $\{h_0,h_1,h_2\}$ of $G$ corresponds to a touching triple of tiles $T(h_i)$ with $T(h_0) \cap T(h_1) \cap T(h_2) \neq \varnothing$. In particular, writing $d(h_i,h_j)$ for the constant value of $h_i-h_j$ on $T(h_i) \cap T(h_j)$, we have $d(h_0,h_1)+d(h_1,h_2)+d(h_2,h_1)=0$.  It follows that the sum of $d$ over any cycle is zero and therefore that $d$ can be written the gradient of a vertex function $f:\ttt\to \Z$ which is unique up to additive constant.  Now fix any $h_0 \in \hhh$ and set \change{$g(x) = h_0(x-y) + f(T(h))$ on $x\in T(h)$, where $y\in \Z^2$ is the translation such that $T(h)=T(h_0)+y$.}
\end{proof}

Our goal is to associate a partial odometer $h$, unique up to odometer translation \eref{translate}, to every circle $C \in \B$.

\begin{definition}
\label{d.O0}
A partial odometer $h_0:T_0\to \Z$ is a \emph{tile odometer} for $C_0\in \B$ if $T_0$ is a tile for $C_0$ and either:
\begin{itemize}
\item $C_0=(0,\pm 1)$ (so $T_0$ is a singleton),
\item $C_0=(1,1+2z)$ for some $z\in \Ga$, and $h$ is any translation of the partial odometer $h':\{0,1,\I,1+\I\}\to \Z$ given by $h(0)=h(1)=h(\I)=0$, $h(1+\I)=\Im(z)/2$, or\\
\item $(C_0,C_1,C_2,C_3) \in {\B}^4$ is a proper Descartes quadruple, $C_i = (c_i, z_i)$, 
$a_{ij} = a(C_i,C_j)$, $v_{ij} = v(C_i,C_j)$, and
\begin{equation}
\label{e.odecomp}
h_0 = h_1^+ \cup h_1^- \cup h_2^+ \cup h_2^- \cup h_3^+ \cup h_3^-,
\end{equation}
where $h_i^\pm$ is a tile odometer for $C_i$ such that 
\begin{equation*}
\cent(T(h_i^\pm)) - \cent(T(h_0)) = \pm \tfrac{1}{2} (v_{kj} - \I v_{kj}),
\end{equation*}
and
\begin{equation*}
c_i = 0 \quad \mbox{or} \quad \slope(h_i^\pm) - \slope(h_0) = \pm \tfrac{1}{2} (a_{kj} + \I a_{kj}),
\end{equation*}
for all rotations $(i,j,k)$ of $(1,2,3)$.
\end{itemize}
We call the $h_i^\pm$'s the \emph{subodometers} of $h_0$. 
\end{definition}

When $T_1, ..., T_n$ is a sequence of tiles such $T_k$ is a subtile of $T_{k+1}$, we call $T_1$ an {\em ancestor tile} of $T_n$. Restrictions $h|T$ of odometers to ancestor tiles $T$ of $T(h)$ are called \emph{ancestor odometers} of $h_0$.  The following lemma asserts that this makes sense:

\begin{lemma}
\label{l.Oancestor}
If $h_0:T_0\to \Z$ is a tile odometer for $C_0$ and the tile $T$ for the circle $C\in \B$ is an ancestor of $T_0$, then $h_0|T$ is a tile odometer for $C$.\qed
\end{lemma}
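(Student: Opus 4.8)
The plan is to argue by induction on the length $n$ of an ancestor chain $T = T_1, T_2, \dots, T_n = T_0$, in which each $T_k$ is a subtile of $T_{k+1}$ and $T_0 = T(h_0)$. For the base case $n = 1$ we have $T = T_0$, and $h_0|T_0 = h_0$ is a tile odometer for $C_0$ by hypothesis, so there is nothing to prove.

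For the inductive step, take a chain of length $n \geq 2$ and consider its top step, in which $T_{n-1}$ is a subtile of $T_0 = T_n$. Since $T_0$ possesses a proper subtile, its circle has $c_0 > 1$, so $C_0$ is the child of a proper Descartes quadruple $(C_0, C_1, C_2, C_3)$ and $h_0$ admits the decomposition $h_0 = h_1^+ \cup h_1^- \cup h_2^+ \cup h_2^- \cup h_3^+ \cup h_3^-$ of \eref{odecomp} into subodometers. Because the odometer decomposition \dref{O0} is set up to mirror the tile decomposition \dref{T0}, the subtiles of $T_0$ are exactly the footprints $T(h_i^\pm)$, so $T_{n-1} = T(h_i^\pm)$ for some rotation index $i$ and some sign. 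The symbol $\cup$ in \eref{odecomp} denotes the common extension of pairwise compatible partial odometers whose offset constants all vanish; consequently $h_0$ agrees with $h_i^\pm$ on all of $T(h_i^\pm)$, that is, $h_0|T_{n-1} = h_i^\pm$.

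Now $h_i^\pm$ is a tile odometer for $C_i$, the circle of the tile $T_{n-1}$, and $T = T_1, \dots, T_{n-1}$ is an ancestor chain for $T_{n-1} = T(h_i^\pm)$ of length $n - 1$. By the induction hypothesis applied to the tile odometer $h_i^\pm$, the restriction $h_i^\pm|T$ is a tile odometer for $C$. Since $T \sbs T_{n-1} = T(h_i^\pm)$ and restriction is transitive, $h_0|T = (h_0|T_{n-1})|T = h_i^\pm|T$, which is therefore a tile odometer for $C$, completing the induction.

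The only point requiring care — and the step I expect to carry the actual content — is the identification $h_0|T(h_i^\pm) = h_i^\pm$, namely that restricting the glued odometer to one of its constituent subtile footprints returns that subodometer exactly, not merely up to an additive constant. This is precisely what the union notation encodes: $\cup$ is defined only for compatible partial odometers with offset constant $0$, so on each $T(h_i^\pm)$ the values of $h_0$ coincide with those of $h_i^\pm$, including on the overlaps with neighboring subtiles, where the compatibility conditions force agreement. Everything else is the bookkeeping of an induction that transfers the recursive structure of \dref{O0} down the chain of subtiles.
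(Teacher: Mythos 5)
Your proof is correct, and it takes essentially the same route the paper intends: the paper states this lemma with an immediate \qed, treating it as exactly the routine induction on the recursive structure of \dref{O0} that you spell out. The one point you rightly flag as carrying the content — that $h_0|T(h_i^\pm)=h_i^\pm$ exactly, because the union in \eref{odecomp} is a common extension with vanishing offset constants, and the subodometer domains are precisely the (uniquely determined) subtiles of $T_0$ — is indeed what makes the restriction well-behaved, so nothing is missing.
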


By induction, tile odometers are easily seen to be centrally symmetric:

\begin{lemma}
\label{l.osym}
If $h$ is a tile odometer with domain $T$, then $x \mapsto h(-x)$ with domain $-T$ is a translation of $h$. $\qed$
\end{lemma}

Our main goal in this section is to prove inductively that each circle $C\in \B$ has an associated tile odometer.  (Note that, inductively, the definition immediately gives that tile odometers for a given circle are unique up to odometer translation.)  However, before proving that circles in $\B$ do have tile odometers, we will prove  that tile odometers must have certain compatibility properties when they do exist.

Given a proper Descartes quadruple $(C_0,C_1,C_2,C_3)$ with $c_0>0$, we say that tile odometers $h_0$ and $h_0'$ for $C_0$ are \emph{left-lattice adjacent} if 
\begin{align*}
\cent(T(h_0')) - \cent(T(h_0)) &=\pm v_{i0}\\
\slope(h_0') - \slope(h_0) &= \pm a_{i0}
\end{align*}
for some $i \in \{1,2,3\}$ (with matching signs, as usual) 
and similarly \emph{right-lattice adjacent} if
\begin{align*}
\cent(T(h_0')) - \cent(T(h_0)) &=\pm v_{0i}\\
\slope(h_0') - \slope(h_0) &= \pm a_{0i}.
\end{align*}
for some $i \in \{1,2,3\}$.
Given tile odometers $h_0$ and $h_i$ for $C_0$ and $C_i$ $(i=1,2,3)$, we say that $h_i$ is  \emph{subtile-lattice adjacent} to $h_0$ if 
\begin{align*}
\cent(T(h_i)) - \cent(T(h_0)) &= \I^s \tfrac{1}{2}  (v_{i0} + v_{0i})\\
\slope(h_i) - \slope(h_0) &= (-\I)^s\tfrac{1}{2} (a_{i0} + a_{0i}).
\end{align*}
for some $s \in \{0,1,2,3\}$.  Note that the subtile-lattice adjacency relationship is not symmetric.

\subsection{Outline of construction}
The essential difference between the induction in \sref{tiles} and the argument we are forced to carry out in this section is that the odometer analog of \eref{t90} \change{in \lref{tiling}}  is false; tile odometers are not $90^\circ$ symmetric in a straightforward way.   In particular, we cannot build a $\Lambda_C$-periodic global odometer using that inductive argument.  (Instead, that argument would give an $\I \Lambda_C$ periodic function, which does not have the correct growth to be an odometer for $C$.  These $\I\Lambda_C$ functions are interesting in their own right, however.)

Note that \dref{O0} is analogous to \dref{T0}, and that left-lattice adjacency and subtile-lattice adjacency as defined here correspond for the domains of $h_0$ and $h_t'$ $(t=0,1,2,3)$ to the two types of pairwise tile relationships which were seen in \change{\eref{ttile} and} \eref{ttouch1} of \lref{tiling}. However, as the proof of \eref{ttouch1} ends with an application of $90^\circ$ symmetry, the inductive argument of \sref{tiles} can actually be adapted to the cases of right-lattice adjacency and subtile-lattice adjacency of tile-odometers, but \emph{not} left-lattice adjacency.  This adaptation is carried out in \lref{induct-adjacency}, below, which shows that right-lattice adjacency and subtile-lattice adjacency do indeed reduce, inductively, to (subtile- or left-lattice) adjacencies among smaller tile odometers.  

The main innovation in this section is then to make use of boundary strings to establish the compatibility of left-lattice adjacent tile odometers (Lemmas \ref{l.odomboundarystring} and \ref{l.otile}).  Once left-lattice adjacency is handled, the reduction from the adapatation of the tile argument described above implies that right-lattice and subtile-lattice adjacent tile odometers are also compatible (\lref{otileR}).  Finally, this allows us to prove that tile odometers exist for each circle (\lref{hasodom}).

\subsection{Adapting the tile argument}
Here we adapt the tile argument to reduce right- and subtile-lattice adjacency to subtile- and left-lattice adjacency among smaller tile odoemters.


\begin{lemma}
\label{l.induct-adjacency}
If the tile odometers $h$ and $h_0$ are right-lattice adjacent, then any subodometer of $h$ intersecting $h_0$ is subtile-lattice adjacent to $h_0$.  Similarly, if the tile odometer $h$ is subtile-lattice adjacent to $h_0$, then for any subodometer $h_i^\pm$ of $h_0$ which intersects $h$, either $h,h_i^\pm$ are left-lattice adjacent, or $h$ is subtile-lattice adjacent to $h_i^\pm$, or $h_i^\pm$ is subtile-lattice adjacent to $h$.

\end{lemma}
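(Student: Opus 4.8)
The plan is to treat both statements as identities between centroid offsets and slope offsets, and to exploit the fact that the two families of offsets are governed by the dual recursions for the $v_{ij}$ and $a_{ij}$ recorded in \lref{lattice}. The crucial preliminary observation is that every adjacency condition in sight pairs a centroid equation written in the $v_{ij}$ with a slope equation written in the $a_{ij}$, and in each pair the slope equation is obtained from the centroid equation by the formal substitution sending each $v_{ij}$ to $a_{ij}$ and each explicit factor of $\I$ to $-\I$: compare the subodometer offsets $\pm\tfrac12(v_{kj}-\I v_{kj})$ and $\pm\tfrac12(a_{kj}+\I a_{kj})$ in \dref{O0}, and the subtile-lattice offsets $\I^s\tfrac12(v_{i0}+v_{0i})$ and $(-\I)^s\tfrac12(a_{i0}+a_{0i})$. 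This substitution carries the $v$-rules \eref{lattice.superbasis}--\eref{lattice.flip} exactly to their $a$-counterparts. Consequently any identity deduced from the $v$-rules among centroid offsets has a mirror identity among the slope offsets obtained by conjugating the coefficient $\I$, and the slope half of each adjacency conclusion follows automatically from the centroid half. So first I would set up this duality once, and thereafter verify only the centroid (i.e.\ $v_{ij}$) equations.

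For the first statement, I would start from the hypothesis $\cent(T(h))-\cent(T(h_0))=\pm v_{0i}$ and substitute the subodometer offset $\cent(T(h_t^\pm))-\cent(T(h))=\pm\tfrac12(v_{kj}-\I v_{kj})$ from \dref{O0}, obtaining
\[
\cent(T(h_t^\pm))-\cent(T(h_0)) = \pm v_{0i}\pm\tfrac12(v_{kj}-\I v_{kj}).
\]
The task is then to show that for exactly those subodometers $h_t^\pm$ whose tile actually meets $T(h_0)$, this difference simplifies via the lattice rules \eref{lattice} (using \eref{lattice.induct} and \eref{lattice.flip}) to $\I^s\tfrac12(v_{t0}+v_{0t})$ for a suitable $s$, which is the centroid half of subtile-lattice adjacency. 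Which subodometers meet $T(h_0)$ is read off from the interface between the neighbouring tiles $T(h_0)$ and $T(h_0)\pm v_{0i}$ established in \lref{tiling} (via \eref{ttile} and \eref{ttouch1}, together with the fact from our application of \lref{topological} that only tiles drawn adjacently in \fref{righttile} intersect). The remaining content is a finite sign-and-rotation bookkeeping, carried out exactly as in the claims in the proof of \lref{tiling}.

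For the second statement I would proceed analogously, beginning from $\cent(T(h))-\cent(T(h_0))=\I^s\tfrac12(v_{m0}+v_{0m})$, where $C_m$ is the parent of $C_0$ for which $h$ is a tile odometer, and subtracting the subodometer offset of $h_t^\pm$. Here the three possible conclusions correspond precisely to the three possible relations between the two parents $C_m$ and $C_t$ of $C_0$: if $C_m=C_t$ the simplified difference takes the form $\pm v_{\ell t}$ and gives left-lattice adjacency; if instead one of $C_m,C_t$ is a parent of the other (using the chain $C_3\to C_2\to C_1$ among the parents fixed in the proof of \lref{tiling}) the difference takes subtile-lattice form in the corresponding direction. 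As before, only the subodometers meeting $T(h)$ enter, and this intersection information is supplied by \lref{tiling}.

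The main obstacle is not the arithmetic, which is mechanical given \eref{lattice}, but the geometric bookkeeping: pinning down exactly which subodometers of $h$ (resp.\ of $h_0$) have tiles meeting $T(h_0)$ (resp.\ $T(h)$), and matching each such intersecting subodometer to a unique adjacency type with the correct sign and power of $\I$. This step is where \lref{tiling} — and in particular the tile-interface structure encoded in \eref{ttile}, \eref{ttouch1}, \eref{ttouch2}, \eref{tgooddecomp} and the accompanying figures — is indispensable; once the intersecting cases are enumerated, the duality reduction disposes of the slope equations and the $v$-rule computations close each case.
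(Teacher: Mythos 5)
Your proposal is correct and takes essentially the same approach as the paper's proof: both arguments reduce the lemma to the tile-intersection bookkeeping already established in \lref{tiling} (in particular \eref{tgooddecomp} and the configuration of \fref{righttile} coming from the application of \lref{topological}), and then settle each intersecting pair by a finite computation with the linear lattice rules \eref{lattice}. Your explicit duality $v_{ij}\mapsto a_{ij}$, $\I\mapsto-\I$, which makes the slope equations follow automatically from the centroid equations, is a tidy economy that the paper's ``checked by hand'' verification leaves implicit; the paper instead organizes the same computation through the translated odometers $h_i^t$ on the tiles $R_i^t$ of \eref{leftdecomp}.
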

\begin{proof}
Recall the tiles $R_i^t$ defined in \eref{leftdecomp} from the proof of \lref{tiling}, as shown in \fref{righttile}; each $R_i^t$ is defined by 
\begin{align*}
R_i^0&=T_i^++v_{0k}\\
R_i^1&=T_i^+-v_{0j}\\
R_i^2&=T_i^--v_{0k}\\
R_i^3&=T_i^--v_{0j},
\end{align*}
where $(i,j,k)$ is a rotation of $(1,2,3)$.
We define for each $R_i^t$ a translation $h_i^t$ of the tile odometer for the circle $C_i$ whose domain is $R_i^t$, by
\begin{align*}
\cent(h_i^0)& =\cent(h_i^+)+v_{0k} & \slope(h_i^0)&=\slope(h_i^+)+a_{0k}\\
\cent(h_i^1)& =\cent(h_i^+)-v_{0j} & \slope(h_i^1)&=\slope(h_i^+)-a_{0j}\\
\cent(h_i^2)& =\cent(h_i^-)-v_{0k} & \slope(h_i^2)&=\slope(h_i^-)-a_{0k}\\
\cent(h_i^3)& =\cent(h_i^-)+v_{0j} & \slope(h_i^3)&=\slope(h_i^-)+a_{0j}
\end{align*}
We know from \eref{ttouch1} and the application in \sref{tiles} of \lref{topological} that the boundary of $T_0$ is covered by the $R_i^t$'s.  Thus, to prove the first part of the lemma, it is sufficient to show compatibility of $h_0$ with the $h_i^t$.  And by part \eref{tgooddecomp} of \lref{tiling}, it is sufficient to show compatibility of tile odometers $h_i^\pm$ and $h_s^t$ whose domains are drawn as touching in \fref{righttile}.  It can be checked by hand using the lattice rules \eref{lattice} that such a \change{pair} $h_i^\pm$ and $h_s^t$ are either left-lattice adjacent or subtile-lattice adjacent, proving the full statement of the lemma, since the $h_s^t$ include all odometers which are subtile-lattice adjacent to $h_0$.
\end{proof}

Looking ahead, if we knew left-lattice adjacent tile odometers to be compatible, then by induction, \lref{induct-adjacency} would give compatibility of right-lattice adjacent and subtile-lattice adjacent odometers as well.  Indeed, we will give this as \lref{otileR}, below.

\subsection{Using boundary strings for left-lattice adjacency}

Unlike the argument for tiles, we can not apply $90^\circ$ symmetry and $v_{0i} = \I v_{i0}$ to add the case of left-lattice adjacent tile odometers to \lref{induct-adjacency}, since odometers are only $180^\circ$ symmetric in general.  Instead, we will express the shared boundary of the touching tiles in terms of boundary strings, and \change{inductively} use the compatibility of the restrictions of the odometers to the tiles making up the boundary strings.

To do this, we first need to strengthen our notion of boundary string: We say a partial odometer \emph{respects} a string when its domain includes all tiles of the string, and its restrictions to those tiles are tile odometers which are consecutively left-lattice adjacent.

\begin{lemma}
\label{l.odomboundarystring}
Suppose $(C_0,C_1,C_2,C_3)$ is a proper Descartes quadruple, $h_0$ is a tile odometer for the circle $C_0$ with domain $T_0$,  and write $v_{ij} = v(C_i,C_j)$.  For each $i = 1, 2, 3$ for which $c_i>0$, we have that if $R_i^\pm$ are the endpoints of the $C_i$ boundary string $\rrr$ for $T_0$ 
and $h_{R_i^\pm}$ is a tile odometer for $C_i$ with domain $R_i^\pm$, which satisfies
\begin{equation*}
\slope(h_{R_i^\pm})-\slope(h_0)= \tfrac {1}{2} (v_{i0} \pm v_{0i}),
\end{equation*}
then $h_{R_i^+}$ and $h_{R_i^-}$ are each subtile-lattice adjacent to $h_0$, and $h_0\cup h_{R_i^+}\cup h_{R_i^-}$ respects the $C_i$ boundary string of $T_0$.
\end{lemma}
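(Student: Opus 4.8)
The plan is to induct on the curvature $c_0$, reducing by the symmetries of \sref{lattice-sym} and the explicit degenerate constructions (\pref{ford}, \pref{diamond}) to the generic situation in which $(C_0,C_1,C_2,C_3)$ is proper, no parent is a line, at most one parent has curvature $1$, and $c_1>c_2\geq c_3$; in particular every parent curvature is strictly smaller than $c_0$. Fix $i$ with $c_i>0$. The engine of the proof is \lref{boundarystring}: it exhibits the $C_i$ boundary string $\rrr$ of $T_0$ as the concatenation of the string from $R_i^-$ to $T_i^-$ with the string from $T_i^-$ to $R_i^+$, the two halves meeting at the subtile $T_i^-$ of $T_0$, on which $h_0$ restricts to the subodometer $h_i^-$ (a tile odometer for $C_i$ by \dref{O0}).

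The first real step is to recognize each half as a boundary string of a subtile. Using the offsets \eref{rioffset} together with the lattice rules \eref{lattice} (chiefly \eref{lattice.superbasis} and \eref{lattice.induct}), I would show that $C_i$ is a parent of the circle of one subtile $T'$ lying on the $R_i^-$ side and of one subtile $T''$ on the $R_i^+$ side, and that the half-string from $R_i^-$ to $T_i^-$ coincides (as a string in the common tiling, well-defined by \lref{biglatticevectors}) with the $C_i$ boundary string of $T'$, while the half from $T_i^-$ to $R_i^+$ is the $C_i$ boundary string of $T''$. The matching is carried out by checking that the centroid and slope data prescribed for the boundary-string endpoints of $T'$ and $T''$ agree with those of $h_{R_i^-},h_i^-$ and of $h_i^-,h_{R_i^+}$ respectively; this is precisely where the duality between the $v$- and $a$-rules in \eref{lattice} is used. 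When a parent is degenerate ($c_j=0$) or a subtile coincides with an endpoint, the corresponding half collapses to a trivial two-tile string, a case to be disposed of by a direct calculation.

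With the halves identified, I would apply the induction hypothesis to the subtiles $T',T''$, whose circles have curvature $<c_0$: for each, the relevant subodometer $h_0|T'$, together with its endpoint odometers, is compatible and respects the subtile's $C_i$ boundary string. Because $h_0$ restricts to these subodometers, its restriction to each interior tile of $\rrr$ is a tile odometer (guaranteed also by \lref{Oancestor}) and consecutive restrictions are left-lattice adjacent, the two adjacency chains concatenating at the shared junction $T_i^-$. The induction hypothesis likewise yields compatibility of $h_{R_i^-}$ with $h_0|T'$, hence with $h_0$ (since $R_i^-$ meets $T_0$ only along its interface with $T'$), and symmetrically for $h_{R_i^+}$; as $R_i^-$ and $R_i^+$ have disjoint footprints their mutual compatibility is vacuous. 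Zeroing out the offset constants by the permitted additive constants then produces $g=h_0\cup h_{R_i^+}\cup h_{R_i^-}$, which respects $\rrr$.

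The principal obstacle is the bookkeeping in the second step: correctly identifying which subtile each half-string belongs to and verifying, through the $v$- and $a$-lattice rules, that the endpoint slope and centroid data line up so that the subtile strings glue into the string of $T_0$. A structural subtlety to keep in mind throughout is that \emph{this} lemma is the tool later used to establish the compatibility of left-lattice adjacent odometers; hence the induction must be organized so that every compatibility invoked at a junction comes either from the induction hypothesis at strictly smaller curvature or from the explicit degenerate-case propositions, and never from the general left-lattice compatibility that is not yet available.
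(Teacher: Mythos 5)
Your outline does follow the paper's proof in broad strokes (induction on curvature, splitting the string at $T_i^-$ via \lref{boundarystring}, identifying halves with boundary strings of subtiles, and your insistence that no compatibility be drawn from left-lattice adjacency at the current level is exactly the right discipline), but two steps fail as written. The first is the claim that, generically, \emph{both} halves are boundary strings of subtiles. The $C_i$ boundary string of a subtile only makes sense when $C_i$ is a parent of that subtile's circle, and under the normalization $c_1>c_2\geq c_3$ the parent relations among $C_1,C_2,C_3$ are exactly: $C_3$ is a parent of $C_1$ and of $C_2$, and $C_2$ is a parent of $C_1$. So both halves admit your identification only for $i=3$; for $i=2$ only the half ending at $R_2^+$ does, and for $i=1$ neither does, since no subtile's circle has $C_1$ as a parent. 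There is a clean dichotomy: the half on the $C_j$ side is a $C_i$ boundary string of a $C_j$ subtile iff $C_i$ is a parent of $C_j$; in the opposite case ($C_j$ a parent of $C_i$) the displacement $v_{ji}$ joins adjacent tiles of the $\Lambda_{C_i}$ tiling, so the half is a two-tile string. These are not the degenerate situations you set aside ("$c_j=0$ or a subtile coinciding with an endpoint"); they are the whole content of the cases $i=1,2$. What they require — and what your proposal never supplies — is the direct verification the paper performs for $i=1$: computing from \eref{lattice} that $\slope(h_{R_1^+})-\slope(h_1^-)=a_{31}$ and $\slope(h_1^-)-\slope(h_{R_1^-})=-a_{21}$ (where $h_1^-$ is the subodometer of $h_0$ on $T_1^-$), so that consecutive odometers are left-lattice adjacent, together with the orientation check via the sign in \eref{lattice.determinant} showing the three tiles form the string from $R_1^-$ to $R_1^+$ rather than the reverse.

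The second gap is your compatibility argument for $h_{R_i^\pm}$ with $h_0$. You deduce it from compatibility with $h_0|T'$ plus the claim that $R_i^-$ meets $T_0$ only along its interface with $T'$. That claim is false: the endpoints $R_i^\pm$ are among the tiles $R_i^j$ of \eref{leftdecomp}, and by Claim~\ref{c.outtouch} and \fref{righttile} each of these touches several subtiles of $T_0$, so the path $F(R_i^\pm)\cap F(T_0)$ in general crosses more than one subtile (and for $i=1$ there is no subtile $T'$ available at all). The paper's route is both shorter and respects the induction ordering you were worried about: the centroid offsets $\tfrac12(v_{i0}\pm v_{0i})$ together with the hypothesized slope offsets say precisely that $h_{R_i^\pm}$ is subtile-lattice adjacent to $h_0$, so compatibility is \lref{otileR}, which through \lref{induct-adjacency} only ever calls on left-lattice compatibility at strictly smaller curvature. (Your parenthetical that $R_i^+$ and $R_i^-$ have disjoint footprints can also fail: for $i=1$ they form a touching triple with $T_1^-$, by the inductive case of \eref{ttile} for $C_1$, so their mutual compatibility is not vacuous either.)
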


\begin{figure}[t]
\begin{center}
\nofig{\begin{tikzpicture}[font=\tiny,scale=1/5]
\begin{scope}[draw=gray]
\draw (29.5,26.5) -- (30.5,26.5) -- (31.5,26.5) -- (32.5,26.5) -- (32.5,27.5) -- (32.5,28.5) -- (33.5,28.5) -- (34.5,28.5) -- (35.5,28.5) -- (35.5,29.5) -- (35.5,30.5) -- (36.5,30.5) -- (36.5,31.5) -- (36.5,32.5) -- (36.5,33.5) -- (35.5,33.5) -- (34.5,33.5) -- (34.5,34.5) -- (34.5,35.5) -- (34.5,36.5) -- (33.5,36.5) -- (32.5,36.5) -- (32.5,37.5) -- (31.5,37.5) -- (30.5,37.5) -- (29.5,37.5) -- (29.5,36.5) -- (29.5,35.5) -- (28.5,35.5) -- (27.5,35.5) -- (26.5,35.5) -- (26.5,34.5) -- (26.5,33.5) -- (25.5,33.5) -- (25.5,32.5) -- (25.5,31.5) -- (25.5,30.5) -- (26.5,30.5) -- (27.5,30.5) -- (27.5,29.5) -- (27.5,28.5) -- (27.5,27.5) -- (28.5,27.5) -- (29.5,27.5) -- (29.5,26.5);
\draw (31.000000,32.000000) node {$T_1^+$};
\draw (21.5,28.5) -- (22.5,28.5) -- (23.5,28.5) -- (24.5,28.5) -- (24.5,29.5) -- (24.5,30.5) -- (25.5,30.5) -- (25.5,31.5) -- (25.5,32.5) -- (25.5,33.5) -- (24.5,33.5) -- (23.5,33.5) -- (23.5,34.5) -- (22.5,34.5) -- (21.5,34.5) -- (20.5,34.5) -- (20.5,33.5) -- (20.5,32.5) -- (19.5,32.5) -- (19.5,31.5) -- (19.5,30.5) -- (19.5,29.5) -- (20.5,29.5) -- (21.5,29.5) -- (21.5,28.5);
\draw (22.500000,31.500000) node {$T_2^+$};
\draw (23.5,33.5) -- (24.5,33.5) -- (25.5,33.5) -- (26.5,33.5) -- (26.5,34.5) -- (26.5,35.5) -- (26.5,36.5) -- (25.5,36.5) -- (24.5,36.5) -- (23.5,36.5) -- (23.5,35.5) -- (23.5,34.5) -- (23.5,33.5);
\draw (25.000000,35.000000) node {$T_3^-$};
\draw (32.5,36.5) -- (33.5,36.5) -- (34.5,36.5) -- (35.5,36.5) -- (35.5,37.5) -- (35.5,38.5) -- (35.5,39.5) -- (34.5,39.5) -- (33.5,39.5) -- (32.5,39.5) -- (32.5,38.5) -- (32.5,37.5) -- (32.5,36.5);
\draw (34.000000,38.000000) node {$R_3^-$};
\draw (17.5,32.5) -- (18.5,32.5) -- (19.5,32.5) -- (20.5,32.5) -- (20.5,33.5) -- (20.5,34.5) -- (20.5,35.5) -- (19.5,35.5) -- (18.5,35.5) -- (17.5,35.5) -- (17.5,34.5) -- (17.5,33.5) -- (17.5,32.5);
\draw (19.000000,34.000000) node {$R_3^+$};
\end{scope}\begin{scope}[thick]
\draw (24.5,19.5) -- (25.5,19.5) -- (26.5,19.5) -- (27.5,19.5) -- (27.5,20.5) -- (27.5,21.5) -- (28.5,21.5) -- (29.5,21.5) -- (30.5,21.5) -- (30.5,20.5) -- (31.5,20.5) -- (32.5,20.5) -- (33.5,20.5) -- (33.5,21.5) -- (33.5,22.5) -- (34.5,22.5) -- (35.5,22.5) -- (36.5,22.5) -- (36.5,23.5) -- (36.5,24.5) -- (37.5,24.5) -- (37.5,25.5) -- (37.5,26.5) -- (37.5,27.5) -- (36.5,27.5) -- (35.5,27.5) -- (35.5,28.5) -- (35.5,29.5) -- (35.5,30.5) -- (36.5,30.5) -- (36.5,31.5) -- (36.5,32.5) -- (36.5,33.5) -- (35.5,33.5) -- (34.5,33.5) -- (34.5,34.5) -- (34.5,35.5) -- (34.5,36.5) -- (33.5,36.5) -- (32.5,36.5) -- (32.5,37.5) -- (31.5,37.5) -- (30.5,37.5) -- (29.5,37.5) -- (29.5,36.5) -- (29.5,35.5) -- (28.5,35.5) -- (27.5,35.5) -- (26.5,35.5) -- (26.5,36.5) -- (25.5,36.5) -- (24.5,36.5) -- (23.5,36.5) -- (23.5,35.5) -- (23.5,34.5) -- (22.5,34.5) -- (21.5,34.5) -- (20.5,34.5) -- (20.5,33.5) -- (20.5,32.5) -- (19.5,32.5) -- (19.5,31.5) -- (19.5,30.5) -- (19.5,29.5) -- (20.5,29.5) -- (21.5,29.5) -- (21.5,28.5) -- (21.5,27.5) -- (21.5,26.5) -- (20.5,26.5) -- (20.5,25.5) -- (20.5,24.5) -- (20.5,23.5) -- (21.5,23.5) -- (22.5,23.5) -- (22.5,22.5) -- (22.5,21.5) -- (22.5,20.5) -- (23.5,20.5) -- (24.5,20.5) -- (24.5,19.5);
\end{scope}
\end{tikzpicture}}
\end{center}
\caption{Verifying a boundary string.}
\label{f.checkstring}
\end{figure}
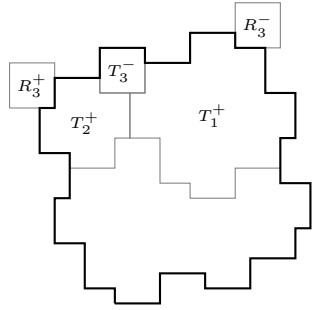

\begin{proof}
Recall from the definition of boundary strings that
\begin{equation*}
\cent(R_i^\pm)-\cent(T_0) = \tfrac{1}{2} (v_{i0} \pm v_{0i}).
\end{equation*}
Thus each $h_{R_i^\pm}$ is subtile-lattice adjacent to $h_0$ as claimed. 
Moreover, \lref{boundarystring} gives
\begin{equation}
\label{e.string-tiles}
\cent(R_i^+)-\cent(T_i^-)=v_{ki}\quad \mbox{and}\quad \cent(T_i^-)-\cent(R_i^-)=-v_{ji}.
\end{equation}
In particular, for $i=1$, then our assumption that $c_1>c_2,c_3$ implies that $R_1^-,T_i^-,R_1^+$ are a triple of consecutively touching tiles in a tiling of $T_1^-$ under the lattice generated by $\{v_{41},v_{21},v_{31}\}$; this is already sufficient to imply that they form a string (either from $R_1^-$ to $R_1^+$ or vice versa), and the sign in \eref{lattice.determinant} implies that they are the string from $R_1^-$ to $R_1^+$.  Moreover, since 
\[
\slope(h_1^-)-\slope(h_0)=-\frac 1 2 (a_{32}+\I a_{32}),
\]
we can calculate using the lattice rules \eref{lattice} that
\begin{equation}
\label{e.string-slopes}
\slope(h_{R_1^+})-\slope(h_i^-)=a_{31},\quad \slope(h_1^-)-\slope(h_{R_1^-})=-a_{21},
\end{equation}
and \eref{string-tiles} and \eref{string-slopes} together give that $h_{R_i^-},h_{T_1^-},h_{R_i^+}$ are consecutively left-lattice adjacent; thus $h_0\cup h_{R_i^+}\cup h_{R_i^-}$ respects the string.

  The cases $i = 2, 3$ require the induction hypothesis and, since they are similar, we handle only the case $i = 3$.  
Decomposing $h_0$ according to tile odometers $h_i^\pm$ on tiles $T_i^\pm$ according to \dref{O0} and using the lattice rules \eref{lattice}, we check that
\begin{align*}
\cent(R_3^+) - \cent(T_2^+) = \tfrac{1}{2}(v_{32} - v_{23}) \quad \mbox{and} \quad \cent(T_3^-) - \cent(T_2^+) = \tfrac{1}{2}(v_{32} + v_{23}),\\
\slope(h_{R_3^+}) - \slope(h_2^+) = \tfrac{1}{2}(a_{32} - a_{23}) \quad \mbox{and} \quad \slope(h_3^-) - \slope(h_2^+) = \tfrac{1}{2}(a_{32} + a_{23}),
\end{align*}
(see \fref{checkstring}).
In particular, the string $\sss'$ from $T_3^-$ to $R_3^+$ is the $C_3$ boundary string of $T_2^+$, and by induction, $h_0\cup h_{R_i^+}\cup h_{R_i^-}$ respects this string.  In exactly the same way, we can check that the string $\sss$ from $R_3^-$ to $T_3^-$ is the $C_3$ boundary string of $T_1^+$, and is respected by $h_0\cup h_{R_i^+}\cup h_{R_i^-}$.  By \lref{boundarystring}, the $C_3$-string for $T_0$ is the concatenation of $\sss$ and $\sss'$, and this string is respected by $h_0\cup h_{R_i^+}\cup h_{R_i^-}$ since all pairs of consecutive tiles in the concatenation are already consecutive tiles in either $\sss$ or $\sss'$.
\end{proof}

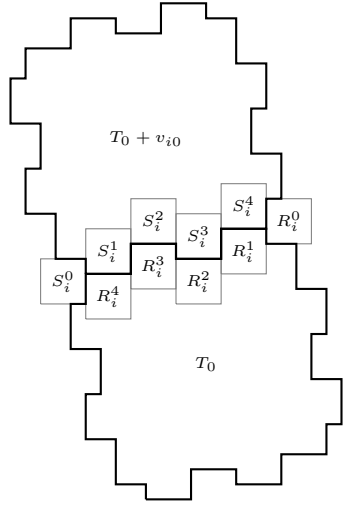
\begin{figure}[t]
\begin{center}
\begin{tikzpicture}[font=\tiny,scale=1/5]
\begin{scope}[draw=gray]
\draw (32.5,36.5) -- (33.5,36.5) -- (34.5,36.5) -- (35.5,36.5) -- (35.5,37.5) -- (35.5,38.5) -- (35.5,39.5) -- (34.5,39.5) -- (33.5,39.5) -- (32.5,39.5) -- (32.5,38.5) -- (32.5,37.5) -- (32.5,36.5);
\draw (34.000000,38.000000) node {$R_i^{0}$};
\draw (29.5,34.5) -- (30.5,34.5) -- (31.5,34.5) -- (32.5,34.5) -- (32.5,35.5) -- (32.5,36.5) -- (32.5,37.5) -- (31.5,37.5) -- (30.5,37.5) -- (29.5,37.5) -- (29.5,36.5) -- (29.5,35.5) -- (29.5,34.5);
\draw (31.000000,36.000000) node {$R_i^{1}$};
\draw (29.5,37.5) -- (30.5,37.5) -- (31.5,37.5) -- (32.5,37.5) -- (32.5,38.5) -- (32.5,39.5) -- (32.5,40.5) -- (31.5,40.5) -- (30.5,40.5) -- (29.5,40.5) -- (29.5,39.5) -- (29.5,38.5) -- (29.5,37.5);
\draw (31.000000,39.000000) node {$S_i^{4}$};
\draw (26.5,32.5) -- (27.5,32.5) -- (28.5,32.5) -- (29.5,32.5) -- (29.5,33.5) -- (29.5,34.5) -- (29.5,35.5) -- (28.5,35.5) -- (27.5,35.5) -- (26.5,35.5) -- (26.5,34.5) -- (26.5,33.5) -- (26.5,32.5);
\draw (28.000000,34.000000) node {$R_i^{2}$};
\draw (26.5,35.5) -- (27.5,35.5) -- (28.5,35.5) -- (29.5,35.5) -- (29.5,36.5) -- (29.5,37.5) -- (29.5,38.5) -- (28.5,38.5) -- (27.5,38.5) -- (26.5,38.5) -- (26.5,37.5) -- (26.5,36.5) -- (26.5,35.5);
\draw (28.000000,37.000000) node {$S_i^{3}$};
\draw (23.5,33.5) -- (24.5,33.5) -- (25.5,33.5) -- (26.5,33.5) -- (26.5,34.5) -- (26.5,35.5) -- (26.5,36.5) -- (25.5,36.5) -- (24.5,36.5) -- (23.5,36.5) -- (23.5,35.5) -- (23.5,34.5) -- (23.5,33.5);
\draw (25.000000,35.000000) node {$R_i^{3}$};
\draw (23.5,36.5) -- (24.5,36.5) -- (25.5,36.5) -- (26.5,36.5) -- (26.5,37.5) -- (26.5,38.5) -- (26.5,39.5) -- (25.5,39.5) -- (24.5,39.5) -- (23.5,39.5) -- (23.5,38.5) -- (23.5,37.5) -- (23.5,36.5);
\draw (25.000000,38.000000) node {$S_i^{2}$};
\draw (20.5,31.5) -- (21.5,31.5) -- (22.5,31.5) -- (23.5,31.5) -- (23.5,32.5) -- (23.5,33.5) -- (23.5,34.5) -- (22.5,34.5) -- (21.5,34.5) -- (20.5,34.5) -- (20.5,33.5) -- (20.5,32.5) -- (20.5,31.5);
\draw (22.000000,33.000000) node {$R_i^{4}$};
\draw (20.5,34.5) -- (21.5,34.5) -- (22.5,34.5) -- (23.5,34.5) -- (23.5,35.5) -- (23.5,36.5) -- (23.5,37.5) -- (22.5,37.5) -- (21.5,37.5) -- (20.5,37.5) -- (20.5,36.5) -- (20.5,35.5) -- (20.5,34.5);
\draw (22.000000,36.000000) node {$S_i^{1}$};
\draw (17.5,32.5) -- (18.5,32.5) -- (19.5,32.5) -- (20.5,32.5) -- (20.5,33.5) -- (20.5,34.5) -- (20.5,35.5) -- (19.5,35.5) -- (18.5,35.5) -- (17.5,35.5) -- (17.5,34.5) -- (17.5,33.5) -- (17.5,32.5);
\draw (19.000000,34.000000) node {$S_i^{0}$};
\end{scope}\begin{scope}[thick]
\draw (24.5,19.5) -- (25.5,19.5) -- (26.5,19.5) -- (27.5,19.5) -- (27.5,20.5) -- (27.5,21.5) -- (28.5,21.5) -- (29.5,21.5) -- (30.5,21.5) -- (30.5,20.5) -- (31.5,20.5) -- (32.5,20.5) -- (33.5,20.5) -- (33.5,21.5) -- (33.5,22.5) -- (34.5,22.5) -- (35.5,22.5) -- (36.5,22.5) -- (36.5,23.5) -- (36.5,24.5) -- (37.5,24.5) -- (37.5,25.5) -- (37.5,26.5) -- (37.5,27.5) -- (36.5,27.5) -- (35.5,27.5) -- (35.5,28.5) -- (35.5,29.5) -- (35.5,30.5) -- (36.5,30.5) -- (36.5,31.5) -- (36.5,32.5) -- (36.5,33.5) -- (35.5,33.5) -- (34.5,33.5) -- (34.5,34.5) -- (34.5,35.5) -- (34.5,36.5) -- (33.5,36.5) -- (32.5,36.5) -- (32.5,37.5) -- (31.5,37.5) -- (30.5,37.5) -- (29.5,37.5) -- (29.5,36.5) -- (29.5,35.5) -- (28.5,35.5) -- (27.5,35.5) -- (26.5,35.5) -- (26.5,36.5) -- (25.5,36.5) -- (24.5,36.5) -- (23.5,36.5) -- (23.5,35.5) -- (23.5,34.5) -- (22.5,34.5) -- (21.5,34.5) -- (20.5,34.5) -- (20.5,33.5) -- (20.5,32.5) -- (19.5,32.5) -- (19.5,31.5) -- (19.5,30.5) -- (19.5,29.5) -- (20.5,29.5) -- (21.5,29.5) -- (21.5,28.5) -- (21.5,27.5) -- (21.5,26.5) -- (20.5,26.5) -- (20.5,25.5) -- (20.5,24.5) -- (20.5,23.5) -- (21.5,23.5) -- (22.5,23.5) -- (22.5,22.5) -- (22.5,21.5) -- (22.5,20.5) -- (23.5,20.5) -- (24.5,20.5) -- (24.5,19.5);
\draw (28.500000,28.500000) node {$T_0$};
\draw (20.5,34.5) -- (21.5,34.5) -- (22.5,34.5) -- (23.5,34.5) -- (23.5,35.5) -- (23.5,36.5) -- (24.5,36.5) -- (25.5,36.5) -- (26.5,36.5) -- (26.5,35.5) -- (27.5,35.5) -- (28.5,35.5) -- (29.5,35.5) -- (29.5,36.5) -- (29.5,37.5) -- (30.5,37.5) -- (31.5,37.5) -- (32.5,37.5) -- (32.5,38.5) -- (32.5,39.5) -- (33.5,39.5) -- (33.5,40.5) -- (33.5,41.5) -- (33.5,42.5) -- (32.5,42.5) -- (31.5,42.5) -- (31.5,43.5) -- (31.5,44.5) -- (31.5,45.5) -- (32.5,45.5) -- (32.5,46.5) -- (32.5,47.5) -- (32.5,48.5) -- (31.5,48.5) -- (30.5,48.5) -- (30.5,49.5) -- (30.5,50.5) -- (30.5,51.5) -- (29.5,51.5) -- (28.5,51.5) -- (28.5,52.5) -- (27.5,52.5) -- (26.5,52.5) -- (25.5,52.5) -- (25.5,51.5) -- (25.5,50.5) -- (24.5,50.5) -- (23.5,50.5) -- (22.5,50.5) -- (22.5,51.5) -- (21.5,51.5) -- (20.5,51.5) -- (19.5,51.5) -- (19.5,50.5) -- (19.5,49.5) -- (18.5,49.5) -- (17.5,49.5) -- (16.5,49.5) -- (16.5,48.5) -- (16.5,47.5) -- (15.5,47.5) -- (15.5,46.5) -- (15.5,45.5) -- (15.5,44.5) -- (16.5,44.5) -- (17.5,44.5) -- (17.5,43.5) -- (17.5,42.5) -- (17.5,41.5) -- (16.5,41.5) -- (16.5,40.5) -- (16.5,39.5) -- (16.5,38.5) -- (17.5,38.5) -- (18.5,38.5) -- (18.5,37.5) -- (18.5,36.5) -- (18.5,35.5) -- (19.5,35.5) -- (20.5,35.5) -- (20.5,34.5);
\draw (24.500000,43.500000) node {$T_0 + v_{i0}$};
\end{scope}
\end{tikzpicture}
\end{center}
\caption{Verifying tiling $T_{C_0}$ by $\Lambda_{C_0}$ using boundary strings.}
\label{f.lefttile}
\end{figure}

We now prove the compatibility of left-lattice adjacent odometers:

\begin{lemma}
\label{l.otile}
If the odometer $h_0'$ for $C_0$ is left-lattice adjacent to $h_0$, then $h_0'$ and $h_0$ are compatible.  Moreover, if $c_i>1$, where $\cent(T(h_0'))-\cent(T(h_0))=\pm v_{i0}$, then any vertex in $T(h_0)\cap T(h_0')$ lies, together with all of its lattice neighbors, in the union of the domains of some pair of proper ancestor odometers of $h_0$ and $h_0'$ which are pairwise left-lattice adjacent.
\end{lemma}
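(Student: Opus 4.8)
The plan is to prove both assertions simultaneously by induction on the curvature $c_0$, taking the Ford and diamond families (together with the radius‑$1$ and line cases) as base cases, where compatibility of left‑lattice adjacent odometers already follows from the periodicity of \pref{ford} and \pref{diamond} and the explicit constructions of \sref{degenerate}. In the inductive step I fix a proper Descartes quadruple $(C_0,C_1,C_2,C_3)$ with $c_0>1$ and assume \lref{otile} for every circle of smaller curvature; through \lref{induct-adjacency} this also supplies compatibility of right‑lattice and subtile‑lattice adjacent odometers for those smaller circles, so that \lref{odomboundarystring} and \lref{otileR} are available at the current level. After possibly interchanging $h_0$ and $h_0'$, I may assume $\cent(T(h_0'))-\cent(T(h_0))=+v_{i0}$, and I write $T_0=T(h_0)$ and $T_0'=T_0+v_{i0}=T(h_0')$. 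The geometric core is the ``moreover'' covering clause, from which ordinary compatibility will be read off together with the inductive hypothesis.

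First I set up the two boundary strings that flank the interface. By \lref{tiling}\eref{ttouch1}, $T_0$ and $T_0'$ touch, so $T_0\cap T_0'$ is a single path $P$ whose endpoints are the common vertices of the touching triples $\{T_0,T_0',R_i^+\}$ and $\{T_0,T_0',R_i^-\}$, where $\cent(R_i^\pm)=\cent(T_0)+\tfrac12(v_{i0}\pm v_{0i})$. Applying \lref{odomboundarystring} to $h_0$ produces tile odometers $h_{R_i^-},h_{R_i^+}$ for $C_i$ such that $h_0\cup h_{R_i^-}\cup h_{R_i^+}$ respects the $C_i$ boundary string $\rrr$ of $T_0$, namely the $\ttt_i$‑string from $R_i^-$ to $R_i^+$ in the $C_i$‑tiling $\ttt_i=R_i^-+\Lambda_{C_i}$, whose interior tiles lie in $T_0$ by \lref{boundarystring}. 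Next I identify the reverse $\ttt_i$‑string $\sss$ from $R_i^+$ to $R_i^-$ with the central reflection through $\cent(T_0')$ of the $C_i$ boundary string of $T_0'$: a centroid computation using the lattice rules \eref{lattice} and the flip relations \eref{lattice.flip} (so $v_{0i}=\I v_{i0}$, $a_{0i}=-\I a_{i0}$) shows that reflecting the endpoints of that boundary string through $\cent(T_0')$ returns exactly $R_i^+$ and $R_i^-$, and that the same odometers $h_{R_i^\pm}$ are subtile‑lattice adjacent to $h_0'$. Since the reflection of a tile odometer is a translation of it (\lref{osym}), $h_0'\cup h_{R_i^-}\cup h_{R_i^+}$ respects $\sss$, whose interior tiles lie in $T_0'$.

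With both strings in hand I invoke \lref{stringpair} for the tiling $\ttt_i$ (legitimate since $c_i\ge 1$) applied to $R_i^-$ and $R_i^+$: the interiors of $\rrr$ and $\sss$ share a path $Q$ from $R_i^-$ to $R_i^+$, and as those interiors lie in $T_0$ and $T_0'$ respectively, $Q\subseteq T_0\cap T_0'=P$. For a vertex $x\in Q$, $x$ lies in an interior tile $U$ of $\rrr$ — an ancestor, hence by \lref{Oancestor} a $C_i$ tile odometer restriction of $h_0$ — and in an interior tile $U'$ of $\sss$, likewise a restriction of $h_0'$; since tile footprints are two‑dimensional they cannot both sit inside the path $P$, so $U\neq U'$ and, sharing $x$, they are adjacent $C_i$‑tiles in $\ttt_i$. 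Tracking the slope data along the two strings from their shared endpoints $h_{R_i^\pm}$ via the lattice rules shows that $h_0|_U$ and $h_0'|_{U'}$ have centroid difference $\pm v_{ji}$ and slope difference $\pm a_{ji}$ for a parent $C_j$ of $C_i$, i.e.\ they are left‑lattice adjacent proper‑ancestor odometers with $x\in U\cap U'$ and $x$ together with its neighbors in $U\cup U'$; this is precisely the covering clause. Compatibility then follows: by the inductive hypothesis each pair $(h_0|_U,h_0'|_{U'})$ is compatible, so $h_0-h_0'$ is constant on the connected set $U\cap U'$; as $(U,U')$ ranges over the interface‑crossing pairs met by $Q$ these sets cover $P$ and overlap consecutively (consecutive string tiles share at least two vertices), so the constants agree and $h_0-h_0'$ is a single constant on $P=T(h_0)\cap T(h_0')$.

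I expect the main obstacle to be the orientation and slope bookkeeping in the second paragraph. The tile argument of \sref{tiles} used $90^\circ$ symmetry to pass between $T_0$ and $T_0'$, whereas odometers are only $180^\circ$ symmetric, so recognizing $\sss$ as a reflected boundary string of $T_0'$ and — crucially — verifying that the \emph{same} endpoint odometers $h_{R_i^\pm}$ serve both strings forces one to replace genuine $90^\circ$ symmetry by the exact flip identities \eref{lattice.flip} combined with \lref{osym}. The secondary subtlety is promoting constancy along the shared path $Q$ to constancy on the whole interface $P$, which is exactly why the covering clause is phrased with ``together with all of its lattice neighbors'': that neighbor overlap is what lets the local offset constants chain along the connected path. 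The remaining degenerate subcases $c_i=1$ (and any parent coincidence involving a line) will be dispatched directly from the explicit data of \sref{degenerate} rather than through the string machinery.
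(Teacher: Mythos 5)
Your proposal follows essentially the same route as the paper's proof: the two flanking strings $\rrr$ and $\sss$ sharing the common endpoint odometers $h_{R_i^\pm}$, \lref{stringpair} combined with \lref{odomboundarystring} to place the interface $T_0\cap T_0'$ inside both string interiors, and the telescoping slope computation through the shared endpoints (using the linearity of the $v\mapsto a$ correspondence) to show that cross-interface touching pairs of $C_i$-tiles carry left-lattice adjacent restrictions, with compatibility then following by induction on curvature. Your explicit appeals to \lref{osym} and the flip relations to realize $\sss$ as a reflected boundary string of $T_0'$, and your chaining of offset constants along the connected interface, merely spell out steps the paper leaves implicit in its citation of \lref{odomboundarystring}.
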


\begin{proof}
If $c_i=0$ then $C_0$ is a Ford circle and this lemma is easy to verify from the construction in \sref{ford}, so we may assume $c_i>0$.

From the definition of left-lattice adjacency, we have without loss of generality that $h_0'\change{(x)}=h_0^i(x) = h_0(x - v_{i0}) + a_{i0} \cdot x$ of $h_0$ with domain $T_0^i=T_0+v_{i0}$, and we let $R_i$ and $S_i$ be tiles for $C_i$ satisfying
\begin{align*}
\cent(R_i)-\cent(T_0) =\tfrac{1}{2} (v_{i0} - v_{0i}),\\
\cent(S_i)-\cent(T_0)= \tfrac{1}{2} (v_{i0} + v_{0i}).
\end{align*}

From \lref{tiling}, we know that $R_i^-,T_0,T_0^i$ and $R_i^+,T_0,T_0^i$ are both touching triples; in particular, the intersection of $T_0$ and $T_0^i$ is a path from $S_i$ to $R_i$.  Therefore, let $\rrr=R_i^0,R_i^1,\dots,R_i^t$ and $\sss=S_i^0,S_i^1,\dots,S_i^t$ be the string from $R_i^-=R_i^0$ to $R_i^+=R_i^t$ and the string from $R_i^+=S_i^0$ to $R_i^-=S_i^t$, respectively; \change{note that they are equivalent under a central reflection} (\fref{lefttile}). \lref{odomboundarystring} guarantees that the interiors of $\rrr$ and $\sss$ lie in $T_0$ and $T_0+v_{i0}$, respectively. \change{By \lref{stringpair}, the intersection of the interiors of $\rrr$ and $\sss$ contains a simple path from from $S_i$ to $R_i$, so $T_0\cap T_0^i$ equals the intersection of the interiors of $\rrr$ and $\sss$.}  So it suffices to show that each restriction for each pair of touching $R_i^{\ell_1}$, $S_i^{\ell_2}$ from $\rrr$ and $\sss$ that $h_0|R_i^{\change{\ell_1}}$ and $h_0|S_i^{\change{\ell_2}}$ are left-lattice adjacent (thus compatible by induction).  Note that $c_i>1$ implies that there is no vertex in $\Z^2$ which lies in the intersection of 4 tiles in a $T_i+\Lambda_{C_i}$ tiling of $\Z^2$, justifying the Moreover clause.

We let $h_{R_i^\pm}$ be defined as in \lref{odomboundarystring}, and let $f_0^i=h_0\cup h_{R_i^-}\cup h_{R_i^+}$ and $f_1^i=h_0^i\cup h_{R_i^-}\cup h_{R_i^+}$, which, by \lref{odomboundarystring} \change{and inductive application of \lref{otileR}, below, to the sublattice adjaceny of $h_{R_i^{\pm}}$ to $h_0$,} are well-defined partial odometers which respect the strings $\rrr$ and $\sss$, respectively.  Considering now touching tiles $R_i^{\ell_1}$ and $S_i^{\ell_2}$ from $\rrr$ and $\sss$, respectively, we see that
\[
R_i^{\ell_1},R_i^{\ell_1-1},\dots, R_i^0=S_i^t,S_i^{t-1},\dots,S_i^{\ell_2}
\]
is a sequence of \change{$(\ell_1+\ell_2+1-t)$} tiles where each consecutive pair $U^m,U^{m+1}$ in the sequence forms a two-tile string which is respected either by $f_0^i$ or $\change{f_1^i}$.  In particular, \change{let} $C_i^1,C_i^2,C_i^3$ be the parents of $C_i$ in clockwise order.  \change{For each} pair $U^m,U^{m+1}$ \change{which is} respected by $f_{\change{q_m}}^i$ ($\change{q_m} \in \{0,1\}$), \change{we have for some $n$ (
  which may vary with $m$) that}
\begin{align}
\slope(f_{\change{q_m}}^i|U^{m+1})-\slope(f_{\change{q_m}}^i|U^m)&=\pm a(C_i,C_i^n),\quad\mbox{where}\\
\cent(U^{m+1})-\cent(U^m)&=\pm v(C_i,C_i^n).
\end{align}
We now have
\[
\pm v(C_i,C_i^{n})=\cent(S_i^{\ell_2})-\cent(R_i^{\ell_1})=\sum_{\change{m}} \left(\cent(U^{m+1})-\cent(U^m)\right)
\]
implies that
\[
\slope(f_1^i|S_i^{\ell_2})-\slope(f_0^i|R_i^{\ell_1})=\sum_{\change{m}} \left(\slope(f_{q_m}^i|U^{m+1})-\slope(f_{q_m}^i|U^{m})\right)=\pm a(C_i,C_i^{n}).
\]
In particular, $f_1^i|S_i^{\ell_2}$ and $f_0^i|R_i^{\ell_1}$ are left-lattice adjacent.
\end{proof}

As noted earlier, \lref{induct-adjacency} now gives us the following: 

\begin{lemma}
\label{l.otileR}
If $h$ is a tile odometer which is right-lattice or subtile-lattice adjacent to the tile odometer $h_0$, then $h$ and $h_0$ are compatible.\qed
\end{lemma}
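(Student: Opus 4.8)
The plan is to prove both assertions by induction on the curvature $c_0$ of the circle $C_0$ hosting the relevant decomposition, running this induction simultaneously with \lref{otile}. At curvature $c_0$ we may assume that left-lattice, right-lattice, and subtile-lattice adjacent tile odometers are compatible for every circle of smaller curvature, and that \lref{otile} holds for such circles. The two base cases are the lines $(0,\pm 1)$, whose tiles are singletons and hence meet any other tile in at most one vertex, so that compatibility is automatic, and the radius-$1$ circles $(1,1+2z)$, which are checked directly (or read off from \sref{ford}). The one subtlety in the ordering is that within a single curvature level the subtile-lattice case must be settled first, since both \lref{odomboundarystring} (hence \lref{otile}) and the right-lattice case at curvature $c_0$ feed on it.

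For the subtile-lattice case, suppose a tile odometer $h$ for a parent circle is subtile-lattice adjacent to $h_0$ (for $C_0$), and write $h_0 = \bigcup h_i^\pm$ as in \dref{O0}. For each subodometer $h_i^\pm$ of $h_0$ that meets $h$, \lref{induct-adjacency} puts the pair $h, h_i^\pm$ into one of three alternatives: left-lattice adjacent (both being odometers for a common parent circle), $h$ subtile-lattice adjacent to $h_i^\pm$, or $h_i^\pm$ subtile-lattice adjacent to $h$. In every alternative the two odometers are for circles that are parents of $C_0$ or their parents, hence of curvature strictly less than $c_0$; so by the inductive hypothesis (using \lref{otile} at smaller curvature for the left-lattice alternative) $h$ and $h_i^\pm$ are compatible. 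Since $h_0 = h_i^\pm$ on $T(h_i^\pm)$, this shows that $h - h_0$ is constant on each piece $T(h) \cap T(h_i^\pm)$. The right-lattice case is parallel: here $h$ and $h_0$ are both odometers for $C_0$, and \lref{induct-adjacency} makes each subodometer $h_i^\pm$ of $h$ that meets $h_0$ subtile-lattice adjacent to $h_0$ — exactly the subtile-lattice case just established at curvature $c_0$ — so again $h - h_0 = h_i^\pm - h_0$ is constant on each $T(h_i^\pm) \cap T(h_0)$.

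It remains to glue these local constants into a single offset constant. By \lref{tiling}, applied through \lref{topological}, the intersection $T(h) \cap T(h_0)$ is a single path in $\Z[\I]$, hence connected, and it is covered by the pieces $T(h) \cap T(h_i^\pm)$ (respectively $T(h_i^\pm) \cap T(h_0)$). On each piece $h - h_0$ is constant, and wherever two pieces share a vertex the two relevant subodometers agree there, because the union in \dref{O0} is taken with offset constant $0$ and so $h_0$ (respectively $h$) is single-valued at that vertex. Thus the constants on overlapping pieces coincide, and connectivity of the path forces a single constant on all of $T(h) \cap T(h_0)$; this is the required offset constant, giving compatibility in the sense of \dref{comp}.

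The main obstacle is precisely this gluing step: showing that the locally defined offsets are forced to agree globally. This is where the path structure of tile intersections from \lref{tiling} is indispensable — one needs not only that $T(h) \cap T(h_0)$ is connected, but that the covering subtile intersections meet consecutively in vertices at which the relevant odometer is unambiguously defined. The asymmetry of subtile-lattice adjacency, and the consequent need to sequence the induction so that the subtile-lattice case at curvature $c_0$ precedes \lref{otile} and the right-lattice case at the same level, also demand some care, but these are bookkeeping matters once the reductions furnished by \lref{induct-adjacency} are available.
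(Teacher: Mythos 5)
Your proposal is correct and is essentially the paper's own argument: the paper states this lemma with only a \qed, pointing back to its remark that \lref{induct-adjacency}, \lref{otile}, and induction yield it --- precisely your reduction of the right-lattice case to the same-level subtile-lattice case, and of the subtile-lattice case to left-lattice or subtile-lattice pairs among the (strictly smaller) parent circles, with the same ordering of cases within a curvature level. Your explicit gluing of the piecewise offsets into a single constant is detail the paper suppresses entirely; it is sound, and the ingredient you flag as needed (consecutive pieces of the cover actually overlapping) follows because each edge of the intersection path is an edge of some square of $T_0$, and that square belongs to a single subtile, so both endpoints of the edge lie in one piece $T(h)\cap T(h_i^\pm)$.
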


\subsection{Existence of tile odometers}  We conclude with the following.

\begin{lemma}
\label{l.hasodom}
Every circle $C_0\in \B$ has a tile odometer $h_0:T_0\to \Z$.
\end{lemma}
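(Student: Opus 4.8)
The plan is to prove \lref{hasodom} by induction along the directed forest of semi-proper Descartes quadruples, exactly mirroring the inductive construction of prototiles in \lref{prototile}. The base cases are immediate from \dref{O0}: when $C_0 = (0,\pm 1)$ the tile $T_0$ is a singleton and any partial odometer on it qualifies, and when $C_0 = (1,1+2z)$ the explicit formula $h(0)=h(1)=h(\I)=0$, $h(1+\I)=\Im(z)/2$ provides the tile odometer. For the inductive step, I fix a proper Descartes quadruple $(C_0,C_1,C_2,C_3)$ with $c_0 > 1$ and, using the symmetry reductions of \sref{lattice-sym}, assume $c_1 > c_2 \geq c_3$, so that $(C_1,C_4,C_2,C_3)$ is also proper where $C_4 = 2(C_1+C_2+C_3)-C_0$ is the Soddy precursor. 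By the induction hypothesis, each of $C_1, C_2, C_3$ (and $C_4$) already has a tile odometer.

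**Assembling the six subodometers.** Following the pattern of \dref{O0}, I would produce the six subodometers $h_i^\pm$ (for $i=1,2,3$) as translations of the inductively-given tile odometers for $C_i$, placing their domains at the prescribed centroids $\cent(T(h_i^\pm)) - \cent(T(h_0)) = \pm\tfrac12(v_{kj}-\I v_{kj})$ and fixing their slopes to satisfy $\slope(h_i^\pm) - \slope(h_0) = \pm\tfrac12(a_{kj}+\I a_{kj})$. The domains $T(h_i^\pm)$ are precisely the subtiles whose union is $T_0$, by \lref{tiling}\,\eref{tarea} and \lref{prototile}; what must be checked is that the six functions $h_i^\pm$ are \emph{pairwise compatible} in the sense of \dref{comp}, so that their common extension $h_0 = \bigcup_i h_i^\pm$ of \eref{odecomp} is well-defined as a single function on $T_0$.

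**Reducing compatibility to the adjacency lemmas.** The heart of the argument is verifying this pairwise compatibility, and here I would lean entirely on the machinery of \sref{odometers}. By \lref{tiling}\,\eref{tgooddecomp}, among the subtiles the only nontrivial overlaps and interface-intersections occur for pairs drawn adjacently in \fref{righttile}; for each such pair I would use the lattice rules \eref{lattice} to compute the centroid and slope differences and identify the pair as either left-lattice, right-lattice, or subtile-lattice adjacent (just as in the proof of \lref{induct-adjacency}). Compatibility of each such pair then follows from \lref{otile} (left-lattice case) together with \lref{otileR} (right- and subtile-lattice cases). The subtlest interface is the overlap between $T_1^+$ and $T_1^-$, controlled by \lref{tiling}\,\eref{tgooddecomp}, whose conclusion $F(T_1^+)\cap F(T_1^-)\sbs (F(Q_2^+)\cap F(S_3^-))\cup F(Q_4^-)\cup (F(Q_3^+)\cap F(S_2^-))$ localizes where agreement must be checked; on that set the two subodometers restrict to ancestor odometers that are already known to be compatible by induction. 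Once all pairs are compatible, \lref{gluing} (or simply the consistency of the offset constants around each touching triple, which \lref{tiling} guarantees) yields a well-defined $h_0$, and one reads off directly from the construction that the centroid and slope relations of \dref{O0} hold, so $h_0$ is a tile odometer for $C_0$.

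**The main obstacle.** I expect the principal difficulty to be organizing the compatibility verification so that the logical dependence on \lref{otile}, \lref{otileR}, and \lref{induct-adjacency} is not circular: those lemmas are stated for tile odometers that \emph{exist}, and here I am simultaneously constructing $h_0$. The resolution is that all the pairwise checks involve only the subodometers $h_i^\pm$ and their ancestors, which are odometers for the strictly smaller circles $C_1, C_2, C_3$ and are available by the induction hypothesis; the adjacency lemmas are applied only at the level of these already-constructed smaller odometers, and $h_0$ itself is merely their glued union. Making this stratification explicit — that every invocation of compatibility is between odometers of circles strictly below $C_0$ in the Apollonian forest — is the step requiring the most care, and it is exactly the point at which the boundary-string technology of \lref{odomboundarystring} and \lref{stringpair} (invoked inside \lref{otile}) does the work that $90^\circ$ symmetry did for tiles.
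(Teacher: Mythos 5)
Your proposal is correct and follows essentially the same route as the paper: construct the six subodometers $h_i^\pm$ by copying the proof of \lref{prototile}, use \lref{tiling}\,\eref{tgooddecomp} to identify the intersecting pairs, and verify compatibility of each pair by classifying its adjacency type via the lattice rules and invoking \lref{otileR} (with the $T_1^+\cap T_1^-$ interface localized through the double decomposition onto pairs of smaller ancestor odometers handled by induction). Your closing remark about non-circularity — that every adjacency lemma is applied only to odometers of circles strictly smaller than $C_0$ — is exactly the stratification implicit in the paper's "so we are done by induction."
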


\begin{proof}
In light of \sref{degenerate}, we may assume that $(C_0,C_1,C_2,C_3)$ is a proper Descartes quadruple with $c_1\geq c_2>1$.
Copying the proof of 
\lref{prototile}, we easily obtain tile odometers $h_i^\pm$ for each circle $C_i$ such that
\begin{equation*}
\cent(T(h_i^\pm)) - \cent(T_0) = \pm \tfrac{1}{2} (v_{kj} - \I v_{kj}),
\end{equation*}
and
\begin{equation*}
c_i = 0 \quad \mbox{or} \quad \slope(h_i^\pm) - \slope(h_0) = \pm \tfrac{1}{2} (a_{kj} + \I a_{kj}),
\end{equation*}
for all rotations $(i,j,k)$ of $(1,2,3)$.  The difficulty lies in checking there are height offsets so that the $h_i^\pm$ have a common extension to $T_0$.

We know from part \eref{tgooddecomp} of \lref{tiling} that the only pairs of subodometers $h_i^\pm$ whose domains intersect are the pairs
\begin{align*}
(h_i^\pm&,h_j^\mp)\phantom{,}\quad\mbox{for }i\neq j,\\
(h_1^\pm&,h_2^\pm),\quad\mbox{and}\\
(h_1^+&,h_1^-).
\end{align*}
Thus, as in the proof of \lref{gluing}, we only need to check compatibility of these pairs.  The tile odometers $h_i^\pm, h_j^\mp$ for $i \neq j$ are easily verified to be subtile-lattice adjacent, and so are compatible by \lref{otileR}.  It remains to check compatibility for tile odometer pairs corresponding to the tile pairs $(T_1^-,T_2^-)$, $(T_1^+,T_2^+)$, and $(T_1^+,T_1^-)$.  To accomplish this, we decompose $T_1^+$ and $T_1^-$ into unions of $Q_i^\pm$ and $S_i^\pm$ for $i = 2, 3, 4$ according to \eref{tdecomp}, as shown in \fref{doubledecomp}.   The second part of \eref{tgooddecomp} implies that it suffices to check compatibility of the restrictions of the odometers $h_i^\pm$ to the pairs 
$(S_2^-,T_2^-)$, 
$(Q_2^+,T_2^+)$,
$(Q_2^+,S_3^-)$, and
$(Q_3^+,S_2^-)$.
Using the lattice rules we verify that these pairs are right-lattice, right-lattice, subtile-lattice, and subtile-lattice adjacent, respectively, so we are done by induction.
\end{proof}

\section{Global odometers}
\label{s.maximality}

Having constructed tile odometers $h_C$ for each circle $C \in {\B}$ by \lref{hasodom}, we now check that the $h_C$ extend to global odometers $g_C$ satisfying $\Delta g_C\leq 1$, and prove our main theorem.   
We first observe that the partial odometers glue together to form global odometers with the correct periodicity and growth at infinity.  (As in the previous section, a tile now is just a subset of $\Ga$.)

\begin{lemma}
\label{l.periodic}
For every circle $C \in {\B}$, there is a function $g_C : \Z^2 \to \Z$ which has a restriction to a tile odometer for $C$, for which the periodicity condition \eref{periodic} holds for $v\in \Lambda_C$, and for which
\begin{equation}
\label{e.periodic.difference}
x \mapsto g_C(x) - \tfrac{1}{2} x^t A_C x - b \cdot x,
\end{equation}
is $\Lambda_C$-periodic for some $b \in \R^2$.
\end{lemma}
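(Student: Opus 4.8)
The plan is to glue together the $\Lambda_C$-translates of the tile odometer $h_C$ furnished by \lref{hasodom}, using the gluing lemma \lref{gluing}, and then to read off both periodicity statements from an invariance-plus-uniqueness argument. For $u \in \Lambda_C$ let $\Sigma_u$ denote the odometer translation $(\Sigma_u f)(x) = f(x-u) + (A_C u)\cdot x$, and set $h_C^u = \Sigma_u h_C$, a tile odometer on $T_C + u$ with $\cent(T(h_C^u)) = \cent(T_C) + u$ and $\slope(h_C^u) = \slope(h_C) + A_C u$. Writing $u = \sum_i n_i v_{i0}$ with $v_{i0} = v(C_i,C_0)$, relation \eref{ai0vi0} gives $A_C u = \sum_i n_i a_{i0} \in \Ga$, so each $h_C^u$ is integer-valued; since $\sum_i v_{i0} = 0$ forces $\sum_i a_{i0} = A_C \sum_i v_{i0} = 0$, the map $u \mapsto A_C u$ is a well-defined homomorphism on $\Lambda_C$, and the family $\hhh = \{h_C^u : u \in \Lambda_C\}$ is well-defined. (When $c_C = 1$ the tiling below is not hexagonal; but every radius-$1$ circle is a symmetry image of a Ford circle with $q = 1$, so for these the lemma follows directly from \pref{ford} and \sref{lattice-sym}, and I assume $c_C > 1$ henceforth.)

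First I would verify the hypotheses of \lref{gluing} for $\hhh$. By \tref{Lambdatiling} the domains $\{T_C + u\}$ form a hexagonal tiling, and the only intersecting pairs of tiles are lattice neighbors, i.e. pairs with $u' - u = \pm v_{i0}$. For such a pair the centroids differ by $\pm v_{i0}$ and the slopes by $\pm A_C v_{i0} = \pm a_{i0}$, so $h_C^{u}$ and $h_C^{u'}$ are left-lattice adjacent and hence compatible by \lref{otile}; non-neighboring tiles have empty overlap and are compatible by default (\dref{comp}). Thus \lref{gluing} produces $g_C : \Z^2 \to \Z$, unique up to an additive constant, compatible with every $h_C^u$; in particular $g_C$ agrees with $h_C$ up to a constant on $T_C$, so $g_C|_{T_C}$ is again a tile odometer for $C$.

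The key step is to extract periodicity from uniqueness. A direct computation shows $\Sigma_u \Sigma_{u'} h_C$ and $\Sigma_{u+u'} h_C$ differ by a constant, so $\Sigma_u$ permutes $\hhh$ up to additive constants; since compatibility is preserved by $\Sigma_u$, the translate $\Sigma_u g_C$ is then compatible with every member of $\hhh$, and uniqueness forces $\Sigma_u g_C = g_C + \kappa_u$ for some constant $\kappa_u$. Unwinding this identity gives
\begin{equation*}
g_C(x + u) = g_C(x) + x^t A_C u + m(u), \qquad m(u) := u^t A_C u - \kappa_u,
\end{equation*}
for all $x \in \Z^2$ and $u \in \Lambda_C$. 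Normalizing the free constant so that $g_C(0) = 0$ and putting $x = 0$ gives $m(u) = g_C(u)$, which is exactly \eref{periodic}. Finally, the displayed identity says the increments of $\Phi(x) := g_C(x) - \tfrac12 x^t A_C x$ along $\Lambda_C$ are independent of $x$; since $\Lambda_C$ has full rank, $\Phi$ differs from a linear function $b \cdot x$ by a $\Lambda_C$-periodic function, which is the asserted periodicity of \eref{periodic.difference}.

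The main obstacle is not the gluing itself but legitimately invoking it: the compatibility of adjacent translates rests on the full strength of \lref{otile} (whose proof used boundary strings precisely to compensate for the missing $90^\circ$ symmetry of odometers), and the hexagonality and neighbor structure of the tiling rest on \tref{Lambdatiling}. Once those inputs are secured, the invariance argument that converts the gluing's uniqueness into the periodicity identity is essentially formal.
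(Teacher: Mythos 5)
Your proposal is correct and is essentially the paper's own argument: the paper likewise glues the family of lattice translates $x \mapsto h_C(x - k_1 v_{10} - k_2 v_{20}) + (k_1 a_{10} + k_2 a_{20})\cdot x$ via \lref{gluing} (compatibility coming from left-lattice adjacency, i.e.\ \lref{otile}), then uses invariance of this family under the shift operators together with the uniqueness clause of \lref{gluing} and the normalization $g(0)=0$ to obtain \eref{periodic}, with $a_{i0}=A_C v_{i0}$ from \eref{ai0vi0} supplying the identification of slopes. The only cosmetic differences are that you index the family by all $u\in\Lambda_C$ through the operator $\Sigma_u$ (so the extension from generators to the full lattice is automatic) and that you spell out the radius-$1$ degenerate case, which the paper leaves to \sref{degenerate}.
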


\begin{proof}
We may suppose $C = C_0$ and $(C_0,C_1,C_2,C_3)$ is a proper Descartes quadruple, $h_i$ is a tile odometer for $C_i$ whose domain is the tile $T_i$, and write $v_{ij} = v(C_i,C_j)$, $a_{ij} = a(C_i,C_j)$.   \change{We write $h=h_0$ and let $h(k,\ell)$ be the tile odometer on the domain $T(h)+kv_{10}+\ell v_{20}$ given by
\begin{equation*}
x \mapsto h(x - k v_{10} - \ell v_{20}) + (k a_{10} + \ell a_{20}) \cdot x,
\end{equation*}
Now any pair of overlapping tile odometers in 
\[
\hhh=\{h(k,\ell) : k,\ell\in \Z\}
\]
}
are left-lattice adjacent, and thus compatible.  By \lref{gluing}, there is a function $g : \Z^2 \to \Z$ with $g(0)=0$ that is compatible with every $h \in \hhh$.  Since $\hhh$ is invariant under
\begin{equation*}
h \mapsto (x \mapsto h(x - v_{i0}) + a_{i0} \cdot x)
\end{equation*}
for each $i \in \{1,2,3\}$, we see from \lref{gluing} that
\begin{equation*}
x \mapsto g(x - v_{i0}) + a_{i0} \cdot x,
\end{equation*}
differs from $g$ by some constant:
\begin{equation*}
g(x + v_{i0}) = \beta_i + a_{i0} \cdot x + g(x).
\end{equation*}
But $g(0)=0$ implies that $g(v_{i0})=\beta_i$, so that 
\begin{equation*}
g(x + v_{i0}) = g(v_{i0}) + a_{i0} \cdot x + g(x),
\end{equation*}
for all $i = 1, 2, 3$ and $x \in \Z^2$.  Together with $a_{i0} = A_{C_0} v_{i0}$ from \eref{ai0vi0}, this implies the periodicity condition \eref{periodic} for $v\in \Lambda_C$ and that \eref{periodic.difference} is $\Lambda_C$-periodic for some $b \in \R^2$.
\end{proof}

To prove this criterion for general odometers $g_C$, we follow the outline of \pref{ford}.  To begin, we need to understand the Laplacian $\Delta g_C$ for all $C \in {\B}$.  Let $N(x)$ and $\bar N(x)$ denote the set $\{x\pm 1, x\pm \I\}$ of lattice neighbors of $x\in \Ga$ and $\{x\}\cup N(x)$, respectively.

\begin{lemma}
\label{l.web}
Let $h^1$ and $h^2$ be compatible tile odometers for (tangent or identical) circles  in $\B$, let $h=h^1\cup h^2$, and let $x$ such that $x\in T(h^1)\cap T(h^2)$ and $\bar N(x)\sbs T(h^1)\cup T(h^2)$.  If $h^1,h^2$ are left-lattice adjacent, then $\Delta h(x)=1$.  If $h^1,h^2$ are right-lattice adjacent or subtile-lattice adjacent, then $\Delta h(x)=0$ if $x\in s_y\not\subseteq T(h^1)\cup T(h^2)$ for some $y$, and $\Delta h(x)=1$ otherwise.
\end{lemma}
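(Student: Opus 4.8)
The plan is to argue by induction, following the recursive structure of \dref{O0}, with induction measure the sum $c^1+c^2$ of the curvatures of the two circles $C^1,C^2$ underlying $h^1,h^2$. The guiding observation is that $\Delta h(x)$ depends only on the five values of $h$ on $\bar N(x)$, so we are free to evaluate it using whichever (sub)odometers own the unit squares incident to $x$; the entire content is to reduce the local picture at $x$ to a configuration of \emph{strictly smaller} circles, for which the statement is already known. A direct computation from the slopes $\slope(h^i)$ and the identities $a_{i0}=A_{C_0}v_{i0}$ fails precisely because tile odometers are only approximately quadratic (they carry integer rounding), which is what forces the recursive reduction rather than a closed-form evaluation. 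The base cases are the circles of curvature $0$ (lines, whose tiles are singletons) and $1$ (single-square tiles with the explicit odometer of \dref{O0}); there the finitely many compatible, $\bar N(x)$-covering configurations can be checked by hand, and the Ford and diamond computations of \sref{degenerate} supply the analogous explicit base checks when one circle is a line.

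For the left-lattice adjacent case I would invoke the Moreover clause of \lref{otile} directly. When $\cent(T(h^2))-\cent(T(h^1))=\pm v_{i0}$ with $c_i>1$, that clause guarantees that $x$ together with all of $\bar N(x)$ lies in the union of the domains of a pair of proper ancestor odometers $\hat h^1,\hat h^2$ (restrictions of $h^1,h^2$) that are themselves left-lattice adjacent for the strictly smaller circle $C_i$. Since $h=\hat h^1\cup \hat h^2$ on $\bar N(x)$ and the two pieces are compatible, the induction hypothesis yields $\Delta h(x)=\Delta(\hat h^1\cup\hat h^2)(x)=1$; the cases $c_i\in\{0,1\}$ fall back on the Ford construction of \sref{ford} and on the base case. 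This disposes of the left-lattice case and, importantly, shows $\Delta\equiv 1$ all along such interfaces.

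For the right-lattice and subtile-lattice cases I would use \lref{induct-adjacency} to pass to subodometers. If $h^1,h^2$ are right-lattice adjacent, then by \lref{induct-adjacency} any subodometer of $h^1$ meeting $h^2$ is subtile-lattice adjacent to $h^2$; if they are subtile-lattice adjacent, then any subodometer of the child odometer meeting the other is either left-lattice adjacent to it or subtile-lattice adjacent in one of the two directions. Replacing $h^1$ (or $h^2$) by the subodometer owning the squares of $\bar N(x)$ strictly decreases $c^1+c^2$ while leaving $h$ unchanged on $\bar N(x)$; and because the subodometers of a tile odometer cover its entire footprint, the set of unit squares $s_y\ni x$ contained in the union is unaffected by the replacement. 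Hence the geometric alternative ``$x\in s_y\not\subseteq T(h^1)\cup T(h^2)$'' transfers verbatim to the smaller pair, and iterating the reduction lands us either in the left-lattice case (which forces a full corner and the value $1$) or in a base configuration with a missing square (giving the value $0$), exactly as claimed.

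The main obstacle is the bookkeeping in this last reduction: confirming that the three adjacency types line up with the $0$-versus-$1$ dichotomy through the prescribed subtile overlaps. Concretely, one must verify, using \eref{tgooddecomp} of \lref{tiling} together with the lattice identities \eref{lattice}, that the subodometers owning the four squares incident to $x$ are precisely the expected adjacent pairs (and not some spurious overlap between $T_1^+$ and $T_1^-$), and that a square $s_y\ni x$ fails to be covered exactly when the governing local configuration is of subtile-lattice rather than left-lattice type. I expect this to collapse, via the central symmetry of \lref{osym} and the boundedly many ways the incident squares can be distributed between two adjacent subtiles, to a finite list of local pictures that are settled directly in the base case.
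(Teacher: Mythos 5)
Your proposal is correct and takes essentially the same route as the paper's proof: induction on tile area (equivalently curvature), the Moreover clause of \lref{otile} for the left-lattice case (with curvature-$0$ and curvature-$1$ parents handled via the Ford construction and hand-checked base cases), and \lref{induct-adjacency} to replace $h^1$ by one of its subodometers in the right-lattice and subtile-lattice cases, keeping the $s_y$-dichotomy unchanged through the reduction. The only step the paper makes explicit that you leave implicit is the short geometric argument---using the fact that a tile cannot contain two diagonally opposite squares at $x$ without containing one of the other two---which shows that, after normalizing so that $h^2$ owns $s_x$ and $h^1$ owns $s_{x-1-\I}$, a \emph{single} subodometer $h'$ of $h^1$ already satisfies $\bar N(x)\sbs T(h')\cup T(h^2)$; this is exactly what makes your phrase ``the subodometer owning the squares of $\bar N(x)$'' meaningful.
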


\begin{proof}
The proof is by induction on the areas of $T(h^1)$ and $T(h^2)$.  For the cases of right-lattice and subtile-lattice adjacency, the base case occurs when $h^1$ and $h^2$ are both tile odometers for a circle of curvature $1$, in which case the statement can be checked by hand; it is sufficient to check for the circle $(1,1)$.  Thus for the inductive step in this case, we assume (without loss of generality) $h^1$ is a tile odometer for a circle of curvature $>1$: in particular, it can be decomposed into subodometers according to \dref{O0}.

For this case we let $x$ be a point such that $\bar N(x)\sbs T(h^1)\cup T(h^2)$.  We claim that there is a subodometer $h'$ of $h^1$ such that $\bar N(x)\sbs T(h')\cup T(h^2)$.   Indeed, the definition of a tile ensures that if $h^i$ ($i=1,2$) covers $s_x$ and $s_{x-1-\I}$ then it must also cover either $s_{x-1}$ or $s_{x-\I}$.  (In particular, at least three of the four squares containing $x$ as a vertex are covered by $h$.)  Thus, without loss of generality, we have that $s_{x}$ is covered by $h^2$ and $s_{x-1-\I}$ is covered by $h^1$.  In this case we let $h'$ be the subodometer of $h^1$ whose domain covers $s_{x-1-\I}$, and we have that $\bar N(x)\sbs T(h')\cup T(h^2)$.  \lref{induct-adjacency} now implies that $h'$ and $h^2$ are subtile-lattice adjacent or left-lattice adjacent, so we are done by induction (in particular, note that the condition on $s_y$ which determines whether $\Delta h(x)$ is 0 or 1 is unchanged with the inductive step). 

For the case of left-lattice adjacency, \lref{otile} gives the statement by induction. The base case is when $h^1$ and $h^2$ are left-lattice adjacent along a $v_{i0}$ for which the curvature $c_i$ of the corresponding parent circle $C_i$ is 1.  In this case, however, the proof of \lref{otile} gives that $x$ lies, together with all of its neighbors, in the union of four tile odometers for $C_i=(1,1+2z)$ ($z\in \Ga$)  which are cyclically left-lattice adjacent, and this case can be checked by hand.
\end{proof}

\noindent Note that from the case of left-lattice adjacency, \lref{web} has the crucial consequence that $\Delta g_C\equiv 1$ on the ``web'' of its tile boundaries, as seen in \fref{odom}.
Next, we analyze $\Delta g_C$ on the interior of $T_C$.

\begin{lemma}
\label{l.interior}
Suppose $(C_0,C_1,C_2,C_3) \in {\B}^4$ is a proper Descartes quadruple, write $C_i = (c_i, z_i)$ and $T_i$ is a tile for $C_i$, and decompose $T_0$ as a union of $T_i^\pm$ as in \eref{tdecomp}.  Let $x \in T_0 \setminus \partial T_0$, and let $k$ denote the number of boundaries $\partial T_i^\pm$ of subtiles that contain $x$.  Then we have
\begin{enumerate}
\item If $k = 2$ and $x\neq \cent(T_0)$ then $\Delta g_{C_0}(x) =1$.
\item If $k = 2$ and $x=\cent(T_0)$ then $\Delta g_{C_0}(x)=0$.
\item If $k = 3$ then $\Delta g_{C_0}(x) =0$.
\item If $k = 4$ and $x\neq \cent(T_0)$ then $\Delta g_{C_0}(x) = -1$.\label{cyan}
\item If $k = 4$ and $x = \cent(T_0)$ then $\Delta g_{C_0}(x) = -2$.\label{blue}
\end{enumerate}
\end{lemma}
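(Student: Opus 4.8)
The goal is to compute the Laplacian $\Delta g_{C_0}$ at an interior point $x$ of $T_0$ as a function of how many subtile boundaries $\partial T_i^\pm$ contain $x$. The central tool is \lref{web}, which already tells us the value of the Laplacian of $h^1 \cup h^2$ at a point shared by two compatible tile odometers: the value is $1$ if the two are left-lattice adjacent, and is $0$ or $1$ (according to whether all four surrounding squares are covered) if they are right-lattice or subtile-lattice adjacent. The strategy is therefore to reduce each of the five cases to an additive count over the pairs of subtiles meeting at $x$, using the fact that $\Delta$ is a local operator: $\Delta g_{C_0}(x)$ depends only on the values of $g_{C_0}$ on $\bar N(x)$, which in turn are determined by the subodometers $h_i^\pm$ whose domains cover the squares incident to $x$.

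First I would fix an interior point $x$ with $\bar N(x) \sbs T_0$ and record which of the four squares $s_x, s_{x-1}, s_{x-\I}, s_{x-1-\I}$ incident to $x$ belong to which subtile. By part \eref{tgooddecomp} of \lref{tiling}, together with the topological conclusions of \lref{topological} applied in \sref{tiles}, the intersection pattern of subtiles around $x$ is tightly constrained: each edge of $\Z^2$ at $x$ lies in at most two subtiles, and $x$ lies on the boundary of exactly $k$ subtiles. The quantity $k$ counts precisely the number of distinct subtile interfaces passing through $x$, so the cases $k=2,3,4$ correspond to $x$ lying on one edge between two subtiles, on a touching-triple vertex, or on a four-tile meeting point, respectively. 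For each configuration I would identify the adjacency type (left-, right-, or subtile-lattice) of each pair of neighboring subodometers by invoking \lref{induct-adjacency} and \lref{otile}/\lref{otileR}, exactly as was done inside the proof of \lref{web}; this assigns to each interface a local Laplacian contribution.

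The key computation is then a bookkeeping identity. Writing $\Delta g_{C_0}(x) = \sum_{y \sim x} (g_{C_0}(y) - g_{C_0}(x))$ and splitting $N(x)$ according to which subtile interface each neighbor sits across, the contribution from a left-lattice interface is $+1$ (this is the ``web'' value, \lref{web}, which crucially gives $\Delta \equiv 1$ on tile boundaries), while right- and subtile-lattice interfaces contribute $0$ when all surrounding squares are present. The distinction between $x = \cent(T_0)$ and $x \neq \cent(T_0)$ arises because the center is the unique point where the largest pair of subtiles $T_1^+, T_1^-$ overlap (recall $Q_4^- = S_4^+$ from \lref{tiling}), so the local adjacency structure there involves the Soddy precursor $C_4$ and hence an extra interface of the right/subtile type rather than the web type; this shifts the count down by one, explaining the $-1$ versus $-2$ in cases \eref{cyan}--\eref{blue} and the $0$ versus $1$ in cases (1)--(2). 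I would verify each of the five counts by summing: $k=2$ off-center gives a single web edge through $x$ forcing $\Delta = 1$, $k=2$ at center gives the degenerate-overlap configuration yielding $0$, $k=3$ gives a touching-triple balance of $0$, and $k=4$ gives $-1$ or $-2$ according to whether the central overlap is present.

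The main obstacle I anticipate is case analysis bookkeeping at $x = \cent(T_0)$, where the overlap of $T_1^+$ and $T_1^-$ on $Q_4^- = S_4^+$ means $x$ is covered by \emph{more} than the expected number of subodometers, and one must be careful that \lref{web} is being applied to the correct pair of odometers (the ones actually covering the two squares flanking each edge at $x$) rather than to a nominal pair whose domains happen to contain $x$ but not the relevant square. Establishing that the subodometer covering each incident square is well-defined and that consecutive pairs around $x$ are genuinely adjacent in one of the three senses is precisely the content reused from the inductive argument of \lref{web}, so I would lean on that lemma's machinery rather than re-deriving it. A secondary subtlety is confirming that at an interior point no square incident to $x$ is uncovered (so that the ``$0$ versus $1$'' branch of \lref{web} consistently lands on $1$ for the non-central right/subtile interfaces), which follows from $x \notin \partial T_0$ together with the tiling property $T_0 + \Lambda_{C_0}$ established in \tref{Lambdatiling}.
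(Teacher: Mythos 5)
There is a genuine gap, and it sits exactly where your ``additive count'' is supposed to do the work, namely cases (2), (4) and (5). \lref{web} is not a per-interface rule: it computes the \emph{total} Laplacian at $x$, under the hypothesis that $\bar N(x)$ is covered by the domains of a \emph{single} compatible pair of tile odometers that is left-, right-, or subtile-lattice adjacent, and its only possible outputs are $0$ and $1$. The values $-1$ and $-2$ of cases (4)--(5) are therefore outside its range, and no sum of such contributions can be negative. Worse, when $k=4$ the only pairs of subtiles whose union covers $\bar N(x)$ are diagonal pairs such as $\{T_1^+,T_1^-\}$, and those are precisely the pairs which are \emph{not} lattice-adjacent in any of the three senses (their compatibility is established only by descending to the double decomposition, as in \lref{hasodom}); the pairs that are adjacent share an edge at $x$ and fail to cover $\bar N(x)$. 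So \lref{web} cannot be applied at all in these configurations. The paper's route is entirely different: after the symmetry reduction one may assume the $c_i$ are distinct and positive, and then $k=4$ never occurs; cases (4)--(5) arise only when $C_0$ is a Ford or Diamond circle, and were already verified by the explicit formulas of \sref{degenerate} (\lref{fordlap}, \lref{diamondlap}). Case (2) is handled the same way: $k=2$ with $x=\cent(T_0)$ forces the Soddy precursor curvature $c_4=0$, so $C_1$ is a Ford circle, and $\Delta g_{C_0}(x)=0$ is computed directly from the closed-form Ford odometers --- not deduced from any adjacency count. Your remark that the central overlap ``shifts the count down by one'' is not a mechanism, and it misstates the geometry anyway: $T_1^+\cap T_1^-$ is the whole subtile $Q_4^-=S_4^+$, which degenerates to a single point exactly when $c_4=0$.

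Two further errors infect your treatment of the salvageable cases (1) and (3). First, no interface interior to $T_0$ is a left-lattice (``web'') adjacency: left-lattice adjacency relates the translates $T_0+\Lambda_{C_0}$, and is what makes $\Delta g_{C_0}\equiv 1$ on $\partial T_0$ --- exactly the set this lemma excludes. The interior pairs are subtile- or right-lattice adjacent (after passing to the double decomposition when the pair is $\{T_1^+,T_1^-\}$ and ruling out the pairs $\{Q_2^+,S_2^-\}$ and $\{S_3^-,Q_3^+\}$, which by \eref{tgooddecomp} occur only in the Ford case). Second, you misread the $0$-versus-$1$ dichotomy of \lref{web}: the relevant condition is whether every square at $x$ lies in the union of the domains of the chosen \emph{pair}, not whether it lies in $T_0$. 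If, as your last paragraph asserts, interiority of $x$ forced this branch to land on $1$, then case (3) would come out as $1$; in fact it is the square belonging to the third subtile --- covered by $T_0$ but not by the pair --- that triggers the $0$ branch and gives $\Delta g_{C_0}(x)=0$. With these two corrections, cases (1) and (3) do follow from \lref{web} essentially as the paper argues, but the proposal as written does not prove the lemma.
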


In particular, as $180$ degree symmetry precludes $k=3$ when $x=\cent(T_0)$, we have \[ \Delta g_{C_0}(x) = 3 - k - \id_{\{ \cent(T) \}}(x) \] 
whenever $k(x)\geq 2$.   Note that cases (\ref{cyan}) and (\ref{blue}) arise only when $C_0$ is a Ford or Diamond circle, so these cases have been proved already in \sref{degenerate}.  These rules can be witnessed in ``general'' tile decomposition in \fref{dec}.  (When looking at this figure, keep in mind that the curve surrounding the Soddy twin tile is not the meeting of two borders.)

\begin{remark}
  The above lemma provides a fast algorithm for recursively generating tiles with their associated Laplacian patterns; the base case for the recursion is given by the base cases of \dref{O0}.  The appendix lists all such patterns associated to circles in $\mathcal{B}$ of curvature $1 \leq c \leq 100$.

  One may be tempted to define the odometers using this lemma in place of our complicated recursive construction.  Indeed, by Liouville theorem, the Laplacian of the odometer determines the odometer up to a harmonic polynomial.  Every periodic Laplacian pattern can be integrated to a function in this way.  However, there is no guarantee in general that the function so constructed is integer valued.  Thus in some sense, an important point of the construction in this manuscript is that these particular patterns do integrate to integer valued functions.
\end{remark}

\begin{proof}[Proof of Lemma~\ref{l.interior}]
For the Ford and Diamond circles, this Lemma has already been verified in \sref{ford}.  Thus, we may assume by induction that the $c_i$ are distinct and positive, and that the lemma holds for the proper Descartes quadruples $(C_1,C_4,C_2,C_3)$ and $(C_2,C_3,C_5,C_6)$ for $C_4, C_5, C_6 \in {\B}$.  Since $c_1 > 1$, we can decompose $T_1^+$ and $T_1^-$ into $Q_i^\pm$ and $S_i^\pm$ as in Claim \ref{c.doubledecomp} and \fref{doubledecomp}.

First consider the case where $k=3$.  \dref{O0} (and part \eref{tgooddecomp} from \lref{tiling}) give that the two subtiles containing $x$ are subtile-lattice adjacent.  Since $k=3$, there exists the square $s_y$ in the final hypothesis of \lref{web}, and thus \lref{web} gives that $\Delta g_{C_0}(x)=0$.

Next consider the case where $x \neq \cent(T_0)$ and $k = 2$.  If the two subtiles whose boundaries contain $x$ are not the pair $\{T_1^+,T_1^-\}$, then by \dref{O0} (and part \eref{tgooddecomp} from \lref{tiling}), the two subtiles containing $x$ are subtile-lattice adjacent, and \lref{web} now gives the result. 

If on the other hand $k=2$ and $x\in \partial T_1^+\cap \partial T_1^-$, then we use the double decomposition of $T_1^+,T_1^-$.  As in the proof of \lref{web}, we are guaranteed that two tiles from among $S_3^-,Q_3^+,S_2^-,Q_2^+,S_4^+$ cover the neighborhood $\bar N(x)$.  As in the proof of \lref{hasodom}, all pairs among these tiles are known 
to have induced tile odometers which are subtile-lattice adjacent or right-lattice adjacent except for the pairs $\{Q_2^+,S_2^-\}$ and $\{S_3^-,Q_3^+\}$.  But \eref{tgooddecomp} implies that this case cannot occur unless $c_i=0$ for some $i\in \{3,4\}$, in which case $C_0$ is a Ford circle.

Finally, if $x=\cent(T)$, then $x\in \partial T_1^+\cap \partial T_1^-$ implies that $c_4=0$.  In particular, $C_1 = C_{pq}$ is a Ford circle with Ford circle parents $C_2 = C_{p_1 q_1}$ and $C_3 = C_{p_2 q_2}$.  We have that $\bar N(x)$ is covered by $T_1^+\cup T_1^-$.  In this case, the tile odometers $g_{pq}^\pm$ on $T_1^\pm$ are related by $q_{pq}^+(x - (q_2-q_1,q)) = q_{pq}^-(x) + (p,p_2-p_1) \cdot x + k$ for some constant $k \in \Z$.  Thus, the explicit formula from \sref{ford} can be used to verify this case.
\end{proof}

We now generalize the inductive argument in \pref{ford} to obtain maximality of general odometers.

\begin{lemma}
\label{l.maximal}
For each $C \in {\B}$, $g_C$ is maximal.
\end{lemma}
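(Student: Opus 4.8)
The plan is to reduce the maximality of the general odometer $g_C$ to the criterion already established for the degenerate cases, namely \lref{suffmax}. That lemma says: if $g$ is superharmonic, $\Delta g$ is doubly periodic, and the only infinite connected $X \subseteq \Z^2$ with $\Delta(g + \id_X) \leq 1$ is $X = \Z^2$, then $g$ is maximal in the sense of \tref{odometer}. By \lref{periodic} we already know $g_C$ is superharmonic (once we invoke $\Delta g_C \leq 1$) and that $\Delta g_C$ is $\Lambda_C$-periodic, hence doubly periodic. So the work is entirely in verifying the ``infinite connected set'' hypothesis, exactly as in the moreover clauses of \pref{ford} and \pref{diamond}. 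The key new tool that makes this possible for general circles is \lref{interior}, together with the consequence of \lref{web} that $\Delta g_C \equiv 1$ on the web of tile boundaries $\partial(T_C + \Lambda_C)$.

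First I would confirm $\Delta g_C \le 1$ globally, which is immediate from \lref{web} (value $1$ on tile boundaries) and \lref{interior} (values $\le 1$ in tile interiors). Then, following \pref{ford}, suppose $X \subseteq \Z^2$ is infinite and connected with $\Delta(g_C + \id_X) \le 1$; as there, we may reduce to the case where both $X$ and its complement $X^C$ are connected, by replacing $X$ with the complement of a connected component of $X^C$ (using $\Delta \id_{\tilde X} \le \Delta \id_X$). The crucial dichotomy is whether $X^C$ meets the web. If $X^C$ intersects a $\Lambda_C$-translate of some tile boundary, then since $X$ is infinite and connected, the set $\{x \in X^C : \Delta \id_X(x) > 0\}$ must also meet the web; but on the web $\Delta g_C(x) = 1$, so $\Delta(g_C + \id_X)(x) > 1$, a contradiction. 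Hence we may assume $X^C$ lies strictly inside the interior of a single tile $T_C$ (up to translation).

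The remaining task is to show no nonempty proper $X^C$ can be confined to the interior of one tile. This is where \lref{interior} does the real work: it gives the explicit formula $\Delta g_C(x) = 3 - k(x) - \id_{\{\cent(T)\}}(x)$ whenever $x$ lies in $k \ge 2$ of the subtile boundaries $\partial T_i^\pm$. The strategy mirrors the inductive claim inside the proof of \pref{ford}: I would set up an induction on the curvature (equivalently, on the tile decomposition depth), showing that any simply connected $Y \subseteq T_C$ whose complement-within-the-tile touches the tile boundary in a controlled way must contain an interior point $x$ with $\Delta g_C(x) - \Delta \id_Y(x) > 1$. The points where $x$ lies on $k=2$ subtile boundaries have $\Delta g_C(x) = 1$, so if such a point is in $\partial Y$ it immediately yields the contradiction; points on $k=3$ boundaries have $\Delta g_C(x) = 0$, so a boundary vertex there with at least two neighbors outside $Y$ gives $\Delta \id_Y(x) < -2$ and hence the excess. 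When $Y$ avoids the subtile interfaces entirely, it is trapped inside the interior of a single subtile $T_i^\pm$, and we recurse to the parent odometer.

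The main obstacle I anticipate is handling the interface between the two largest overlapping subtiles $T_1^+$ and $T_1^-$, which is precisely the case that forced the double decomposition throughout \sref{tiles} and \sref{odometers}; the relevant $k=2$ intersection $\partial T_1^+ \cap \partial T_1^-$ and the special behavior at $\cent(T_0)$ (where $\Delta g_C$ can be $0$ or even $-2$) must be treated using the double-decomposition tiles $Q_i^\pm, S_i^\pm$ and the pairwise adjacency data from \eref{tgooddecomp}, exactly as \lref{interior} does. I expect that once the case analysis is organized along the subtile boundary structure, the argument follows the template of \pref{ford} closely, with \lref{interior} replacing the explicit Ford-circle computation of \lref{fordlap}. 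Finally, having produced an infinite connected $X^* \subsetneq \Z^2$ with $\Delta(g_C + \id_{X^*}) \le 1$ in the only surviving case via the usual compactness and translation argument (as in \lref{suffmax}), we reach a contradiction, so $g_C$ satisfies the hypotheses of \lref{suffmax} and is therefore maximal.
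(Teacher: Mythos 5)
Your proposal is correct and follows essentially the same route as the paper: reduce via \lref{web} to the case where the complement is confined to the interior of a single tile, then run an induction on curvature (with Ford and Diamond circles from \pref{ford} and \pref{diamond} as base cases) using the explicit Laplacian values of \lref{interior} at subtile-boundary intersections, treating the overlapping pair $T_1^\pm$ as the special case. The only cosmetic difference is that you route the conclusion explicitly through \lref{suffmax}, which the paper leaves implicit.
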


\begin{proof}
Suppose $X \subseteq \Z^2$ is connected and infinite and $\Delta (g_C + \1_X) \leq 1$.  Let $Y$ be a connected component of $\Z^2 \setminus X$ and observe that $\Delta (g_C - \1_Y) \leq 1$.  In particular, we may assume that $Y = \Z^2 \setminus X$.  It is enough to show that $Y$ must be empty.  Now, if $Y$ is not contained in $T \setminus \partial T$ for some $T \in T_C + \Lambda_C$, then by \lref{web}, there is a point $x \in Y$ such that $\Delta g_C(x) = 1$ and $\Delta \id_X(x) > 0$, since the tile odometers of which $g_C$ consist are left-lattice adjacent.  Thus, we may assume $Y \subseteq T_C \setminus \partial T_C$.  The lemma is now immediate from the following claim.

{\em Claim.}  Suppose $Y \subseteq T_C$ is simply connected, $Y \setminus \partial T_C$ is non-empty, $Y \cap \partial T$ is connected, and $Y \cap \partial T_C \cap T_i^\pm$ is nonempty for at most one subtile $T_i^\pm$.  Then there is a vertex $x \in Y \setminus \partial T_C$ such that $\Delta (g_C - \1_Y)(x) > 1$.

We prove this by induction on the curvature of $C$.  Note that, by the proofs of \pref{ford} and \pref{diamond}, we may assume that $C$ is neither a Ford nor a Diamond circle.  In particular, each $T_i^\pm$ contains at least one square and the pairwise intersections are exactly what we expect from the picture.  We may assume that $C_1$ is the largest parent.  Thus $T_i^\pm$ and $T_j^\mp$ have simply connected intersection when $i \neq j$ and $T_i^\pm$ and $T_i^\mp$ are disjoint except when $i = 1$, in which case the intersection is simply connected.

{\em Case 1.} $Y$ is contained in the interior of some $T_i^\pm$.  The claim follows either by induction hypothesis or \change{by} the corresponding result\change{s} for Ford and Diamond circles in \pref{ford} and \pref{diamond}.

{\em Case 2.} Some $x \in \partial Y \setminus \partial T$ lies in the boundary of exactly $T_i^+$ and $T_j^-$ with $i \neq j$.  Observe that $\Delta \1_Y(x) < 0$ and $x \neq \cent(T)$. Thus \lref{interior} gives the claim.

{\em Case 3.} Some $x \in \partial Y \setminus \partial T$ lies in the boundary of exactly three $T_i^\pm$.  Since case 2 is excluded, we must have $\Delta \1_Y(x) < -1$ and thus \lref{interior} again gives the claim.

{\em Case 4.} In the exclusion of the above three cases, the topology of the tile decomposition implies that $Y$ lies in the union of the interior of $T_1^+$, the interior of $T_1^-$, and the intersection of one $T_1^\pm \cap \partial T$.  In $Y \cap T_1^+$ is non-empty, then we can inductively apply the claim to $Y \cap T_1^+$.  Otherwise, we can inductively apply the claim to $Y \subseteq T_1^-$.
\end{proof}

From \lref{periodic} and \lref{maximal}, we immediately obtain \tref{odometer}, modulo checking that $\Lambda_{C_0}$ and $L_{C_0}$ are in fact the same lattice.
\begin{theorem}
\label{t.lattice2}
$L_{C}=\Lambda_{C}$ for all $C \in \B$.
\end{theorem}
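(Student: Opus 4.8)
The plan is to prove the reverse inclusion $L_C\subseteq\Lambda_C$, since $\Lambda_C\subseteq L_C$ is already \lref{lattice1}. Recall that $\Lambda_C$ is a full-rank sublattice of $\Z^2$ of determinant equal to the curvature $c$ of $C$: this is \eref{lattice.determinant}, or equivalently the area identity $|T_C|=c$ from \eref{tarea} together with the fact that $T_C+\Lambda_C$ tiles $\Z^2$. Thus it suffices to show $[\Z^2:L_C]=c$, and I would deduce this from one extra fact: that the Laplacian $\Delta g_C$ is periodic not merely under $\Lambda_C$ but under the a priori larger lattice $L_C$.

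To establish $L_C$-periodicity of $\Delta g_C$, fix $v\in L_C$, set $a:=A_Cv\in\Z^2$, and put $h(x):=g_C(x+v)-a\cdot x$. Since $a\cdot x$ is affine, $\Delta h(x)=\Delta g_C(x+v)\le 1$, and the invertible transform $f\mapsto f(\cdot+v)-a\cdot(\cdot)$ preserves both the growth matrix $A_C$ and maximality in the sense of \tref{odometer}; hence $h$ is again a maximal odometer for $A_C$. By \lref{periodic} the difference $m:=h-g_C$ has the form $p(\cdot+v)-p(\cdot)+\text{const}$, where $p$ is the $\Lambda_C$-periodic part of $g_C$, so $m$ is bounded and $\Lambda_C$-periodic. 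If I can show $m$ is constant, then $g_C(x+v)=g_C(x)+(A_Cv)\cdot x+g_C(v)$, i.e. \eref{periodic} holds for $v$; as $v\in L_C$ is arbitrary, \eref{periodic} then holds throughout $L_C$, and the remark following \tref{odometer} gives that $g_C-\tfrac12 x^tA_Cx-b\cdot x$ is $L_C$-periodic and hence that $\Delta g_C$ is $L_C$-periodic.

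So the heart of the matter is a uniqueness statement: two maximal odometers for $A_C$ whose difference is bounded differ by a constant. I would prove this by the web-and-maximality mechanism already used in \lref{maximal}. Normalizing so that $h\le g_C$ with equality somewhere, put $X=\{g_C>h\}$ and $Y=\{g_C=h\}$ (nonempty and $\Lambda_C$-periodic); then $\min(g_C,h+1)=h+\1_X$ has $\Delta\le1$, so $\Delta(h+\1_X)\le 1$. On the connected, periodic web $\Omega_h$ where $\Delta h\equiv 1$ (\lref{web}), every $x\in\Omega_h\cap Y$ forces $N(x)\subseteq Y$, so by connectedness either $\Omega_h\subseteq Y$ or $\Omega_h\subseteq X$. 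The case $\Omega_h\subseteq X$ confines $Y$ to the interiors of $h$'s tiles, where a finite component $Y_j$ has connected infinite complement with $\Delta(h+\1_{\Z^2\setminus Y_j})\le 1$, which the argument of \lref{maximal} forces to equal all of $\Z^2$ — impossible. In the remaining case $\Omega_h\subseteq Y$, the set $X$ lies in tile interiors, and \textbf{the hard part} is ruling out a finite ``bump'' component of $X$: stated maximality is one-sided and cannot exclude a bounded bump directly. I expect to close this by invoking the inductive Claim inside the proof of \lref{maximal} (equivalently its Ford analogue in \pref{ford}), which is engineered to detect a forbidden configuration inside a single tile, using the central symmetry of \lref{osym} to convert the bump into a hole. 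This yields $X=\varnothing$, hence $m$ constant.

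With $\Delta g_C$ now $L_C$-periodic, the conclusion is a short counting argument. Writing $q(x)=\tfrac12 x^tA_Cx$ one computes $\Delta q\equiv\trace(A_C)=1/c$, and since $g_C-q-b\cdot x$ is $L_C$-periodic its Laplacian sums to zero over any $L_C$-fundamental domain $F'$. Hence
\[
\sum_{x\in F'}\Delta g_C(x)=|F'|\,\trace(A_C)=\frac{[\Z^2:L_C]}{c}.
\]
The left-hand side is an integer, so $c\mid[\Z^2:L_C]$; but $\Lambda_C\subseteq L_C$ gives $[\Z^2:L_C]\mid c$ and in particular $[\Z^2:L_C]\le c$. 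Therefore $[\Z^2:L_C]=c=[\Z^2:\Lambda_C]$, forcing $L_C=\Lambda_C$.
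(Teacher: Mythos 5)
Your opening reduction ($\Lambda_C\subseteq L_C$ from \lref{lattice1}) and your closing integrality count (the sum of the integer-valued $\Delta g$ over an $L_C$-fundamental domain equals $\det(L_C)\trace(A_C)=\det(L_C)/c$, which together with $\det(L_C)\le\det(\Lambda_C)=c$ forces $\det(L_C)=c$) are both correct and coincide with the paper's final step. The gap is the rigidity lemma you interpose to get $L_C$-periodicity, and it is not a repairable technicality: the statement \emph{two maximal odometers for $A_C$ whose difference is bounded differ by a constant} is false. Take the diamond circle $C_2$ of \sref{diamond}, with odometer $g_2$, tile $T_2$, interior $T_2'=\{x:|x_1|+|x_2-2|\le 1\}$, and lattice $L_2$ generated by $\{(0,-4),(3,2)\}$. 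By \lref{diamondlap}, at the center $x^*=(0,2)$ one has $\Delta g_2(x^*)=0$, and the four neighbors of $x^*$ lie in $T_2'$ with $\Delta g_2$ equal to $-1$ at $(\pm1,2)$ and $-2$ at $(0,1),(0,3)$. Put $X=x^*+L_2$ (one point per tile; distinct points of $X$ are at distance $\sqrt{13}>2$) and $g'=g_2+\1_X$. Then $\Delta g'\le 1$ everywhere: $\Delta g'=-4$ on $X$, $\Delta g'\le -1+1=0$ at neighbors of $X$, and $\Delta g'=\Delta g_2$ elsewhere. Moreover $g'$ is an odometer for $A_{C_2}$ (the perturbation $\1_X$ is $L_2$-periodic), and $g'$ is maximal in the sense of \tref{odometer}: if $f\ge g'$ and $\Delta f\le 1$ then $f\ge g_2$, so $f-g_2$ is bounded by \pref{diamond} and \lref{suffmax}, hence $f-g'$ is bounded. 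Thus $g_2$ and $g'$ are maximal odometers with bounded, non-constant difference. This is exactly the periodic-bump configuration of your Case B, so that case cannot be closed by any argument that uses only maximality, the web, and symmetry: the configuration genuinely occurs. Your proposed repair also fails on its own terms --- central symmetry carries bump configurations to bump configurations (it does not negate the function), while the Claim inside \lref{maximal} concerns sets $Y$ satisfying the complementary ``hole'' inequality $\Delta(g_C-\1_Y)\le 1$ inside a single tile.

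Any correct argument must therefore exploit the specific relation between $g_C$ and its competitor $h=g_C(\cdot+v)-(A_Cv)\cdot(\cdot)$, not merely that both are maximal odometers; this is what the paper does, and it sidesteps rigidity entirely. For $v\in L_C$ the paper forms $g^v(x)=g_C(x+v)-x^tA_Cv-g_C(v)$ (integer-valued because $A_Cv\in\Z^2$) and sets $g'=\min_{v}g^v$, a finite minimum over $L_C/\Lambda_C$ since $g^v=g_C$ for $v\in\Lambda_C$. The minimum of finitely many functions with $\Delta\le 1$ again has $\Delta\le 1$, translation by $w\in L_C$ permutes the family $\{g^v\}$, and the resulting function carries the $L_C$-periodic structure needed for the counting; the integrality argument is then run on $g'$ rather than on $g_C$. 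If you replace your middle section (the uniqueness lemma and its web/bump analysis) by this minimum construction, your first and last paragraphs survive essentially unchanged and you recover the paper's proof.
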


\begin{proof}
\lref{lattice1} verified $\Lambda_C \subseteq L_{C}$, thus it remains to verify $L_{C}\subseteq \Lambda_{C}$.
For $\change{C}\in \B$, let $g_{\change{C}}$ be the odometer for $\change{C}$, as constructed in \sref{maximality}.  
Recall from \eref{periodic} that $g_{\change{C}}$ satisfies
\[
g_{\change{\change{C}}}(x+v)=g_{\change{\change{C}}}(x)+x^t A_{\change{\change{C}}} v+ g_{\change{\change{C}}}(v)
\]
for $v\in \Lambda_{\change{C}}$.
We will now modify $g_{\change{C}}$ to produce an odometer for $\change{C}$ \change{satisfies the periodicity condition \eref{periodic} for the lattice $L_{\change{C}}$}.  In particular, for $v\in L_{\change{C}}$, we let
\[
g_{\change{C}}^v(x)=g_{\change{C}}(x+v)-x^t A_{\change{C}} v-g_{\change{C}}(v),
\]
which is an integer since $A_{\change{C}} v\in \Z^2$ by the definition of $L_{\change{C}}$.  Note that for $v\in \Lambda_{\change{C}}$ we have $g^v(x)=g(x)$.

We now define
\[
g'(x):=\min_{v\in L_{\change{C}}} g^v(x).
\]
Note that the periodicity $g^v(x)=g(x)$ for $v\in \Lambda_{\change{C}}$ implies that this can be interpreted as a finite minimum over the quotient $L_{\change{C}}/\Lambda_{\change{C}}$.  

In particular, up to an additive constant, $g'(x)$ is still an odometer for $A_{\change{C}}$, and now satisfies the periodicity condition \eref{periodic} for the lattice $L_{\change{C}}$.  In particular, we have that the average Laplacian $\bar \Delta g'$ of $g'$ over one period of $L_{\change{C}}$ must satisfy $\frac 1 d =\trace(A_{\change{C}})\leq \bar \Delta g'\leq \bar \Delta g=\frac 1 d$.  But then we must have $\det(L_{\change{C}})\geq \frac 1 d$, and thus $L_{\change{C}}=\Lambda_{\change{C}}$.
\end{proof}

\begin{bibdiv}
\begin{biblist}

\newcommand{\arxiv}[1]{{\tt \href{http://arxiv.org/abs/#1}{arXiv:#1}}}

\bib{Bak-Tang-Wiesenfeld}{article}{
  volume = {59},
  journal = {Physical Review Letters},
  author = {Bak, Per},
  author = {Tang, Chao},
  author = {Wiesenfeld, Kurt},
  url = {http://link.aps.org/doi/10.1103/PhysRevLett.59.381},
  year = {1987},
  title = {Self-organized criticality: An explanation of the 1/f noise},
  publisher = {American Physical Society},
  pages = {381--384}
}

\bib{bourgain2011proof}{article}{
  title={A proof of the positive density conjecture for integer Apollonian circle packings},
  author={Bourgain, Jean},
  author={Fuchs, Elena},
  journal={Journal of the American Mathematical Society},
  volume={24},
  number={4},
  pages={945--967},
  year={2011},
  note={\arxiv{1001.3894}}
}

\bib{bourgain2013local}{article}{
  title={On the local-global conjecture for integral Apollonian gaskets},
  author={Bourgain, Jean},
  author={Kontorovich, Alex},
  journal={Inventiones mathematicae},
  pages={1--62},
  year={2013},
  publisher={Springer}
  note={\arxiv{1205.4416}},
}

\bib{Caracciolo-Paoletti-Sportiello}{article}{
        author={Sergio Caracciolo},
        author={Guglielmo Paoletti},
        author={Andrea Sportiello},
        title={Conservation laws for strings in the Abelian Sandpile Model},
        journal={Europhysics Letters},
        volume={90},
        number={6},
        year={2010},
        pages={60003},
	note={\arxiv{1002.3974}}       
}

\bib{Conway}{book}{
   author={Conway, John H.},
   title={The sensual (quadratic) form},
   series={Carus Mathematical Monographs},
   volume={26},
   note={With the assistance of Francis Y. C. Fung},
   publisher={Mathematical Association of America},
   place={Washington, DC},
   date={1997},
   pages={xiv+152},
   isbn={0-88385-030-3},
}

\bib{dhar1990self}{article}{
  title={Self-organized critical state of sandpile automaton models},
  author={Dhar, Deepak},
  journal={Physical Review Letters},
  volume={64},
  number={14},
  pages={1613--1616},
  year={1990},
  publisher={American Physical Society}
}

\bib{dhar2009pattern}{article}{
  title={Pattern formation in growing sandpiles},
  author={Dhar, Deepak}, 
  author={Sadhu, Tridib},
  author={Chandra, Samarth},
  journal={Europhysics Letters},
  volume={85},
  number={4},
  pages={48002},
  year={2009},
  publisher={IOP Publishing}
  note={\arxiv{0808.1732}}
}

\bib{dhar2013sandpile}{article}{
  title={A sandpile model for proportionate growth},
  author={Dhar, Deepak},
  author={Sadhu, Tridib},
  journal={Journal of Statistical Mechanics: Theory and Experiment},
  number={11},
  pages={P11006},
  year={2013},
  publisher={IOP Publishing},
  note={\arxiv{1310.1359}}
}

\bib{Ford}{article}{
   author={Ford, L. R.},
   title={Fractions},
   journal={American Mathematical Monthly},
   volume={45},
   date={1938},
   number={9},
   pages={586--601},
   issn={0002-9890},
}

\bib{fuchs2013counting}{article}{
  title={Counting problems in Apollonian packings},
  author={Fuchs, Elena},
  journal={Bulletin of the American Mathematical Society},
  volume={50},
  number={2},
  pages={229--266},
  year={2013}
}

\bib{Graham-Lagarias-Mallows-Wilks-Yan}{article}{
   author={Graham, Ronald L.},
   author={Lagarias, Jeffrey C.},
   author={Mallows, Colin L.},
   author={Wilks, Allan R.},
   author={Yan, Catherine H.},
   title={Apollonian circle packings: geometry and group theory. I. The
   Apollonian group},
   journal={Discrete and Computational Geometry},
   volume={34},
   date={2005},
   number={4},
   pages={547--585},
   issn={0179-5376},
}

\bib{kontorovich2011apollonian}{article}{
  title={Apollonian circle packings and closed horospheres on hyperbolic 3-manifolds},
  author={Kontorovich, Alex},
  author={Oh, Hee},
  journal={Journal of the American Mathematical Society},
  volume={24},
  number={3},
  pages={603--648},
  year={2011},
  note={\arxiv{0811.2236}}
}

\bib{Kuo-Trudinger}{article}{
  author={Kuo, Hung-Ju},
  author={Trudinger, Neil S.},
  title={Positive difference operators on general meshes},
  journal={Duke Math. J.},
  volume={83},
  date={1996},
  number={2},
  pages={415--433},
  issn={0012-7094},
  review={\MR{1390653}},
  doi={10.1215/S0012-7094-96-08314-3},
  }

\bib{Lagarias-Mallows-Wilks}{article}{
   author={Lagarias, Jeffrey C.},
   author={Mallows, Colin L.},
   author={Wilks, Allan R.},
   title={Beyond the Descartes circle theorem},
   journal={American Mathematical Monthly},
   volume={109},
   date={2002},
   number={4},
   pages={338--361},
   issn={0002-9890},
}

\bib{Levine-Pegden-Smart}{article}{
  author={Levine, Lionel},
  author={Pegden, Wesley},
  author={Smart, Charles K},
  title={Apollonian structure in the Abelian sandpile},
  journal={Geometric and Functional Analysis},
  volume={26},
  number={1},
  pages={306--336},
  year={2016},
  publisher={Springer}
  note={\arxiv{1208.4839}}
}

\bib{Nicholls}{article}{
   author={Nicholls, Peter J.},
   title={Diophantine approximation via the modular group},
   journal={Journal of the London Mathematical Society},
   volume={17},
   date={1978},
   number={1},
   pages={11--17},
   issn={0024-6107},
}

\bib{ostojic2003patterns}{article}{
  title={Patterns formed by addition of grains to only one site of an abelian sandpile},
  author={Ostojic, Srdjan},
  journal={Physica A: Statistical Mechanics and its Applications},
  volume={318},
  number={1},
  pages={187--199},
  year={2003},
  publisher={Elsevier}
}

\bib{paoletti2012deterministic}{thesis}{
  title={Deterministic Abelian Sandpile Models and Patterns},
  author={Paoletti, Guglielmo},
  year={2012}
  note={Ph.D. thesis, Department of Physics, Universita di Pisa. \url{http://pcteserver.mi.infn.it/~caraccio/PhD/Paoletti.pdf}}
}

\bib{Wes}{article}{
author={Wesley Pegden},
title={Sandpile galleries},
note={\url{http://www.math.cmu.edu/~wes/sandgallery.html}}
}

\bib{Pegden-Smart}{article}{
  author={Pegden, Wesley},
  author={Smart, Charles K},
  title={Convergence of the Abelian Sandpile},
  journal={Duke Mathematical Journal},
  volume={162},
  number={4},
  pages={627--642},
  year={2013},
  publisher={Duke University Press},
  note={\arxiv{1105.0111}}
}

\bib{sadhu2010pattern}{article}{
  title={Pattern formation in growing sandpiles with multiple sources or sinks},
  author={Sadhu, Tridib},
  author={Dhar, Deepak},
  journal={Journal of Statistical Physics},
  volume={138},
  number={4-5},
  pages={815--837},
  year={2010},
  publisher={Springer},
  note={\arxiv{0909.3192}}
}

\bib{sadhu2011effect}{article}{
  title={The effect of noise on patterns formed by growing sandpiles},
  author={Sadhu, Tridib}, 
  author={Dhar, Deepak},
  journal={Journal of Statistical Mechanics: Theory and Experiment},
  volume={2011},
  number={03},
  pages={P03001},
  year={2011},
  publisher={IOP Publishing},
  note={\arxiv{1012.4809}}
}

\bib{Sarnak}{article}{
   author={Peter Sarnak}, 
   title={Letter to J. Lagarias about integral Apollonian packings}, 
   note={\url{http://web.math.princeton.edu/sarnak/AppolonianPackings.pdf}}
}

\bib{Stange}{article}{
  author={Stange, Katherine E.},
  title={The sensual Apollonian circle packing},
  journal={Expositiones Mathematicae},
  year={2016},
  publisher={Elsevier},
  note={\arxiv{1208.4836}}
}

\end{biblist}
\end{bibdiv}

\appendix

\section{Table of Odometer Patterns}

Here we display proper Descartes quadruples $(C_0,C_1,C_2,C_3) \in \B$, along with the Soddy precursor $C_4=2(C_1+C_2+C_3)-C_0$ of $C_0$, the vectors $v(C_i,C_0)$ and $a(C_i,C_0)$ $(i=1,2,3)$, the tile odometer for $C_0$, and a tiling neighborhood in $T_C+L_C$.  We display quadruples up to symmetry for $1 \leq c_0 \leq 156$.  An extended appendix with more circles is available as an ancillary file for this manuscript at \url{arXiv.org}.
\footnotesize
\setlength{\tabcolsep}{2pt}
\begin{longtable}[c]{rrrcc}
\hline
\begin{tabular}{r}$C_0$ \\$C_1$ \\$C_2$ \\$C_3$ \\$C_4$ \end{tabular}\hspace{1em} & \begin{tabular}{r} \\$v(C_1,C_0)$ \\$v(C_2,C_0)$ \\$v(C_3,C_0)$ \\ \\\end{tabular} & \begin{tabular}{r}  \\$a(C_1,C_0)$ \\$a(C_2,C_0)$ \\$a(C_3,C_0)$ \\ \\\end{tabular} & tile odometer & \begin{minipage}{2.8cm}One neighborhood of $T_C$ in $T_C+L_C$.\end{minipage} 

\ \\

\hline

\ \\

\input{app.tex}
\end{longtable}

\end{document}